\title[(0,2) mirror symmetry on homogeneous Hopf surfaces]
{(0,2) mirror symmetry on homogeneous Hopf surfaces}
\date{9 May 2023}
\author[L. \'Alvarez-C\'onsul]{Luis \'Alvarez-C\'onsul}
\address{Instituto de Ciencias Matem\'aticas (CSIC-UAM-UC3M-UCM)\\ Nicol\'as Cabrera 13--15, Cantoblanco\\ 28049 Madrid, Spain}
  \email{l.alvarez-consul@icmat.es}
\author[A. De Arriba de La Hera]{Andoni De Arriba de La Hera}
\address{Dep. \'Algebra, Geometr\'ia y Topolog\'ia\\ Universidad Complutense de Madrid\\ and Instituto de Ciencias Matem\'aticas (CSIC-UAM-UC3M-UCM)\\ Nicol\'as Cabrera 13--15, Cantoblanco\\ 28049 Madrid, Spain}
  \email{andoni.dearriba@icmat.es}
  \author[M. Garcia-Fernandez]{Mario Garcia-Fernandez}
\address{Dep. Matem\'aticas\\ Universidad Aut\'onoma de Madrid\\ and
  Instituto de Ciencias Matem\'ati\-cas (CSIC-UAM-UC3M-UCM)\\ Ciudad
  Universitaria de Cantoblanco\\ 28049 Madrid, Spain}
\email{mario.garcia@icmat.es}
\thanks{
Partially supported by the Spanish Ministry of Science and Innovation, through the `Severo Ochoa Programme for Centres of Excellence in R\&D' (SEV-2015-0554 and CEX2019-000904-S), and grant MTM2016-81048-P. The first author is partially supported by MICINN under grant PID2019-109339GB-C31. The second author is partially supported by MICINN under grant BES-2017-080578. The second and third authors are partially supported by MICINN under grants PID2019-109339GA-C32 and EUR2020-112265.
}
\theoremstyle{plain}
\newtheorem{theorem}{Theorem}[section]
\newtheorem{lemma}[theorem]{Lemma}
\newtheorem{proposition}[theorem]{Proposition}
\newtheorem*{theorem*}{Theorem}
\theoremstyle{definition}
\newtheorem{definition}[theorem]{Definition}
\newtheorem{definition-theorem}[theorem]{Definition-Theorem}
\newtheorem{example}[theorem]{Example}
\newtheorem*{acknowledgements}{Acknowledgements}
\theoremstyle{remark}
\newtheorem{remark}[theorem]{Remark}
\numberwithin{equation}{section} \setcounter{tocdepth}{1}
\newcommand{\qf}[2]{{\left({#1}\middle|{#2}\right)}} 
\newcommand{\tr}{\operatorname{tr}}
\newcommand{\Id}{\operatorname{Id}}
\newcommand{\End}{\operatorname{End}}
\newcommand{\CC}{{\mathbb C}}
\newcommand{\RR}{{\mathbb R}}
\newcommand{\ZZ}{{\mathbb Z}}
\renewcommand{\)}{\right)}
\newcommand{\defeq}{\mathrel{\mathop:}=} 
\newcommand{\hra}{\hookrightarrow}
\newcommand{\lto}{\longrightarrow}
\newcommand{\cD}{\mathcal{D}}
\newcommand{\g}{\mathfrak{g}}
\newcommand{\cL}{\mathcal{L}}
\newcommand{\cR}{\mathcal{R}}
\newcommand{\Lie}{\operatorname{Lie}}
\newcommand{\cH}{\mathcal{H}} 
\newcommand{\U}{\operatorname{U}}
\newcommand{\SU}{\operatorname{SU}}
\newcommand{\glg}{\mathfrak{g}}
\newcommand{\C}{{\mathbb{C}}}
\newcommand{\R}{{\mathbb{R}}}
\newcommand{\Z}{{\mathbb{Z}}}
\newcommand{\N}{{\mathbb{N}}}
\newcommand{\la}{\langle}
\newcommand{\ra}{\rangle}
\begin{document}

\begin{abstract}
In this work we find the first examples of (0,2) mirror symmetry on compact non-K\"ahler complex manifolds. For this we follow Borisov's approach to mirror symmetry using vertex algebras and the chiral de Rham complex. Our examples of (0,2) mirrors are given by pairs of Hopf surfaces endowed with a Bismut-flat pluriclosed metric. Requiring that the geometry is homogeneous, we reduce the problem to the study of Killing spinors on a quadratic Lie algebra and the construction of embeddings of the $N=2$ superconformal vertex algebra in the superaffine vertex algebra, combined with topological T-duality.
\end{abstract}

\maketitle

\setlength{\parskip}{5pt}
\setlength{\parindent}{0pt}


\section{Introduction}
\label{sec:intro}

Shortly after the discovery of mirror symmetry in string theory, physicists realized that this duality extends to a class of superconformal field theories dubbed \emph{(0,2)-models} \cite{WittenN2}. Unlike the more familiar story for algebraic Calabi--Yau manifolds, the geometric content of these models is given by a pair $(X,E)$, where $X$ is a Calabi--Yau threefold and $E$ is a stable holomorphic vector bundle over $X$ satisfying 
$$
c_1(E) = 0, \qquad ch_2(E) = ch_2(X).
$$
Such pairs are supposed to be exchanged under \emph{(0,2)-mirror symmetry}, in a way that some of the features of the standard picture are recovered when $E = TX$ \cite{McOrist,MelSeShe}. For a general $E$, however, $(0,2)$ mirror symmetry should be very different \cite{GGG,MelBook} to the exchange of the A-model with the B-model in the homological or SYZ versions of mirror symmetry \cite{KontsevichHMS,SYZ}.

At this point a natural question is forced upon us: what is $(0,2)$ mirror symmetry?
An important motivation for this question came with the observation that $(0,2)$ models have associated \emph{chiral rings} \cite{ABS} which generalize the quantum cohomology ring of a symplectic manifold.
There has been some direct attempts to understand this picture in the mathematics literature with the aid of toric geometry \cite{Borisov02,Donagi}, but the story is far from over. 
These mathematical advances provide underpinnings for the \emph{half-twisted} model associated to a $(0,2)$-theory \cite{WittenMS} -- a closely-related but simpler theory which plays a similar role to that of the A and B models in mirror symmetry. According to Witten \cite{WittenMS}, $(0,2)$ mirror symmetry exchanges the half-twisted model of $(X,E)$ with the half-twisted model of its mirror and therefore it has the potential to provide an indirect method to understand the chiral rings \cite{MelPle}. This alternative should run parallel to the classical prediction on the number of fixed-degree rational curves on the quintic Calabi--Yau threefold, by Candelas, De la Ossa, Green and Parkes \cite{COGP}.
   
The construction of the chiral ring in \cite{Borisov02} is based on a general approach to mirror symmetry using vertex algebras by Borisov \cite{Borisov}, 
which is well-suited for understanding certain aspects of $(0,2)$ mirror symmetry. The idea is that, even though the notion of superconformal field theory which underlies mirror symmetry is not yet properly axiomatized, part of the structure of such a theory
is a vertex algebra with an $N=2$ superconformal structure, which does have a rigorous mathematical meaning \cite{Kac} (see Section \ref{ssec:background}). The most basic features of this algebraic object are a vector superspace $V$ and four \emph{quantum fields} 
\begin{equation*}
L, J, G^+, G^- \in \operatorname{End} V[[z^{\pm 1}]],
\end{equation*}
whose Fourier modes satisfy suitable commutation relations (see Example \ref{exam:N2}). In this algebraic set-up, $(0,2)$ mirror symmetry is expected to be an isomorphism of the vertex algebras corresponding to the $(0,2)$ models of $(X,E)$ and its mirror $(\hat X, \hat E)$, which flips the $N=2$ superconformal structures via the \emph{mirror involution}
\begin{equation}\label{eq:mirrorinvintro}
L \longleftrightarrow \hat L, \qquad   J \longleftrightarrow - \hat J, \qquad  G_\pm \longleftrightarrow \hat G_\mp.
\end{equation}
The mathematical definition of the $N=2$ superconformal structure associated to a pair $(X,E)$ is expected to involve certain global sections of the (twisted) chiral de Rham complex \cite{MSV,GSV}, a sheaf of vertex algebras on a smooth manifold introduced by Malikov, Schechtman and Vaintrob. Despite the accumulated physical knowledge on $(0,2)$ models, a construction of these global sections is not known at present. In relation to the half-twisted model, it was proved by Witten that the cohomology of the holomorphic chiral de Rham complex is related in perturbation theory to the chiral algebra of the half-twisted theory \cite{Witten02} (see also \cite{Kapustin}).

There has been some progress in Borisov's Programme in recent years, mainly due to the work of Heluani, Kac, and their collaborators \cite{HeluaniRev}, but this proposal is far from being complete: even for the case of algebraic Calabi--Yau manifolds, the construction of $N=2$ structures on the chiral de Rham complex often involves the $\SU(n)$-holonomy metric provided by Yau's theorem, which we do not know explicitly. For the case of pairs $(X,E)$ the replacement of the metric is expected to be a solution of the Hull--Strominger system \cite{HullTurin,Strom}, a fully non-linear system of partial differential equations about which we know very little at present. A novel feature of the solutions of the Hull--Strominger system is that they involve non-K\"ahler metrics which often admit continuous isometry groups, due to the presence of torsion. Over the years, this has led to the construction of large classes of examples on non-K\"ahler Calabi--Yau manifolds with an explicit description of the metric (see, e.g., \cite{FeiYau,OUVi}). Thus non-K\"ahler geometry provides a promising scenario for understanding the vertex algebra approach to $(0,2)$ mirror symmetry. Using the methods from \cite{GF3}, first candidates of $(0,2)$ mirrors on compact non-K\"ahler manifolds have been recently constructed by the third author in \cite{GF4} via T-duality.

Even though the bundle $E$ allows for symmetries, it also carries all sorts of additional difficulties compared to the standard Calabi--Yau case \cite{grst,PPZ}. According to physicists, one way to have non-vanishing torsion in the absence of $E$ is to add singularities to the solutions caused by the back-reaction of the geometry when a \emph{brane} is in place. A famous example is the \emph{heterotic string soliton} constructed by Callan, Harvey, and Strominger \cite{CHS} on the non-compact manifold
$$
\widetilde X = \mathbb{C}^2 \backslash \{0\},
$$ 
corresponding to a NS5-brane located at the origin. Near an asymptotic $S^3$ with \emph{flux} $\ell \in \mathbb{N}$ surrounding the fivebrane (the throat of a semi-wormhole) we find a familiar non-K\"ahler metric: a Bismut-flat pluriclosed hermitian metric. Taking the quotient by a radial $\mathbb{Z}$-action we obtain a compact Hopf surface $X$ with a preferred geometry (see Remark \ref{rem:Zquotient}).

Inspired by this, in the present work we find the first examples of $(0,2)$ mirror symmetry on compact non-K\"ahler complex manifolds using vertex algebras. Our examples of $(0,2)$ mirrors in Theorem \ref{thm:02} are given by pairs of Hopf surfaces endowed with a Bismut-flat pluriclosed metric, with fixed topology $S^3 \times S^1$. The quantity $\ell$ is realized as the cohomology class of the torsion of the hermitian metric on the underlying smooth manifold, which remains fixed:
\begin{equation}\label{eq:Severaclassintro}
[H_\ell] = \ell \in H^3(S^3 \times S^1,\mathbb{R}) \cong \mathbb{R}.
\end{equation}
We will show that, in our examples, (0,2)-mirror symmetry exchanges the complex structure on $X$ with the Aeppli class of the pluriclosed metric of the mirror $\hat X$, and vice versa (see Proposition \ref{prop:Tdual}). This phenomenon is strongly reminiscent of the rotation of the Hodge diamond on mirror symmetry for algebraic Calabi--Yau manifolds. More importantly, in Theorem \ref{thm:halftwist} we consider an $S^1$-equivariant version of the \emph{half-twisted model} $\Omega^{ch}_{X}(\mathcal{Q}_{\ell})^{T^1}$ following \cite{StructuresHeluani,LinshawMathai}, and prove that $(0,2)$ mirror symmetry yields an isomorphism
\begin{equation}\label{eq:halftwistmirrorintro}
\Omega^{ch}_{X}(\mathcal{Q}_{\ell})^{T^1} \cong \Omega^{ch}_{\overline{\hat X}}(\overline{\hat{\mathcal{Q}}}_{\ell})^{T^1}
\end{equation}
in our examples. The identity is remarkable, as it gives an isomorphism of holomorphic sheaves of SUSY vertex algebras over different complex manifolds. It shall be compared with a fundamental result of Borisov and Libgober \cite{BorLib}, which matches the elliptic genera of a Calabi--Yau hypersurface on a Fano toric manifold with the one of its mirror. 

The study of $(0,2)$ models on Hopf surfaces goes back to Witten \cite{Witten02}. Our result confirms physical expectations about mirror symmetry for these models \cite{Tan}. For the latest developments on $(0,2)$ mirror symmetry in relation to Landau-Ginzburg models, we refer to \cite{GGSharpe} and references therein. Alternative proposals to extend mirror symmetry to non-K\"ahler complex manifolds are due to Lau, Tseng, and Yau \cite{LTY}, Aldi and Heluani \cite{AldiHel}, Popovici \cite{Popovici} and Ward \cite{Ward}.

\subsection*{Outline of the paper}

Our construction is a round trip, from geometry to algebra and back, and has independent interest in the field of vertex algebras. We start by regarding a Bismut-flat pluriclosed hermitian metric as a solution of the \emph{Killing spinor equations} 
\begin{equation}\label{eq:killingEintro}
D^{S_+}_- \eta = 0, \qquad \slashed D^+ \eta = 0,
\end{equation}
on an exact Courant algebroid over $S^3 \times S^1$ with \v Severa class \eqref{eq:Severaclassintro} (see Proposition \ref{lemma:KillingevenE}). These are universal equations for a generalized metric, a divergence operator, and a spinor on a Courant algebroid, introduced in \cite{GF3,grt}. Their solutions encompass special holonomy metrics with solutions of the Hull--Strominger system, and are motivated by similar equations in supergravity \cite{CHS,CSW}. Identifying $S^3 \times S^1$ with the compact Lie group
\begin{equation}\label{eq:Kintro}
K = \SU(2) \times \U(1),
\end{equation}
and requiring that the solutions are left-invariant, we identify the Killing spinor equations with analogue equations on a Courant algebroid \emph{over a point}, that is, a quadratic Lie algebra $\g$ (see Proposition \ref{prop:geomalg}). 

Basic features of the Killing spinor equations \eqref{eq:killingEintro} on a quadratic Lie algebra $\g$ are studied in Section \ref{sec:KSeq}. In Section \ref{sec:SCVA}, we prove two main results concerning the construction of $N=2$ superconformal structures from Killing spinors in the universal superaffine vertex algebra associated to a quadratic Lie algebra (see Example \ref{exam:superafin}). For this, we assume that the rank of the generalized metric on $\g$ is even ($\dim V_+=2n$), and characterize the Killing spinor equations in terms of natural conditions for a decomposition 
$$
\g = V_+ \oplus V_- = l \oplus \overline{l} \oplus V_-
$$
and a \emph{divergence} $\varepsilon \in l \oplus \overline{l}$ (see Proposition \ref{prop:Killingeven}). Here, $l, \overline{l}$ are isotropic and $(l \oplus \overline{l})^\perp = V_-$. The resulting \emph{$F$-term} and \emph{$D$-term} equations
\begin{equation}\label{eq:FDeqintro}
[l,l] \subset l,\quad [\overline{l},\overline{l}] \subset \overline{l}, \qquad	 \frac12[\epsilon_j, \overline{\epsilon}_j] = \varepsilon_{\overline l} - \varepsilon_l
\end{equation}
make sense over an arbitrary closed field, and are reminiscent of similar conditions which appear in supersymmetric field theories in theoretical physics \cite{Dine}. Here, $\epsilon_j, \overline{\epsilon}_j$ form an isotropic basis of  $l \oplus \overline{l}$. In this generality, in Theorem \ref{th:N=2} we prove that a solution of \eqref{eq:FDeqintro} satisfying a technical condition \eqref{eq:orthogonalderived} 
induces an embbeding of the $N=2$ superconformal vertex algebra with central charge $c = 3 \dim l$ into the superaffine vertex algebra associated to $\g$ with level $0 \neq k \in \mathbb{C}$. This result generalizes a construction by Getzler for Manin triples \cite{Getzler} (see Remark \ref{rem:Getzler}). In our second main result (Theorem \ref{th:N=2dil}), we construct an analogue embedding with central charge 
\begin{equation}\label{eq:centralchargedilatonintro}
		c = 3\dim l+\frac{12}{k}\qf{\varepsilon}{\varepsilon}
	\end{equation}
for solutions of \eqref{eq:FDeqintro} such that $\varepsilon$ is \emph{holomorphic} (Definition \ref{def:holoiso}). We expect similar results for the cases $\dim V_+ \in \{ 4k, 7,8 \}$, in relation to other interesting vertex algebras in the literature \cite{HeluaniRev} (see Remark \ref{rem:G2} and Section \ref{sec:N=4}).

Back to the geometry, in Section \ref{sec:KillingHopf} we construct a family of left-invariant solutions of the Killing spinor equations on the group manifold $K$ (see Lemma \ref{lem:solutionsKsEq}). Applying Theorem \ref{th:N=2dil}, in Proposition \ref{prop:superaffineembed} we show that any element in the family induces an embedding of the $N=2$ superconformal vertex algebra with central charge $c = 6 + 6/\ell$ on the global sections of the (twisted) chiral de Rham complex. The construction of $(0,2)$ mirrors in Theorem \ref{thm:02} requires the application of topological T-duality \cite{BEM}, combined with Linshaw--Mathai's chiral version of the Cavalcanti--Gualtieri isomorphism \cite{CaGu,LinshawMathai}, which realizes the mirror involution \eqref{eq:mirrorinvintro}. The identification of the equivariant half-twisted models for $(0,2)$ mirrors is established in Theorem \ref{thm:halftwist}, following closely \cite{StructuresHeluani}. Finally, in Section \ref{sec:furtherex} we discuss two applications of Theorem \ref{th:N=2}. In Section \ref{sec:N=4} we construct an explicit two-parameter family of $N=4$ superconformal vertex algebras with central charge $c=6$. Our construction seems to be related to a $N=4$ superconformal algebra originally discovered by Sevrin, Troost and Van Proeyen \cite{STVan} (see also \cite{CHS}), but we have not been able to find a precise match. In Section \ref{sec:HS} we construct an infinite family of solutions of the Killing spinor equations for which Theorem \ref{th:N=2} applies, and such that $l \oplus \overline{l}$ is not a Manin triple (see Remark \ref{rem:Getzler}). Interestingly, this arise from invariant solutions of the Hull-Strominger system on a Lie group recently studied in \cite{GFGM}.

\begin{acknowledgements}
The authors wish to thank David Alfaya, Dan Fox, Reimundo He\-lua\-ni, Jock McOrist, Pavol \v Severa, Carlos Shahbazi, and Fridrich Valach for useful discussions. 
Special thanks to Jock McOrist for his generosity and patience during the long lectures on $(0,2)$ gauged linear sigma models that he gave at ICMAT.
The second author is grateful to the Department of Mathematics at the University of Toronto for the hospitality. 
\end{acknowledgements} 

\section{Killing spinors on quadratic Lie algebras}
\label{sec:KSeq}

\subsection{Generalized metrics, torsion, and divergence}\label{sec:gmetrics}

Let $\mathfrak{g}$ be a Lie algebra over the real numbers $\mathbb{R}$, endowed with an invariant inner product $\qf{\cdot}{\cdot}$ of arbitrary signature.
The aim of this section is to introduce a notion of \emph{metric with parallel spinor} on the quadratic Lie algebra $(\mathfrak{g},\qf{\cdot}{\cdot})$, which we will use for our applications to vertex algebras in Section \ref{sec:SCVA}.
To define this notion, we will view the quadratic Lie algebra as a Courant algebroid over a point, and then follow closely~\cite{GF3}.

We start by considering the relevant notions of metric and connection in our setup.

\begin{definition}\label{def:Gmetric}
	A \emph{generalized metric} on $\mathfrak{g}$ is an orthogonal decomposition
	$$
	\mathfrak{g} = V_+ \oplus V_-,
	$$
	so that the restriction of $\qf{\cdot}{\cdot}$ to $V_\pm$ is non-degenerate. We say that a generalized metric is \emph{Riemannian} if $\qf{\cdot}{\cdot}_{|V_+}$ is positive definite and $\qf{\cdot}{\cdot}_{|V_-}$ is negative definite.
\end{definition}

In the sequel we will only consider Riemannian metrics, but a similar analysis can be carried out in other signatures. Observe that a generalized metric is uniquely determined by a choice of positive-definite subspace $V_+ \subset \mathfrak{g}$. Given such a generalized metric, the associated orthogonal projections will be denoted
$$
\pi_+ \colon \mathfrak{g} \lto V_+, \qquad \pi_- \colon \mathfrak{g} \lto V_-.
$$
When there is no possibility of confusion, we will use the shorter notation
$$
a_\pm = \pi_\pm a, \quad \;
\textrm{ for } a \in \mathfrak{g}.
$$


\begin{definition}\label{def:Gconnection}
	A \emph{generalized connection} on $\mathfrak{g}$ is a linear map
	$$
	D \colon \mathfrak{g} \to \mathfrak{g}^* \otimes \mathfrak{g},
	$$
	satisfying the following compatibility condition for any $a,b,c \in \mathfrak{g}$:
	$$
	\qf{D_a b}{c} + \qf{b}{D_a c} = 0.
	$$
\end{definition}

In Definition \ref{def:Gconnection}, the notation $D_a b$ stands for the element in $\mathfrak{g}$ obtained from $a$ and $Db$ via the natural duality pairing $\mathfrak{g} \otimes \mathfrak{g}^* \to \mathbb{R}$.

The space of generalized connections on $\mathfrak{g}$ can be canonically identified with
$$
\mathfrak{g}^* \otimes \Lambda^2 \mathfrak{g},
$$
via the pairing $\qf{\cdot}{\cdot}$ on $\mathfrak{g}$. As in the case of differential geometry, given a generalized metric $V_+ \subset \mathfrak{g}$, we can define a natural space of $V_+$-compatible generalized connections.

\begin{definition}\label{def:Gconnectionmetric}
	Let $V_+ \subset \mathfrak{g}$ be a generalized metric. A generalized connection $D$ is \emph{$V_+$-compatible} if
	$$
	D(V_\pm) \subset \mathfrak{g}^* \otimes V_\pm.
	$$
	The space of $V_+$-compatible connections will be denoted by $\cD(V_+)$.
\end{definition}

As with generalized connections, there is a canonical identification
$$
\cD(V_+) = \mathfrak{g}^* \otimes (\Lambda^2 V_+ \oplus \Lambda^2 V_-),
$$
so any $D \in \cD(V_+)$ splits into four operators
\begin{equation}\label{eq:4operators}
	\begin{split}
		D^+_- & \in V_-^* \otimes \Lambda^2 V_+, \qquad D^-_+ \in V_+^* \otimes \Lambda^2 V_-,\\
		D^+_+ & \in V_+^* \otimes \Lambda^2 V_+, \qquad D^-_- \in V_-^* \otimes \Lambda^2 V_-.
	\end{split}
\end{equation}
In order to construct operators canonically associated to a generalized metric, we recall next the definition of torsion for generalized connections \cite{AXu,G3}.

\begin{definition}\label{def:torsion}
Given a generalized connection $D$, its \emph{torsion} $T_D \in \Lambda^3 \mathfrak{g}^*$ is defined by
	\begin{equation}\label{eq:torsionG}
		T_D(a,b,c) = \qf{D_{a}b - D_{b}a - [a,b]}{c} + \qf{D_{c}a}{b},
	\end{equation}
	for any $a,b,c \in \mathfrak{g}$.
\end{definition}

The fact that $T_D$ is totally skew-symmetric follows from the axioms of a quadratic Lie algebra and the compatibility between $D$ and $\qf{\cdot}{\cdot}$ (Definition \ref{def:Gconnection}).

Given a generalized metric $V_+ \subset \mathfrak{g}$, we will denote by 
$$
\cD^0(V_+) = \{ D \in \cD(V_+) \; | \; T_D = 0\}
$$
the space of torsion-free compatible connections. This is an affine space modelled on 
$$
\Sigma_+ \oplus \Sigma_-
$$
where
\begin{equation}\label{eq:sigma}\hspace{-1ex}
\Sigma_\pm = \left \{ \phi \in (V_\pm^*)^{\otimes 3}  : \phi(a,b,c) = - \phi(a,c,b), \ \sum_{\sigma(a,b,c)} \ \phi(a,b,c) = 0 , \;
	\textrm{ for } a,b,c \in V_\pm \right\}.
\end{equation}
The next result shows that the mixed-type operators $D^\pm_\mp$ in \eqref{eq:4operators} are uniquely determined by the generalized metric $V_+$ for any element $D \in \cD^0(V_+)$.

\begin{lemma}\label{lem:mixedfixed}
	Let $V_+ \subset \mathfrak{g}$ be a generalized metric. Then, for any $D \in \cD^0(V_+)$, we have
	\begin{equation}\label{eq:mixedcanonical}
		D_{a_-}b_+ = [a_-,b_+]_+, \qquad D_{a_+}b_- = [a_+,b_-]_-.
	\end{equation}
\end{lemma}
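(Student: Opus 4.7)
The plan is to extract $D_{a_-}b_+$ by testing it against an arbitrary vector $c_+\in V_+$ inside the torsion identity, since $V_+$-compatibility guarantees $D_{a_-}b_+\in V_+$ and $\qf{\cdot}{\cdot}$ is non-degenerate on $V_+$. Accordingly, I would set $a = a_-$, $b = b_+$, $c = c_+$ in the torsion formula
$$T_D(a_-,b_+,c_+) = \qf{D_{a_-}b_+ - D_{b_+}a_- - [a_-,b_+]}{c_+} + \qf{D_{c_+}a_-}{b_+},$$
and use $T_D=0$.

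The key simplification comes from Definition \ref{def:Gconnectionmetric}: the $V_+$-compatibility of $D$ forces $D_{b_+}a_- \in V_-$ and $D_{c_+}a_- \in V_-$, so both pairings against the $V_+$-vectors $c_+$ and $b_+$ vanish because $V_+ \perp V_-$. What survives is
$$\qf{D_{a_-}b_+}{c_+} = \qf{[a_-,b_+]}{c_+} = \qf{[a_-,b_+]_+}{c_+}.$$
Since $c_+\in V_+$ is arbitrary, $D_{a_-}b_+\in V_+$, and $\qf{\cdot}{\cdot}$ is non-degenerate on $V_+$, this gives the first identity $D_{a_-}b_+ = [a_-,b_+]_+$. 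The second identity $D_{a_+}b_- = [a_+,b_-]_-$ follows by the symmetric argument, testing against $c_-\in V_-$ and using that $D_{b_-}a_+\in V_+$ and $D_{c_-}a_+\in V_+$ are orthogonal to $V_-$.

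There is essentially no obstacle here beyond bookkeeping: the statement is a direct consequence of the skew-symmetry of $T_D$ (which, as noted after Definition \ref{def:torsion}, follows from the invariance of $\qf{\cdot}{\cdot}$ and the compatibility of $D$), the block-diagonal structure imposed by $V_+$-compatibility, and the orthogonality $V_+\perp V_-$. The only small care needed is to check that the third term $\qf{D_{c_+}a_-}{b_+}$ indeed vanishes; this is where skew-symmetry of $T_D$ in its three arguments is used implicitly, since without it one might worry about cyclic rearrangements. No assumption on the Lie bracket beyond invariance of $\qf{\cdot}{\cdot}$ is required, so the same proof will go through verbatim for the quadratic Lie algebra setup of the paper.
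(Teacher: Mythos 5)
Your proof is correct and is essentially the paper's own argument: evaluate the torsion formula on $(a_-,b_+,c_+)$, observe that $V_+$-compatibility kills the two terms involving $D_{b_+}a_-$ and $D_{c_+}a_-$ via the orthogonality $V_+\perp V_-$, and conclude by non-degeneracy of $\qf{\cdot}{\cdot}_{|V_+}$. One small remark: your aside that the vanishing of $\qf{D_{c_+}a_-}{b_+}$ ``implicitly uses skew-symmetry of $T_D$'' is unnecessary --- it follows directly from compatibility and orthogonality, exactly as you already argued in the preceding paragraph.
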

\begin{proof}
  This result follows because given $a,b,c \in \mathfrak{g}$ and  $D \in \cD(V_+)$, we have 
	\begin{equation*}\label{eq:TDmixed}
		T_D(a_-,b_+,c_+) = \qf{D_{a_-}b_+ - [a_-,b_+]}{c_+}.
        \qedhere
        \end{equation*}
\end{proof}

\begin{remark}
	The previous lemma can be regarded as a weak analogue of the Koszul formula in Riemannian geometry. In the context of generalized geometry, as introduced by Hitchin \cite{Hit1}, the canonical mixed-type operators \eqref{eq:mixedcanonical} 
relate to classical connections with skew-symmetric torsion (see Section \ref{ssec:KsCourant}).
\end{remark}

As we will see shortly, $\cD^0(V_+)$ is positive-dimensional provided that $\dim V_\pm \neq 1$, that is, in general, a generalized metric does not determine a torsion-free compatible connection uniquely \cite{GF3}. This situation can be compared with the construction of the Levi-Civita connection in Riemannian geometry. To proceed, we introduce `divergences' as a device to further constrain the degrees of freedom in $\cD^0(V_+)$.

\begin{definition}
	A \emph{divergence} on $\mathfrak{g}$ is an element $\alpha \in \mathfrak{g}^*$.
\end{definition}

Observe that any connection $D \in \cD(\g)$ determines a divergence by the formula
\begin{equation}\label{eq:alpha-D}
\alpha_D(a) = - \tr Da.
\end{equation}
In the sequel, we will identify divergences with elements $\varepsilon \in \g$ via the isomorphism $\g \cong \g^*$ provided by $\qf{\cdot}{\cdot}$.  Given $\varepsilon \in \mathfrak{g}$, we define elements
$$
\phi^{\varepsilon_+} \in V_+^* \otimes \Lambda^2 V_+, \qquad \phi^{\varepsilon_-} \in V_-^* \otimes \Lambda^2 V_-,
$$
by the formula
\begin{equation}\label{eq:chipm}
	\phi^{\varepsilon_\pm}_{a_\pm}b_\pm := \qf{a_\pm}{b_\pm} \varepsilon_\pm - \qf{\varepsilon_\pm}{b_\pm} a_\pm.
\end{equation}
Then we obtain a canonical splitting
\begin{equation}\label{eq:splittingyoungpure}
	\Sigma_\pm = \Sigma^0_\pm \oplus V_\pm
\end{equation}
via the direct sum decomposition 
\begin{equation}\label{eq:decompositionchipm}
	\phi^\pm = \phi_0^\pm + \phi^{\varepsilon_\pm} \in \Sigma_\pm,
\end{equation}
where 
\begin{equation}\label{eq:sigma0}
\phi_0^\pm \in	\Sigma^0_\pm = \Big{\{}\phi\in \Sigma_\pm : \sum_{i=1}^{\dim V_\pm} \phi(e_i^\pm,e_i^\pm,\cdot) = 0\Big{\}}.
\end{equation}
and
$$
\varepsilon_\pm := \frac{1}{\dim V_\pm -1}\sum_{i=1}^{n} \phi_{e_i^\pm} e^i_\pm.
$$
Here, $\{e_i^\pm\}$ is an orthogonal basis of $V_\pm$ with metric dual basis $\{e^i_\pm\}$, that is, $\la e_i^\pm, e^j_\pm \ra = \delta_{ij}$.

Given a generalized metric $V_+$ and a divergence $\varepsilon \in \g$, we denote by  
$$
\cD^0(V_+,\varepsilon) = \{D \in \cD^0(V_+) \; | \; \tr D = - \qf{\varepsilon}{\cdot}\}
$$
the space of torsion-free $V_+$-compatible generalized connections with fixed divergence. This is an affine space modelled on $\Sigma^0_+ \oplus \Sigma^0_-$.

\begin{lemma}\label{lem:Ddivergence}
Given a generalized metric $V_+$ and a divergence $\varepsilon \in \g$, the space $\cD^0(V_+,\varepsilon)$ is non-empty provided that $\dim V_\pm \neq 1$. Furthermore, the following formula defines an element $D \in \cD^0(V_+,\varepsilon)$:
\begin{equation}\label{eq:Ddiv}
\begin{split}\hspace*{-.8ex}
D_{a}b & = [a_-,b_+]_+ + [a_+,b_-]_- + \frac{1}{3} [a_+,b_+]_+ + \frac{1}{3} [a_-,b_-]_- + \frac{\phi^{\varepsilon_+}_{a_+}b_+}{\dim V_+ -1} + \frac{\phi^{\varepsilon_-}_{a_-}b_-}{\dim V_- -1} .
\end{split}
\end{equation}
\end{lemma}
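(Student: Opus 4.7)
The plan is to take the explicit formula \eqref{eq:Ddiv} as the candidate and verify in turn that the resulting $D$ is (i) a generalized connection, (ii) $V_+$-compatible, (iii) torsion-free, and (iv) has divergence $-\qf{\varepsilon}{\cdot}$, as required by the definition of $\cD^0(V_+,\varepsilon)$. The restriction $\dim V_\pm \neq 1$ enters only through the normalization $(\dim V_\pm -1)^{-1}$ of the $\phi^{\varepsilon_\pm}$ terms, and is sharp since $\phi^{\varepsilon_\pm}$ vanishes identically on a one-dimensional space, so no non-zero pure-type divergence could be produced there.

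Condition (ii) is visible by inspection: every summand in \eqref{eq:Ddiv} preserves the decomposition $V_+ \oplus V_-$. For (i), I would verify skew-symmetry of $\qf{D_a b}{c}$ in $(b,c)$ separately on each summand. The mixed pieces $[a_-,b_+]_+$, $[a_+,b_-]_-$ and the pure pieces $\tfrac13[a_\pm,b_\pm]_\pm$ reduce to ad-invariance of $\qf{\cdot}{\cdot}$ combined with the fact that pairing with an element of $V_\pm$ is unaffected by projection onto $V_\pm$. For the $\phi^{\varepsilon_\pm}$ summands, a direct expansion using \eqref{eq:chipm} produces the required cancellation by symmetry of the pairing.

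The central computation is (iii). The mixed components $T_D(a_-,b_+,c_+)$ and $T_D(a_+,b_-,c_-)$ vanish exactly by the argument of Lemma~\ref{lem:mixedfixed}, which is precisely what forced the choice \eqref{eq:mixedcanonical} of the mixed-type operators. For the pure-type $T_D(a_+,b_+,c_+)$, I would pair with $c_+$ and combine the $\tfrac23[a_+,b_+]_+$ contribution from $D_{a_+}b_+ - D_{b_+}a_+$ with the $-[a_+,b_+]$ term in \eqref{eq:torsionG}, and then use ad-invariance $\qf{[c_+,a_+]}{b_+} = \qf{[a_+,b_+]}{c_+}$ to cancel this against the $\tfrac13 \qf{[c_+,a_+]}{b_+}$ piece coming from $D_{c_+}a_+$; the coefficient $\tfrac13$ is exactly what makes this triple cancellation work. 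The four scalar terms coming from the $\phi^{\varepsilon_+}$ contributions cancel in pairs, and the pure $V_-$ case is identical by formal substitution.

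Finally, for (iv) I would evaluate $\tr(b \mapsto D_b a)$ in orthogonal bases $\{e_i^\pm\}$ of $V_\pm$ with metric duals $\{e_\pm^i\}$, taking $a = a_+$ (the case $a = a_-$ being identical). The mixed contribution $[e_k^-,a_+]_+$ drops out since $V_+ \perp V_-$; the $\tfrac13[e_k^+,a_+]_+$ trace vanishes by the standard swap $\sum_k \qf{[e_k^+,a_+]}{e^k_+} = -\sum_k \qf{e_k^+}{[e^k_+,a_+]}$, which forces this sum to equal its negative; and a direct expansion shows the $\phi^{\varepsilon_+}$ piece contributes $\frac{1-\dim V_+}{\dim V_+ - 1}\qf{\varepsilon_+}{a_+} = -\qf{\varepsilon}{a_+}$, so that the normalization factor is precisely the unique one making the divergence come out correctly. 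The main obstacle is not a conceptual one but careful bookkeeping of signs and normalizations in the $\phi^{\varepsilon_\pm}$ contributions to both the torsion and the divergence; the coefficients $\tfrac13$ and $(\dim V_\pm -1)^{-1}$ in \eqref{eq:Ddiv} are pinned down uniquely by these two respective cancellations.
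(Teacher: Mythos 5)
Your verification is correct, and all four checks land where they should: the triple cancellation with coefficient $\tfrac13$ via cyclic invariance of $\qf{[\cdot,\cdot]}{\cdot}$, the pairwise cancellation of the $\phi^{\varepsilon_\pm}$ contributions to the torsion, and the trace computation $\sum_k \qf{\phi^{\varepsilon_+}_{e_k^+}a_+}{e^k_+} = (1-\dim V_+)\qf{\varepsilon_+}{a_+}$ are exactly the computations that make the lemma true. The difference from the paper is organizational rather than substantive: instead of verifying the closed formula \eqref{eq:Ddiv} head-on, the paper assembles it in stages using the affine structure it has already set up. It starts from the canonical mixed-type connection $\tilde D_a b = [a_-,b_+]_+ + [a_+,b_-]_-$, observes that $T_{\tilde D}$ lies in $\Lambda^3 V_+ \oplus \Lambda^3 V_-$ and is totally skew, so that $D^0 = \tilde D - \tfrac13 T_{\tilde D}$ is automatically torsion-free (this packages your $\tfrac13$-cancellation into the general fact that subtracting a third of a totally skew torsion kills it), notes that $D^0$ has zero divergence, and then adds $\phi^{\varepsilon_\pm}/(\dim V_\pm - 1)$, which lies in $\Sigma_\pm$ and hence cannot change the torsion while it shifts the trace by exactly $-\qf{\varepsilon_\pm}{\cdot}$. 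The structural route saves you from having to check metricity and $V_+$-compatibility at all (they are built into membership in $\cD(V_+)$ and $\Sigma_\pm$), whereas your direct check is self-contained and makes visible why each normalization is forced. One cosmetic remark: the defining condition of $\cD^0(V_+,\varepsilon)$ is $\tr D = -\qf{\varepsilon}{\cdot}$, which by the convention \eqref{eq:alpha-D} means the divergence $\alpha_D$ equals $+\qf{\varepsilon}{\cdot}$; your computation of $\tr(b\mapsto D_b a)$ hits the former, which is the correct target, so only the wording ``has divergence $-\qf{\varepsilon}{\cdot}$'' should be adjusted.
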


\begin{proof}
	Consider $\tilde D \in \cD(V_+)$ defined by
	\begin{equation}\label{eq:Dpuretype}
		\tilde D_{a}b = [a_-,b_+]_+ + [a_+,b_-]_-.
	\end{equation}
	This is a compatible connection with $T_D \in \Lambda^3 V_+ \oplus \Lambda^3 V_-$, and therefore
	$$
	D^0 = \tilde D - \frac{1}{3}T_{\tilde D} \in \cD^0(V_+).
	$$
	Finally, it is not difficult to see that $D^0$ has zero divergence. The statement follows now from the identity
	\begin{equation*}
	\begin{split}
	D_{a}b & = D^0_{a}b + \frac{\phi^{\varepsilon_+}_{a_+}b_+}{\dim V_+ -1} + \frac{\phi^{\varepsilon_-}_{a_-}b_-}{\dim V_- -1}.
	\qedhere\end{split}
    \end{equation*}
\end{proof}

\begin{remark}\label{rem:divergence}
The $V_+^*$-valued endomorphism $\phi^{\varepsilon_+}$ in \eqref{eq:decompositionchipm} is reminiscent of the `1-form valued Weyl endomorphisms' in conformal geometry,
	which appear in the variation of the Levi-Civita connection upon a
	conformal change of the metric. 
	Thus geometrically, divergences keep track of the `conformal geometry' on the quadratic Lie algebra $\g$.
\end{remark}

To finish this section, we present a natural compatibility condition for pairs $(V_+,\varepsilon)$ which plays an important role in our applications in Section \ref{sec:SCVA}. This condition leads to desirable structural properties of the `generalized Ricci tensor' on a quadratic Lie algebra \cite{GF3,SV2}, but we will not need this more general piece of the theory for the present work.

\begin{definition}\label{def:isometry}
	Let $V_+$ be a generalized metric on $\g$. We say that $\varepsilon \in \g$ is an \emph{infinitesimal isometry} of $V_+$ if 
	\begin{equation}\label{eq:isometry}
		[\varepsilon,V_\pm] \subset V_\pm. 
	\end{equation}
	In this case, we will say that $(V_+,\varepsilon)$ is a \emph{compatible pair}.
\end{definition}

\subsection{The Killing spinor equations}\label{ssec:KSeq}

In order to introduce the Killing spinor equations on the quadratic Lie algebra $\g$, we define next a pair of Dirac-type operators canonically associated to a pair $(V_+,\varepsilon)$ as in Lemma \ref{lem:Ddivergence}.

Let $V_+$ be a Riemannian generalized metric on $\g$. In the sequel, we fix an orthogonal basis $\{e_i^\pm\}_{i=1}^n$ of $V_\pm$ with dual basis $\{e_\pm^i\}_{i=1}^n$, i.e., such that $\qf{e_\pm^i}{e_j^{\pm}} = \delta_{ij}$. It is not difficult to see that the following construction is independent of the choice of basis.

Let $Cl(V_+)$ and $Cl(V_-)$ denote the complex Clifford algebras of $V_+$ and $V_-$, respectively, defined by the relation
\begin{equation}\label{eq:Cliffordrel}
	v \cdot v = \qf{v}{v} 
\end{equation}
for $v \in V_\pm$. We fix irreducible representations $S_\pm$ of $Cl(V_\pm)$ (see, e.g., \cite{MicLaw}, and note the different convention for the relation \eqref{eq:Cliffordrel} in the Clifford algebra).

For any choice of metric connection $D \in \cD(V_+)$, we have associated \emph{spin connections}
\begin{equation}\label{eq:LCspinpure}
	D^{S_+}_+ \in V_+^* \otimes \End(S_+), \qquad D^{S_-}_- \in V_-^* \otimes \End(S_-).
\end{equation}

\begin{definition}\label{d:scalardirac} 
	Let $(V_+,\varepsilon)$ be a pair given by a generalized metric and a divergence. Given $D \in \cD^0(V_+,\varepsilon)$, define a pair of Dirac-type operators
	\begin{align*}
		\slashed D^+ \in \End(S_+), \qquad \slashed D^- \in \End(S_-),
	\end{align*}
	given explicitly by the following formula, for $\eta \in S_\pm$:
	$$
	\slashed D^\pm \eta = \sum_{i=1}^{n} e_\pm^i \cdot D^{S_\pm}_{e_i^\pm} \eta.
	$$
\end{definition}

\begin{lemma}\label{lem:dDpm}
	The Dirac operators $\slashed D^+$ and $\slashed D^-$ are independent of the choice of connection $D \in \cD^0(V_+,\varepsilon)$.
\end{lemma}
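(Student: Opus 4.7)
The plan is to leverage the affine structure of $\cD^0(V_+,\varepsilon)$ over $\Sigma^0_+\oplus\Sigma^0_-$, reduce the claim to a short Clifford algebra identity, and verify that identity using the three defining properties of $\Sigma^0_\pm$.

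First, take $D,D'\in\cD^0(V_+,\varepsilon)$ and decompose their difference as $A=D-D'=A^++A^-$ with $A^\pm\in\Sigma^0_\pm$, viewed as tensors via the inner product. Because the spin connection $D^{S_+}_+$ is the spin lift of only the $D^+_+$-part of the connection and $\slashed D^+$ contracts over a basis of $V_+$, the difference $\slashed D^+-\slashed{D'}^+$ depends solely on $A^+$; symmetrically, $\slashed D^--\slashed{D'}^-$ depends only on $A^-$. This splits the lemma into two independent statements. I will carry out the argument for the $+$ case, as the $-$ case is identical up to signs introduced by the Clifford relation $v\cdot v=\qf{v}{v}$ on the negative-definite subspace $V_-$.

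Second, unwind the spin lift. Under the natural inclusion $\Lambda^2 V_+\hookrightarrow Cl(V_+)$ and the chosen irreducible representation $Cl(V_+)\to\End(S_+)$, the operator $(D^{S_+}_{e_i^+}-D'^{S_+}_{e_i^+})\eta$ is a universal nonzero constant times Clifford multiplication by $\sum_{j,k}A^+(e_i^+,e_j^+,e_k^+)\,e_+^j\cdot e_+^k$ applied to $\eta$. Multiplying on the left by $e_+^i$ and summing in $i$, the lemma reduces to proving the Clifford identity
$$
S \;:=\; \sum_{i,j,k=1}^{n} A^+(e_i^+,e_j^+,e_k^+)\,e_+^i\cdot e_+^j\cdot e_+^k \;=\; 0\qquad\text{in }Cl(V_+).
$$

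Third, compute $S$ by splitting the sum according to how many of $i,j,k$ coincide and expanding triple products via \eqref{eq:Cliffordrel}. Each of the three defining properties of $\Sigma^0_+$ kills exactly one family of terms: skew-symmetry of $A^+$ in its last two slots annihilates the $j=k$ contribution outright; the terms with $i=j$ or $i=k$ (and the third index different) collapse under \eqref{eq:Cliffordrel} to a single Clifford generator whose coefficient is a multiple of $\sum_i A^+(e_i^+,e_i^+,\cdot)$ and hence vanishes by the trace condition in \eqref{eq:sigma0}; and for pairwise distinct $i,j,k$, the element $e_+^i\cdot e_+^j\cdot e_+^k$ is totally antisymmetric, so the sum extracts only the totally antisymmetric part of $A^+$, which vanishes by the cyclic Bianchi identity in \eqref{eq:sigma}. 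Assembling these three cancellations gives $S=0$.

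The only nontrivial step is the final Clifford bookkeeping, where one must verify that each of the three conditions cutting out $\Sigma^0_+\subset(V_+^*)^{\otimes 3}$ is used once and only once; I expect no analytic or geometric subtlety beyond this short algebraic check.
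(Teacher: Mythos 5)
Your proposal is correct and follows essentially the same route as the paper: both reduce to the observation that the difference of two connections in $\cD^0(V_+,\varepsilon)$ lies in $\Sigma^0_+\oplus\Sigma^0_-$ and then show the resulting Clifford contraction vanishes. The only difference is presentational — the paper compresses your third step into the single identity $\slashed D'^\pm = \slashed D^\pm - \tfrac{1}{2}\,c.p.(\sigma_\pm)$ (the trace condition having already been used to kill the $V_\pm$-component of the difference), whereas you expand the triple Clifford product and track the three cancellations explicitly.
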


\begin{proof}
	Let $D \in \cD(V_+,\varepsilon)$. Consider a torsion-free $V_+$-compatible connection $D' = D + \phi \in \cD^0(V_+)$, with
	$$
	\phi = \phi_+ + \phi_- \in \Sigma_+ \oplus \Sigma_-.
	$$
	Using the decomposition in \eqref{eq:decompositionchipm}, there exists $e_\pm \in V_\pm$ such that $\phi_\pm = \phi_\pm^0 + \phi^{e_\pm}$. Then the divergence of $D'$, defined by~\eqref{eq:alpha-D}, is 
	$$
	\alpha_{D'} = \qf{\varepsilon}{\cdot} - (\dim V_+ - 1)\qf{e_+}{\cdot} - (\dim V_- - 1) \qf{e_-}{\cdot},
	$$
	and therefore $\alpha_{D'} = \qf{\varepsilon}{\cdot}$ implies $e_+ = e_- = 0$. Define $\sigma \in \Lambda^3 \g^*$ by
	$$
	\sigma(e_1,e_2,e_3) = \qf{\phi_{e_1}e_2}{e_3},
	$$
	and note that the total skew-symmetrization $c.p. (\sigma)$ vanishes, since $D'$ and $D$ are torsion-free. Decomposing $\sigma = \sigma_+ + \sigma_-$ in pure types, the result follows from
	\[
	\slashed D'^\pm = \slashed D^\pm - \tfrac{1}{2} c.p.(\sigma_\pm) = \slashed D^\pm.
	\qedhere
        \]
\end{proof}

We are ready to introduce the equations of our main interest. Observe that, from Lemma \ref{lem:Ddivergence} and Lemma \ref{lem:dDpm}, the operator $\slashed D^+$ (respectively $\slashed D^-$) depends only on $\varepsilon_+ = \pi_+ \varepsilon$ (respectively $\varepsilon_- = \pi_- \varepsilon$). Notice also that the canonical operators $D^\pm_\mp$ associated to a generalized metric $V_+ \subset \g$ (see Lemma \ref{lem:mixedfixed}) induce \emph{spin connections}
\begin{equation}\label{eq:LCspinmix}
	D^{S_+}_- \in V_-^* \otimes \End(S_+), \qquad D^{S_-}_+ \in V_+^* \otimes \End(S_-).
\end{equation}

\begin{definition}\label{def:killing}
	A triple $(V_+,\varepsilon_\pm,\eta)$, given by a generalized metric $V_+$, $\varepsilon_\pm \in V_\pm$, and a non-vanishing spinor $\eta \in S_\pm$, is a solution of the \emph{Killing spinor equations}, if
	\begin{equation}\label{eq:killing}
		\begin{split}
			D^{S_\pm}_\mp \eta &= 0,\\
			\slashed D^\pm \eta &= 0,
		\end{split}
	\end{equation}
	where $D^{S_\pm}_\mp$ and $\slashed D^\pm$ are defined via Lemma \ref{lem:mixedfixed} and Lemma \ref{lem:dDpm}.
\end{definition}

Motivated by the analogy with supergravity~\cite{CSW}, we will call the first and the second equations in \eqref{eq:killing} the \emph{gravitino equation} and the \emph{dilatino equation}, respectively. Unlike the dilatino equation, the gravitino equation only depends on the pair $(V_+,\eta)$ (by Lemma \ref{lem:mixedfixed}). Our next goal is to give a more amenable characterization of these equations.

\begin{lemma}\label{lem:gravitino}
	A pair $(V_+,\eta)$, given by a generalized metric $V_+$ and a non-vanishing spinor $\eta \in S_\pm$, is a solution of the gravitino equation
	\begin{equation}\label{eq:gravitino}
		D^{S_\pm}_\mp \eta = 0,
	\end{equation}
	if and only if 
	\begin{equation}\label{eq:isotropy}
		D^\pm_\mp \in V_\mp^* \otimes \Lie G_\eta,
	\end{equation}
	where $G_\eta \subset Spin(V_\pm)$ is the stabilizer of $\eta$. More explicitly, \eqref{eq:gravitino} is equivalent to
	$$
	\sum_{i,j} \qf{[\pi_\mp,e_i^\pm]}{e_\pm^j}e^j_\pm e_i^\pm \cdot \eta = 0.
	$$
\end{lemma}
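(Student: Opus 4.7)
My plan is to prove both assertions by unwinding the definition of the spin connection and using the identification $\mathfrak{so}(V_\pm) \cong \Lambda^2 V_\pm \cong \mathfrak{spin}(V_\pm) \subset Cl(V_\pm)$.

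First, I would recall the standard lift formula. Given $A\in\mathfrak{so}(V_\pm)$ with matrix $A_{ij}=\qf{Ae_i^\pm}{e^j_\pm}$ in the orthonormal basis $\{e_i^\pm\}$, its image in $\mathfrak{spin}(V_\pm)\subset Cl(V_\pm)$ is
\[
\tilde A=\tfrac14\sum_{i,j}A_{ij}\,e^j_\pm\cdot e_i^\pm,
\]
and this lift acts on $\eta\in S_\pm$ via Clifford multiplication. Applied to the mixed-type operator $D^\pm_\mp\in V_\mp^*\otimes\Lambda^2 V_\pm$, Lemma \ref{lem:mixedfixed} gives $D^\pm_{a_\mp}b_\pm=[a_\mp,b_\pm]_\pm$ for any $a_\mp\in V_\mp$, so that $A_{ij}=\qf{D^\pm_{a_\mp}e_i^\pm}{e^j_\pm}=\qf{[a_\mp,e_i^\pm]}{e^j_\pm}$ (the projection $\pi_\pm$ is invisible to $e^j_\pm\in V_\pm$ because $V_\pm\perp V_\mp$). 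Substituting this into the lift formula yields
\[
D^{S_\pm}_{a_\mp}\eta=\tfrac14\sum_{i,j}\qf{[a_\mp,e_i^\pm]}{e^j_\pm}\,e^j_\pm\cdot e_i^\pm\cdot\eta,
\]
which, after suppressing the irrelevant overall factor, is the explicit expression stated in the lemma (with $a_\mp$ replacing $\pi_\mp$).

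For the "if and only if" part, I would appeal to the general fact that an element $\xi\in\mathfrak{spin}(V_\pm)$ annihilates $\eta\in S_\pm$ if and only if $\xi\in\Lie G_\eta$, simply because $G_\eta\subset\mathrm{Spin}(V_\pm)$ is by definition the stabilizer of $\eta$ under the spin representation. Since $D^{S_\pm}_{a_\mp}\eta$ is nothing other than the action of the spin lift of $D^\pm_{a_\mp}\in\mathfrak{so}(V_\pm)$ on $\eta$, the condition $D^{S_\pm}_{a_\mp}\eta=0$ for all $a_\mp\in V_\mp$ is equivalent to $D^\pm_{a_\mp}\in\Lie G_\eta$ for all $a_\mp$, i.e.\ $D^\pm_\mp\in V_\mp^*\otimes\Lie G_\eta$.

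This is essentially a routine bookkeeping result, and I do not foresee a genuine obstacle; the only care required is with sign and normalization conventions in the Clifford relation \eqref{eq:Cliffordrel} and the resulting formula for the $\Lambda^2 V_\pm\to Cl(V_\pm)$ lift, which must be kept consistent so that the factor of $\tfrac14$ in the derivation cancels out harmlessly in the displayed equivalent form. The one mildly nontrivial check is that the restriction to $V_\pm$ implicit in $[a_\mp,e_i^\pm]_\pm=\pi_\pm[a_\mp,e_i^\pm]$ indeed drops out upon pairing with $e^j_\pm$, which is immediate from orthogonality of the generalized metric decomposition.
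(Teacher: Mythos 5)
Your proof is correct and follows essentially the same route as the paper: both identify $D^{S_\pm}_\mp\eta$ with the Clifford action of the spin lift $\tfrac14\sum_{i,j}\qf{[a_\mp,e_i^\pm]}{e_\pm^j}e^j_\pm e_i^\pm$ of the mixed-type operator from Lemma~\ref{lem:mixedfixed}, and both reduce the equivalence with \eqref{eq:isotropy} to the standard fact that $\Lie G_\eta=\{B\in\Lambda^2V_\pm \mid B\cdot\eta=0\}$. The bookkeeping points you flag (the harmless $\tfrac14$, and the projection $\pi_\pm$ being killed by the pairing with $e^j_\pm$) are exactly the ones the paper's proof relies on.
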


\begin{proof}
	The first part of the statement follows simply from the identity~\cite{MicLaw}
	$$
	\Lie G_\eta = \{ B \in \Lambda^2 V_\pm \; | \; B \cdot \eta = 0\}.
	$$
	As for the second part, an endomorphism $A\in \End(V_\pm)$ satisfies
	$$
	A = \sum_{i,j=1}^{\dim V_\pm} \qf{A e_i^\pm}{e_\pm^j}\qf{e^i_\pm}{\cdot} \otimes e_j^\pm.
	$$
	Since $\qf{e^i_\pm}{\cdot} \otimes e_j^\pm - \qf{e^j_\pm}{\cdot} \otimes
	e_i^\pm \in \mathfrak{so}(V_\pm)$ embeds as $\frac{1}{2}e^j_\pm e_i^\pm$ in the Clifford
	algebra $Cl(V_\pm)$, an endomorphism $A\in \mathfrak{so}(V_\pm)$ corresponds to
	\begin{equation}\label{eq:A-as-element-in-Cl}
		A=\frac{1}{4}\sum_{i,j=1}^{\dim V_\pm} \qf{A e_i^\pm}{e_\pm^j} e^j_\pm e_i^\pm \in Cl(V_\pm).
	\end{equation}
	Then, given a spinor $\eta \in S_\pm$, we have
	\begin{align*}
		D^{S_\pm}_\mp \eta &{} =  \frac{1}{4}\sum_{i,j} \qf{[\pi_\mp,e_i^\pm]}{e_\pm^j}e^j_\pm e_i^\pm \cdot \eta.
        \qedhere
	\end{align*}
\end{proof}

\begin{remark}\label{rem:G2}
The case $\dim V_\pm = 2n_\pm$ and $G_\eta = \SU(n_\pm)$ is studied in detail in Section \ref{sec:Killingeven}. When $\dim V_\pm = 7$, it is interesting to consider the equations \eqref{eq:killing} for a non-vanishing real spinor, as in this case $G_\eta = G_2$ in Lemma \ref{lem:gravitino} (see \cite[Prop. 10.2]{MicLaw}). Similarly, when $\dim V_\pm = 8$ and we take $S_\pm = \mathbb{R}^8$, one has $G_\eta = \operatorname{Spin}(7)$ (see \cite[Prop. 10.4]{MicLaw}).
\end{remark}

Next we obtain a similar characterization of the dilatino equation in \eqref{eq:killing}.

\begin{lemma}\label{lem:dilatino}
	A triple $(V_+,\varepsilon_\pm,\eta)$ as in Definition \ref{def:killing} is a solution of the dilatino equation
	\begin{equation}\label{eq:dilatino}
		\slashed D^\pm \eta = 0,
	\end{equation}
	if and only if 
	$$
	\frac{1}{6}\sum_{i,j,k} \qf{[e_k^\pm,e_i^\pm]}{e_\pm^j}e^k_\pm e^j_\pm e_i^\pm \cdot \eta = \varepsilon_\pm \cdot \eta.
	$$
\end{lemma}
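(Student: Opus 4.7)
The strategy is to compute $\slashed D^+\eta$ explicitly (the $-$ case is identical) using a convenient representative from $\cD^0(V_+,\varepsilon)$ and then rearrange the resulting identity. By Lemma~\ref{lem:dDpm}, $\slashed D^\pm$ is independent of the choice of torsion-free compatible connection with fixed divergence, so I would use the preferred connection~\eqref{eq:Ddiv} of Lemma~\ref{lem:Ddivergence}, which on pairs $a,b \in V_+$ reduces to
\[
D_a b \;=\; \tfrac{1}{3}[a,b]_+ \;+\; \tfrac{1}{n-1}\bigl(\qf{a}{b}\varepsilon_+ - \qf{\varepsilon_+}{b}a\bigr),
\]
where $n=\dim V_+$ (assumed $\neq 1$, else both sides are vacuous).

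Next I would feed this into the Clifford-algebra formula~\eqref{eq:A-as-element-in-Cl} used in the proof of Lemma~\ref{lem:gravitino}, namely
\[
D^{S_+}_{e_i^+}\eta \;=\; \tfrac{1}{4}\sum_{l,m}\qf{D_{e_i^+}e_l^+}{e_+^m}\,e^m_+\,e_l^+\cdot\eta,
\]
and assemble $\slashed D^+\eta = \sum_i e_+^i\cdot D^{S_+}_{e_i^+}\eta$. Working in an orthonormal basis (the identity to be proved is a basis-independent contraction, so this is harmless), the substitution produces a clean splitting $\slashed D^+\eta = A\eta + B\eta$, where
\[
A\eta \;=\; \tfrac{1}{12}\sum_{i,l,m}\qf{[e_i^+,e_l^+]}{e_+^m}\,e_+^i\,e^m_+\,e_l^+\cdot\eta
\]
collects the bracket part of $D$ and $B\eta$ collects the divergence part.

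The main technical step, and the only place where one has to work, is to show $B\eta=-\tfrac{1}{2}\varepsilon_+\cdot\eta$. This is a short Clifford computation: using the relation $e_+^i e_+^j + e_+^j e_+^i = 2\delta_{ij}$ one obtains
\[
\sum_i e_+^i\,e^m_+\,e_i^+ \;=\; -(n-2)\,e^m_+, \qquad \sum_m e_+^m\,e^m_+\,e_l^+ \;=\; n\,e_l^+,
\]
so the two summands in $B\eta$ contract to $-(n-2)\varepsilon_+$ and $-n\,\varepsilon_+$ respectively, and the prefactor $\tfrac{1}{4(n-1)}$ combines them to exactly $-\tfrac{1}{2}\varepsilon_+$. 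Keeping the signs and combinatorial factors straight in this reduction is the one mildly delicate point.

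Finally, $\slashed D^+\eta=0$ reads $A\eta = \tfrac{1}{2}\varepsilon_+\cdot\eta$, i.e.\
\[
\tfrac{1}{6}\sum_{i,l,m}\qf{[e_i^+,e_l^+]}{e_+^m}\,e_+^i\,e^m_+\,e_l^+\cdot\eta \;=\; \varepsilon_+\cdot\eta,
\]
and the dummy-index relabelling $(i,l,m)\mapsto(k,i,j)$, together with $e_+^k = e^k_+$ in the orthonormal basis, turns this into the identity stated in the lemma. The argument for $S_-$ is the same verbatim with all $+$ signs replaced by $-$.
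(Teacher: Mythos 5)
Your proposal is correct and follows essentially the same route as the paper: both use the preferred connection \eqref{eq:Ddiv}, insert it into the Clifford embedding \eqref{eq:A-as-element-in-Cl} as in Lemma \ref{lem:gravitino}, and reduce the divergence (Weyl) part of the contraction to $-\tfrac{1}{2}\varepsilon_\pm\cdot\eta$ (the paper does this via the commutator form $\tfrac{1}{4(n-1)}\sum_k e^k_\pm(\varepsilon_\pm e_k^\pm - e_k^\pm\varepsilon_\pm)$, you by contracting the two summands of $\phi^{\varepsilon_\pm}$ directly, which is the same computation). The only caveat is that for $S_-$ the basis is orthogonal for a negative-definite form, so $e^i_-=-e_i^-$ and your intermediate identities acquire signs, though the final contraction is unchanged.
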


\begin{proof}
	Consider the connection $D \in \cD(V_+,\varepsilon)$ in \eqref{eq:Ddiv}. Arguing as in the proof of Lemma \ref{lem:gravitino}, for any $a_\pm \in V_\pm$ we obtain
	\begin{align*}
		D^{S_\pm}_{a_\pm} \eta &{} =  \frac{1}{12}\sum_{i,j} \qf{[a_\pm,e_i^\pm]}{e_\pm^j} e^j_\pm e_i^\pm \cdot \eta\\
		& + \frac{1}{4(\dim V_\pm -1)} \sum_{i,j} \Big{(}\qf{a_\pm}{e_i^\pm}\qf{\varepsilon_\pm}{e^j_\pm} - \qf{\varepsilon_+}{e_i^\pm} \qf{a_\pm}{e^j_\pm} \Big{)} e^j_\pm e_i^\pm \cdot \eta\\
		&{} =  \frac{1}{12}\sum_{i,j} \qf{[a_\pm,e_i^\pm]}{e_\pm^j} e^j_\pm e_i^\pm \cdot \eta +  \frac{1}{4(\dim V_\pm -1)} (\varepsilon_\pm a_\pm - a_\pm \varepsilon_\pm) \cdot \eta.
	\end{align*}
	Hence, setting 
	$$
	C := \frac{1}{12}\sum_{i,j,k} \qf{[e_k^\pm,e_i^\pm]}{e_\pm^j} e^k_\pm e^j_\pm e_i^\pm \cdot \eta,
	$$
	we have
	\begin{align*}
		\slashed D^\pm \eta & = C + \frac{1}{4(\dim V_\pm -1)} \sum_{k} e^k_\pm  (\varepsilon_\pm e_k^\pm - e_k^\pm \varepsilon_\pm) \cdot \eta \\
		& = C + \frac{1}{4(\dim V_\pm -1)} \sum_{k} \Big{(}2\qf{\varepsilon_\pm}{e^k_\pm} e_k^\pm - 2 \varepsilon_\pm\Big{)} \cdot \eta \\
		& = C - \frac{1}{2}\varepsilon_\pm \eta.
        \qedhere
	\end{align*}
	
\end{proof}

\subsection{Killing spinors in even dimensions}\label{sec:Killingeven}

In this section we analyse further the Killing spinor equations \eqref{eq:killing} under the assumption that $\dim V_\pm$ is even. In the sequel, we fix a generalized metric $\g = V_+ \oplus V_-$ on $\g$ and orientations on $V_\pm$. Assuming that $\dim V_\pm = 2n_\pm$, for $n_\pm \in \mathbb{N}$, $S_\pm$ split as irreducible $\operatorname{Spin}(2n_\pm)$-representations 
\begin{equation}\label{eq:chiralspinor}
	S_\pm = S_\pm^+ \oplus S_\pm^-,
\end{equation}
corresponding to the $\pm 1$-eigenspaces for the action of the complex volume
$$
\nu_\CC^\pm = i^{n_\pm}e_1^\pm \ldots e_{2n_\pm}^\pm
$$ 
for a choice of oriented basis $\{e_j^\pm\} \subset V_\pm$ such that $\qf{e_i^\pm}{e_j^\pm} = \pm \delta_{ij}$. A pure spinor $\eta \in S_\pm$ must have definite chirality, that is, either $\eta \in S_\pm^+$ or $\eta \in S_\pm^-$ (see \cite[Lemma 9.6, p. 337]{MicLaw}). For $\eta \in S_\pm^+$ pure, the isotropy group of $\eta$ in $\operatorname{Spin}(2n_\pm)$ is given by (see \cite[Lemma 9.15, p. 343]{MicLaw})
$$
G_\eta = \SU(n_\pm).
$$
In particular, $\eta$ determines an almost complex structure $I$ on $V_\pm$ compatible with $\qf{\cdot}{\cdot}_{|V_\pm}$ and the given orientation, such that the decomposition 
$$
V_\pm^{\C} := V_\pm \otimes_{\R} \C = V_\pm^{1,0} \oplus V_\pm^{0,1}
$$
in $\pm i$-eigenspaces is determined by
$$
V_+^{1,0} = \{a_+ \in V_+ \otimes \CC \; | \; a_+ \cdot \eta = 0\}, \qquad V_-^{0,1} = \{a_- \in V_- \otimes \CC \; | \; a_- \cdot \eta = 0\},
$$
where $\C$ is the field of complex numbers. 
Our aim in this section is to characterize \eqref{eq:killing} in terms of this $\SU(n_\pm)$-structure.

We fix a pure spinor $\eta \in S_+^+$ (the case $\eta \in S_-^+$ is analogue). Using the almost complex structure $I$ on $V_+$ determined by $\eta$, we have a model 
$$
S_+ = \Lambda^* V_+^{0,1}
$$
with Clifford action
$$
a_+ \cdot \sigma = \sqrt{2} i_{\qf{a_+^{1,0}}{\cdot}} \sigma + \sqrt{2}
a_+^{0,1} \wedge \sigma.
$$
Here, $\qf{\cdot}{\cdot}$ denotes the $\CC$-linear extension of the pairing to $V_+^{\C}$, which is a symmetric tensor of type $(1,1)$. With this identification, the decomposition \eqref{eq:chiralspinor} corresponds to 
\begin{equation}\label{eq:chiralspinorexp}
\Lambda^* V_+^{0,1} = \Lambda^{even} V_+^{0,1} \oplus \Lambda^{odd} V_+^{0,1},
\end{equation}
and, by \cite[Proposition 9.7, p. 337]{MicLaw}, $\eta = \lambda \in \CC^*$ in this model. For the following calculations, it will be convenient to fix an oriented orthonormal basis for $\qf{\cdot}{\cdot}_{|V_+}$ of the form
$$
\{e_1^+, Ie_1^+, \ldots , e_{n_+}^+, I e_{n_+}^+\},
$$
with associated basis
$$
\{\epsilon_j^+ \}_{j=1}^{n_+} \subset V_+^{1,0},\qquad \{ \overline{\epsilon}_j^+ \}_{j=1}^{n_+}  \subset \overline{V_+^{1,0}} = V_+^{0,1},
$$
defined by
$$
\epsilon_j^+ = \tfrac{1}{\sqrt{2}}(e_j^+ - i Ie_j^+), \qquad \overline{\epsilon}_j^+ = \overline{\epsilon_j^+} = \tfrac{1}{\sqrt{2}}(e_j^+ + i Ie_j^+).
$$
Notice that the $\C$-linear extension of $\qf{\cdot}{\cdot}_{|V_+}$ satisfies
\begin{equation}\label{eq:isotropybasis}
\qf{\epsilon_j^+}{\epsilon_k^+} = 0,  \quad \qf{\epsilon_j^+}{\overline{\epsilon}_k^+} = \delta_{jk}, \quad \qf{\overline{\epsilon}_j^+}{\overline{\epsilon}_k^+} = 0.
\end{equation}
With the previous notation, we have the following.

\begin{lemma}\label{lem:gravitinoeven}
	A pair $(V_+,\eta)$, with $\dim V_\pm = 2n_\pm$ and $\eta \in S_\pm^+$ pure, is a solution of the gravitino equation \eqref{eq:gravitino} if and only if the following conditions are satisfied
	\begin{equation}\label{eq:gravitinoeven}
		1) \; [V_\pm^{0,1},V_\pm^{0,1}] \subset V_\pm^\C, \qquad \qquad  2) \; \sum_{j=1}^{n_\pm} [\epsilon_j^\pm, \overline{\epsilon}_j^\pm] \in V_\pm^\C.
	\end{equation}
\end{lemma}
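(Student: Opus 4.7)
The plan is to decode Lemma~\ref{lem:gravitino} using the pure-spinor model $S_+ \cong \Lambda^* V_+^{0,1}$. Working with $\eta \in S_+^+$ (the $\eta \in S_-^+$ case being entirely analogous with $V_+$ and $V_-$ interchanged), I would first recall the standard description of $\mathfrak{su}(n_+) = \Lie G_\eta$ inside $\mathfrak{so}(V_+)^\C = \mathfrak{so}(V_+^\C,\qf{\cdot}{\cdot})$: an element $A$ lies in $\mathfrak{su}(n_+)$ if and only if (a) $A$ preserves the decomposition $V_+^\C = V_+^{1,0}\oplus V_+^{0,1}$, and (b) the complex trace of its $(1,1)$-component $A_{11}$ on $V_+^{1,0}$ vanishes. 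By Lemma~\ref{lem:gravitino}, the gravitino equation is equivalent to $\ad(a_-)_+ := [a_-,\cdot]_+ \in \mathfrak{su}(n_+)$ for every $a_- \in V_-$, so the task reduces to translating (a) and (b) into conditions (1) and (2), respectively, by exploiting the ad-invariance of $\qf{\cdot}{\cdot}$.

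For (a), I would use that $V_+^{1,0}$ is $\qf{\cdot}{\cdot}_{|V_+}$-isotropic, so $\ad(a_-)_+$ preserves $V_+^{1,0}$ precisely when $\qf{[a_-,\alpha]_+}{\beta} = 0$ for all $\alpha,\beta \in V_+^{1,0}$. Since $\beta \in V_+$ the projection is superfluous, and ad-invariance yields $\qf{[a_-,\alpha]}{\beta} = \qf{a_-}{[\alpha,\beta]}$. Imposing this for every $a_- \in V_-$ and invoking non-degeneracy of $\qf{\cdot}{\cdot}$ on $V_-$ forces $\pi_-[V_+^{1,0},V_+^{1,0}] = 0$, i.e.\ $[V_+^{1,0},V_+^{1,0}] \subset V_+^\C$; complex conjugation then gives the equivalent statement $[V_+^{0,1},V_+^{0,1}] \subset V_+^\C$, which is (1).

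For (b), I would use the dual-isotropic basis of \eqref{eq:isotropybasis}: since $\qf{\epsilon_k^+}{\overline{\epsilon}_j^+} = \delta_{jk}$ and $\qf{\overline{\epsilon}_k^+}{\overline{\epsilon}_j^+} = 0$, for any $A \in \End(V_+^\C)$ one computes $\tr A_{11} = \sum_j \qf{A\epsilon_j^+}{\overline{\epsilon}_j^+}$ regardless of whether (a) holds, since the $V_+^{1,0}$-valued component of $A\epsilon_j^+$ is automatically extracted. Taking $A = \ad(a_-)_+$ and applying ad-invariance once more gives
\[
\tr(\ad(a_-)_+)_{11} = \sum_{j=1}^{n_+}\qf{[a_-,\epsilon_j^+]}{\overline{\epsilon}_j^+} = \qf{a_-}{\sum_{j=1}^{n_+}[\epsilon_j^+,\overline{\epsilon}_j^+]},
\]
and vanishing for all $a_- \in V_-$ is, by non-degeneracy, equivalent to $\sum_j[\epsilon_j^+,\overline{\epsilon}_j^+] \in V_+^\C$, which is (2).

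There is no serious obstacle; the whole computation is a linear-algebraic unwinding. The conceptual crux, which I would want to state cleanly, is that ad-invariance allows one to shift a bracket \emph{across} the pairing so that an arbitrary $a_- \in V_-$ ends up in one slot; non-degeneracy on $V_-$ then converts each orthogonality statement into the bracket-containment statements appearing on the right-hand side of \eqref{eq:gravitinoeven}.
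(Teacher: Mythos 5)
Your proof is correct, and it reaches the two conditions of \eqref{eq:gravitinoeven} by a genuinely different route than the paper. The paper's proof works inside the explicit spinor model $S_+=\Lambda^*V_+^{0,1}$ with $\eta=\lambda\in\CC^*$: it forms the two-form $\tau(b_+,c_+)=\qf{[a_-,b_+]}{c_+}$, decomposes it into types, and computes the Clifford action on the vacuum via the identities $\epsilon_j\cdot 1=0$, $\overline{\epsilon}_j\cdot 1=\sqrt{2}\,\overline{\epsilon}_j$, $\epsilon_j\overline{\epsilon}_k\cdot 1=2\delta_{jk}$, obtaining $\tau\cdot\eta=2\lambda\tau^{0,2}+2\lambda\sum_j\tau^{1,1}(\epsilon_j,\overline{\epsilon}_j)$; the vanishing of the $(0,2)$-part and of the $(1,1)$-trace then translate, exactly as in your argument, into conditions (1) and (2). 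You instead invoke only the first, structural half of Lemma~\ref{lem:gravitino} together with $G_\eta=\SU(n_+)$, and replace the Clifford computation by the standard description of $\mathfrak{su}(n_+)\subset\mathfrak{so}(V_+)$ as the complex-linear, complex-traceless skew endomorphisms; maximal isotropy of $V_+^{1,0}$ and the dual bases \eqref{eq:isotropybasis} then identify ``preserves $V_+^{1,0}$'' with the vanishing of $\tau^{0,2}$ and ``traceless'' with the vanishing of $\sum_j\qf{\ad(a_-)\epsilon_j^+}{\overline{\epsilon}_j^+}$, after which the shift of the bracket across the pairing and non-degeneracy on $V_-$ are identical in both arguments. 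Your version is shorter and avoids spinor calculus at the cost of quoting the Lie-algebra characterization of $\mathfrak{su}(n)$; the paper's version keeps the computation self-contained in the model it has already set up and, more importantly, produces the Clifford identities that are reused verbatim in the proof of the dilatino counterpart (Lemma~\ref{lem:dilatinoeven}), where the relevant tensor is a three-form and the group-theoretic shortcut is no longer available. Your one-line dismissal of the $\eta\in S_-^+$ case is acceptable for the same reason the paper's is: both conditions in \eqref{eq:gravitinoeven} are preserved under conjugation.
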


\begin{proof}
Assume first $\eta \in S_+^+$. Given $a_- \in V_-$, define
	$$
	\tau \in \Lambda^2 V_+
	$$
	by the formula 
	$$
	\tau(b_+,c_+) = \qf{\left[a_-,b_+\right]}{c_+},
	$$
	for $b_+,c_+ \in V_+$. Here we identify $V_+ \cong V_+^*$ with the isomorphism given by the induced metric on $V_+$. Then, by Lemma \ref{lem:gravitino}, there exists $\lambda \in \CC^*$ such that the gravitino equation is equivalent to 
	$$
	\tau \cdot \eta = \tau \cdot \lambda = 0
	$$
	for all $a_- \in V_-$. Decompose $\tau$ as
	$$
	\tau = \tau^{2,0}+\tau^{1,1}+\tau^{0,2},
	$$
	where $\tau^{0,2}=\overline{\tau^{2,0}}$. Using the identities
	\begin{align*}
		\epsilon_j \cdot 1 &= 0,\\
		\overline{\epsilon}_j \cdot 1 & = \sqrt{2} \overline{\epsilon}_j,\\
		\epsilon_j \overline{\epsilon}_k \cdot 1 & = 2 \delta_{jk},
	\end{align*}
	we obtain
	\begin{align*}
		\tau \cdot \eta = 2 \lambda \tau^{0,2} + 2 \lambda \sum_j \tau^{1,1}(\epsilon_j , \overline{\epsilon}_j).
	\end{align*}
	Thus $\tau \cdot \eta = 0$ holds if and only if
	\begin{align*}
		0 & = \tau(b_+^{0,1},c_+^{0,1}) = \qf{[a_-,b_+^{0,1}]}{c_+^{0,1}} = \qf{a_-}{[b_+^{0,1},c_+^{0,1}]},\\
		0 & = \sum_j \qf{a_-}{[\epsilon_j ,\overline{\epsilon}_j]},
	\end{align*}
	for all $b_+,c_+ \in V_+^\C$. The statement follows from the fact that $a_-$ can be chosen arbitrarily.
	
	Similarly, $\eta \in S^+_-$ pure is a solution of the gravitino equation if and only if
$$
1) \; [V_-^{1,0},V_-^{1,0}] \subset V_-^\C, \qquad \qquad  2) \; \sum_{j=1}^{n_-} [\overline{\epsilon}_j^-, \epsilon_j^-] \in V_-^\C,
$$
which is equivalent to \eqref{eq:gravitinoeven} by conjugation on the two equations.
\end{proof}

We state next our characterization of the dilatino equation \eqref{eq:dilatino} in the present setup.

\begin{lemma}\label{lem:dilatinoeven}
A triple $(V_+,\varepsilon_\pm,\eta)$, with $\dim V_\pm = 2n_\pm$, $\eta \in S_\pm^+$ pure, and $\varepsilon_\pm \in V_\pm$, is a solution of the dilatino equation \eqref{eq:dilatino} if and only if the following two conditions hold
\begin{equation}\label{eq:dilatinoeven}
1) \; [V_\pm^{0,1},V_\pm^{0,1}]_+ \subset V_\pm^{0,1}, \qquad \qquad 2) \; \frac{i}{2}\sum_{j=1}^{n_\pm} [\epsilon_j^\pm, \overline{\epsilon}_j^\pm]_+ = \mp  I\varepsilon_\pm.
\end{equation}
\end{lemma}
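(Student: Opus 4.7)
The plan mirrors the proof of Lemma~\ref{lem:gravitinoeven}, now applied to the dilatino equation instead of the gravitino equation. I would treat the $+$ case in detail; the $-$ case then follows by complex conjugation, which interchanges $V_\pm^{1,0}$ with $V_\pm^{0,1}$ and accounts for the sign change $\mp$ in condition~(2). By Lemma~\ref{lem:dilatino}, the dilatino equation is equivalent to the algebraic identity
\[
\tfrac{1}{6}\sum_{i,j,k} \qf{[e_k^+, e_i^+]}{e_+^j}\, e^k_+ e^j_+ e_i^+ \cdot \eta \;=\; \varepsilon_+ \cdot \eta,
\]
and the strategy is to expand both sides in the graded spinor module $S_+ = \Lambda^* V_+^{0,1}$ and match bidegrees.

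Using the isotropic complex basis $\{\epsilon_j^+, \overline{\epsilon}_j^+\}$ associated to the almost complex structure $J$ determined by $\eta$, with $\eta = \lambda \in \CC^*$, together with the Clifford formula $a \cdot \sigma = \sqrt{2}\, i_{\qf{a^{1,0}}{\cdot}}\sigma + \sqrt{2}\, a^{0,1} \wedge \sigma$, a direct enumeration of triple Clifford products $f^\alpha f^\beta f^\delta \cdot 1$ shows that only a handful of index patterns survive. They contribute exclusively to $\Lambda^1 V_+^{0,1}$ and $\Lambda^3 V_+^{0,1}$, while the right-hand side $\sqrt{2}\,\varepsilon_+^{0,1}$ is pure of degree one. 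The $\Lambda^3$-component is proportional to $\sum \qf{[\epsilon_a^+, \epsilon_d^+]}{\epsilon_b^+}\,\overline{\epsilon}_a^+ \wedge \overline{\epsilon}_b^+ \wedge \overline{\epsilon}_d^+$; since the Cartan 3-form $\qf{[\cdot,\cdot]}{\cdot}$ on $\g$ is totally antisymmetric (an immediate consequence of ad-invariance of $\qf{\cdot}{\cdot}$), its vanishing is equivalent to $\qf{[V_+^{1,0}, V_+^{1,0}]}{V_+^{1,0}} = 0$, i.e.\ $[V_+^{1,0}, V_+^{1,0}]_+ \subset V_+^{1,0}$ by isotropy, which by conjugation on the real Lie algebra $\g$ is condition~(1) of~\eqref{eq:dilatinoeven}.

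The bulk of the work lies in matching the $\Lambda^1$-parts. After collecting the surviving patterns, I would apply the cyclic identity $\qf{[X,Y]}{Z} = \qf{[Y,Z]}{X}$ (again a consequence of total antisymmetry of the Cartan 3-form) to rewrite every contribution as a trace of the form $\qf{A}{\epsilon_a^+}\,\overline{\epsilon}_a^+$, where $A := \sum_{j=1}^{n_+} [\epsilon_j^+, \overline{\epsilon}_j^+]_+ \in V_+^\CC$, and then equate the result with $\sqrt{2}\,\varepsilon_+^{0,1}$ to obtain a complex identity relating $A^{(0,1)}$ to $\varepsilon_+^{(0,1)}$. The elementary calculation $[\epsilon_j^+, \overline{\epsilon}_j^+] = i\,[e_j^+, Je_j^+]$ shows that $A$ lies in $iV_+$, which promotes this complex $(0,1)$-identity in $V_+^{0,1}$ to the real identity between $\tfrac{i}{2}A$ and $-J\varepsilon_+$ in $V_+$, i.e.\ condition~(2). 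The main obstacle is this last step: the Clifford bookkeeping requires carefully tracking several cross-terms with delicate combinatorial coefficients, and organising them so that cyclicity collapses them all into a single trace expression in $A$ is the delicate combinatorial point of the argument.
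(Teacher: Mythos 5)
Your plan is correct and follows essentially the same route as the paper's proof: reduce to the Clifford identity of Lemma \ref{lem:dilatino}, expand $H\cdot\eta$ in the model $S_+=\Lambda^*V_+^{0,1}$, read off condition (1) from the vanishing of the degree-three (pure wedge) part via total antisymmetry of the Cartan form and isotropy, extract condition (2) from the degree-one part as a trace against $\sum_j[\epsilon_j^+,\overline{\epsilon}_j^+]$, and promote the resulting $(0,1)$-identity to the real identity using that $i[\epsilon_j^+,\overline{\epsilon}_j^+]$ is real, with the $-$ case handled by conjugation. The only difference is presentational: the paper carries out the degree-one bookkeeping directly via the two explicit triple Clifford-product identities rather than organising it through the cyclic identity, but the content is the same.
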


\begin{proof}
Assume first $\eta \in S_+^+$. Define
	$$
	H \in \Lambda^3 V_+
	$$
	by the formula 
	$$
	H(a_+,b_+,c_+) = \qf{\left[a_+,b_+\right]}{c_+}
	$$
	for $a_+,b_+,c_+ \in V_+$. Then, by Lemma \ref{lem:dilatino}, the dilatino equation is equivalent to 
	$$
	\frac{1}{6}H \cdot \lambda = \varepsilon_+ \cdot \lambda.
	$$
	Decompose $H$ as
	$$
	H = H^{3,0} + H^{2,1} + H^{1,2} + H^{0,3},
	$$
	where $H^{3,0}=\overline{H^{0,3}}$ and $H^{2,1}=\overline{H^{1,2}}$. Using the identities 
	\begin{align*}
		\overline{\epsilon}_j^+ \epsilon_k^+ \overline{\epsilon}_l^+ \cdot 1 & = 2
		\sqrt{2}\delta_{kl} \overline{\epsilon}_j^+,\\
		\epsilon_j^+ \overline{\epsilon}_k^+ \overline{\epsilon}_l^+ \cdot 1 & =2
		\sqrt{2}(\delta_{jk} \overline{\epsilon}_l^+ - \delta_{jl}\overline{\epsilon}_k^+),
	\end{align*}
	we obtain
	\begin{align*}
		(H - 6 \varepsilon_+) \cdot \eta = \lambda 2 \sqrt{2} \(H^{0,3} + \tfrac{3}{2} \sum_{j} H^{1,2}(\epsilon_j^+, \overline{\epsilon}_j^+) - 3 \varepsilon_+^{0,1}\).
	\end{align*}
	Thus $(H - 6 \varepsilon_+) \cdot \eta = 0$ holds if and only if
	\begin{align*}
		0 & = H(a_+^{0,1},b_+^{0,1},c_+^{0,1}) = \qf{[a_+^{0,1},b_+^{0,1}]}{c_+^{0,1}}, \\
		0 & = \sum_j \qf{a_+^{1,0}}{[\epsilon_j^+ ,\overline{\epsilon}_j^+]} - 2\qf{a_+^{1,0}}{\varepsilon_+^{0,1}},
	\end{align*}
	for all $a_+,b_+,c_+ \in V_+^\C$. Using the orthogonal decomposition
$$
\g \otimes_\RR \C = V_+^\C \oplus V_-^\C,
$$
and the fact that $\qf{\cdot}{\cdot}_{|V_+}$ is of type $(1,1)$, we see that the dilatino equation is equivalent to
\begin{equation*}
[V_+^{0,1},V_+^{0,1}]_+ \subset V_+^{0,1}, \qquad \qquad  \frac{1}{2}\sum_{j=1}^{n_+} [\epsilon_j^+, \overline{\epsilon}_j^+]^{0,1}_+ = \varepsilon_+^{0,1}.
	\end{equation*}
Using now
$$
\overline{i [\epsilon_j^+, \overline{\epsilon}_j^+]} = - i [\overline{\epsilon}_j^+,\epsilon_j^+] = i [\epsilon_j^+, \overline{\epsilon}_j^+],
$$
it follows that
$$
- I \varepsilon_+ = i \varepsilon_+^{0,1} + \overline{i\varepsilon_+^{0,1}} = \frac{i}{2}\sum_{j=1}^{n_+}\left([\epsilon_j^+, \overline{\epsilon}_j^+]^{0,1}_+ + [\epsilon_j^+, \overline{\epsilon}_j^+]^{1,0}_+\right) = \frac{i}{2}\sum_{j=1}^{n_+} [\epsilon_j^+, \overline{\epsilon}_j^+]_+.
$$
Similarly, a pure spinor $\eta \in S^+_-$ is a solution of the dilatino equation if and only if
$$
[V_-^{1,0},V_-^{1,0}]_- \subset V_-^{1,0}, \qquad \qquad  2) \; \frac{1}{2}\sum_{j=1}^{n_-} [\overline{\epsilon}_j^-, \epsilon_j^-]_-^{1,0} = - \varepsilon_-^{1,0},
$$
which is equivalent to \eqref{eq:dilatinoeven} by conjugation on the two equations.
\end{proof}

With the previous results at hand, we give a characterization of the Killing spinor equations in the present setup.

\begin{proposition}\label{prop:Killingeven}
Let $(V_+,\varepsilon_\pm,\eta)$ be a triple, with $\dim V_\pm = 2n_\pm$, $\eta \in S_\pm^+$ pure, and $\varepsilon_\pm \in V_\pm$. Then, $(V_+,\varepsilon_\pm,\eta)$ is a solution of the Killing spinor equations \eqref{eq:killing} if and only if the following two conditions hold:
\begin{equation}\label{eq:FtermDterm}
1) \; [V_\pm^{0,1},V_\pm^{0,1}] \subset V_\pm^{0,1}, \qquad \qquad 2) \; \frac{i}{2} \sum_{j=1}^{n_\pm} [\epsilon_j^\pm, \overline{\epsilon}_j^\pm] = \mp I\varepsilon_\pm.
\end{equation}
\end{proposition}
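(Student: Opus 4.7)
The plan is to derive Proposition \ref{prop:Killingeven} as a direct consequence of Lemmas \ref{lem:gravitinoeven} and \ref{lem:dilatinoeven}. The key observation is that the gravitino and dilatino equations decouple with respect to the decomposition $\g^\C = V_+^\C \oplus V_-^\C$: the gravitino equation controls the $V_\mp^\C$-component of the relevant brackets, whereas the dilatino equation controls their $V_\pm^\C$-component. Combining the two pins down the brackets in full.

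For the forward direction, suppose $(V_+,\varepsilon_\pm,\eta)$ solves the Killing spinor equations, and fix $a,b \in V_\pm^{0,1}$. Condition (1) of Lemma \ref{lem:gravitinoeven} forces $[a,b]\in V_\pm^\C$, i.e.\ $[a,b]_\mp = 0$, whence $[a,b] = [a,b]_\pm$. Condition (1) of Lemma \ref{lem:dilatinoeven} then gives $[a,b]_\pm \in V_\pm^{0,1}$, yielding the first assertion in \eqref{eq:FtermDterm}. For the second, condition (2) of Lemma \ref{lem:gravitinoeven} says that $\sum_j [\epsilon_j^\pm,\overline{\epsilon}_j^\pm]$ lies in $V_\pm^\C$, so it coincides with its $V_\pm^\C$-part; by condition (2) of Lemma \ref{lem:dilatinoeven}, this part is determined in such a way that $\tfrac{i}{2}\sum_j[\epsilon_j^\pm,\overline{\epsilon}_j^\pm] = \mp J\varepsilon_\pm$.

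The converse is formal. If \eqref{eq:FtermDterm}(1) holds, then $[V_\pm^{0,1},V_\pm^{0,1}]\subset V_\pm^{0,1}\subset V_\pm^\C$, so its $V_\mp^\C$-projection vanishes (condition (1) of Lemma \ref{lem:gravitinoeven}) and its $V_\pm^\C$-projection is the bracket itself (condition (1) of Lemma \ref{lem:dilatinoeven}). If \eqref{eq:FtermDterm}(2) holds, then $\sum_j[\epsilon_j^\pm,\overline{\epsilon}_j^\pm]\in V_\pm\subset V_\pm^\C$, whence condition (2) of Lemma \ref{lem:gravitinoeven} is immediate, and condition (2) of Lemma \ref{lem:dilatinoeven} follows by reading off the $V_\pm^\C$-projection, which equals the full sum.

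No substantive obstacle arises; the argument is a bookkeeping combination of the two preceding lemmas. The only mild care required is with the sign $\mp J\varepsilon_\pm$ and with the chirality convention $\eta\in S_\pm^+$, both of which have already been absorbed into the $\pm$ notation of the preceding lemmas.
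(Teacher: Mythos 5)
Your proposal is correct and follows exactly the route the paper takes: the paper's proof consists of the single sentence that the result is straightforward from Lemma \ref{lem:gravitinoeven} and Lemma \ref{lem:dilatinoeven}, and your bookkeeping (gravitino kills the $V_\mp^\C$-component of the brackets, dilatino then constrains the remaining $V_\pm^\C$-component) is precisely the intended combination. No gaps.
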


\begin{proof}
The proof is straightforward from Lemma \ref{lem:gravitinoeven} and Lemma \ref{lem:dilatinoeven}.
\end{proof}

\begin{remark}\label{rem:flipI}
Let $(V_+,\varepsilon_\pm,\eta)$ be a solution of the Killing spinor equations as in Proposition \ref{prop:Killingeven}, and denote by $\eta' \in S_\pm^-$ a pure spinor in the line corresponding to $-I$. Then, it follows from \eqref{eq:FtermDterm} that $(V_+,\varepsilon_\pm,\eta')$ is also a solution of the Killing spinor equations.
\end{remark}

\subsection{The F-term and D-term conditions}\label{sec:FDterms}

Proposition \ref{prop:Killingeven} suggests a weaker version of the equations \eqref{eq:killing}, which forgets about the real structure on the complex quadratic Lie algebra $\g \otimes_\R \C$, and takes as fundamental object the isotropic subspace
$$
V_+^{1,0} \subset \g \otimes_\RR \CC.
$$ 
This alternative point of view is more flexible, as it allows us to work over an arbitrary field of characteristic zero. This is the approach that we take for the applications to vertex algebras in Section \ref{sec:SCVA}.

Let $\g^c$ be a complex quadratic Lie algebra with pairing $\qf{\cdot}{\cdot}$. We consider the following space of non-degenerate isotropic subspaces:
$$
\cL = \{l \oplus \overline{l} \subset \g^c \; | \; l,\overline{l} \textrm{ are isotropic and } \qf{\cdot}{\cdot}_{| l \oplus \overline{l} } \textrm{ is non-degenerate} \}.
$$
By definition, given an element $l \oplus \overline{l} \in \cL$, we have a canonical identification
$$
l^* \cong \overline{l}.
$$
We denote the orthogonal complement of $l \oplus \overline{l}$ by $V_-^\CC = (l \oplus \overline{l})^\perp$. We will use the following notation for the orthogonal projections:
$$
\pi_+ \colon \g^c \to l \oplus \overline{l}, \qquad \pi_{l} \colon \g^c \to l, \qquad \pi_{\overline{l}} \colon \g^c \to \overline{l}, \qquad \pi_- \colon \g^c \to V_-^\CC,
$$
which exist by assumption. Given $a \in \g^c$, when there is no possible confussion we will use the simplified notation
\begin{equation}\label{eq:projections}
a_+ = \pi_+ a, \qquad  a_{l} = \pi_{l}a, \qquad a_{\overline{l}} = \pi_{\overline{l}}a, \qquad a_- = \pi_-a.
\end{equation}

To formulate the following definition, given $l \oplus \overline{l}  \in \cL$, we fix dual isotropic bases $\epsilon_j, \overline{\epsilon}_j$ of $l\oplus\overline{l}$ as in~\eqref{eq:isotropybasis}. In other words, $\{\epsilon_j\}_{j=1}^{\dim l}$ is a basis of $l$, $\{\overline{\epsilon}_j\}_{j=1}^{\dim l}$ is a basis of $\overline{l}$, and 
\begin{equation}\label{eq:isotropybasis-weaker}
\qf{\epsilon_j}{\epsilon_k} = 0,  \quad \qf{\epsilon_j}{\overline{\epsilon}_k} = \delta_{jk}, \quad \qf{\overline{\epsilon}_j}{\overline{\epsilon}_k} = 0.
\end{equation}

\begin{definition}\label{def:FtermDterm}
We will say that an element $l \oplus \overline{l}  \in \cL$ satisfies 
	\begin{enumerate}
	\item the \emph{F-term gravitino equation} if
	\begin{equation}\label{eq:Ftermgrav}
	[l,l] \subset l \oplus \overline{l}, \qquad [\overline{l},\overline{l}] \subset l \oplus \overline{l}.
	\end{equation}
		
	\item the \emph{F-term dilatino equation} if
	\begin{equation}\label{eq:Ftermdil}
	[l,l]_+ \subset l, \qquad [\overline{l},\overline{l}]_+ \subset \overline{l}.
	\end{equation}
	
	\item the \emph{F-term equation} if
	\begin{equation}\label{eq:Fterm}
	[l,l] \subset l, \qquad [\overline{l},\overline{l}] \subset \overline{l}.
	\end{equation}
	
	\item the \emph{D-term gravitino equation}  if, for any basis $\epsilon_j, \overline{\epsilon}_j$ of $l \oplus \overline{l}$ as in \eqref{eq:isotropybasis-weaker},
	\begin{equation}\label{eq:Dtermgrav}
	\sum_{j=1}^{\dim l} [\epsilon_j, \overline{\epsilon}_j]  \in l \oplus \overline{l}.
	\end{equation}

	\item the \emph{D-term dilatino equation} with divergence $\varepsilon \in l \oplus \overline{l}$ if, for any basis $\epsilon_j, \overline{\epsilon}_j$ of $l \oplus \overline{l}$ as in \eqref{eq:isotropybasis-weaker},
	\begin{equation}\label{eq:Dtermdil}
	\frac{1}{2} \sum_{j=1}^{\dim l} [\epsilon_j, \overline{\epsilon}_j]_+  = \varepsilon_{\overline l} - \varepsilon_l.
	\end{equation}
	
	\item the \emph{D-term equation} with divergence $\varepsilon \in l \oplus \overline{l}$ if, for any basis $\epsilon_j, \overline{\epsilon}_j$ of $l \oplus \overline{l}$ as in \eqref{eq:isotropybasis-weaker},
	\begin{equation}\label{eq:Dterm}
	\frac{1}{2}\sum_{j=1}^{\dim l} [\epsilon_j, \overline{\epsilon}_j] = \varepsilon_{\overline l} - \varepsilon_l.
	\end{equation}
	\end{enumerate}
\end{definition}

It is easy to see that \eqref{eq:Dtermgrav}, \eqref{eq:Dtermdil} and \eqref{eq:Dterm} are independent of the choice of basis. The names of the equations \eqref{eq:Fterm} and \eqref{eq:Dterm} are borrowed from similar conditions appearing in supersymmetric field theories in theoretical physics. In this setup, the structure of the effective potential typically distinguishes two types of conditions for a minimal energy configuration \cite{Dine}, and \eqref{eq:Fterm} and \eqref{eq:Dterm} are strongly reminiscent of those.

\begin{remark}\label{rem:involutivity}
Note that the $F$-term gravitino equation \eqref{eq:Ftermgrav} is equivalent to 
\begin{equation*}
\left[a_-,e_j\right]_{\overline{l}} = 0, \qquad \left[a_-,e^j\right]_{l} = 0,
	\end{equation*}
for all $j \in \left\lbrace1,\ldots,n\right\rbrace$, for any given $a_- \in V_-^\CC := (l \oplus \overline{l})^\perp$.
\end{remark}

In the case that $\g^c$ is the complexification of a real quadratic Lie algebra $\g$, any solution of the gravitino equation \eqref{eq:gravitino} determines a solution of the F-term and D-term gravitino equations \eqref{eq:Ftermgrav} and \eqref{eq:Dtermgrav}, given by
\begin{equation}\label{eq:lreal}
l \oplus \overline{l} = V_+^{1,0} \oplus V_+^{0,1} \subset \g \otimes_\R \C.
\end{equation}
This is a straightforward consequence of Proposition \ref{prop:Killingeven}. The following provides a converse for this result.

\begin{lemma}\label{lem:gravDFterm}
Let $\g$ be a real quadratic Lie algebra and consider $\g^c = \g \otimes_\R \C$. Denote by $\tau \colon \g^c \to \g^c$ the involution induced by conjugation.  Then any solution $(V_+,\eta)$ of the gravitino equation \eqref{eq:gravitino}, with $\dim V_+ = 2n_+$ and $\eta \in S_+^+$ pure, determines a solution of the F-term and D-term gravitino equations \eqref{eq:Ftermgrav} and \eqref{eq:Dtermgrav}, given by \eqref{eq:lreal}, such that
\begin{equation}\label{eq:realcompatiblegrav}
1) \; \tau(l) = \overline{l}, \qquad 2) \; \qf{a}{\tau(a)} > 0 \textrm{ for any } 0 \neq a \in l.
\end{equation}
Conversely, any solution of \eqref{eq:Ftermgrav} and \eqref{eq:Dtermgrav} satisfying \eqref{eq:realcompatiblegrav} determines a solution $(V_+,\eta)$ of the gravitino equation, with $\dim V_+ = 2n_+$ and $\eta \in S_+^+$ pure, uniquely up to rescaling of the spinor.
\end{lemma}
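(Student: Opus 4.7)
The plan is to transfer Lemma~\ref{lem:gravitinoeven} through the standard dictionary between pure spinor lines in $S_+$ and maximal isotropic subspaces of $V_+^\C$, and to translate the reality data $(V_+,\eta)$ into the involution conditions \eqref{eq:realcompatiblegrav}.

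\textbf{Forward direction.} Starting from a gravitino solution $(V_+,\eta)$ with $\eta\in S_+^+$ pure, recall from the setup of Section~\ref{sec:Killingeven} that $\eta$ determines a compatible almost complex structure $J$ on $V_+$ with $V_+\otimes_\R\C = V_+^{1,0}\oplus V_+^{0,1}$ the $\pm i$-eigenspace splitting. I would set $l:=V_+^{1,0}$ and $\overline l:=V_+^{0,1}$. Because $\qf{\cdot}{\cdot}|_{V_+}$ is of type $(1,1)$ relative to $J$, both summands are isotropic and mutually dual, so $l\oplus\overline l\in\cL$. The F-term gravitino equation \eqref{eq:Ftermgrav} for $\overline l$ is condition~1 of \eqref{eq:gravitinoeven}; its counterpart for $l$ follows by applying the bracket-preserving conjugation $\tau$. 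The D-term gravitino equation \eqref{eq:Dtermgrav} is condition~2. Since $J$ is a real operator, $\tau$ swaps its $\pm i$-eigenspaces, which gives $\tau(l)=\overline l$. For a nonzero $a\in l$, writing $a=x+iy$ with $x,y\in V_+$, the equation $Ja=ia$ forces $Jx=-y$ and $Jy=x$, whence
\[
\qf{a}{\tau(a)}=\qf{x+iy}{x-iy}=\qf{x}{x}+\qf{y}{y}>0
\]
by positive-definiteness of $\qf{\cdot}{\cdot}|_{V_+}$ (at least one of $x,y$ is nonzero).

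\textbf{Converse.} Given $l\oplus\overline l\in\cL$ satisfying \eqref{eq:Ftermgrav}, \eqref{eq:Dtermgrav} and \eqref{eq:realcompatiblegrav}, I proceed in four steps. First, define $V_+:=(l\oplus\overline l)^\tau\subset\g$; the hypothesis $\tau(l)=\overline l$ ensures that its complexification is $l\oplus\overline l$, so $\dim_\R V_+=2\dim_\C l$. Second, define $J$ on $l\oplus\overline l$ by $+i\cdot\Id$ on $l$ and $-i\cdot\Id$ on $\overline l$; antilinearity of $\tau$ combined with $\tau(l)=\overline l$ gives $\tau J=J\tau$, so $J$ restricts to a real endomorphism of $V_+$ with $J^2=-\Id$. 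Third, verify that $\qf{\cdot}{\cdot}|_{V_+}$ is positive definite and $J$-invariant: every $0\neq v\in V_+$ admits a unique decomposition $v=a+\tau(a)$ with $a\in l$, and the isotropy of $l$ and $\overline l$ combined with \eqref{eq:realcompatiblegrav} yields $\qf{v}{v}=2\qf{a}{\tau(a)}>0$, while $J$-invariance reduces on $\C$-linear extension to the three bidegrees, each immediate from isotropy; taking $V_-:=V_+^\perp$ then produces a generalized metric since $V_+$ is non-degenerate. Fourth, with the orientation on $V_+$ induced by $J$, let $\eta\in S_+^+$ be the (up to scalar) unique pure spinor annihilated by $l$ under Clifford multiplication. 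Applying Lemma~\ref{lem:gravitinoeven} to $(V_+,\eta)$, the gravitino equation reduces to condition~1 of \eqref{eq:gravitinoeven} for $\overline l=V_+^{0,1}$ and condition~2, both of which are the hypotheses \eqref{eq:Ftermgrav} and \eqref{eq:Dtermgrav}.

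\textbf{Main subtlety.} The only real bookkeeping issue is the orientation–chirality matching in the last step: by Remark~\ref{rem:flipJ}, swapping $J\leftrightarrow -J$, equivalently interchanging the roles of $l$ and $\overline l$, flips the chirality of the associated pure spinor. Thus the ordered pair $(l,\overline l)$, rather than the unordered splitting, is what fixes $J$, the orientation on $V_+$, and therefore the placement of $\eta$ in $S_+^+$ as opposed to $S_+^-$. Once this convention is fixed, uniqueness of $\eta$ up to nonzero scalar is the classical bijection between pure spinor lines of a given chirality and maximal isotropic subspaces of $V_+^\C$, so no additional argument is needed. Positive-definiteness of $V_+$ via \eqref{eq:realcompatiblegrav} is the key hypothesis translating the pure-spinor reality condition into an algebraic statement about $l$ and $\tau$.
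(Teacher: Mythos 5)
Your proposal is correct and follows essentially the same route as the paper: the forward direction is read off from Lemma \ref{lem:gravitinoeven} with $l=V_+^{1,0}$, and the converse reconstructs $V_+$ as the real points of $l\oplus\overline l$, gets positivity from $\qf{v}{v}=2\qf{a}{\tau(a)}$, builds $J$ from the eigenspace decomposition, and invokes the pure-spinor/maximal-isotropic correspondence (\cite[Prop.~9.7]{MicLaw}) before applying Lemma \ref{lem:gravitinoeven} again. Your explicit attention to the orientation–chirality convention is a point the paper leaves implicit in its citation of \cite{MicLaw}, but it is the same argument.
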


\begin{proof}
Given a solution $l \oplus \overline{l} \in \cL$ of \eqref{eq:Ftermgrav} and \eqref{eq:Dtermgrav} that satisfies \eqref{eq:realcompatiblegrav}, define $V_+^\C := l \oplus \overline{l}$. Then condition $1)$ in \eqref{eq:realcompatiblegrav} implies that $V_+^\C$ is the complexification of a subspace $V_+ \subset \g$, while condition $2)$ implies that $\qf{\cdot}{\cdot}_{|V_+} >0$. Using \eqref{eq:realcompatiblegrav} again, we see that $l$ induces an almost complex structure $I \colon V_+ \to V_+$ compatible with $\qf{\cdot}{\cdot}_{|V_+}$. Therefore, by \cite[Prop. 9.7, p. 337]{MicLaw}, $I$ determines a spinor line $\langle \eta \rangle \subset S_+^+$. Applying now Lemma \ref{lem:gravitinoeven}, it follows that $(V_+,\lambda \eta)$ is a solution of the gravitino equation \eqref{eq:gravitino}, for any $\lambda \in \C^*$, satisfying the hypotheses in the statement of the lemma. The converse follows from Proposition \ref{prop:Killingeven}.
\end{proof}

Under the above assumption $\g^c = \g \otimes_\R \C$, we have an analogue relation between the dilatino equation \eqref{eq:dilatino}, and the F-term and D-term dilatino equations \eqref{eq:Ftermdil} and \eqref{eq:Dtermdil}. Similarly, any solution of the Killing spinor equations determines a solution of the F-term and D-term equations, \eqref{eq:Fterm} and \eqref{eq:Dterm}, given by
\begin{equation}\label{eq:lrealdiv}
l \oplus \overline{l} = V_+^{1,0} \oplus V_+^{0,1} \subset \g \otimes_\R \C, \qquad \varepsilon = \varepsilon_+
\end{equation}
This follows by direct application of Proposition \ref{prop:Killingeven}. We give next the converse result.

\begin{proposition}\label{prop:FDreal}
Any solution $(V_+,\varepsilon_+,\eta)$ of the Killing spinor equations \eqref{eq:killing} on $\g$, with $\dim V_+ = 2n_+$ and $\eta \in S_+^+$ pure, determines a solution of the F-term and D-term equations on $\g \otimes_\R \C$, \eqref{eq:Fterm} and \eqref{eq:Dterm}, given by \eqref{eq:lrealdiv}, such that
\begin{equation}\label{eq:realcompatible}
1) \; \tau(l) = \overline{l}, \qquad 2) \; \qf{a}{\tau(a)} > 0 \textrm{ for any } 0 \neq a \in l, \qquad 3) \; \tau(\varepsilon) = \varepsilon .
\end{equation}
Conversely, any solution of the F-term and D-term equations satisfying \eqref{eq:realcompatible} determines a solution $(V_+,\varepsilon,\eta)$ of the Killing spinor equations, with $\dim V_+ = 2n_+$ and $\eta \in S_+^+$ pure, uniquely up to rescaling of the spinor.
\end{proposition}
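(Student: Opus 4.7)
My plan is to handle the two directions separately, exploiting Proposition~\ref{prop:Killingeven} throughout as the algebraic dictionary between the Killing spinor equations and the isotropic/bracket conditions on $l\oplus\overline{l}$.

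For the forward direction, I would set $l := V_+^{1,0}$, $\overline{l}:=V_+^{0,1}$ and $\varepsilon := \varepsilon_+$, and read off each condition from Proposition~\ref{prop:Killingeven}. Condition (1) of \eqref{eq:FtermDterm}, namely $[V_+^{0,1},V_+^{0,1}]\subset V_+^{0,1}$, together with its complex conjugate, is exactly the F-term equation \eqref{eq:Fterm}. For the dilatino part, the identity
$-J\varepsilon_+ = i\varepsilon_+^{0,1} - i\varepsilon_+^{1,0} = i(\varepsilon_{\overline{l}} - \varepsilon_l)$
lets me cancel the overall factor $i/2$ in condition (2) of \eqref{eq:FtermDterm} and recover the D-term equation \eqref{eq:Dterm}. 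The reality conditions \eqref{eq:realcompatible} then fall out of the construction: (1) because $V_+^{0,1}=\overline{V_+^{1,0}}$; (2) because, writing any nonzero $a\in V_+^{1,0}$ as $a=u-iJu$ with $0\neq u\in V_+$, one has $\qf{a}{\tau(a)} = 2\qf{u}{u}>0$; and (3) because $\varepsilon_+\in V_+$ is $\tau$-fixed by definition.

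For the converse, my strategy is to piggyback on Lemma~\ref{lem:gravDFterm}, which already performs the reconstruction of the geometric data in the gravitino-only setting. Given $(l\oplus\overline{l},\varepsilon)$ satisfying the F-term and D-term equations together with \eqref{eq:realcompatible}, the F-term gravitino inclusions and \eqref{eq:realcompatiblegrav} hold in particular, so Lemma~\ref{lem:gravDFterm} produces a Riemannian generalized metric $V_+\subset\g$ (the $\tau$-fixed real form of $l\oplus\overline{l}$), an orthogonal almost complex structure $J$ on $V_+$ with $+i$-eigenspace $l$, and a pure spinor line $\langle\eta\rangle\subset S_+^+$. Reality condition (3) places $\varepsilon$ in $V_+$, so I may set $\varepsilon_+:=\varepsilon$. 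I then need to verify that the two conditions of Proposition~\ref{prop:Killingeven} are met: condition (1) is the full F-term equation read through the identifications $l=V_+^{1,0}$, $\overline{l}=V_+^{0,1}$, and condition (2) follows by inverting the same rearrangement used in the forward direction, turning $\frac{1}{2}\sum_j[\epsilon_j,\overline{\epsilon}_j] = \varepsilon_{\overline l}-\varepsilon_l$ into $\frac{i}{2}\sum_j[\epsilon_j^+,\overline{\epsilon}_j^+] = -J\varepsilon_+$. Applying Proposition~\ref{prop:Killingeven} concludes that $(V_+,\varepsilon_+,\eta)$ solves \eqref{eq:killing}, and the uniqueness-up-to-rescaling of $\eta$ inherits directly from the corresponding clause of Lemma~\ref{lem:gravDFterm}.

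I do not expect any substantive obstacle. The one point that requires a moment of care is that the F-term equation \eqref{eq:Fterm}, being a single inclusion, has to simultaneously encode both the gravitino inclusion $[V_+^{0,1},V_+^{0,1}]\subset V_+^{\mathbb C}$ of Lemma~\ref{lem:gravitinoeven} and the dilatino inclusion $[V_+^{0,1},V_+^{0,1}]_+\subset V_+^{0,1}$ of Lemma~\ref{lem:dilatinoeven}; this is automatic from $[\overline{l},\overline{l}]\subset\overline{l}\subset V_+^{\mathbb C}$. Beyond this, the entire argument is a bookkeeping exercise translating between the real data $(V_+,J,\varepsilon_+,\eta)$ and the complex-isotropic data $(l\oplus\overline{l},\varepsilon)$.
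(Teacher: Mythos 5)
Your proposal is correct and follows essentially the same route as the paper: both directions are reduced to Proposition~\ref{prop:Killingeven} together with the reconstruction already carried out in Lemma~\ref{lem:gravDFterm}, with condition $3)$ of \eqref{eq:realcompatible} supplying the reality of $\varepsilon$. The extra details you supply (the identity $-J\varepsilon_+ = i(\varepsilon_{\overline l}-\varepsilon_l)$ converting condition $2)$ of \eqref{eq:FtermDterm} into the D-term equation, and the positivity computation $\qf{a}{\tau(a)}=2\qf{u}{u}>0$) are accurate and merely make explicit what the paper leaves implicit.
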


\begin{proof}
Given a solution $l \oplus \overline{l} \in \cL$ of \eqref{eq:Fterm} and \eqref{eq:Dterm} that satisfies \eqref{eq:realcompatible}, define $V_+^\C := l \oplus \overline{l}$. Arguing as in the proof of Lemma \ref{lem:gravDFterm}, $(V_+,\varepsilon,\lambda \eta)$ is a solution of the Killing spinor equations \eqref{eq:killing}, for any $\lambda \in \C^*$, satisfying the hypothesis in the statement. Notice that condition $3)$ in \eqref{eq:realcompatible} implies that $\varepsilon$ is real, that is, $\varepsilon \in V_+$. The converse follows directly from Proposition \ref{prop:Killingeven} and Lemma \ref{lem:gravDFterm}. 
\end{proof}

Let $(l \oplus \overline{l},\varepsilon)$ be a pair as in Definition \ref{def:FtermDterm}. To finish this section we introduce a notion of compatibility for such a pair, which provides a `holomorphic counterpart' to the notion of isometry in Definition \ref{def:isometry} (cf. Lemma \ref{lem:examhol}).

\begin{definition}\label{def:holoiso}
Let $\g^c$ be a complex quadratic Lie algebra and $l \oplus \overline{l} \in \cL$. We say that $\varepsilon \in \g^c$ is
\begin{enumerate}

\item an \emph{infinitesimal isometry} if 
	\begin{equation}\label{eq:isometryabs}
		[\varepsilon,l \oplus \overline{l}] \subset l \oplus \overline{l}.
	\end{equation}
	
\item \emph{holomorphic} if 
	\begin{equation}\label{eq:holo}
		[\varepsilon,l] \subset l, \qquad [\varepsilon,\overline l] \subset \overline l. 
	\end{equation}

\end{enumerate}

\end{definition}

Note that, trivially, condition \eqref{eq:holo} implies \eqref{eq:isometryabs}. Observe also that, in the situation considered in Proposition \ref{prop:FDreal}, equation \eqref{eq:isometryabs} implies that $\varepsilon$ is an isometry in the sense of Definition \ref{def:isometry}. More abstractly, in the next result we study a salient feature of holomorphicity for the derived Lie subalgebras 
$$
[l,l] \subset \g^c,\qquad  [\overline{l},\overline{l}] \subset \g^c.
$$
The necessary condition \eqref{eq:orthogonalderived} below will play a key role in our first main Theorem \ref{th:N=2}.  The proof is a direct consequence of the invariance of the pairing $\qf{\cdot}{\cdot}$ and the isotropic condition on $l, \overline{l}$, and it is omitted.

\begin{lemma}\label{lem:orthogonalderived}
Let $l \oplus \overline{l} \in \cL$ and $\varepsilon \in \g^c$ holomorphic. Then $\varepsilon$ is orthogonal to the derived Lie subalgebras $[l,l],[\overline{l},\overline{l}] \subset \g^c$, that is,
 \begin{equation}\label{eq:orthogonalderived}
	\varepsilon \in [l,l]^\perp \cap [\overline{l},\overline{l}]^\perp. 
	\end{equation}
      \end{lemma}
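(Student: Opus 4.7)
The plan is to unpack the two hypotheses directly using the invariance of the pairing $\qf{\cdot}{\cdot}$ (built into the definition of a quadratic Lie algebra) together with the isotropy of $l$ and $\overline{l}$, which are the defining properties of an element of $\cL$. In fact, the statement should drop out in two lines, once for $[l,l]$ and symmetrically for $[\overline{l},\overline{l}]$; there is no serious obstacle here.

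More concretely, I would pick arbitrary $a,b\in l$ and compute $\qf{\varepsilon}{[a,b]}$. By invariance of $\qf{\cdot}{\cdot}$ under $\ad_{a}$, we have
\begin{equation*}
\qf{\varepsilon}{[a,b]}=-\qf{[a,\varepsilon]}{b}=\qf{[\varepsilon,a]}{b}.
\end{equation*}
The holomorphicity assumption \eqref{eq:holo} guarantees $[\varepsilon,a]\in l$, and since $l$ is isotropic by the definition of $\cL$, the right-hand side vanishes. As $a,b\in l$ were arbitrary, this shows $\varepsilon\in[l,l]^\perp$. Replacing $l$ by $\overline{l}$ throughout (and using the second half of \eqref{eq:holo}, namely $[\varepsilon,\overline{l}]\subset \overline{l}$, together with isotropy of $\overline{l}$) gives $\varepsilon\in[\overline{l},\overline{l}]^\perp$ by the same calculation.

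Note that neither the F-term equation \eqref{eq:Fterm} nor the non-degeneracy of $\qf{\cdot}{\cdot}$ on $l\oplus\overline{l}$ is actually needed for the argument: only isotropy of $l$ and $\overline{l}$ and the holomorphicity condition enter. This explains why the authors omit the proof, and it makes clear why holomorphicity (rather than the weaker isometry condition \eqref{eq:isometryabs}) is exactly the right hypothesis: \eqref{eq:isometryabs} would only give $[\varepsilon,a]\in l\oplus\overline{l}$, and the $\overline{l}$-component would pair nontrivially with $b\in l$, so the conclusion would fail.
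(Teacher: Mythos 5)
Your proof is correct and is exactly the argument the paper has in mind: the authors omit the proof, stating it is "a direct consequence of the invariance of the pairing $\qf{\cdot}{\cdot}$ and the isotropic condition on $l,\overline{l}$," which is precisely your two-line computation $\qf{\varepsilon}{[a,b]}=\qf{[\varepsilon,a]}{b}=0$ using $[\varepsilon,a]\in l$ and the isotropy of $l$ (and symmetrically for $\overline{l}$). Your side remark that the F-term equation itself is not needed is also accurate.
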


\section{Embeddings of Superconformal vertex algebras from Killing spinors}\label{sec:SCVA}

\subsection{Background on supersymmetric vertex algebras}\label{ssec:background}

Here we review basic notions and examples of the theory of supersymmetric vertex algebras that will be needed in the rest of this article. 
We will use extensively the superfield formalism introduced by Heluani--Kac~\cite{SUSYVA}, where details can be found.
As we will work only with $N_K=1$ SUSY vertex algebras and $N_K=1$ SUSY Lie conformal algebras within this formalism~\cite[\S 4]{SUSYVA}, we will refer to these special cases simply as `SUSY vertex algebras' and `SUSY Lie conformal algebras'.
The case $N_K=1$ was studied earlier by Barron~\cite{Barron00}.
As in the rest of Section~\ref{sec:SCVA}, we work over an algebraically closed field $\CC$ of characteristic $0$. The adjective ``super'' applied to algebras, modules and vector spaces will mean $\ZZ/2\ZZ$-graded.

In the standard approach, a vertex algebra consists of a vector superspace $V$, endowed with an even non-vanishing vector $\left|0\right\rangle\in V$ (vacuum), an even endomorphism $T\colon V\to V$ (infinitesimal translation) and a parity preserving linear map $Y\colon V\to(\End V)[[z^{\pm}]]$ (the state-field correspondence) mapping each vector $v\in V$ into a field. By a field, we mean a formal sum
\[
a(z)\defeq Y(a,z)=\sum_{n\in\ZZ}a_{(n)}z^{-n-1}
\]
in an even variable $z$, with `Fourier modes' $a_{(n)}\in\End V$, such that $Y(a,z)b$ is a formal Laurent series for all $b\in V$, so that the operator product expansions are finite sums
\[
a(z)b(w)\sim\sum_{n\geq 0}\frac{(a_{(n)}b)(w)}{(z-w)^{n+1}}.
\]
This data should satisfy several conditions called vacuum axioms, translation invariance and locality (see, e.g.,~\cite[\S 4.1]{Kac} for details).

One is often interested in a vertex algebra that is conformal~\cite[\S 4.10]{Kac}. In particular, the $T$-action is enhanced to an action of the Virasoro algebra for some central charge $c\in\C$. When the Virasoro action admits a further enhancement to a superconformal symmetry, it is computationally more efficient to use the superfield formalism.
In the case of the (Neveu--Schwarz) $N=1$ superconformal algebra, this action includes an odd linear map $S\colon V\to V$ such that $S^2=T$.
Adding a formal odd variable $\theta$ commuting with $z$, we can assign a \emph{superfield}
\[
Y(a,Z)=Y(a,z)+\theta Y(Sa,z),
\]
to each $a\in V$, with $Z=(z,\theta)$, and then the enhanced translation-invariance property is
\begin{equation}\label{eq:enhanced-translation-invariance}
[S,Y(a,Z)]=(\partial_{\theta}-\theta\partial_z)Y(a,Z).
\end{equation}
This motivates the definition of a \emph{SUSY vertex algebra}~\cite{SUSYVA}, which we recall next.

Since $S^2=T$, $V$ is a supermodule over the translation algebra $\cH$, defined as the associative superalgebra with an odd generator $S$, an even generator $T$, and the relation $S^2=T$. This superalgebra can be identified with the parameter algebra, i.e., the associative superalgebra $\cL$ with an odd generator $-\chi$, an even generator $-\lambda$, and the relation $\chi^2=-\lambda$. The two pairs of generators are denoted $\nabla=(T,S)$ and $\Lambda=(\lambda,\chi)$. Expanding now the superfields
\[
Y(a,Z)=\sum_{n \in \Z \atop J=0,1} Z^{-1-n|1-J}a_{(n|J)},
\]
where $Z^{n|J}=z^n\theta^J$, the \emph{$\Lambda$-bracket} and the \emph{normally ordered product} are defined by
\begin{equation}\label{eq:Lambda-bracket-nop}
\left[a_\Lambda b\right] = \displaystyle\sum_{n\in\N \atop J=0,1} \frac{\Lambda^{n|J}}{n!}a_{(n|J)}b,
\qquad
:ab:=a_{(-1|1)}b,
\end{equation}
respectively, where $\Lambda^{n|J}=\lambda^n\chi^J$.
The properties of the first operation motivate the definition~\cite[Def. 4.10]{SUSYVA} of a \emph{SUSY Lie conformal algebra} as the data given by a supermodule $\cR$ over $\cH$ and a parity-reversing bilinear map
\begin{equation}\label{eq:Lambda-bracket.1}
\cR\times \cR\lto \cL\otimes \cR,\quad (a,b)\longmapsto [a_{\Lambda}b],
\end{equation}
called the $\Lambda$-bracket, that satisfies the following identities for all $a,b,c\in\cR$, respectively called sesquilinearity, skew-symmetry or commutativity, and the Jacobi identity:
\begin{gather}\label{eq:sesquiLambda}
\left[Sa_\Lambda b\right] = \chi\left[a_\Lambda b\right],\qquad
\left[a_\Lambda Sb\right] = -(-1)^{\left|a\right|} \left(S+\chi\right)\left[a_\Lambda b\right],  
\\\label{eq:comLambda}
\left[a_\Lambda b\right] = (-1)^{\left|a\right|\left|b\right|}\left[b_{-\Lambda-\nabla}a\right],
\\\label{eq:JacobiLambda}
\left[a_\Lambda\left[b_\Gamma c\right]\right] = (-1)^{\left|a\right|+1}\left[\left[a_\Lambda b\right]_{\Lambda+\Gamma}c\right] + (-1)^{\left(\left|a\right|+1\right)\left(\left|b\right|+1\right)}\left[b_\Gamma\left[a_\Lambda c\right]\right].
\end{gather}
The identities~\eqref{eq:sesquiLambda} and~\eqref{eq:comLambda} take place in $\cL\otimes \cR$, and the identity~\eqref{eq:JacobiLambda} takes place in $\cL\otimes\cL'\otimes\cR$, where $\cL'$ is a copy of $\cL$ with the formal variables $\Lambda=(\lambda,\chi)$ replaced by $\Gamma=(\gamma,\eta)$.
The degree of a homogeneous element $a$ is denoted $\left|a\right|$ (when we use this notation, we are implicitly assuming $a$ is homogeneous).
The precise meaning of the expressions in~\eqref{eq:sesquiLambda},~\eqref{eq:comLambda} and~\eqref{eq:JacobiLambda} is explained in Appendix~\ref{app:1}.

Adding the data given by the second operation in~\eqref{eq:Lambda-bracket-nop}, one obtains the following definition~\cite[Def. 4.19]{SUSYVA}.
A \emph{SUSY vertex algebra} is a SUSY Lie conformal algebra $V$, whose underlying vector superspace is a differential superalgebra, with the (odd) differential $S\colon V\to V$ induced by the $\cH$-module structure on $V$, that is unital (with identity element $\left|0\right\rangle\in V$), quasicommutative and quasiassociative, and such that the $\Lambda$-bracket and the multiplication of this superalgebra, called normally ordered product
\begin{equation}\label{eq:VA-equiv.1}
V\times V\lto V,\quad (a,b)\longmapsto :ab:,
\end{equation}
are related by the non-commutative Wick formula.
Quasicommutativity, quasiassociativity and the non-commutative Wick formula are respectively given by the following identities:
\begin{gather}\label{eq:cuasicon}
:ab:-(-1)^{\left|a\right|\left|b\right|}:ba: = \int_{-\nabla}^0 d\Lambda \left[a_\Lambda b\right],
\\\label{eq:cuasiaso}
::ab:c:-:a:bc:: = :\!\left(\int_0^\nabla d^r\!\Lambda\, a\right) \left[b_\Lambda c\right]\!:+(-1)^{\left|a\right|\left|b\right|} :\!\left(\int_0^\nabla d^r\!\Lambda\, b \right)\left[a_\Lambda c\right]\!:,
\\\label{eq:Wick}
\left[a_\Lambda:bc:\right] = :\left[a_\Lambda b\right]c:+(-1)^{\left(\left|a\right|+1\right)\left|b\right|} :b\left[a_\Lambda c\right]:+\int_0^\Lambda d\Gamma \left[\left[a_\Lambda b\right]_\Gamma c\right].
\end{gather}
The meaning of the integrals in these identities 
is explained in Appendix \ref{app:1}.

Any SUSY Lie conformal algebra $\cR$ determines a SUSY vertex algebra $V(\cR)$ together with an embedding of SUSY Lie conformal algebras $\cR\hra V(\cR)$, that is universal for morphisms $\cR\to V'$ into other SUSY vertex algebras $V'$.
One says that $V(\cR)$ is the SUSY vertex algebra \emph{generated} by $\cR$~\cite[\S 1.8]{SUSYVA}, or more precisely, $V(\cR)$ is the \emph{universal enveloping SUSY vertex algebra} of $\cR$~\cite[Theorem 3.4.2 and \S 4.21]{SUSYVA}. 
%

Next we describe several SUSY vertex algebras used in this paper. 

\begin{example}\label{exam:NS}
The $N=1$ superconformal vertex algebra of central charge $c$, also called the \emph{Neveu--Schwarz vertex algebra} (see, e.g.,~\cite[pp. 178-179]{Kac},~\cite[Example 2.4]{SUSYVA}), is generated by the SUSY Lie conformal algebra $\cR$ whose underlying $\cH$-module is freely generated by an odd vector $H$ and a scalar $c$, with $\Lambda$-bracket
\begin{equation}\label{eq:NS}
\left[{H}_\Lambda H\right] = \left(2T+\chi S+3\lambda\right)H+\frac{\chi\lambda^2}{3}c.
\end{equation}

Expanding in components the corresponding superfield $Y(H,Z)=G(z)+2\theta L(z)$, and the field components in Fourier modes
\begin{equation*}
L(z) = \sum_{n \in \Z} L_n z^{-2-n},\qquad G(z) = \sum_{n \in \frac12+\Z} G_n z^{-\frac32-n}, 
\end{equation*}
one recovers from~\eqref{eq:NS} the usual Virasoro and Neveu--Schwarz commutation relations with central charge $c$ (see~\cite[Example 5.5]{SUSYVA} for details), respectively given by 
\begin{subequations}\label{eq:NS-comm-relations.1}
\begin{gather}\label{eq:NS-comm-relations.1.a}
\left[L_m,L_n\right] = (m-n)L_{m+n}+\delta_{m,-n}\frac{m^3-m}{12}c,
\\\label{eq:NS-comm-relations.1.b}
\left[G_m,L_n\right] = \left(m-\frac{n}{2}\right)G_{m+n},\quad \left[G_m,G_n\right] = 2L_{m+n}+\frac{c}{3}\left(m^2-\frac{1}{4}\right)\delta_{m,-n}.
\end{gather}\end{subequations}
\end{example}

\begin{example}\label{exam:N2}
The \emph{$N=2$ superconformal vertex algebra of central charge $c$} is generated by the SUSY Lie conformal algebra $\cR$ whose underlying $\cH$-module is freely generated by two superfields, namely an odd vector $H$ (a `Neveu-Schwarz vector'), an even vector $J$ (a `current') and a scalar $c$, with $\Lambda$-brackets given by~\eqref{eq:NS} and 
\begin{equation}\label{eq:N2SCVA}
\left[{J}_\Lambda J\right] = -\left(H+\frac{\lambda\chi}{3}c\right),
\quad
\left[{H}_\Lambda J\right] = \left(2T+2\lambda+\chi S\right)J.
\end{equation}
Expanding in components the corresponding superfields
\begin{gather*}
Y(J,Z)=-i\left(J(z)+\theta\left(G^-(z)-G^+(z)\right)\right),
\quad 
Y(H,Z)=\left(G^+(z)+G^-(z)\right)+2\theta L(z),
\end{gather*}
and the field components in Fourier modes
\[
J(z) = \sum_{n \in \Z} J_n z^{-1-n},
\quad
G^{\pm}(z)=\sum_{n \in \frac12+\Z} G_n^{\pm}z^{-\frac32-n}, 
\]
one obtains from the above $\Lambda$-brackets the Virasoro commutation relations~\eqref{eq:NS-comm-relations.1.a} and the following ones (see~\cite[Theorem 5.10]{Kac} and~\cite[Example 3.13]{BZHS}):
\begin{gather*}
\left[J_m,J_n\right] = \frac{m}{3}\delta_{m,-n}c,
\quad
\left[J_m,G_n^{\pm}\right] = \pm G_{m+n}^{\pm},
\quad  
\left[G_m^{\pm},L_n\right] = \left(m-\frac{n}{2}\right)G^{\pm}_{m+n},
\\
\left[L_m,J_n\right] = -nJ_{m+n},
\quad
\left[G_m^+,G_n^-\right] = L_{m+n}+\frac{m-n}{2}J_{m+n}+\frac{c}{6}\left(m^2-\frac{1}{4}\right)\delta_{m,-n}.
\end{gather*}
\end{example}

\begin{remark}\label{rem:tech}
The Neveu--Schwarz vector $H$ can be recovered from the current $J$. More precisely, given an even vector $J$ satisfying the identities~\eqref{eq:N2SCVA} for some odd vector $H$ and an invariant central element $c$, the triple $(J,H,c)$ automatically satisfies~\eqref{eq:NS}, so it determines an $N=2$ superconformal vertex algebra. This follows by a direct application of the Jacobi identity for SUSY Lie conformal algebras (see, e.g.,~\cite[Lemma A.1]{GCYHeluani} for details).
\end{remark}

\begin{example}\label{exam:N4.1}
The (`small') \emph{$N=4$ superconformal vertex algebra of central charge $c$} is generated by the SUSY Lie conformal algebra $\cR$ whose underlying $\cH$-module is freely generated by an odd vector $H$, three even vectors $J^1,J^2,J^3$, and a scalar $c$, with $\Lambda$-brackets given by~\eqref{eq:NS},~\eqref{eq:N2SCVA} for $J=J^i$ with $i=1,2,3$ (i.e., $H$ and $J^i$ determine an $N=2$ superconformal vertex algebra of central charge $c$), and
\begin{equation}\label{eq:N4SCVA.1}
\left[{J^i}_\Lambda J^j\right] = - \varepsilon_{ijk}(S+2\chi)J^k,\qquad i\neq j,
\end{equation}
where $\varepsilon_{ijk}$ is the totally antisymmetric tensor. 
Superfield expansions similar to the ones in Examples~\ref{exam:NS} and~\ref{exam:N2} provide commutation relations for the corresponding Fourier-mode infinite-dimensional Lie superalgebra (see~\cite[Example 3.14]{BZHS}).
\end{example}

\begin{example}\label{exam:superafin}
Let $(\g,\qf{\cdot}{\cdot})$ be a quadratic Lie algebra and $k\in\C$ a scalar (one can start with a Lie superalgebra, but we will not consider this generality in this paper).
Let $\Pi\g$ be the corresponding purely odd vector superspace. Abusing notation,
\[
  [\cdot,\cdot]\colon\Pi\g\times\Pi\g\to\Pi\g,
  \qquad
  \qf{\cdot}{\cdot}\colon\Pi\g\times\Pi\g\to\C,
\]
will denote the bilinear maps corresponding to the Lie bracket and the quadratic form on $\g$, identifying elements $a$ of $\g$ with their corresponding odd copies $\Pi a$ in $\Pi\g$. 
The \emph{universal superaffine vertex algebra with level $k$} associated to $\g$ is the SUSY vertex algebra 
$V^k(\g_{\text{super}})$ generated by the supercurrent algebra or superaffinization $\mathfrak{SCur}\mathfrak{g}$. This is the SUSY Lie conformal algebra with underlying $\cH$-module freely generated by $\Pi\g$ and a scalar $k$, with the $\Lambda$-bracket 
\begin{equation}\label{eq:Lambdaaffine}
\left[{a}_\Lambda b\right]=\left[a,b\right]+\chi\qf{a}{b}k
\end{equation}
for all $a,b\in\Pi\g$. 

The above construction is often considered when the Lie algebra $\g$ is simple or abelian, taking the level $k+h^{\vee}$, where $2h^{\vee}$ is the eigenvalue for the Casimir operator on $\glg$ (see~\cite[Example 5.9]{SUSYVA} and~\cite[pp. 33, 115]{Kac}). Since we do not want to restrict to semisimple or abelian Lie algebras in this paper, we will not use this convention. 
\end{example}

\subsection{Generators of supersymmetry}

Let $\left(\mathfrak{g},\qf{\cdot}{\cdot}\right)$ be a finite-dimensional quadratic Lie algebra over an algebraically closed field $\CC$ of characteristic $0$. Following Section \ref{sec:FDterms}, throughout this section we fix a direct sum decomposition 
$$
\mathfrak{g} = l \oplus \overline{l} \oplus V_-,
$$
with $l,\overline{l}$ isotropic, $\qf{\cdot}{\cdot}_{| l \oplus \overline{l} }$ non-degenerate, and $V_- = ( l \oplus \overline{l})^\perp$. We will use the notation \eqref{eq:projections} for the associated orthogonal projections. We fix a basis $\left\lbrace\epsilon_j,\overline{\epsilon}_j\right\rbrace_{j=1}^n$ of $l \oplus \overline{l}$ satisfying \eqref{eq:isotropybasis-weaker} and an orthogonal basis $\left\lbrace v_\alpha\right\rbrace_{\alpha=1}^m$ of $V_-$. 

Let $V^k(\g_{\text{super}})$ be the universal superaffine vertex algebra associated to $\left(\mathfrak{g},\qf{\cdot}{\cdot}\right)$ with level $0 \neq k \in \CC$ (see Example \ref{exam:superafin}). We fix $\varepsilon \in l \oplus \overline{l}$ and consider the associated odd vectors
\begin{equation}
e := \Pi\varepsilon =  e_{l} + e_{\overline{l}}, \qquad	 u := \Pi (\varepsilon_{l}-\varepsilon_{\overline{l}}) = e_{l}-e_{\overline{l}}.
\end{equation}
We define odd vectors associated to the basis elements
$$
e_j = \Pi\epsilon_j, \qquad e^j = \Pi\overline{\epsilon}_j, \qquad w_\alpha = \Pi v_\alpha, 
$$
and use them to define even vectors on $V^k(\g_{\text{super}})$ by
\begin{subequations}\begin{align}\label{eq:Jmas}
J_0 & = \frac{i}{k}:e^je_j:,
\\\label{eq:Jmasdilaton}
J & = J_0 - \frac{2}{k}Siu.
\end{align}\end{subequations}
Here and in the sequel we use the Einstein summation convention for repeated indices. It is easy to see that the previous expresions are independent of the choice of basis. We aim to prove that $J_0$ and $J$ generate $N=2$ superconformal vertex algebra structures (see Example \ref{exam:N2}) under natural assumptions studied in Section \ref{sec:FDterms}. Our construction extends a construction by E. Getzler for Manin triples \cite{Getzler} (see Remark \ref{rem:Getzler}). 

Before we address this question in Section \ref{ssec:N=2}, in the present section we undertake the calculation of the $\Lambda$-brackets $[{J_0}_{\Lambda} J_0]$ and $[J_\Lambda J]$ without making any assumptions. Define
\begin{equation}\label{eq:w}
w = \left[e^j,e_j\right]_{l}-\left[e^j,e_j\right]_{\overline{l}} = \Pi(\left[\overline{\epsilon}_j,\epsilon_j\right]_{l}-\left[\overline{\epsilon}_j,\epsilon_j\right]_{\overline{l}}) \in \Pi \g.
\end{equation}
Observe that $w$ is independent of the choice of basis, and hence it provides an invariant of the subspace $l \oplus \overline{l} \subset \g$.

\begin{proposition}\label{prop:pasoprimero}
Define $c_0:=3\dim l$ and the vector
\begin{equation}\label{eq:NSmas1}
		H'= H_0+\frac{1}{k}Tw \in V^k(\g_{\text{super}}),
\end{equation}
where
\begin{equation}\label{eq:NSmas0}
		\begin{split}
			H_0 & = \frac{1}{k}\left( :e_j\left(Se^j\right):+:e^j\left(Se_j\right):\right)\\ 
			& +\frac{1}{k^2}\left(:e_j:e^k\left[e^j,e_k\right]::+:e^j:e_k\left[e_j,e^k\right]::\right. \\
			&\left.-:e_j:e_k\left[e^j,e^k\right]::-:e^j:e^k\left[e_j,e_k\right]::\right).
		\end{split}
\end{equation}
Then, one has
\begin{equation}\label{eq:N2SCVAparte1}
		\left[{J_0}_\Lambda{J_0}\right] = -\left(H'+\frac{\lambda\chi}{3}c_0\right).
\end{equation}
\end{proposition}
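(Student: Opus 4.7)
The plan is a direct computation using the non-commutative Wick formula \eqref{eq:Wick}, sesquilinearity \eqref{eq:sesquiLambda} and skew-symmetry \eqref{eq:comLambda}, starting from the defining $\Lambda$-bracket of the superaffine vertex algebra \eqref{eq:Lambdaaffine}. The only non-trivial ``algebraic'' inputs beyond the axioms are the invariance of $\qf{\cdot}{\cdot}$ under the bracket and the duality relations $\qf{\epsilon_j}{\overline{\epsilon}_k}=\delta_{jk}$, $\qf{\epsilon_j}{\epsilon_k}=\qf{\overline{\epsilon}_j}{\overline{\epsilon}_k}=0$ from \eqref{eq:isotropybasis-weaker}.

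First I would record the elementary brackets between the generators in $\Pi\g$, namely $[e_{j\,\Lambda}e_k]=\Pi[\epsilon_j,\epsilon_k]$, $[e^j_{\;\Lambda}e^k]=\Pi[\overline{\epsilon}_j,\overline{\epsilon}_k]$ and $[e_{j\,\Lambda}e^k]=\Pi[\epsilon_j,\overline{\epsilon}_k]+\chi\,\delta_{jk}k$, together with their skew-symmetric counterparts. Next I would compute $[e_{j\,\Lambda}J_0]$ and $[e^j_{\;\Lambda}J_0]$ by applying the Wick formula \eqref{eq:Wick} to $J_0=\frac{i}{k}:e^je_j:$. At this stage the output is a sum of normally ordered quadratic expressions in the generators, plus one linear term of the form $\chi$ times a generator coming from the central term in \eqref{eq:Lambdaaffine}, and no integral contribution since the iterated bracket of two $\Pi\g$-elements with a third one sits in $\Pi\g$ and the resulting $\Gamma$-integral against a constant in $\Lambda$ is linear in $\lambda\chi$.

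The core step is the computation of $[J_{0\,\Lambda}J_0]=\frac{i}{k}[J_{0\,\Lambda}:e^je_j:]$, again by \eqref{eq:Wick}. This yields three groups of terms:
\begin{enumerate}
\item $:[J_{0\,\Lambda}e^j]e_j:+(-1)^{|J_0|\cdot|e^j|}:e^j[J_{0\,\Lambda}e_j]:$, which after substituting the formulas of the previous step produces exactly the cubic normally ordered expressions appearing in $H_0$ from \eqref{eq:NSmas0}, once one rewrites the iterated brackets using the invariance $\qf{[a,b]}{c}=\qf{a}{[b,c]}$ to pull the indices into the standard form.
\item The derivative terms $:(Se^j)e_j:$ and $:e^j(Se_j):$ of \eqref{eq:NSmas0}, which come out of the central $\chi k$ contribution in \eqref{eq:Lambdaaffine} combined with sesquilinearity \eqref{eq:sesquiLambda}.
\item The integral term $\int_0^\Lambda d\Gamma\,[[J_{0\,\Lambda}e^j]_\Gamma e_j]$, which after simplification produces on the one hand the central charge $-\frac{\chi\lambda^2}{3}c_0$ (the factor $\dim l$ arises from contracting $\delta_{jj}$, and the overall coefficient $1/3$ is the standard output of a double integral of $\Gamma^2$) and on the other hand the single $T$-derivative of the vector $w$ defined in \eqref{eq:w}, giving the correction $\frac{1}{k}Tw$ in \eqref{eq:NSmas1}.
\end{enumerate}

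The main obstacle will be bookkeeping: keeping track of signs coming from the parity reversal $\Pi$, of the Einstein-summation indices split into $l$- and $\overline{l}$-type, and of the reorderings needed to match the precise form of $H_0$ in \eqref{eq:NSmas0}. In particular, cross terms of the shape $:e_j(:e_k[e^j,e^k]:):$ and $:e^j(:e^k[e_j,e_k]:):$ (which are not manifestly present in the naive output of \eqref{eq:Wick}) appear only after using quasi-associativity \eqref{eq:cuasiaso} to reorder iterated normally ordered products; the associator contributions there must be shown to cancel against parts of the integral term, combining to give precisely the $\frac{1}{k}Tw$ summand. Once the three contributions above are identified, the identity \eqref{eq:N2SCVAparte1} follows by assembling them and matching the coefficient of $\chi\lambda^2$ with $-c_0/3$.
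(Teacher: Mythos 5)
Your plan follows the paper's proof essentially verbatim: apply the non-commutative Wick formula to $\left[{J_0}_\Lambda :e^je_j:\right]$, obtain the intermediate brackets $\left[{J_0}_\Lambda e^k\right]$ and $\left[{J_0}_\Lambda e_k\right]$ via skew-symmetry from $\left[{e^k}_\Lambda :e^je_j:\right]$, evaluate the $\Gamma$-integral, and then reorder the resulting normally ordered products with the quasi-commutativity and quasi-associativity identities of Lemma \ref{lem:tech1} to match $H_0$. Two bookkeeping attributions in your outline are off, though neither derails the argument: in the paper the correction $\frac{1}{k}Tw$ arises \emph{entirely} from the reordering identities \eqref{eq:cuasicon1}--\eqref{eq:cuasicon3} applied to the raw output $\tilde H$ (the Wick integrals instead produce the $\lambda\chi\dim l$ central term together with $\lambda$-linear vector terms that cancel against the $\lambda$-linear term of the non-integral part of the Wick formula), and the factor $1/3$ in $\frac{\lambda\chi}{3}c_0$ is not the output of any double integration but simply the normalization $c_0:=3\dim l$.
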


\begin{proof}
By the non-commutative Wick  formula
	\begin{equation}\label{eq:J0ejej}
		\left[{J_0}_\Lambda:e^je_j:\right] = :\left[{J_0}_\Lambda e^j\right]e_j:+(-1)^{\left(\left|J_0\right|+1\right)\left|e^j\right|}:e^j\left[{J_0}_\Lambda e_j\right]:+\int_0^\Lambda d\Gamma\left[{\left[{J_0}_\Lambda e^j\right]}_\Gamma e_j\right].
\end{equation}
We calculate $\left[{J_0}_\Lambda e^j\right]$ and $\left[{J_0}_\Lambda e_j\right]$ using the commutativity of the $\Lambda$-bracket
	\begin{equation*}
		\begin{split}
			\left[{J_0}_\Lambda e^k\right] & = \frac{i}{k} (-1)^{\left|J_0\right|\left|e^k\right|}\left[{e^k}_{-\Lambda-\nabla}:e^je_j:\right].
		\end{split}
	\end{equation*}
Applying the non-commutative Wick formula, it is easy to see that
	\begin{equation*}
		\begin{split}
			\left[{e^k}_\Lambda:e^je_j:\right] &= :\left[{e^k}_\Lambda e^j\right]e_j:+\left(-1\right)^{\left(\left|e^k\right|+1\right)\left|e^j\right|}:e^j\left[{e^k}_\Lambda e_j\right]:+\int_0^\Lambda d\Gamma \left[\left[{e^k}_\Lambda e^j\right]_\Gamma e_j\right] \\
			&=:\left[{e^k},e^j\right]e_j: + :e^j\left[{e^k},e_j\right]:-\chi ke^k+\lambda k\qf{e^k}{\left[e^j,e_j\right]};
		\end{split}
	\end{equation*}
which implies
	\begin{equation}\label{eq:Jintsuper}
		\begin{split}
			\left[{J_0}_\Lambda e^k\right] &= \frac{i}{k}\left(:\left[{e^k},e^j\right]e_j: + :e^j\left[{e^k},e_j\right]:\right)+i\left(\chi e^k+Se^k-\lambda\qf{e^k}{\left[e^j,e_j\right]}\right),
		\end{split}
	\end{equation}
	\begin{equation}\label{eq:Jintsub}
		\begin{split}
			\left[{J_0}_\Lambda e_k\right] & = \frac{i}{k}\left(:\left[e_k,e^j\right]e_j:+:e^j\left[e_k,e_j\right]:\right)-i\left(\chi e_k+Se_k+\lambda\qf{e_k}{\left[e^j,e_j\right]}\right).
		\end{split}
	\end{equation}
Applying now sesquilinearity in the $\Lambda$-bracket in the integrand of \eqref{eq:J0ejej}, we obtain that
	\begin{equation*}
		\begin{split}
			\left[{J_0}_\Lambda:e^je_j:\right] & = \frac{i}{k}\left(::\left[{e^j},e^k\right]e_k:e_j:+ ::e^k\left[{e^j},e_k\right]:e_j:\right.\\
			& -\left.:e^j:\left[e_j,e^k\right]e_k::-:e^j:e^k\left[e_j,e_k\right]::\right)\\
			&+i\left(:\left(Se^j\right)e_j:+:e^j\left(Se_j\right):+\lambda\left(\left[e^j,e_j\right]_{\overline{l}}-\left[e^j,e_j\right]_{l}\right)\right)\\
			&+\frac{i}{k} \int_0^\Lambda d\Gamma\left[\left(:\left[{e^j},e^k\right]e_k:+:e^k\left[{e^j},e_k\right]:\right)_\Gamma e_j\right]\\
			&+i\int_0^\Lambda d\Gamma\left(\eta-\chi\right)\left(\left[e^j,e_j\right]+\eta k\qf{e^j}{e_j}\right).
		\end{split}
	\end{equation*}
By non-commutative Wick formula and commutativity, it is easy to see that
	$$\partial_{\eta}\left[\left(:\left[{e^j},e^k\right]e_k:+:e^k\left[{e^j},e_k\right]:\right)_\Gamma e_j\right] = -2k\left[e^j,e_j\right]_{\overline{l}}.$$ 
Hence, computing the previous two integrals, we obtain the identity
	\begin{equation*}
		\begin{split}
			\left[{J_0}_\Lambda {J_0}\right] & = - \tilde H - \lambda\chi\dim l  = -\left(H'+\frac{\lambda\chi}{3}c_0\right),
		\end{split}
	\end{equation*}
where
\begin{equation*}
		\begin{split}
			\tilde H & = \frac{1}{k^2}\left(::\left[{e^j},e^k\right]e_k:e_j:+ ::e^k\left[{e^j},e_k\right]:e_j:\right.\\
			& + \left.:e^j:\left[e_j,e^k\right]e_k::-:e^j:e^k\left[e_j,e_k\right]::\right) + \frac{1}{k}\left(:\left(Se^j\right)e_j:+:e^j\left(Se_j\right):\right).
		\end{split}
	\end{equation*}
To finish the proof, it suffices to check that $\tilde H = H'$ as in \eqref{eq:NSmas1}, for which we apply basic properties in Lemma \ref{lem:tech1}.	
Applying \eqref{eq:cuasicon1} and \eqref{eq:cuasicon2} we obtain, respectively, 
	\begin{equation*}
	\begin{split}
-:e^j:\left[e_j,e^k\right]e_k:: & = :e^j:e_k\left[e_j,e^k\right]::,\\
:\left(Se^j\right)e_j: & = :e_j\left(Se^j\right):+T\left[e^j,e_j\right].
	\end{split}
	\end{equation*}
By \eqref{eq:cuasicon3}, we also have
	\begin{equation*}
		\begin{split}
			::e^k\left[{e^j},e_k\right]:e_j: & = :e_j:e^k\left[{e^j},e_k\right]::-k T\left(\left(e_j\left|e^k\right.\right)\left[e^j,e_k\right] -\left(e_j\left|\left[e^j,e_k\right]\right.\right)e^k\right) \\
			& = :e_j:e^k\left[{e^j},e_k\right]::-k\left(T\left[e^j,e_j\right]+T\left[e^j,e_j\right]_{\overline{l}}\right).
		\end{split}
	\end{equation*}
Finally, \eqref{eq:cuasicon1} and \eqref{eq:cuasicon3} imply that
	\begin{equation*}
		\begin{split}
			::\left[{e^j},e^k\right]e_k:e_j: & = -:e_j:e_k\left[{e^j},e^k\right]::+kT\left(\left(\left.e_j\right|e_k\right)\left[{e^j},e^k\right]-\left(e_j\left|\left[{e^j},e^k\right]\right.\right)e_k\right) \\
			&=-:e_j:e_k\left[{e^j},e^k\right]::+kT\left[e^j,e_j\right]_{l}
		\end{split}
	\end{equation*}
and therefore $\tilde H = H'$ as claimed.
\end{proof}

\begin{remark}
Our formula \eqref{eq:NSmas0} for $H_0$ shall be compared with the formula for the vector $H^+$ in \cite[Theorem 2]{StructuresHeluani}.
\end{remark}

We next calculate the $\Lambda$-bracket $\left[{J}_\Lambda J\right]$, where $J$ is defined in \eqref{eq:Jmasdilaton}.

\begin{proposition}\label{prop:pasoprimdil}
Define
	\begin{equation}\label{eq:centralchargedilatoncasi}
		c = 3\left(\dim l+\frac{4}{k}\qf{e}{w-e}\right).
	\end{equation}
and the vector
\begin{equation}\label{eq:NSmasdilatoncasi}
		H = H'-\frac{2}{k}Te+\frac{2}{k^2}S\left(:\left[u,e^j\right]e_j:+:e^j\left[u,e_j\right]:\right) \in V^k(\g_{\text{super}}).
	\end{equation}
Then, one has
	\begin{equation}\label{eq:N2SCVAparte1dil}
		\left[{J}_\Lambda J\right] = -\left(H+\frac{\lambda\chi}{3}c\right).
	\end{equation}
\end{proposition}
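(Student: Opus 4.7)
The plan is to expand
\[
J = J_0 - \tfrac{2}{k}Siu
\]
and compute $[J_\Lambda J]$ by bilinearity of the $\Lambda$-bracket, reducing everything to four pieces:
\begin{equation*}
[J_\Lambda J] = [{J_0}_\Lambda J_0] - \tfrac{2}{k}\bigl([{J_0}_\Lambda Siu] + [{Siu}_\Lambda J_0]\bigr) + \tfrac{4}{k^2}[{Siu}_\Lambda Siu].
\end{equation*}
The first piece is supplied directly by Proposition~\ref{prop:pasoprimero}.

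For the two cross terms, I would apply sesquilinearity \eqref{eq:sesquiLambda} to pull the $S$ outside:
\[
[{J_0}_\Lambda Siu] = -(S+\chi)[{J_0}_\Lambda iu], \qquad [{Siu}_\Lambda J_0] = \chi[{iu}_\Lambda J_0].
\]
By skew-commutativity \eqref{eq:comLambda} these are related, so it suffices to compute $[{J_0}_\Lambda iu]$. Writing $iu = i(\varepsilon_l^je_j - \varepsilon_{\bar l}^je^j)$ in the chosen isotropic basis, I would plug the expressions \eqref{eq:Jintsuper} and \eqref{eq:Jintsub} (already obtained in the proof of Proposition~\ref{prop:pasoprimero}) to get $[{J_0}_\Lambda iu]$ as an explicit combination of normally ordered products and linear-in-$\Lambda$ terms, whose coefficients involve the brackets $[u,e^j]$, $[u,e_j]$ together with scalar contributions coming from the central term $\chi k$ of the superaffine $\Lambda$-bracket.

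For the last piece, I would use sesquilinearity twice to reduce $[{Siu}_\Lambda Siu]$ to $\chi(S+\chi)[{iu}_\Lambda iu]$, and then apply the defining $\Lambda$-bracket \eqref{eq:Lambdaaffine} of the superaffine algebra; since $u$ is odd and $[u,u]=0$ by antisymmetry of the Lie bracket on $\g$, only the central term $\chi\qf{u}{u}k$ survives, giving a term proportional to $\qf{\varepsilon_l}{\varepsilon_{\bar l}}$ with precisely the $\lambda\chi$ dependence needed to produce the dilaton correction $\frac{12}{k}\qf{\varepsilon_l}{\varepsilon_{\bar l}}$ to the central charge.

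Finally, I would assemble the four contributions. The $\lambda\chi$-terms combine to yield $-\frac{\lambda\chi}{3}c$ with $c$ as in \eqref{eq:centralchargedilatoncasi}, where the $\qf{e}{w}$-part comes from $[{J_0}_\Lambda iu]$ (via its $\lambda$-coefficient, which involves $\qf{e^j}{[e^j,e_j]}$-type traces) and the $\qf{e}{e}$-part from $[{iu}_\Lambda iu]$. The remaining terms should be recognized, after applying quasicommutativity \eqref{eq:cuasicon} and quasiassociativity \eqref{eq:cuasiaso} (as packaged in the appendix lemma already used in Proposition~\ref{prop:pasoprimero}), as exactly $-H$ with $H$ given by \eqref{eq:NSmasdilatoncasi}: the contribution $-\frac{2}{k}Te$ arises from the translation parts of the cross terms acting on $iu = e_l - e_{\bar l}$, while the $\frac{1}{k^2}S(:[u,e^j]e_j: + :e^j[u,e_j]:)$ piece comes from the normally ordered part of $[{J_0}_\Lambda iu]$ after the sesquilinearity reshuffle.

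The main obstacle is purely bookkeeping: carefully tracking the signs introduced by the odd operators $S$ and $\chi$, by the factor $i$, and by the superalgebra conventions when reordering normally ordered products; and recognizing that the seemingly miscellaneous correction terms repackage cleanly into the combination $-\frac{2}{k}Te + \frac{1}{k^2}S(:[u,e^j]e_j:+:e^j[u,e_j]:)$ appearing in \eqref{eq:NSmasdilatoncasi}. Apart from these identifications, everything is forced by the structure of Proposition~\ref{prop:pasoprimero}.
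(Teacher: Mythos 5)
Your proposal is correct and follows essentially the same route as the paper's proof: expand $[J_\Lambda J]$ into the four pieces by sesquilinearity, quote Proposition~\ref{prop:pasoprimero} for $[{J_0}_\Lambda J_0]$, evaluate the cross terms from \eqref{eq:Jintsuper}--\eqref{eq:Jintsub} together with skew-symmetry, and reduce $[{Su}_\Lambda Su]$ to the central term $k\chi\qf{u}{u}=-k\chi\qf{e}{e}$. The only imprecision is in the constants of the $\lambda\chi$-bookkeeping (the last piece contributes $-\tfrac{12}{k}\qf{e}{e}$ to $c$, and the dilaton shift only emerges after combining this with the $\qf{e}{w}$ contribution of the cross terms), which the full computation would pin down.
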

\begin{proof}
By sesquilinearity we have 
	\begin{equation*}
		\begin{split}
			\left[{J}_\Lambda J\right] & = \left[{J_0}_\Lambda {J_0}\right] - i\frac{2}{k}\left[{J_0}_\Lambda Su\right] - i\frac{2}{k}\left[{Su}_\Lambda{J_0}\right]-\frac{4}{k^2}\left[{Su}_\Lambda Su\right] \\
			& = \left[{J_0}_\Lambda{J_0}\right]+i\frac{2}{k}\left(\chi+S\right)\left[{J_0}_\Lambda u\right]-i\frac{2}{k}\chi\left[{u}_\Lambda{J_0}\right]-\frac{4}{k^2} \chi\left(S+\chi\right)\left[{u}_\Lambda u\right].
		\end{split}
\end{equation*}
We calculate the $\Lambda$-brackets in the different summands by steps. The term $\left[{J_0}_\Lambda{J_0}\right]$ was calculated in \eqref{eq:N2SCVAparte1}. Combining \eqref{eq:Jintsuper} with \eqref{eq:Jintsub} we have
	\begin{equation*}
		\begin{split}
			\left[{J_0}_\Lambda u\right] & = \qf{e}{e^k}\left[{J_0}_\Lambda e_k\right]-\qf{e}{e_k}\left[{J_0}_\Lambda e^k\right] \\
			& = \qf{e}{e^k}\left(\frac{i}{k}\left(:\left[e_k,e^j\right]e_j:+:e^j\left[e_k,e_j\right]:\right)-i\left(\left(\chi+S\right)e_k+\lambda\qf{e_k}{\left[e^j,e_j\right]}\right)\right) \\
			&-\qf{e}{e_k}\left(\frac{i}{k}\left(:\left[e^k,e^j\right]e_j:+:e^j\left[e^k,e_j\right]:\right)+i\left(\left(\chi+S\right)e^k-\lambda\qf{e^k}{\left[e^j,e_j\right]}\right)\right) \\ 
			& = -i\left(\left(\chi+S\right)e+\lambda\qf{u}{\left[e^j,e_j\right]}\right)+ \frac{i}{k}\left(:\left[u,e^j\right]e_j:+:e^j\left[u,e_j\right]:\right).
		\end{split}
	\end{equation*}
Applying commutativity of the $\Lambda$-bracket, we also obtain
	\begin{equation*}
		\begin{split}
			\left[{u}_\Lambda{J_0}\right] & = (-1)^{\left|u\right|\left|J\right|}\left[{J_0}_{-\Lambda-\Delta}{u}\right] \\
			& = i\left(\chi e + \lambda\qf{u}{\left[e^j,e_j\right]}\right) + \frac{i}{k} \left(:\left[u,e^j\right]e_j:+:e^j\left[u,e_j\right]:\right),
		\end{split}
	\end{equation*}
while application of sesquilinearity shows that
	\begin{equation*}
		\begin{split}
			\left[{u}_\Lambda u\right] = \left[u,u\right]+k\chi\qf{u}{u} = k\chi\qf{u}{u} = -k\chi\qf{e}{e}.
		\end{split}
	\end{equation*}
Adding the different terms above, we conclude that
\begin{align*}
\left[{J}_\Lambda J\right] & =  \left[{J_0}_\Lambda{J_0}\right]+i\frac{2}{k}\left(\chi+S\right)\left[{J_0}_\Lambda u\right]-i\frac{2}{k}\chi\left[{u}_\Lambda{J_0}\right]-\frac{4}{k^2} \chi\left(S+\chi\right)\left[{u}_\Lambda u\right]\\
			& = -\left(H'+\frac{\lambda\chi}{3}c\right)+i^2\frac{2}{k^2}S\left(:\left[u,e^j\right]e_j:+:e^j\left[u,e_j\right]:\right)+\frac{4}{k}\chi\left(S+\chi\right)\chi\qf{e}{e} \\
			& +i\frac{2}{k}\left(\chi+S\right)\left(-i\left(\left(\chi+S\right)e+\lambda\qf{u}{\left[e^j,e_j\right]}\right)\right)-i\frac{2}{k}\chi\left(i\left(\chi e + \lambda\qf{u}{\left[e^j,e_j\right]}\right)\right)\\
			&=-\left(\left(H'-\frac{2}{k}Te+\frac{2}{k^2}S\left(:\left[u,e^j\right]e_j:+:e^j\left[u,e_j\right]:\right)\right)+\frac{\lambda\chi}{3}\left(c_0+\frac{12}{k}\qf{e}{w-e}\right)\right) \\
			& = -\left(H+\frac{\lambda\chi}{3}c\right).
\qedhere
\end{align*}
\end{proof}


\begin{remark}\label{rem:Jflip}
In the definition of $J_0$ and $J$ in \eqref{eq:Jmas} and \eqref{eq:Jmasdilaton}, we consider $l \oplus \overline{l}$ as an ordered pair. Note that, if we denote by $\overline{J_0}$ the vector associated to $\overline{l} \oplus l$, by \eqref{eq:cuasicon1} we have
	\begin{equation*}
\overline{J_0} = \frac{i}{k}:e_je^j: = - \frac{i}{k}:e^je_j:	= -	J_0.
	\end{equation*}
Similarly, if we denote $\overline{u} = e_{\overline{l}} - e_l$, we have 
$$
\overline{J} = \overline{J_0} - \frac{2}{k}Si\overline{u}  = - J_0 + \frac{2}{k}Si u = - J.
$$
Note that $\overline{J_0}$ (respectively $\overline{J}$) satisfies the relation \eqref{eq:N2SCVAparte1} (resp. \eqref{eq:N2SCVAparte1dil}) for the same vector $H'$ (resp. $H$).
\end{remark}

\subsection{$N=2$ supersymmetry from the $F$-term and $D$-term equations}\label{ssec:N=2}

Our main goal in this section is to prove that, under natural conditions studied in Section \ref{sec:FDterms}, the vectors $J_0$ and $J$ (see \eqref{eq:Jmas} and \eqref{eq:Jmasdilaton}) induce embeddings of $N=2$ superconformal algebras on the universal superaffine vertex algebra associated to $\left(\mathfrak{g},\qf{\cdot}{\cdot}\right)$. As a consequence of Proposition \ref{prop:pasoprimero} and Proposition \ref{prop:pasoprimdil}, this reduces to proving the identities (see Remark \ref{rem:tech})
\begin{subequations}
		\begin{align}
			\left[{H'}_\Lambda{J_0}\right] & = \left(2\lambda+2T+\chi S\right)J_0,\label{eq:N2SCVAparte2}\\
			\left[{H}_\Lambda{J}\right] & = \left(2\lambda+2T+\chi S\right)J.\label{eq:N2SCVAparte2dil}
		\end{align}
	\end{subequations}
We start by proving the identity \eqref{eq:N2SCVAparte2}. Firstly, we find a more convenient expression for the vector $H_0$ defined in \eqref{eq:NSmas0}.


\begin{lemma}\label{lem:HKSE}
Assume that $l \oplus \overline{l}$ satisfies the $F$-term equation \eqref{eq:Fterm}. Then the vector $H_0$ defined by \eqref{eq:NSmas0} can be written as follows:
	\begin{equation}\label{eq:NSmas0KSE}
		\begin{split}
			H_0 & = \frac{1}{k}\left(:e_j\left(Se^j\right):+:e^j\left(Se_j\right):\right)\\ 
			&  + \frac{1}{k^2}\left(2:e_j:e^k\left[e^j,e_k\right]_-::+:e^j:e^k\left[e_j,e_k\right]_{l}::+:e_j:e_k\left[e^j,e^k\right]_{\overline{l}}::\right).
		\end{split}
	\end{equation}
\end{lemma}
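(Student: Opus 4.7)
My plan is as follows. The identity amounts to a direct rewriting of \eqref{eq:NSmas0} using the $F$-term equation \eqref{eq:Fterm} and the defining vertex-algebra identities (quasi-commutativity \eqref{eq:cuasicon}, quasi-associativity \eqref{eq:cuasiaso} and the non-commutative Wick formula \eqref{eq:Wick}), together with the ancillary identities \eqref{eq:cuasicon1}--\eqref{eq:cuasicon3} from Lemma \ref{lem:tech1} that are already employed in the proof of Proposition \ref{prop:pasoprimero}.

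First, I apply $F$-term: since $l$ and $\overline{l}$ are Lie subalgebras, one has $[e_j,e_k]=[e_j,e_k]_l$ and $[e^j,e^k]=[e^j,e^k]_{\overline{l}}$, so the last two quartic summands of \eqref{eq:NSmas0} are recognized immediately as $-:\!e_j(:\!e_k[e^j,e^k]_{\overline{l}}:)\!:$ and $-:\!e^j(:\!e^k[e_j,e_k]_l:)\!:$.

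Next I decompose the remaining mixed brackets $[e^j,e_k]$ and $[e_j,e^k]$ according to $\g=l\oplus\overline{l}\oplus V_-$, so each of the first two quartic summands of \eqref{eq:NSmas0} splits into an $l$-, an $\overline{l}$- and a $V_-$-piece. Componentwise, the two pieces in each direction are related by (i) relabeling the dummy indices $j\leftrightarrow k$ together with the antisymmetry of the bracket, and (ii) swapping an appropriate inner pair of odd factors via quasi-commutativity \eqref{eq:cuasicon}. The key point is that the $\chi$-level contribution of every such swap vanishes, either because the two swapped factors lie in a common isotropic subspace ($l$ or $\overline{l}$), or because one of them lies in $V_-$ and the other in $l\oplus\overline{l}$, so the relevant pairing vanishes by orthogonality. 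Collecting the contributions, the two $V_-$-pieces add to $2:\!e_j(:\!e^k[e^j,e_k]_-:)\!:$; the two $l$-pieces and the two $\overline{l}$-pieces, combined with the simplified summands from the first step (which carry opposite sign due to the swap), produce respectively $:\!e^j(:\!e^k[e_j,e_k]_l:)\!:$ and $:\!e_j(:\!e_k[e^j,e^k]_{\overline{l}}:)\!:$. This is precisely the form \eqref{eq:NSmas0KSE}.

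The main obstacle will be keeping track of the $T$- and $S$-derivative corrections that quasi-commutativity and quasi-associativity produce during these rearrangements, and verifying that they cancel after symmetrization in $j,k$ via skew-symmetry of the Lie bracket and invariance of the pairing $\qf{\cdot}{\cdot}$. I expect this bookkeeping to proceed exactly as in the final part of the proof of Proposition \ref{prop:pasoprimero}, via the identities \eqref{eq:cuasicon1}--\eqref{eq:cuasicon3}, and to require no new structural input beyond the $F$-term equation.
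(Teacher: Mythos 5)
Your overall strategy coincides with the paper's: the paper first observes (unconditionally, via the correction-free cyclic identity \eqref{eq:cuasiconaso1}) that the first two quartic summands of \eqref{eq:NSmas0} are equal, then uses \eqref{eq:cuasiconaso2} together with the involutivity of $l$ and $\overline{l}$ to reorganize the remaining terms. Your decompose-then-match plan is a reshuffling of that same computation, and the intermediate identities you assert (the doubling of the $V_-$-piece, and the matching of the $l$- and $\overline{l}$-pieces against the pure-bracket summands) are all correct.

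The one step whose justification as written would not go through is the mechanism you give for matching, say, $:e_j\left(:e^k\left[e^j,e_k\right]_{l}:\right):$ with a summand built from the pure bracket $[e_j,e_k]\in l$. Relabeling dummy indices, antisymmetry of the bracket, and reordering odd factors can never convert the structure constants of the mixed bracket $[\overline{l},l]$ into those of $[l,l]$ or $[\overline{l},\overline{l}]$. What does this is the invariance of the pairing: writing $\left[e^j,e_k\right]_{l}=\qf{[e^j,e_k]}{e^m}e_m$ and using $\qf{[\overline{\epsilon}_j,\epsilon_k]}{\overline{\epsilon}_m}=-\qf{\epsilon_k}{[\overline{\epsilon}_j,\overline{\epsilon}_m]}$, the $F$-term equation lets you resum $\qf{e_k}{[e^j,e^m]}e^k=[e^j,e^m]$, so the $l$-projection of a mixed bracket becomes a genuine $[\overline{l},\overline{l}]$-bracket, giving $:e_j\left(:e^k\left[e^j,e_k\right]_{l}:\right):\;=\;:e_j\left(:e_k\left[e^j,e^k\right]_{\overline{l}}:\right):$ (and symmetrically for the $\overline{l}$-projection). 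In your proposal invariance appears only as a device for cancelling $T$- and $S$-corrections; in fact it is the central tool for the matching itself, whereas the corrections are already absent because \eqref{eq:cuasicon1}, \eqref{eq:cuasiconaso1} and \eqref{eq:cuasiconaso2} hold without correction terms for arbitrary odd elements, independently of any isotropy or orthogonality. With that adjustment your plan closes.
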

\begin{proof}
Without any hypothesis,	it follows from \eqref{eq:cuasiconaso1} that
	$$
	:e_j:e^k\left[e^j,e_k\right]:: = :e^j:e_k\left[e_j,e^k\right]::.
	$$
Using now \eqref{eq:cuasiconaso2}, we conclude the identity~\eqref{eq:NSmas0KSE} imposing the involutivity of $l$ and $\overline{l}$.
\end{proof}

We will also need the following technical identity.

\begin{lemma}\label{lem:tech3}
Assume that $l \oplus \overline{l}$ satisfies the $F$-term equation \eqref{eq:Fterm}. Then, for all $i \in \left\lbrace1,\ldots,n\right\rbrace$, we have
	\begin{equation*}\label{eq:Ups2}
		\begin{split}
2:e_j:e_k\left[\left[{e_i},e^j\right]_-,e^k\right]_{\overline{l}}:: & = :\left[{e_i},e^j\right]_+:e^k\left[e_j,e_k\right]_{l}::+:\left[{e_i},e_j\right]_l:e_k\left[e^j,e^k\right]_{\overline{l}}::\\
& + :e_j:\left[{e_i},e_k\right]_l\left[e^j,e^k\right]_{\overline{l}}::+:e^j:\left[{e_i},e^k\right]_+ \left[e_j,e_k\right]_{l}::\\ 
& +:e^j:e^k\left[{e_i},\left[e_j,e_k\right]_{l}\right]_l::+:e_j:e_k\left[{e_i},\left[e^j,e^k\right]_{\overline{l}}\right]_+::.\\
\end{split}
\end{equation*}
\end{lemma}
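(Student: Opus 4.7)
\emph{Plan.} The identity is purely algebraic: only the Jacobi identity for the Lie bracket on $\g$, the $F$-term equations \eqref{eq:Fterm}, and the normally-ordered-product manipulations \eqref{eq:cuasicon1}--\eqref{eq:cuasicon3} should enter; no $\Lambda$-bracket computation is needed. The motivation for the statement is that the six terms on the RHS are precisely what one produces by applying a formal ``Leibniz rule'' to distribute $[e_i,\cdot]$ through the two cubic expressions
\[
A_1:=\ :e^j(:e^k[e_j,e_k]_l:):,\qquad A_2:=\ :e_j(:e_k[e^j,e^k]_{\overline l}:):,
\]
appearing in the simplified formula \eqref{eq:NSmas0KSE} for $H_0$. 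Each of $A_1,A_2$ has three slots, for a total of $3+3=6$ terms, matching the RHS.

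I would begin by applying the Jacobi identity to $e_i,e^j,e^k$:
\[
[[e_i,e^j],e^k]=[e_i,[e^j,e^k]]-[e^j,[e_i,e^k]],
\]
and then decompose $[e_i,e^j]=[e_i,e^j]_l+[e_i,e^j]_{\overline l}+[e_i,e^j]_-$. Projecting onto $\overline l$ and solving for the mixed-type piece yields
\[
[[e_i,e^j]_-,e^k]_{\overline l}=[e_i,[e^j,e^k]]_{\overline l}-[e^j,[e_i,e^k]]_{\overline l}-[[e_i,e^j]_l,e^k]_{\overline l}-[[e_i,e^j]_{\overline l},e^k]_{\overline l}.
\]
The $F$-term equation then forces $[e^j,e^k]\in\overline l$ and $[[e_i,e^j]_{\overline l},e^k]\in[\overline l,\overline l]\subset\overline l$, which trivialises several of these projections. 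Multiplying by $2$ and inserting inside $:e_j(:e_k\cdot:):$ summed over $j,k$, I would then use the isotropy relations \eqref{eq:isotropybasis-weaker} and the normally-ordered-product identities \eqref{eq:cuasicon1}--\eqref{eq:cuasicon3} to migrate the bracket $[e_i,\cdot]$ to the outer or middle slots as dictated by the six RHS terms: the two Jacobi summands contribute to the RHS terms with $[e_i,\cdot]$ on the innermost bracket (terms (5) and (6)), while the two projection summands, after relabelling the summation indices via the pairings $\qf{\epsilon_j}{\overline{\epsilon}_k}=\delta_{jk}$, produce the RHS terms with $[e_i,\cdot]$ on an outer or middle slot (terms (1)--(4)).

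\emph{Main obstacle.} The algebraic content is Jacobi plus the combinatorics of Leibniz, so the real work lies in the bookkeeping. One must (i) relabel summation indices consistently so that each of the six pieces lands in the correct outer shape $:e_j(\ldots):$, $:e^j(\ldots):$, $:[e_i,e^j]_+(\ldots):$, etc.; (ii) track the signs coming from the odd parity of $e_i,e_j,e^j,e_k,e^k$ in the superaffine algebra; and (iii) verify that the $T$-derivative correction terms produced by each quasi-commutativity or quasi-associativity swap inside a threefold normally ordered product either cancel pairwise by antisymmetry of the Lie bracket or collapse under the $F$-term hypothesis. A useful sanity check is that no new structure constants can appear: both sides are images of Jacobi-type combinations under normal ordering, so once the indices are correctly paired, the identity must close on itself.
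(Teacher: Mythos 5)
Your toolkit --- the Jacobi identity, the involutivity of $l$ and $\overline{l}$ coming from the $F$-term equation \eqref{eq:Fterm}, the normal-ordering identities of Lemma \ref{lem:tech1}, and invariance of $\qf{\cdot}{\cdot}$ to relabel summation indices --- is exactly what the paper's proof uses, and working from the left-hand side instead of from the right is only a change of direction. However, the plan as written has two concrete gaps. First, the single Jacobi identity you record, applied to $(e_i,e^j,e^k)$, cannot by itself reach the fifth right-hand term $:e^j(:e^k[e_i,[e_j,e_k]_l]_l:):$, whose innermost bracket is $[e_j,e_k]$ with both entries in $l$; your claim that ``the two Jacobi summands contribute to terms (5) and (6)'' therefore fails for term (5). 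A second, independent Jacobi expansion on $(e_i,e_j,e_k)$ is needed, and the resulting pieces are then matched to the first four right-hand terms by moving $[e_i,\cdot]$ between the three slots via invariance of the pairing, e.g. $:e^j(:e^k[[e_i,e_j]_l,e_k]_l:):\;=\;-:[e_i,e^j]_{\overline{l}}(:e^k[e_j,e_k]_l:):$; this is the paper's second block of identities, which uses only \eqref{eq:cuasicon1} and \eqref{eq:cuasiconaso1}.

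Second, and more seriously, after all the matching the two sides still do not agree term by term: one is left having to prove the residual identities $:e_j(:e_k[[e_i,e^j]_l,e^k]_l:):+:e_j(:e_k[e^j,[e_i,e^k]_l]_l:):=0$ and $:e_j(:e_k[[e_i,e^j]_-,e^k]_+:):\;=\;:e_j(:e_k[e^j,[e_i,e^k]_-]_+:):$. The second is indeed routine (antisymmetry and invariance), but the first is not mere bookkeeping: the paper rewrites that sum as $2a$ with $a=:e_j(:[e^k,e_i]_l[e^j,e_k]_l:):$ and then applies the Jacobi identity once more inside $a$ to obtain $2a=-a$, hence $a=0$. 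Your closing sentence, that the corrections ``either cancel pairwise by antisymmetry of the Lie bracket or collapse under the $F$-term hypothesis,'' does not cover this step, which is the one genuinely non-obvious point of the argument. (Conversely, your worry about $T$-derivative correction terms is unfounded here: the only swaps actually required are \eqref{eq:cuasicon1} and \eqref{eq:cuasiconaso1}, in which the $kT(\cdots)$ corrections cancel identically.)
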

\begin{proof}
Applying the Jacobi identity for the Lie bracket of $\g$ and the involutivity of $l,\overline{l}$, we obtain
\begin{equation*}
\begin{split}
		:e^j:e^k\left[{e_i},\left[e_j,e_k\right]_{l}\right]_{l}:: &= :e^j:e^k\left[\left[{e_i},e_j\right]_{l},e_k\right]_{l}::+:e^j:e^k\left[{e_j},\left[e_i,e_k\right]_{l}\right]_{l}::, \\
			:e_j:e_k\left[{e_i},\left[e^j,e^k\right]_{\overline{l}}\right]_+:: & = :e_j:e_k\left[{e_i},\left[e^j,e^k\right]\right]_+:: \\
			& =:e_j:e_k\left[\left[{e_i},e^j\right],e^k\right]_+::+:e_j:e_k\left[{e^j},\left[e_i,e^k\right]\right]_+::  \\
			& = :e_j:e_k\left[\left[{e_i},e^j\right]_+,e^k\right]_+::+:e_j:e_k\left[{e^j},\left[e_i,e^k\right]_+\right]_+::\\
			& + :e_j:e_k\left[\left[{e_i},e^j\right]_-,e^k\right]_+::+:e_j:e_k\left[{e^j},\left[e_i,e^k\right]_-\right]_+::.
		\end{split}
	\end{equation*}
	\noindent
By \eqref{eq:cuasicon1} and \eqref{eq:cuasiconaso1} combined with the invariance of $\qf{\cdot}{\cdot}$ we also obtain
	\begin{equation*}
		\begin{split}
			:e^j:e^k\left[\left[{e_i},e_j\right]_{l},e_k\right]_{l}:: & = -:\left[e_i,e^j\right]_{\overline{l}}:e^k\left[e_j,e_k\right]_{l}::,\\
			:e^j:e^k\left[{e_j},\left[e_i,e_k\right]_{l}\right]_{l}:: & = -:e^j:\left[e_i,e^k\right]_{\overline{l}} \left[e_j,e_k\right]_{l}::,\\
			:e_j:e_k\left[\left[{e_i},e^j\right]_{l},e^k\right]_{\overline{l}}:: & = -:\left[e_i,e^j\right]_{l}:e^k\left[e_j,e_k\right]_{l}::,\\
			:e_j:e_k\left[\left[{e_i},e^j\right]_{\overline{l}},e^k\right]_+:: & =:e_j:e_k\left[\left[{e_i},e^j\right]_{\overline{l}},e^k\right]_{\overline{l}}::\\
			&= - :\left[e_i,e_j\right]_{l}:e_k\left[e^j,e^k\right]_{\overline{l}}::,\\
			:e_j:e_k\left[{e^j},\left[e_i,e^k\right]_{l}\right]_{\overline{l}}:: & =  -:e^j:\left[e_i,e^k\right]_{l}\left[e_j,e_k\right]_{l}::,\\
			:e_j:e_k\left[{e^j},\left[e_i,e^k\right]_{\overline{l}}\right]_+:: & = :e_j:e_k\left[{e^j},\left[e_i,e^k\right]_{\overline{l}}\right]_{\overline{l}}::\\
			& = -:e_j:\left[e_i,e_k\right]_{l}\left[{e^j},e^k\right]_{\overline{l}}::.
		\end{split}
	\end{equation*}
	\noindent
Substituting these expressions on the right hand side of \eqref{eq:Ups2}, it suffices to prove that
	$$
	:e_j:e_k\left[\left[e_i,e^j\right]_{l},e^k\right]_{l}::+:e_j:e_k\left[e^j,\left[e_i,e^k\right]_{l}\right]_{l}::=0
	$$
	and
	$$:e_j:e_k\left[\left[{e_i},e^j\right]_-,e^k\right]_+::=:e_j:e_k\left[{e^j},\left[e_i,e^k\right]_-\right]_+::.$$
	By \eqref{eq:cuasiconaso1}, the last identity is trivial using antisymmetry and invariance of the quadratic Lie algebra. Moreover, using the sames properties, one can easily conclude that
	$$
	:e_j:e_k\left[\left[e_i,e^j\right]_{l},e^k\right]_{l}::+:e_j:e_k\left[e^j,\left[e_i,e^k\right]_{l}\right]_{l}:: = 2:e_j:\left[e^k,e_i\right]_{l}\left[e^j,e_k\right]_{l}::.
	$$
	Thus it suffices to see that
	$a := :e_j:\left[e^k,e_i\right]_{l}\left[e^j,e_k\right]_{l}:: = 0.$
	But using the Jacobi identity in the left hand side of the previous equality, we conclude $2a =-a$ by \eqref{eq:cuasicon1}, as required.
\end{proof}

The next step is to calculate $\left[{H_0}_\Lambda a\right]$ for arbitrary $a \in l \oplus \overline{l}$.

\begin{lemma}\label{lem:pasointermedio}
Assume that $l \oplus \overline{l}$ satisfies the $F$-term equation \eqref{eq:Fterm}. Then for any $a = a_l + a_{\overline{l}} \in l \oplus \overline{l}$, we have
	\begin{subequations}
		\begin{align}
			\left[{H_0}_\Lambda a_l \right] & = - \frac{2}{k^2}\left(:e_j:e_k\left[\left[a_l,e^j\right]_-,e^k\right]_{\overline{l}}::\right. \nonumber \\
			& + :e_j:e_k\left[\left[a_l,e^j\right]_-,e^k\right]_-::+:\left[a_l,e^j\right]_-:e_k\left[e_j,e^k\right]_-:: \nonumber \\
			& + \left.:e_j:e^k\left[a_l,\left[e^j,e_k\right]_-\right]_{l}::+:e^j:e_k\left[\left[e^k,a_l\right]_-,e_j\right]_-::\right) \nonumber\\
			& + \frac{1}{k}\left(\chi:e_j\left[a_l,e^j\right]_-:-2:e_j\left(S\left[a_l,e^j\right]_-\right): + 2T\left[e_j,\left[a_l,e^j\right]_-\right]_{l}\right.\nonumber\\
			& + \left.\lambda\left(\left[e_j,\left[a_l,e^j\right]_-\right]_{l}+\left[\left[a_l,e^j\right]_-,e_j\right]_-\right)\right)+\left(\lambda+2T+\chi S\right)a_l,\label{eq:NSbasis10}\\
			\left[{H_0}_\Lambda a_{\overline{l}}\right] & = -\frac{2}{k^2}\left(:e^j:e^k\left[\left[a_{\overline{l}},e_j\right]_-,e_k\right]_{l}::\right. \nonumber\\
			&+:e^j:e^k\left[\left[a_{\overline{l}},e_j\right]_-,e_k\right]_-::+:\left[a_{\overline{l}},e_j\right]_-:e^k\left[e^j,e_k\right]_-:: \nonumber\\
			&+\left.:e^j:e_k\left[a_{\overline{l}},\left[e_j,e^k\right]_-\right]_{\overline{l}}::+:e_j:e^k\left[\left[e_k,a_{\overline{l}}\right]_-,e^j\right]_-::\right) \nonumber\\
			&+\frac{1}{k}\left(\chi:e^j\left[a_{\overline{l}},e_j\right]_-:-2:e^j\left(S\left[a_{\overline{l}},e_j\right]_-\right):+2T\left[e^j,\left[a_{\overline{l}},e_j\right]_-\right]_{\overline{l}} \right.\nonumber\\
			&+\left.\lambda\left(\left[e^j,\left[a_{\overline{l}},e_j\right]_-\right]_{\overline{l}}+\left[\left[a_{\overline{l}},e_j\right]_-,e^j\right]_-\right)\right)+\left(\lambda+2T+\chi S\right)a_{\overline{l}};\label{eq:NSbasis20}\\
			\left[{H'}_\Lambda a_l\right] & = \left[{H_0}_\Lambda a_l\right] + \frac{\lambda}{k}\left[a_l,w\right]-\lambda\chi\qf{a_l}{w}.\label{eq:NSbasis1}\\
			\left[{H'}_\Lambda a_{\overline{l}}\right] & = \left[{H_0}_\Lambda a_{\overline{l}}\right] + \frac{\lambda}{k}\left[a_{\overline{l}},w\right]-\lambda\chi\qf{a_{\overline{l}}}{w}.\label{eq:NSbasis2}
		\end{align}
	\end{subequations}
\end{lemma}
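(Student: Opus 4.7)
\emph{Reduction and input brackets.} Under the $F$-term hypothesis, Lemma~\ref{lem:HKSE} replaces $H_0$ by the simpler expression~\eqref{eq:NSmas0KSE}, which is a sum of two quadratic normally ordered products and three quartic ones. The elementary brackets are produced by~\eqref{eq:Lambdaaffine}: since $l$ is isotropic,
$$
[{e_j}_\Lambda a_l] = [e_j,a_l]\in l, \qquad [{e^j}_\Lambda a_l] = [e^j,a_l] + k\chi\qf{e^j}{a_l},
$$
with the analogous expressions for $[{Se_j}_\Lambda a_l]$ and $[{Se^j}_\Lambda a_l]$ obtained via sesquilinearity~\eqref{eq:sesquiLambda}. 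Note that $\qf{e^j}{a_l}e_j=a_l$, which will cause several contractions during the computation.

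\emph{Main calculation.} The plan is to compute $[{H_0}_\Lambda a_l]$ by applying the commutativity axiom~\eqref{eq:comLambda} to each summand of $H_0$ so that $a_l$ sits on the left, and then unwinding the nested normal products by iterated use of the non-commutative Wick formula~\eqref{eq:Wick}: once for each of the two quadratic terms in $H_0$ and twice for each of the three quartic terms. Sesquilinearity handles the $S$-derivatives, and quasi-associativity~\eqref{eq:cuasiaso} together with quasi-commutativity~\eqref{eq:cuasicon} reorder the resulting monomials. The $F$-term hypotheses then allow us to decompose every bracket of two elements of $l\oplus\overline{l}$ into its $l$, $\overline{l}$ and $V_-$ parts, so that every emerging quartic monomial already matches one of those on the right-hand side of~\eqref{eq:NSbasis10}, save for one combination — precisely the six-term sum appearing on the right-hand side of Lemma~\ref{lem:tech3}; invoking that lemma collapses it to $2{:}e_j({:}e_k[[a_l,e^j]_-,e^k]_{\overline{l}}{:}){:}$, which accounts for the first summand of~\eqref{eq:NSbasis10}. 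The identity~\eqref{eq:NSbasis20} is then obtained by the symmetry $l \leftrightarrow \overline{l}$, $e_j \leftrightarrow e^j$, which leaves both the reduced form~\eqref{eq:NSmas0KSE} of $H_0$ and the $F$-term hypothesis invariant.

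\emph{Passage to $H'$.} Since $H'-H_0 = \frac{1}{k}Tw$, sesquilinearity gives
$$
\tfrac{1}{k}[(Tw)_\Lambda a_l] = \tfrac{\chi^2}{k}[w_\Lambda a_l] = -\tfrac{\lambda}{k}\bigl([w,a_l]+k\chi\qf{w}{a_l}\bigr) = \tfrac{\lambda}{k}[a_l,w] - \lambda\chi\qf{a_l}{w},
$$
where we used the identity $\chi^2=-\lambda$ in the parameter algebra, the antisymmetry $[w,a_l]=-[a_l,w]$ (both vectors being odd), and the symmetry of the pairing. This is exactly the correction appearing in~\eqref{eq:NSbasis1}; the analogous computation yields~\eqref{eq:NSbasis2}.

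\emph{Principal obstacle.} The difficulty is not conceptual but combinatorial: the double Wick expansion of each quartic summand of $H_0$ produces a long list of quartic terms and subleading corrections (in $\lambda$, $\chi$, $T$, $S$), and collecting them into the precise seven summands of~\eqref{eq:NSbasis10} requires patient bookkeeping with quasi-associativity, quasi-commutativity, and the Jacobi identity in $\g$. The only identification that is not a purely formal manipulation is the one packaged by Lemma~\ref{lem:tech3}.
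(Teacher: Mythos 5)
Your proposal follows essentially the same route as the paper's proof: reduce $H_0$ to the form \eqref{eq:NSmas0KSE} via Lemma \ref{lem:HKSE}, use skew-symmetry of the $\Lambda$-bracket to compute $\left[{e_i}_\Lambda H_0\right]$ with the basis element on the left, expand by iterated Wick formula together with quasi-associativity and quasi-commutativity, invoke Lemma \ref{lem:tech3} to collapse the terms with no $V_-$ component, obtain \eqref{eq:NSbasis20} by the $l\leftrightarrow\overline{l}$ swap, and derive \eqref{eq:NSbasis1}--\eqref{eq:NSbasis2} from $\left[{(Tw)}_\Lambda a\right]=-\lambda\left[w_\Lambda a\right]$ exactly as you do. The only piece you leave unexecuted is the long Wick bookkeeping, which is also the bulk of the paper's proof; the one step you do carry out explicitly (the $\tfrac{1}{k}Tw$ correction) is correct.
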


\begin{proof}
Since we can exchange the roles played by $l$ and $\overline{l}$, it suffices to check \eqref{eq:NSbasis10} and \eqref{eq:NSbasis1} for $a_l = e_i$, an element in the basis of $l$. By commutativity of the $\Lambda$-bracket we have
	\begin{equation*}
		\begin{split}
			\left[{H_0}_\Lambda e_i\right] = (-1)^{\left|H_0\right|\left|e_i\right|}\left[{e_i}_{-\Lambda-\nabla}H_0\right].
		\end{split}
	\end{equation*}
Hence we need to calculate
$$
\left[{e_i}_{\Lambda}H_0\right]=\frac{1}{k}\Upsilon_i^1+\frac{1}{k^2}\Upsilon_i^2,
$$
for which we use the expression for $H_0$ in Lemma \ref{lem:HKSE}. We compute first 
$$
\Upsilon_i^1 = \left[{e_i}_\Lambda \left(:e_j\left(Se^j\right):+:e^j\left(Se_j\right):\right)\right] = \Upsilon_i^{1,1}+\Upsilon_i^{1,2}.
$$ 
Applying the non-commutative Wick formula once on each summand, we have:
	\begin{equation*}
		\begin{split}
			\Upsilon_i^{1,1} & = \left[{e_i}_\Lambda :e_j\left(Se^j\right):\right] \\
			& =  :\left[{e_i},e_j\right]\left(Se^j\right):+:e_j\left(S\left[{e_i},e^j\right]\right):-\chi:e_j\left[{e_i},e^j\right]:+\lambda ke_i + \lambda\left[\left[{e_i},e_j\right],e^j\right],\\
			\Upsilon_i^{1,2} & = \left[{e_i}_\Lambda :e^j\left(Se_j\right)\right] \\ 
			& = :\left[{e_i},e^j\right]\left(Se_j\right): + :e^j\left(S\left[{e_i},e_j\right]\right):-\chi:e^j\left[{e_i},e_j\right]: + \chi S k e_i+\lambda\left[\left[{e_i},e^j\right],e_j\right],
		\end{split}
	\end{equation*}
and by the involutivity of $l$,
	\begin{equation*}
		\begin{split}
			\Upsilon_i^1 & = \Upsilon_i^{1,1}+\Upsilon_i^{1,2} \\
			& = :e_j\left(S\left[{e_i},e^j\right]_-\right): + :\left[{e_i},e^j\right]_-\left(Se_j\right):+\left(\lambda+\chi S\right)ke_i\\
			& -\chi\left(:e_j\left[{e_i},e^j\right]:+:e^j\left[{e_i},e_j\right]_{l}:\right)-\lambda\left(\left[\left[{e_j},e_i\right]_{l},e^j\right]+\left[\left[{e^j},e_i\right],e_j\right]\right).
		\end{split}
	\end{equation*} 
Here we have used the identity (which follows from antisymmetry and invariance on $\g$)
	\begin{equation*}\label{eq:Ups1}
		\begin{split}
			:\left[{e_i},e_j\right]_{l}\left(Se^j\right):+:e_j\left(S\left[{e_i},e^j\right]_+\right): + :\left[{e_i},e^j\right]_+\left(Se_j\right): + :e^j\left(S\left[{e_i},e_j\right]_{l}\right): & = 0.
		\end{split}
	\end{equation*}
Next, we compute
	\begin{equation*}
		\begin{split}
			\Upsilon_i^2 & = \left[{e_i}_\Lambda\left(:e^j:e^k\left[e_j,e_k\right]_{l}::+:e_j:e_k\left[e^j,e^k\right]_{\overline{l}}::+2:e_j:e^k\left[e^j,e_k\right]_-::\right)\right] \\
			& = \Upsilon_i^{2,1}+\Upsilon_i^{2,2}+2\Upsilon_i^{2,3}.
		\end{split}
\end{equation*}
First of all, applying twice the non-commutative Wick formula, we obtain 
\begin{equation*}
		\begin{split}
			\Upsilon_i^{2,1} & = \left[{e_i}_\Lambda:e^j\left(:e^k\left[e_j,e_k\right]_{l}:\right):\right] \\
			& = :\left[{e_i},e^j\right]:e^k\left[e_j,e_k\right]_{l}::+:\left(\chi k\delta_i^j\right):e^k\left[e_j,e_k\right]_{l}:: \\
			& + :e^j\left(:\left[{e_i}_\Lambda e^k\right] \left[e_j,e_k\right]_{l}:+:e^k\left[{e_i}_\Lambda\left[e_j,e_k\right]_{l}\right]:+\int_0^\Lambda d\Gamma\left[\left[{e_i}_\Lambda e^k\right]_\Gamma\left[e_j,e_k\right]_{l}\right]\right): \\
			&+\int_0^\Lambda d\Gamma \left(:\left[\left[{e_i},e^j\right]_\Gamma e^k\right]\left[e_j,e_k\right]_{l}:+:e^k\left[\left[{e_i},e^j\right]_\Gamma\left[e_j,e_k\right]_{l}\right]:\right) \\
			&=:\left[{e_i},e^j\right]:e^k\left[e_j,e_k\right]_{l}::+:e^j:\left[{e_i},e^k\right] \left[e_j,e_k\right]_{l}::+:e^j:e^k\left[{e_i},\left[e_j,e_k\right]_{l}\right]:: \\
			&+2\chi k:e^j\left[e_i,e_j\right]_{l}: + \lambda k\left(2\left[\left[e^j,e_i\right]_{\overline{l}},e_j\right]_{\overline{l}}+\left[\left[{e^j},e_i\right]_{l},e_j\right]_{l}\right).
		\end{split}
	\end{equation*} 
In the same way, we obtain
	\begin{equation*}
		\begin{split}
			\Upsilon_i^{2,2} & = \left[{e_i}_\Lambda:e_j\left(:e_k\left[e^j,e^k\right]_{\overline{l}}:\right):\right] \\
			& = :\left[{e_i},e_j\right]:e_k\left[e^j,e^k\right]_{\overline{l}}:: + :e_j:\left[{e_i},e_k\right]\left[e^j,e^k\right]_{\overline{l}}:: + :e_j:e_k\left[{e_i},\left[e^j,e^k\right]_{\overline{l}}\right]:: \\
			& + \chi k:e_j\left[{e_i},e^j\right]_{l}:+\lambda k\left(2\left[\left[e_j,e_i\right]_{l},e^j\right]_{l}+\left[\left[e_j,e_i\right]_{\overline{l}},e^j\right]_{\overline{l}}\right),\\
			\Upsilon_i^{2,3} & = \left[{e_i}_\Lambda:e_j:e^k\left[e^j,e_k\right]_-::\right] \\
			& = :\left[{e_i},e_j\right]:e^k\left[e^j,e_k\right]_-:: + :e_j:\left[{e_i},e^k\right]\left[e^j,e_k\right]_-:: + :e_j:e^k\left[{e_i},\left[e^j,e_k\right]_-\right]:: \\
			& + \chi k:e_j\left[e_i,e^j\right]_-:+\lambda k\left(\left[\left[e^j,e_i\right]_-,e_j\right]_{l}+\left[\left[e^j,e_i\right]_{\overline{l}},e_j\right]_-+\left[\left[e_j,e_i\right]_-,e^j\right]_{\overline{l}}\right).
		\end{split}
	\end{equation*}
Applying now the involutivity of $l$ and $\overline{l}$, we have that
	\begin{equation*}
		\begin{split}
			\Upsilon_i^2 & = \Upsilon_i^{2,1}+\Upsilon_i^{2,2}+2\Upsilon_i^{2,3} \\
			& = 2\left(:e_j:e_k\left[{e^j},\left[e_i,e^k\right]_-\right]_-::+:e_j:e_k\left[\left[{e_i},e^j\right]_-,e^k\right]_{\overline{l}}::\right. \\
			&+\left. :e_j:\left[{e_i},e^k\right]_-\left[e^j,e_k\right]_-::+:e_j:e^k\left[e_k,\left[{e^j},e_i\right]_-\right]_-::+:e_j:e^k\left[{e_i},\left[e^j,e_k\right]_-\right]_{l}::\right) \\
			& +\chi k\left(2:e^j\left[e_i,e_j\right]_{l}:+:e_j\left[{e_i},e^j\right]_{l}:+2:e_j\left[e_i,e^j\right]_-:\right) \\
			& + \lambda k\left(2\left[\left[e^j,e_i\right]_{\overline{l}},e_j\right]_{\overline{l}}+\left[\left[{e^j},e_i\right]_{l},e_j\right]_{l}+2\left[\left[e_j,e_i\right]_{l},e^j\right]_{l}+2\left[\left[e^j,e_i\right]_-,e_j\right]_{l}+2\left[\left[e^j,e_i\right]_{\overline{l}},e_j\right]_-\right),
		\end{split}
	\end{equation*}
where we have used the identities \eqref{eq:cuasiconaso1} and \eqref{eq:cuasiconaso2}, combined with invariance, antisymmetry and the Jacobi identity on $\g$, to add together all the double normally ordered products. Moreover, we have used the identity given in Lemma \ref{lem:tech3} to simplify all the terms that do not have any part in $V_-$.
	
The terms of $\left[{e_i}_\Lambda{H_0}\right]-\left(\lambda+\chi S\right)ke_i$ that are linear in $\chi$ and $\lambda$ can be simplified as follows. Using antisymmetry and invariance on $\g$, we have
\begin{equation*}
		\begin{split}
			k\partial_{\chi}\left(\left[{e_i}_\Lambda{H_0}\right]-\left(\lambda+\chi S\right)e_i\right) & = -:e_j\left[{e_i},e^j\right]:-:e^j\left[{e_i},e_j\right]_{l}: \\
			&+2:e^j\left[e_i,e_j\right]_{l}:+:e_j\left[{e_i},e^j\right]_{l}:+2:e_j\left[e_i,e^j\right]_-: \\
			& = :e_j\left[e_i,e^j\right]_-:,\\
			k\partial_{\lambda}\left(\left[{e_i}_\Lambda{H_0}\right]-\left(\lambda+\chi S\right)e_i\right) & = -\left[\left[{e_j},e_i\right]_{l},e^j\right]-\left[\left[{e^j},e_i\right],e_j\right]+2\left[\left[e^j,e_i\right]_-,e_j\right]_{l}\\
			& + 2\left[\left[e^j,e_i\right]_{\overline{l}},e_j\right]_- + 2\left[\left[e^j,e_i\right]_{\overline{l}},e_j\right]_{\overline{l}}+\left[\left[{e^j},e_i\right]_{l},e_j\right]_l+2\left[\left[e_j,e_i\right]_{l},e^j\right]_{l} \\
			& = \left[\left[e^j,e_i\right]_-,e_j\right]_{l}+\left[\left[{e_i},e^j\right]_-,e_j\right]_-,
		\end{split}
	\end{equation*}
where in the last expression we have used the involutivity of $l$ and $\overline{l}$. From this, we obtain
	\begin{equation*}
		\begin{split}
			\left[{e_i}_\Lambda{H_0}\right] & = \frac{1}{k}\Upsilon_i^1+\frac{1}{k^2}\Upsilon_i^2 \\
			& = \left(\lambda+\chi S\right)e_i + \frac{1}{k}\left(:e_j\left(S\left[{e_i},e^j\right]_-\right): + :\left[{e_i},e^j\right]_-\left(Se_j\right):\right.\\
			&+\left.\chi:e_j\left[e_i,e^j\right]_-:+\lambda\left(\left[\left[e^j,e_i\right]_-,e_j\right]_{l}+\left[\left[{e_i},e^j\right]_-,e_j\right]_-\right)\right)\\
			&+\frac{2}{k^2}\left(:e_j:e_k\left[{e^j},\left[e_i,e^k\right]_-\right]_-::+:e_j:e_k\left[\left[{e_i},e^j\right]_-,e^k\right]_{\overline{l}}::\right.\\
			&+\left.:e_j:\left[{e_i},e^k\right]_-\left[e^j,e_k\right]_-::+ :e_j:e^k\left[e_k,\left[{e^j},e_i\right]_-\right]_-:: + :e_j:e^k\left[{e_i},\left[e^j,e_k\right]_-\right]_{l}::\right).
		\end{split}
	\end{equation*}
Finally, to prove \eqref{eq:NSbasis10}, we use that $S$ is an antiderivation of the product $::$ and \eqref{eq:cuasicon2}:
	\begin{equation*}
		\begin{split}
			\left[{H_0}_\Lambda {e_i}\right] & = -\left[{e_i}_{-\Lambda-\nabla}{H_0}\right] \\
			& = -\frac{2}{k^2}\left(:e_j:e_k\left[\left[e_i,e^j\right]_-,e^k\right]_{\overline{l}}::\right.\\
			&+:e_j:e_k\left[\left[e_i,e^j\right]_-,e^k\right]_-::+:\left[e_i,e^j\right]_-:e_k\left[e_j,e^k\right]_-::\\
			&+\left.:e_j:e^k\left[e_i,\left[e^j,e_k\right]_-\right]_{l}::+:e^j:e_k\left[\left[e^k,e_i\right]_-,e_j\right]_-::\right)\\
			&+\frac{1}{k}\left(\chi:e_j\left[e_i,e^j\right]_-:-2:e_j\left(S\left[e_i,e^j\right]_-\right):+2T\left[e_j,\left[e_i,e^j\right]_-\right]_{l}\right.\\
			&+\left.\lambda\left(\left[e_j,\left[e_i,e^j\right]_-\right]_{l}+\left[\left[e_i,e^j\right]_-,e_j\right]_-\right)\right)+\left(\lambda+2T+\chi S\right)e_i.
		\end{split}
	\end{equation*}
To conclude, we compute $\left[{H'}_\Lambda {e_i}\right]$ calculating $\left[\left({H'-H_0}\right)_\Lambda {e_i}\right]$ by sesquilinearity. Indeed,
	\begin{equation*}
		\begin{split}
			\left[{Tw}_\Lambda e_i\right] & = -\lambda\left[{w}_\Lambda e_i\right]=\lambda\left(\left[e_i,w\right]-\chi k\qf{e_i}{w}\right).
		\end{split}
	\end{equation*}
which implies \eqref{eq:NSbasis1}.
\end{proof}

We are ready to state the first main result of our paper.

\begin{theorem}\label{th:N=2}\label{th:pasofin}
Assume that $l \oplus \overline{l}$ satisfies the $F$-term equation \eqref{eq:Fterm} and the $D$-term gravitino equation \eqref{eq:Dtermgrav}, and that 
$$
w \in [l,l]^\perp \cap [\overline{l},\overline{l}]^\perp.
$$
Then the vectors 
\begin{align*}
			J_0 & = \frac{i}{k}:e^je_j:,\\
			H' & = \frac{1}{k}\left( :e_j\left(Se^j\right):+:e^j\left(Se_j\right):\right)\\
			& +\frac{1}{k^2}\left(:e_j:e^k\left[e^j,e_k\right]::+:e^j:e_k\left[e_j,e^k\right]::\right. \\
			&\left.-:e_j:e_k\left[e^j,e^k\right]::-:e^j:e^k\left[e_j,e_k\right]::\right) + \frac{1}{k}Tw,
\end{align*}
induce an embbeding of the $N=2$ superconformal vertex algebra with central charge $c = 3 \dim l$ into the universal superaffine vertex algebra $V^k(\g_{\text{super}})$ with level $0 \neq k \in \mathbb{C}$.
\end{theorem}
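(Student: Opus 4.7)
By Remark \ref{rem:tech}, to obtain an embedding of the $N=2$ superconformal vertex algebra with central charge $c_0 = 3 \dim l$ into $V^k(\g_{\text{super}})$, it suffices to verify the two defining identities in \eqref{eq:N2SCVA} for the pair $(H',J_0)$. The first of these, $[{J_0}_\Lambda J_0] = -(H' + \tfrac{\lambda\chi}{3}c_0)$, is precisely the content of Proposition \ref{prop:pasoprimero} and holds without any assumption on $l \oplus \overline{l}$. The whole content of the theorem therefore reduces to the remaining identity
\begin{equation*}
[{H'}_\Lambda J_0] = (2T + 2\lambda + \chi S) J_0,
\end{equation*}
which is \eqref{eq:N2SCVAparte2}. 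The plan is to verify this directly using the preparatory calculations in Lemma \ref{lem:pasointermedio}.

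The starting point is Wick's formula \eqref{eq:Wick} applied to $J_0 = \frac{i}{k}:\!e^j e_j\!:$, which, together with the fact that $H'$ is even and $e^j$ is odd, gives
\begin{equation*}
[{H'}_\Lambda J_0] = \frac{i}{k}\Bigl(:\![{H'}_\Lambda e^j]\,e_j\!: \,-\, :\!e^j\,[{H'}_\Lambda e_j]\!: \,+\, \int_0^\Lambda d\Gamma\,[[{H'}_\Lambda e^j]_\Gamma e_j]\Bigr).
\end{equation*}
Into this expression I would substitute the explicit formulas \eqref{eq:NSbasis1}--\eqref{eq:NSbasis2} of Lemma \ref{lem:pasointermedio}, which were derived under the $F$-term hypothesis alone. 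The result is a long sum of triple normally ordered products in the $e_j$'s and $e^j$'s (coming from $H_0$), double normally ordered products carrying a factor of $\chi$ or $\lambda$, a handful of terms linear in $T$, $S$, $\lambda$, $\chi$, together with the $w$-contributions $\frac{\lambda}{k}[\,\cdot\,,w]$ and $-\lambda\chi\qf{\cdot}{w}$ produced by the $\frac{1}{k}Tw$ summand of $H'$.

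The main step is to show that under the three hypotheses of the theorem this sum collapses to $\frac{i}{k}(2T + 2\lambda + \chi S):\!e^j e_j\!:$. The $F$-term equation \eqref{eq:Fterm} is what enables the cubic (triple-normal-ordered) contributions to be rearranged via invariance of $\qf{\cdot}{\cdot}$, the Jacobi identity, and the quasicommutativity/quasiassociativity relations \eqref{eq:cuasicon}--\eqref{eq:cuasiaso}, since only brackets that preserve the splitting $l \oplus \overline{l}$ then contribute. The $D$-term gravitino equation \eqref{eq:Dtermgrav}, equivalent to $\sum_j[\epsilon_j,\overline{\epsilon}_j]_{-}=0$, eliminates the residual $V_-$-valued sums left over at the quadratic level. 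Finally, the orthogonality condition $w \in [l,l]^\perp \cap [\overline{l},\overline{l}]^\perp$ is used precisely to kill the residual $\lambda\chi$ coefficient: after summing $-\lambda\chi\qf{e_j}{w}\,e^j$ together with its $\overline{l}$-dual and combining with $\frac{\lambda}{k}[e_j,w]$ via the Wick integral, the surviving scalar pairs $w$ against the derived subspaces $[l,l]$ and $[\overline{l},\overline{l}]$ and vanishes by the assumed orthogonality.

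The main obstacle is not conceptual but bookkeeping: the calculation produces dozens of triple normally ordered monomials whose signs, orderings and bracketings have to be tracked with care, and each application of \eqref{eq:cuasicon}--\eqref{eq:cuasiaso} introduces additional $T$-correction terms that must be collected. The cleanest strategy is to organize the verification by total power of $k^{-1}$, i.e.\ by the number of structure constants appearing, handling separately the $k^{-2}$, $k^{-1}$ and $k^0$ contributions; at each order, the technical identities of Lemma \ref{lem:tech1} and the Jacobi-type rearrangements already exploited in Lemma \ref{lem:tech3} are the workhorses that bring the sum to canonical form.
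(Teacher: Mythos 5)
Your strategy coincides with the paper's: reduce via Remark \ref{rem:tech} and Proposition \ref{prop:pasoprimero} to the single identity \eqref{eq:N2SCVAparte2}, expand $[{H'}_\Lambda :\!e^je_j\!:]$ by the non-commutative Wick formula, substitute the brackets computed in Lemma \ref{lem:pasointermedio}, and show everything collapses to $(2\lambda+2T+\chi S):\!e^je_j\!:$. Two caveats. First, your proposal defers the entire cancellation, which is where essentially all of the work in the paper's proof lies: the paper organizes the expansion as an integral term $I=I_1+I_2+I_3$ plus a product term $P=P_1+P_2+P_3$, and grinds each piece down using Lemmas \ref{lem:tech1}, \ref{lem:tech3} and \ref{lem:tech4} (the decomposition of the quartic terms into $Q_1,\dots,Q_5$ and the quadratic terms into $R_1,\dots,R_4$ is the delicate part), so as written your argument is a correct plan rather than a proof. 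Second, you slightly misplace where the hypothesis $w\in[l,l]^\perp\cap[\overline{l},\overline{l}]^\perp$ enters: in the paper it is not used to kill a residual $\lambda\chi$ coefficient (the $\lambda\chi\,[e^j,e_j]_+$ contributions cancel between $I$ and $P_2$ without it), but rather to annihilate the $\overline{l}$-component of $[e_j,w]$ and the $l$-component of $[e^j,w]$, via invariance of $\qf{\cdot}{\cdot}$, in the $\lambda/k$-linear part of $P_2$ coming from the summand $\tfrac{1}{k}Tw$ of $H'$; this is what allows the Jacobi identity to finish the computation of $P$.
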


\begin{proof}
By the non-commutative Wick formula, it suffices to calculate
	\begin{equation*}
		\begin{split}
			\left[{H'}_\Lambda:e^ie_i:\right] & = :\left[{H'}_\Lambda e^i\right]e_i:+(-1)^{\left(\left|H'\right|+1\right)\left|e^i\right|}:e^i\left[{H'}_\Lambda e_i\right]: + \int_0^\Lambda d\Gamma \left[\left[{H'}_\Lambda e^i\right]_\Gamma e_i\right] : \\
			& = P + I,
		\end{split}
	\end{equation*}
where $I$ is the third summand of the right-hand side in the first line. Firstly, we compute
	\begin{equation*}
		\begin{split}
			I = \int_0^\lambda \left(I_1+I_2+I_3\right)d\lambda,
		\end{split}
	\end{equation*}
	where
	\begin{equation*}
		\begin{split}
			I_1 & = \partial_{\eta}\left[{A^i}_\Gamma e_i\right],\\
			I_2 & = \partial_{\eta}\left[{B^i}_\Gamma{e_i}\right],\\
			I_2 & = \partial_{\eta}\left[{C^i}_\Gamma{e_i}\right],
		\end{split}
	\end{equation*}
	are given by
	\begin{equation*}
		\begin{split}
			A^i & = \frac{1}{k}\left(\chi:e^j\left[e^i,e_j\right]_-:-2:e^j\left(S\left[e^i,e_j\right]_-\right):\right)-\frac{2}{k^2}\left(:e^j:e^k\left[\left[e^i,e_j\right]_-,e_k\right]_{l}::\right.\\
			&+:e^j:e^k\left[\left[e^i,e_j\right]_-,e_k\right]_-::+:\left[e^i,e_j\right]_-:e^k\left[e^j,e_k\right]_-::\\
			&+\left.:e^j:e_k\left[e^i,\left[e_j,e^k\right]_-\right]_{\overline{l}}::+:e_j:e^k\left[\left[e_k,e^i\right]_-,e^j\right]_-::\right),\\
			B^i & = \frac{1}{k}\left(\lambda\left(\left[e^j,\left[e^i,e_j\right]_-\right]_{\overline{l}}+\left[\left[e^i,e_j\right]_-,e^j\right]_-\right)+2T\left[e^j,\left[e^i,e_j\right]_-\right]_{\overline{l}}\right),\\
			C^i & = \frac{\lambda}{k}\left[e^i,w\right]+\left(\lambda+2T+\chi S\right)e^i.
		\end{split}
	\end{equation*}
We proceed as follows. By commutativity of the $\Lambda$-bracket, we can write
	\begin{equation*}
		\begin{split}
			I_1 & = (-1)^{\left|A^i\right|\left|e_i\right|} \partial_{\eta} \left[{e_i}_{-\Gamma-\nabla}A^i\right] = \partial_{\eta}\left[{e_i}_\Gamma A^i\right]\\
			& = \frac{1}{k}\left(I_1^1-2I_1^2\right)-\frac{2}{k^2}\left(I_1^3+I_1^4+I_1^5+I_1^6+I_1^7\right),
		\end{split}
	\end{equation*}
	where
	\begin{equation*}
		\begin{split}
			I_1^1 & = \partial_{\eta}\left[{e_i}_\Gamma\left(\chi:e^j\left[e^i,e_j\right]_-:\right)\right] = \partial_{\eta}\left(\chi\left[{e_i}_\Gamma:e^j\left[e^i,e_j\right]_-:\right]\right) = - \chi k\left[e^j,e_j\right]_- = 0,\\
			I_1^2 & = \partial_{\eta}\left[{e_i}_\Gamma:e^j\left(S\left[e^i,e_j\right]_-\right):\right] = kS\left[e^j,e_j\right]_--:e^j\left[{e_i},\left[e^i,e_j\right]_-\right]: = -:e^j\left[{e_k},\left[e^k,e_j\right]_-\right]:,\\
			I_1^3 & = \partial_{\eta}\left[{e_i}_\Gamma:e^j:e^k\left[e_j,\left[e^i,e_k\right]_-\right]_{l}::\right] = k\left(:e^k\left[e_j,\left[e^j,e_k\right]_-\right]_{l}:-:e^j\left[e_j,\left[e^k,e_k\right]_-\right]_{l}:\right) \\ 
			& =  k:e^j\left[e_k,\left[e^k,e_j\right]_-\right]_{l}:,\\
			I_1^4 & = \partial_{\eta}\left[{e_i}_\Gamma:e^j:e^k\left[\left[e^i,e_j\right]_-,e_k\right]_-::\right] = k\left(:e^k\left[\left[e^j,e_j\right]_-,e_k\right]_-:-:e^j\left[\left[e^k,e_j\right]_-,e_k\right]_-:\right) \\ 
			& = k:e^j\left[e_k,\left[e^k,e_j\right]_-\right]_-:,\\
			I_1^5 & = \partial_{\eta}\left[{e_i}_\Gamma:\left[e^i,e_j\right]_-:e^k\left[e^j,e_k\right]_-::\right] = -k:\left[e^k,e_j\right]_-\left[e^j,e_k\right]_-:\\
			& = k:\left[e^j,e_k\right]_-\left[e^k,e_j\right]_-: = 0,\\
			I_1^6 & = \partial_{\eta}\left[{e_i}_\Gamma:e^j:e_k\left[e^i,\left[e_j,e^k\right]_-\right]_{\overline{l}}::\right] = k\left(:e_k\left[e^j,\left[e_j,e^k\right]_-\right]_{\overline{l}}:- :e^j\left[\left[e_k,e^k\right]_-,e_j\right]_{l}:\right) \\ & = k:e_j\left[e^k,\left[e_k,e^j\right]_-\right]_{\overline{l}}:,\\
			I_1^7 & = \partial_{\eta}\left[{e_i}_\Gamma:e_j:e^k\left[\left[e_k,e^i\right]_-,e^j\right]_-::\right] = k:e_j\left[\left[e_k,e^k\right]_-,e^j\right]_-: = 0.
		\end{split}
	\end{equation*}
These identities are derived combining the non-commutative Wick formula, the $D$-term gravitino equation \eqref{eq:Dtermgrav}, \eqref{eq:cuasicon1}, and the antisymmetry of the Lie bracket $\g$. Combining the previous expressions, we have proved that
	\begin{equation*}
		\begin{split}
			I_1 & = - \frac{2}{k}:e_j\left[e^k,\left[e_k,e^j\right]_-\right]_{\overline{l}}:.
		\end{split}
	\end{equation*}
By sesquilinearity and the $D$-term gravitino equation \eqref{eq:Dtermgrav}, it is straightforward to show 
	\begin{equation*}
		\begin{split}
			I_2 & = \left(\lambda-2\gamma\right)\qf{\left[e^j,\left[e^i,e_j\right]_-\right]_{\overline{l}}}{e_i} = \left(\lambda-2\gamma\right)\qf{\left[e_j,e^k\right]_-}{\left[e^j,e_k\right]_-},\\
			I_3 & = \frac{\lambda}{k}\left(\left(\left.\left[e_j,e^j\right]_{\overline{l}}\right|\left[e^i,e_i\right]_{l}\right)-\left(\left.\left[e_j,e^j\right]_{l}\right|\left[e^i,e_i\right]_{\overline{l}}\right)\right)+\partial_{\eta}\left(\left(\lambda-2\gamma-\chi\eta\right)\left[{e^i}_\Gamma e_i\right]\right) \\
			& =  k\left(\lambda-2\gamma\right)\qf{e^i}{e_i}+\chi\left[e^i,e_i\right]_+,
		\end{split}
	\end{equation*}
which lead us to the formula
	\begin{equation*}
		\begin{split}
			I & = \lambda\chi\left[e^i,e_i\right]_+-2\frac{\lambda}{k}:e_j\left[e^i,\left[e_i,e^j\right]_-\right]_{\overline{l}}:.
		\end{split}
	\end{equation*}
Next, we compute	
$$
	P=P_1+P_2+P_3
$$
in several steps, where
	\begin{equation*}
		\begin{split}
			P_1 & = :\left(\left[{H_0}_\Lambda e^j\right]-\left(\lambda+2T+\chi S\right)e^j\right)e_j: +:e^j\left(\left[{H_0}_\Lambda e_j\right]-\left(\lambda+2T+\chi S\right)e_j\right):,\\
			P_2 & = :\left[\left({H'-H_0}\right)_\Lambda e^j\right]e_j:+:e^j\left[\left({H'-H_0}\right)_\Lambda e_j\right]e_j:,\\
			P_3 & = :\left(\left(\lambda+2T+\chi S\right)e^j\right)e_j:+:e^j\left(\left(\lambda+2T+\chi S\right)e_j\right):.
		\end{split}
\end{equation*}
First, we can compute directly
\begin{equation*}
		\begin{split}
			P_1	& = -\frac{2}{k^2}\left(:e^i\left(:e_j:e_k\left[\left[e_i,e^j\right]_-,e^k\right]_{\overline{l}}::+:e_j:e_k\left[\left[e_i,e^j\right]_-,e^k\right]_-::\right.\right. \\
			&+:\left[e_i,e^j\right]_-:e_k\left[e_j,e^k\right]_-::+:e_j:e^k\left[e_i,\left[e^j,e_k\right]_-\right]_{l}:: \\
			&+\left.:e^j:e_k\left[\left[e^k,e_i\right]_-,e_j\right]_-::\right):+:\left(:e^j:e^k\left[\left[e^i,e_j\right]_-,e_k\right]_{l}::\right. \\
			&+:e^j:e^k\left[\left[e^i,e_j\right]_-,e_k\right]_-::+:\left[e^i,e_j\right]_-:e^k\left[e^j,e_k\right]_-:: \\
			&+\left.\left.:e^j:e_k\left[e^i,\left[e_j,e^k\right]_-\right]_{\overline{l}}::+:e_j:e^k\left[\left[e_k,e^i\right]_-,e^j\right]_-::\right)e_i:\right)  \\
			&+\frac{1}{k}\left(:e^i\left(\chi:e_j\left[e_i,e^j\right]_-:-2:e_j\left(S\left[e_i,e^j\right]_-\right):\right.\right.\\
			&+\left.\lambda\left(\left[e_j,\left[e_i,e^j\right]_-\right]_{l}+\left[\left[e_i,e^j\right]_-,e_j\right]_-\right)+2T\left[e_j,\left[e_i,e^j\right]_-\right]_{l}\right): \\
			&+:\left(\chi:e^j\left[e^i,e_j\right]_-:-2:e^j\left(S\left[e^i,e_j\right]_-\right):\right.\\
			&+\left.\left.\lambda\left(\left[e^j,\left[e^i,e_j\right]_-\right]_{\overline{l}}+\left[\left[e^i,e_j\right]_-,e^j\right]_-\right)+2T\left[e^j,\left[e^i,e_j\right]_-\right]_{\overline{l}}\right)e_i:\right) \\
			&  = -\frac{2}{k^2}Q+\frac{1}{k}R,
		\end{split}
\end{equation*}
where, after reordering the terms and changing indices, we have
\begin{equation*}
		\begin{split}
			Q &= :e^i:e_j:e_k\left[e^j,\left[e_i,e^k\right]_-\right]_{\overline{l}}:::+ :e^i:e_j:e_k\left[\left[e_i,e^j\right]_-,e^k\right]_-:::\\
			& + :e^i:\left[e^j,e_i\right]_-:e_k\left[e^k,e_j\right]_-:::+:e^i:e_j:e^k\left[e_i,\left[e^j,e_k\right]_-\right]_{l}:::\\
			& + :e^i:e^j:e_k\left[\left[e^k,e_i\right]_-,e_j\right]_-::: + ::e^j:e^k\left[e_j,\left[e^i,e_k\right]_-\right]_{l}::e_i:\\
			& + ::e^j:e^k\left[\left[e^i,e_j\right]_-,e_k\right]_-::e_i: + ::\left[e^i,e_j\right]_-:e^k\left[e^j,e_k\right]_-::e_i:\\
			& + ::e^j:e_k\left[e^i,\left[e_j,e^k\right]_-\right]_{\overline{l}}::e_i: + ::e^j:e_k\left[e^k,\left[e_j,e^i\right]_-\right]_-::e_i:\\
			R &= :\left(\chi:e^j\left[e^k,e_j\right]_-:-2:e^j\left(S\left[e^k,e_j\right]_-\right):+\lambda\left(\left[e^j,\left[e^k,e_j\right]_-\right]_{\overline{l}}+\left[\left[e^k,e_j\right]_-,e^j\right]_-\right) \right.\\
			&\left.+2T\left[e^j,\left[e^k,e_j\right]_-\right]_{\overline{l}}\right)e_k:+:e^k\left(\chi:e_j\left[e_k,e^j\right]_-:-2:e_j\left(S\left[e_k,e^j\right]_-\right): \right.\\
			&+\left.\lambda\left(\left[e_j,\left[e_k,e^j\right]_-\right]_{l}+\left[\left[e_k,e^j\right]_-,e_j\right]_-\right)+2T\left[e_j,\left[e_k,e^j\right]_-\right]_{l}\right):.
		\end{split}
	\end{equation*}
Next, we expand $Q$ using the special identities given in Lemma \ref{lem:tech4} (matching the summands in pairs corresponding to the same line). Applying the antisymmetry and invariance on $\g$, combined with \eqref{eq:cuasicon1}, \eqref{eq:cuasiconaso1} and \eqref{eq:cuasiconaso2}, we obtain the following decomposition:
	\begin{equation*}
		\begin{split}
			Q & = Q_1+Q_2+Q_3+Q_4+Q_5,
		\end{split}
	\end{equation*}
where
	\begin{equation*}
		\begin{split}
			Q_1 & = ::e_j:e^k\left[e^i,\left[e^j,e_k\right]_-\right]_{\overline{l}}::e_i:+::e^k:\left[e^i,\left[e^j,e_k\right]_-\right]_{\overline{l}}e_i::e_j:,\\
			Q_2 & = :e^i:e_j:e^k\left[e_i,\left[e^j,e_k\right]_-\right]_{l}:::+ :e^k:\left[e_i,\left[e^j,e_k\right]_-\right]_{l}:e_je^i:::,\\
			Q_3 & = ::e^j:e^k\left[\left[e^i,e_j\right]_-,e_k\right]_-::e_i: + :e^j:e^k:e_i\left[\left[e^i,e_j\right]_-,e_k\right]_-:::,\\
			Q_4 & = ::\left[e^i,e_j\right]_-:e^k\left[e^j,e_k\right]_-::e_i:+:e^k:\left[e^j,e_k\right]_-:e_i\left[e^i,e_j\right]_-:::,\\
			Q_5 & = ::e^j:e_k\left[\left[e^i,e_j\right]_-,e^k\right]_-::e_i: + :e^j:e_i:e_k\left[\left[e_j,e^i\right]_-,e^k\right]_-:::.
		\end{split}
	\end{equation*}
Applying \eqref{eq:cuasiaso3}, \eqref{eq:cuasicon4}, \eqref{eq:cuasiaso2} and \eqref{eq:cuasicon11} to the first summand in this order, we obtain $Q_1 = 0$, while applying \eqref{eq:cuasiconaso1}, \eqref{eq:cuasicon4}, \eqref{eq:cuasiconaso1}, \eqref{eq:cuasiaso3}, \eqref{eq:cuasicon4}, \eqref{eq:cuasiaso2}, \eqref{eq:cuasiaso3}, \eqref{eq:cuasiaso2} and \eqref{eq:cuasicon1} to the first summand in this order, we obtain $Q_2 = 0$. By \eqref{eq:cuasiaso3}, \eqref{eq:cuasiaso2} and \eqref{eq:cuasicon1} to the first summand in this order, we obtain
	\begin{equation*}
		\begin{split}
			Q_3	& = -kT\left(:e^j\left[\left[e^k,e_j\right]_-,e_k\right]_-:\right).
		\end{split}
	\end{equation*}
Similarly, applying \eqref{eq:cuasicon3}, \eqref{eq:cuasiaso2}, \eqref{eq:cuasiaso3}, \eqref{eq:cuasiaso2} and \eqref{eq:cuasicon1} to the first summand in this order, we obtain $Q_4 = 0$, while applying \eqref{eq:cuasiaso3}, \eqref{eq:cuasiaso2}, \eqref{eq:cuasiconaso2} and antisymmetry of the bracket to the first summand in this order, we obtain $Q_5 = 0$. Putting all together, we have that
	\begin{equation*}
		\begin{split}
			Q & = Q_3 = -kT\left(:e^j\left[\left[e^k,e_j\right]_-,e_k\right]_-:\right).
		\end{split}
	\end{equation*}
The next step is to decompose $R$ as follows:
	\begin{equation*}
		\begin{split}
			R & = \chi R_1-2R_2+\lambda R_3+2R_4,
		\end{split}
	\end{equation*}
where
	\begin{equation*}
		\begin{split}
			R_1 & = ::e^j\left[e^k,e_j\right]_-:e_k:-:e^k:e_j\left[e_k,e^j\right]_-::,\\
			R_2 & = ::e^j\left(S\left[e^k,e_j\right]_-\right):e_k:+:e^k:e_j\left(S\left[e_k,e^j\right]_-\right)::,\\
			R_3 & = :\left[e^j,\left[e^k,e_j\right]_-\right]_{\overline{l}}e_k:+:e^k\left[e_j,\left[e_k,e^j\right]_-\right]_{l}:+:\left[\left[e^k,e_j\right]_-,e^j\right]_-e_k:+:e^k\left[\left[e_k,e^j\right]_-,e_j\right]_-:,\\
			R_4 & = :\left(T\left[e^j,\left[e^k,e_j\right]_-\right]_{\overline{l}}\right)e_j:+:e^k\left(T\left[e_j,\left[e_k,e^j\right]_-\right]_{l}\right):.
		\end{split}
	\end{equation*}
Applying \eqref{eq:cuasiaso2} and \eqref{eq:cuasicon1} we have that $R_1 = 0$, while applying \eqref{eq:cuasiaso1} and \eqref{eq:cuasicon2}, we obtain
	\begin{equation*}
		\begin{split}
			R_2 & = T\left(:e^j\left[\left[e^k,e_j\right]_-,e_k\right]:\right).
		\end{split}
	\end{equation*}
By antisymmetry and invariance, we trivially have that
	\begin{equation*}
		\begin{split}
			R_3 & = 2:\left[e^j,\left[e^k,e_j\right]_-\right]_{\overline{l}}e_k:+:\left[\left[e^j,e_k\right]_-,e^k\right]_-e_j:+:e^j\left[\left[e_j,e^k\right]_-,e_k\right]_-:,\\
			R_4
			& = T\left(:e^j\left[\left[e^k,e_j\right]_-,e_k\right]_{l}:\right).
		\end{split}
	\end{equation*}
Putting all together, by Remark \ref{rem:involutivity}, we have that
	\begin{equation*}
		\begin{split}
			R & = \lambda\left(2:\left[e^j,\left[e^k,e_j\right]_-\right]_{\overline{l}}e_k:+:\left[\left[e^j,e_k\right]_-,e^k\right]_-e_j:+:e^j\left[\left[e_j,e^k\right]_-,e_k\right]_-:\right) \\
			& - 2T\left(:e^j\left[\left[e^k,e_j\right]_-,e_k\right]_-\right),
		\end{split}
	\end{equation*}
and we conclude
	\begin{equation*}
		\begin{split}
			P_1 & = \frac{\lambda}{k}\left(2:\left[e^j,\left[e^k,e_j\right]_-\right]_{\overline{l}}e_k:+:\left[\left[e^j,e_k\right]_-,e^k\right]_-e_j:+:e^j\left[\left[e_j,e^k\right]_-,e_k\right]_-:\right).
		\end{split}
	\end{equation*}
Using now the hypothesis $w \in \left[l,l\right]^\perp \cap\left[\overline{l},\overline{l}\right]^\perp$ combined with the properties of invariance and antisymmetry on $\g$, we see that
	\begin{equation*}
		\begin{split}
			P_2 & = \frac{\lambda}{k}\left(:e^j\left[e_j,\left[e^k,e_k\right]_{l}-\left[e^k,e_k\right]_{\overline{l}}\right]:+:\left[e^j,\left[e^k,e_k\right]_{l}-\left[e^k,e_k\right]_{\overline{l}}\right]e_j:\right) \\
			&+ :e^j\left(\lambda\chi\left(e_j\left|\left[e^k,e_k\right]\right.\right)\right):-:\left(\lambda\chi\left(e^j\left|\left[e^k,e_k\right]\right.\right)\right)e_j:\\
			& = -\lambda\chi\left[e^j,e_j\right]_++\frac{\lambda}{k}\left(:\left[e^j,\left[e^k,e_k\right]\right]_-e_j:-:e^j\left[e_j,\left[e^k,e_k\right]\right]_-:\right),
		\end{split}
	\end{equation*}
where, for the last two terms, we have used the $F$-term and $D$-term gravitino equations \eqref{eq:Ftermgrav} and \eqref{eq:Dtermgrav} in order to apply the Jacobi identity for the Lie bracket.
	
At last, using that $T$ is a derivation and $S$ is an antiderivation for the normally ordered product, it is easy to check that
	\begin{equation*}
	\begin{split}
	P_3 & = \left(2\lambda+2T+\chi S\right):e^je_j:.
	\end{split}
	\end{equation*}
To conclude, by the Jacobi identity, we have that
	\begin{equation*}
		\begin{split}
			P & = \left(2\lambda+2T+\chi S\right):e^je_j:-\lambda\chi\left[e^j,e_j\right]_++\frac{2\lambda}{k}:\left[e^j,\left[e^k,e_j\right]_-\right]_{\overline{l}}e_k:\\
			& + \frac{\lambda}{k}\left(:\left[\left[e^j,e_k\right]_-,e^k\right]_-e_j:+:e^j\left[\left[e_j,e^k\right]_-,e_k\right]_-:+:\left[e^j,\left[e^k,e_k\right]\right]_-e_j:-:e^j\left[e_j,\left[e^k,e_k\right]\right]_-:\right)\\
			& = \left(2\lambda+2T+\chi S\right):e^je_j:-\lambda\chi\left[e^j,e_j\right]_++\frac{2\lambda}{k}:\left[e^j,\left[e^k,e_j\right]_-\right]_{\overline{l}}e_k:.
		\end{split}
	\end{equation*}
which, combined with our formula for the integral $I$, now implies
\begin{equation*}
		\begin{split}
			\left[{H'}_\Lambda:e^j e_j:\right] & = \left(2\lambda+2T+\chi S\right):e^je_j:.
		\end{split}
\end{equation*}
This is equivalent to \eqref{eq:N2SCVAparte2}, and hence the statement follows from Proposition \ref{prop:pasoprimero} and Remark \ref{rem:tech}.
\end{proof}

\begin{remark}\label{rem:Getzler}
Under the hypothesis of Theorem \ref{th:N=2}, if we assume further that $l \oplus \overline{l} = \g$, then $(\g,l,\overline{l})$ is a Manin triple and we recover a construction by E. Getzler \cite{Getzler}. Explicit examples where Theorem \ref{th:N=2} applies and such that $l \oplus \overline{l}$ is not a Manin triple will be provided in Section \ref{sec:HS}.
\end{remark}

To prove our second main theorem, we need the following technical lemma. 

\begin{lemma}\label{lem:previothm2}
Assume that $l \oplus \overline{l}$ satisfies the $F$-term equation \eqref{eq:Fterm} and the $D$-term dilatino equation \eqref{eq:Dtermdil} and that $\varepsilon \in l \oplus \overline{l}$ is holomorphic (Definition \ref{def:holoiso}). Then
$$
H = H_0,
$$
where the vectors $H$ and $H_0$ are defined in \eqref{eq:NSmasdilatoncasi} and \eqref{eq:NSmas0}, respectively. Furthermore, the constant $c$ in \eqref{eq:centralchargedilatoncasi} equals $c = 3\dim l+\tfrac{12}{k}\qf{e}{e}$.
\end{lemma}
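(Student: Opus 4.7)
The plan is to show $H - H_0 = 0$ by arguing that the $T$-piece and the $S$-piece of the difference $H-H_0$ vanish for separate reasons, and then reading off the central charge as a corollary.

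First, I would unpack the $D$-term dilatino equation \eqref{eq:Dtermdil}. Both sides lie in $l\oplus\overline{l}$ with $-\varepsilon_l\in l$ and $\varepsilon_{\overline{l}}\in\overline{l}$, so projecting onto the two summands separately yields
\begin{equation*}
[\epsilon_j,\overline{\epsilon}_j]_l = -2\varepsilon_l, \qquad [\epsilon_j,\overline{\epsilon}_j]_{\overline{l}} = 2\varepsilon_{\overline{l}}.
\end{equation*}
Passing to the odd copy via $[e_j,e^j]=\Pi[\epsilon_j,\overline{\epsilon}_j]$ and using antisymmetry of the Lie bracket on $\mathfrak{g}$, this gives $[e^j,e_j]_l = 2e_l$ and $[e^j,e_j]_{\overline{l}} = -2e_{\overline{l}}$, so by \eqref{eq:w}
\begin{equation*}
w \;=\; [e^j,e_j]_l-[e^j,e_j]_{\overline{l}} \;=\; 2e.
\end{equation*}
Substituting into \eqref{eq:NSmasdilatoncasi} immediately kills the $T$-terms $\tfrac{1}{k}Tw-\tfrac{2}{k}Te=0$, and substituting into \eqref{eq:centralchargedilatoncasi} gives $\qf{e}{w-e}=\qf{e}{e}$ and hence the claimed central charge $c = 3\dim l+\tfrac{12}{k}\qf{e}{e}$.

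It then remains to show $:[u,e^j]e_j:+:e^j[u,e_j]:=0$. This is where holomorphicity enters: the hypothesis $[\varepsilon,l]\subset l$, combined with the $F$-term inclusion $[\varepsilon_l,l]\subset l$ built into Definition \ref{def:holoiso}, forces $[\varepsilon_{\overline{l}},l]\subset l$, and symmetrically $[\varepsilon_l,\overline{l}]\subset\overline{l}$. Consequently $\ad_u$ preserves both $l$ and $\overline{l}$ separately, so one can expand $[u,e_j]=A_j^k e_k$ and $[u,e^j]=B^j_k e^k$ with scalar coefficients. Invariance of $\qf{\cdot}{\cdot}$ then gives the duality relation $A_j^k=-B^k_j$, and a relabeling of dummy indices shows
\begin{equation*}
B^j_k:e^k e_j: + A_j^k:e^j e_k: \;=\; B^j_k:e^k e_j: - B^k_j:e^j e_k: \;=\; 0.
\end{equation*}
Combining the two steps yields $H = H_0$.

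The only non-routine input is the clean projection of \eqref{eq:Dtermdil} that produces $w=2e$; once this is in hand, the $S$-term cancellation is a formal reflection of the fact that the Casimir-like bivector $\sum_j \overline{\epsilon}_j\otimes \epsilon_j\in\overline{l}\otimes l$ is invariant under any derivation of $\mathfrak{g}$ preserving $l$ and $\overline{l}$ separately, which is exactly the content of the holomorphicity hypothesis on $\varepsilon$.
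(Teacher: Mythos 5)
Your proof is correct and follows essentially the same route as the paper's: the $D$-term dilatino equation gives $w=2e$, which cancels the $T$-terms and yields the central charge, while holomorphicity (together with the $F$-term inclusions implicit in Definition \ref{def:holoiso}) forces $\ad_u$ to preserve $l$ and $\overline{l}$, so that $:[u,e^j]e_j:+:e^j[u,e_j]:$ vanishes by the same invariance/relabelling identity the paper invokes. The only difference is presentational: you spell out the coefficient computation that the paper compresses into the statement $:[a,e^j]_{\overline{l}}e_j:+:e^j[a,e_j]_l:=0$.
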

\begin{proof}
The holomorphicity of $\varepsilon \in l \oplus \overline{l}$ implies that
	\begin{equation}\label{eq:NSmasdilaton}
		H = H'-\frac{2}{k}Te = H_0+\frac{1}{k}T(w -2e).
	\end{equation}
This follows from the basic identity $:\left[a,e^j\right]_{\overline{l}}e_j:+:e^j\left[a,e_j\right]_l:=0$ for all $a \in \Pi \mathfrak{g}$ (by invariance of $\qf{\cdot}{\cdot}$ and antisymmetry of $\left[\cdot,\cdot\right]$). Note also that the $D$-term dilatino equation \eqref{eq:Dtermdil} is equivalent to  $\left[e^j,e_j\right]_+ = 2u$ and, equivalently, to
\begin{equation}\label{eq:Dtermnewnot}
w = 2e.
\end{equation}
The lemma follows from \eqref{eq:NSmasdilaton} and \eqref{eq:centralchargedilatoncasi}.
\end{proof}



\begin{theorem}\label{th:N=2dil}
Assume that $l \oplus \overline{l}$ satisfies the $F$-term equation \eqref{eq:Fterm} and the $D$-term equation \eqref{eq:Dterm}, and that $\varepsilon \in l \oplus \overline{l}$ is holomorphic. Then the vectors 
\begin{align*}
			J & = \frac{i}{k}:e^je_j: - \frac{2}{k}Siu,\\
			H & = \frac{1}{k}\left( :e_j\left(Se^j\right):+:e^j\left(Se_j\right):\right)\\ 
			& +\frac{1}{k^2}\left(:e_j:e^k\left[e^j,e_k\right]::+:e^j:e_k\left[e_j,e^k\right]::\right. \\
			&\left.-:e_j:e_k\left[e^j,e^k\right]::-:e^j:e^k\left[e_j,e_k\right]::\right),
\end{align*}
induce an embbeding of the $N=2$ superconformal vertex algebra with central charge 
\begin{equation}\label{eq:centralchargedilaton}
		c = 3\left(\dim l+\frac{4}{k}\qf{e}{e}\right)
	\end{equation}
into the universal superaffine vertex algebra $V^k(\g_{\text{super}})$ with level $0 \neq k \in \mathbb{C}$.
\end{theorem}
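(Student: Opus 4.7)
By Remark~\ref{rem:tech}, the statement reduces to establishing the two identities
$[J_\Lambda J] = -\bigl(H + \tfrac{\lambda\chi}{3}c\bigr)$ and $[H_\Lambda J] = (2\lambda + 2T + \chi S)J$. The first identity is already contained in the material of the previous subsection: Proposition~\ref{prop:pasoprimdil} gives this bracket in terms of the vector in~\eqref{eq:NSmasdilatoncasi} and the constant in~\eqref{eq:centralchargedilatoncasi}, and Lemma~\ref{lem:previothm2} shows that under the $D$-term dilatino equation and holomorphicity of $\varepsilon$ these reduce respectively to the $H = H_0$ and $c = 3(\dim l + \tfrac{4}{k}\qf{e}{e})$ appearing in the statement. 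Hence only the second identity requires new work, and I will obtain it by applying Theorem~\ref{th:N=2} and absorbing the corrections $H - H' = -\tfrac{2}{k}Te$ and $J - J_0 = -\tfrac{2i}{k}Su$ via sesquilinearity.

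The first step is to verify that the hypotheses of Theorem~\ref{th:N=2} hold in our setting. The $F$-term equation~\eqref{eq:Fterm} is given; the $D$-term equation~\eqref{eq:Dterm} implies the $D$-term gravitino equation~\eqref{eq:Dtermgrav} directly, and by the equivalence noted in the proof of Lemma~\ref{lem:previothm2} it also forces $w = 2e = 2\Pi\varepsilon$. Since $\varepsilon$ is holomorphic, Lemma~\ref{lem:orthogonalderived} gives $\varepsilon \in [l,l]^\perp \cap [\overline{l},\overline{l}]^\perp$, so $w$ inherits the same orthogonality property. Consequently Theorem~\ref{th:N=2} applies and yields
\begin{equation*}
[{H'}_\Lambda J_0] = (2\lambda + 2T + \chi S)J_0,
\end{equation*}
where $H' = H + \tfrac{2}{k}Te$ and $J_0 = J + \tfrac{2i}{k}Su$.

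Expanding $[H_\Lambda J]$ bilinearly and using the sesquilinearity axioms~\eqref{eq:sesquiLambda} in the forms $[{Ta}_\Lambda b] = -\lambda[a_\Lambda b]$ and $[a_\Lambda Sb] = -(-1)^{|a|}(S+\chi)[a_\Lambda b]$, the desired identity reduces to the evaluation of the auxiliary brackets $[{H'}_\Lambda u]$, $[e_\Lambda J_0]$ and $[e_\Lambda u]$. The last is immediate from~\eqref{eq:Lambdaaffine}; the second follows from~\eqref{eq:Jintsuper}--\eqref{eq:Jintsub} via the commutativity axiom~\eqref{eq:comLambda}; and the first is obtained from Lemma~\ref{lem:pasointermedio} applied to $u = e_l - e_{\overline{l}}$. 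The crucial simplification is that holomorphicity of $\varepsilon$ combined with the $F$-term equation gives $[e,\Pi l]\subset\Pi l$ and $[e,\Pi\overline{l}]\subset\Pi\overline{l}$, whence $[e_l, \cdot]_- = [e_{\overline{l}}, \cdot]_- = 0$ on the whole basis of $\Pi(l \oplus \overline{l})$. Inspecting~\eqref{eq:NSbasis10}--\eqref{eq:NSbasis2}, essentially every nonlinear term therefore vanishes, leaving only linear-in-$\Lambda$ contributions plus the free pieces $(\lambda + 2T + \chi S)e_l$ and $(\lambda + 2T + \chi S)e_{\overline{l}}$, together with a $\lambda\chi\qf{u}{w}$-type term that is controlled by the isotropy of $l,\overline{l}$ and the relation $w = 2e$.

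The main obstacle is the bookkeeping of signs and factors of $i$ when combining these surviving contributions: one must verify that they assemble into exactly the shift $\tfrac{2i}{k}(2\lambda + 2T + \chi S)Su$ needed to promote $[{H'}_\Lambda J_0] = (2\lambda + 2T + \chi S)J_0$ into $[H_\Lambda J] = (2\lambda + 2T + \chi S)J$. Conceptually, the passage from $(J_0, H')$ to $(J, H)$ is the standard dilaton-type deformation of an $N=2$ structure, and the central-charge shift $\tfrac{12}{k}\qf{e}{e}$ is exactly the one predicted by Proposition~\ref{prop:pasoprimdil}; the cancellation is therefore forced, but it has to be checked term by term using the identities of Appendix~\ref{app:1}. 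Once the identity $[H_\Lambda J] = (2\lambda + 2T + \chi S)J$ is established, Remark~\ref{rem:tech} concludes that $J$ and $H$ generate an $N=2$ superconformal vertex subalgebra of $V^k(\g_{\text{super}})$ with the claimed central charge.
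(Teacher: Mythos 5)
Your proposal follows essentially the same route as the paper's proof: the reduction via Remark~\ref{rem:tech}, Proposition~\ref{prop:pasoprimdil} and Lemma~\ref{lem:previothm2}, the verification that Theorem~\ref{th:N=2} applies (using $w=2e$ and Lemma~\ref{lem:orthogonalderived}), and the sesquilinearity expansion of $[H_\Lambda J]$ into the auxiliary brackets $[{H'}_\Lambda u]$, $[e_\Lambda J_0]$, $[e_\Lambda u]$ computed from Lemma~\ref{lem:pasointermedio}, \eqref{eq:Jintsuper}--\eqref{eq:Jintsub} and \eqref{eq:Lambdaaffine}, with holomorphicity and $w=2e$ driving the cancellations. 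The only difference is that you assert the final term-by-term cancellation is ``forced'' rather than carrying it out, whereas the paper performs this short explicit computation; all the ingredients you list are the correct ones.
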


\begin{proof}
The proof reduces to checking the identity \eqref{eq:N2SCVAparte2dil}. By sesquilinearity, we have
	\begin{equation*}
		\begin{split}
			\left[{H}_\Lambda{J}\right] & = \left[{H'}_\Lambda{J_0}\right]-i\frac{2}{k}\left[{H'}_\Lambda Su\right]-\frac{2}{k}\left[{Te}_\Lambda{J_0}\right]+i\frac{4}{k^2}\left[{Te}_\Lambda Su\right] \\
			& = \left[{H'}_\Lambda{J_0}\right]-i\frac{2}{k}\left(S+\chi\right)\left[{H'}_\Lambda u\right]+\frac{2}{k}\lambda\left[{e}_\Lambda{J_0}\right]-i\frac{4}{k^2}\lambda\left(\chi+S\right)\left[{e}_\Lambda u\right].
		\end{split}	
	\end{equation*}
The first summand corresponds to the identity \eqref{eq:N2SCVAparte2}, which we proved in Theorem \ref{th:N=2}. We next compute the other summands in a series of steps. 

By hypothesis $\varepsilon$ is an infinitesimal isometry (Definition \ref{def:holoiso}), and therefore
\begin{equation}\label{eq:twoholomorphic}
		\left[e_l,\overline{l}\right] \subset l \oplus \overline{l} \quad \text{and} \quad \left[e_{\overline{l}},l\right] \subset l \oplus \overline{l}.
	\end{equation}
By sesquilinearity and Lemma \ref{lem:pasointermedio}, using the equations \eqref{eq:twoholomorphic} above, we have
	\begin{equation*}
		\begin{split}
			\left[{H'}_\Lambda u\right] & = \qf{e}{e^i}\left[{H'}_\Lambda e_i\right]-\qf{e}{e_i}\left[{H'}_\Lambda e^i\right] = \left(\lambda+2T+\chi S\right)u+ \frac{\lambda}{k}\left[u,w\right]_++\lambda\chi\qf{e}{\left[e^j,e_j\right]}.
		\end{split}
	\end{equation*}
By \eqref{eq:Jintsub} and \eqref{eq:Jintsuper}, and using that  $\varepsilon$ is an infinitesimal isometry, we have the following:
	\begin{equation*}
		\begin{split}
			\left[{e}_\Lambda{J_0}\right] & = \qf{e}{e^k}\left[{e_k}_\Lambda{J_0}\right]+\qf{e}{e_k}\left[{e^k}_\Lambda{J_0}\right] \\
			&= \frac{i}{k}\left(\qf{e}{e^k}\left(:\left[e_k,e^j\right]e_j:+:e^j\left[e_k,e_j\right]:+ k\chi e_k+k\lambda\qf{e_k}{\left[e^j,e_j\right]}\right)\right. \\
			& +\left. \qf{e}{e_k}\left(:\left[e^k,e^j\right]e_j:+:e^j\left[e^k,e_j\right]:-k\chi e^k+k\lambda\qf{e^k}{\left[e^j,e_j\right]}\right)\right) \\
			& = i\chi u+i\lambda\qf{e}{\left[e^j,e_j\right]}.
		\end{split}
	\end{equation*} 
This follows from the basic identity $:\left[a,e^j\right]_{\overline{l}}e_j:+:e^j\left[a,e_j\right]_l:=0$ for all $a \in \mathfrak{g}$ (see the proof of Lemma \ref{lem:previothm2}). Using again the infinitesimal isometry condition \eqref{eq:isometry}, a simple computation shows that
	\begin{equation*}
		\begin{split}
			\left[{e}_\Lambda u\right] = \left[e,u\right]+k\chi\qf{e}{u} = \left[e,u\right]_+.
		\end{split}
	\end{equation*}
	\noindent
Applying now the $D$-term equation \eqref{eq:Dtermnewnot}, we obtain the required identity \eqref{eq:N2SCVAparte2dil}:
	\begin{equation*}
		\begin{split}
			\left[{H}_\Lambda J\right] & = \left[{H'}_\Lambda{J_0}\right]-i\frac{2}{k}\left(S+\chi\right)\left[{H'}_\Lambda u\right]+\frac{2}{k}\lambda\left[{e}_\Lambda{J_0}\right]-i\frac{4}{k^2}\lambda\left(\chi+S\right)\left[{e}_\Lambda u\right] \\
			& = -i\frac{4}{k^2}\lambda\left(S+\chi\right)\left(\left[u,e\right]_++\left[e,u\right]_+\right) + \left(2\lambda+2T+\chi S\right)J \\
			& = \left(2\lambda+2T+\chi S\right)J.
		\end{split}
	\end{equation*}
The formulae for $H$ and the central charge follow from Lemma \ref{lem:previothm2}.
\end{proof}
\noindent

\begin{remark}
The embeddings of Theorems~\ref{th:N=2} and~\ref{th:N=2dil} do not neccesarily induce on $V^k(\mathfrak{g}_{\rm super})$ an `$N=2$ superconformal structure', i.e., a structure of $N=2$ superconformal vertex algebra in the sense of~\cite[Def. 5.6]{SUSYVA}. In fact, defining the Fourier modes $L_n$ associated to $H'$ (for Theorem~\ref{th:N=2}) or $H$ (for Theorem~\ref{th:N=2dil}) as in Example~\ref{exam:NS}, $L_{-1}$ may not be the translation operator $T$ of $V^k(\mathfrak{g}_{\rm super})$, while $L_0$ may not act semisimply on $V^k(\mathfrak{g}_{\rm super})$.
\end{remark}

\begin{remark}
Assume that $l\oplus\overline{l}$ satisfies the $F$-term equation \eqref{eq:Fterm} and the $D$-term gravitino equation \eqref{eq:Dtermgrav} and that $\varepsilon \in l\oplus\overline{l}$ is holomorphic. Then, by the proof of the previous theorem, there is an embedding of the $N=2$ superconformal vertex algebra generated by $(J,H)$ with central charge \eqref{eq:centralchargedilatoncasi}, where $H$ is defined by \eqref{eq:NSmasdilaton}, on the universal superaffine vertex algebra $V^k(\g_{\text{super}})$ with level $0 \neq k \in \mathbb{C}$.
\end{remark}

\begin{remark}\label{rem:HZ}
It should be noticed that the expressions for the elements $J$ and $H$ in Theorem \ref{th:N=2dil} are formally very similar to the corresponding ones in \cite[Theorem 2]{StructuresHeluani}, but their interpretation is different. On the one hand, our expression for the fields uses a global frame for $l$ and $\overline{l}$, which is not required in \cite{StructuresHeluani}. On the other hand, the result here is much more general in that it does not need two Killing spinors for the construction of the $N=2$ structure, neither a globally defined \emph{dilaton function} (cf. Remark \ref{rem:Zquotient}). We thank an anonymous referee for this comment.
\end{remark}

\section{$(0,2)$ mirror symmetry}
\label{sec:Tdual}

\subsection{Killing spinors on exact Courant algebroids}\label{ssec:KsCourant}

In this section we study the Killing spinor equations on an exact Courant algebroid \cite{GF3,grt}, for Riemannian generalized metrics and a special class of divergences called \emph{closed} \cite{GFStreets}. As we will see in Proposition \ref{prop:geomalg}, restricting to invariant solutions on a homogeneous manifold, we obtain Killing spinors on a quadratic Lie algebra in the sense of Definition \ref{def:killing}. These are the basic ingredients of our construction of (0,2) mirrors in Section \ref{02example}.

Let $M$ be a smooth oriented spin manifold endowed with an exact Courant algebroid (see, e.g., \cite{GFStreets})
\begin{equation}\label{eq:exactE}
0 \longrightarrow T^* \overset{\pi^*}{\longrightarrow} E \overset{\pi}{\longrightarrow} T \longrightarrow 0.
\end{equation}
We will denote the indefinite pairing and the Dorfman bracket on sections of $E$ by $\langle , \rangle$ and $[,]$, respectively. Recall that a choice of isotropic splitting $s \colon T \to E$ of \eqref{eq:exactE} induces a vector bundle isomorphism $E \cong T \oplus T^*$, such that  
\begin{equation}\label{eq:pairingbracketexp}
\langle X + \xi, X + \xi \rangle = \xi(X), \qquad [X+ \xi,Y + \eta] = [X,Y] + L_X \eta - i_Y d\xi + i_Yi_X H,
\end{equation}
where $H \in \Omega^3(M)$ is a closed three-form determined by $s$. The Killing spinor equations on $E$ are defined in terms of a pair $(V_+,\operatorname{div})$, as given in the following definition.

\begin{definition}\label{def:GmetricE}
\hfill

\begin{enumerate}
\item A (Riemannian) generalized metric on $E$ is an orthogonal decomposition $E = V_+ \oplus V_-$, so that the restriction of $\langle \cdot,\cdot\rangle$ to $V_+$ is positive definite.
	
\item A divergence operator on $E$ is a map $\operatorname{div} \colon \Gamma(E) \to C^\infty(M)$ satisfying
$$
\operatorname{div}(fa) = f \operatorname{div}(a) + \pi(a)(f)
$$
for any $f \in C^\infty(M)$.

\end{enumerate}
	
\end{definition}

A choice of generalized metric $V_+ \subset E$ determines an isotropic splitting $s \colon T \to E$, and hence a preferred presentation of the Courant algebroid structure as in \eqref{eq:pairingbracketexp}. In particular, in this splitting one has
$$
V_\pm = \{ X \pm g(X) \; | \; X \in T\}
$$
for a uniquely determined Riemannian metric $g$ on $M$. Consequently, the generalized metric has an associated \emph{Riemannian divergence} defined by
\begin{equation}\label{eq:div0}
\operatorname{div}_0(X + \xi) = \frac{L_X \mu_g}{\mu_g}
\end{equation}
where $\mu_g$ is the volume element of $g$. Following \cite{GFStreets}, we introduce next a compatibility condition for pairs $(V_+,\operatorname{div})$. Recall that the space of divergence operators on $E$ is affine, modelled on the space of sections of $E^* \cong E$.

\begin{definition}\label{def:compatibleE}
A pair $(V_+,\operatorname{div})$ is \emph{compatible} if $\la e, \cdot \ra:= \operatorname{div}_0 - \operatorname{div}$ is an infinitesimal isometry of $V_+$, that is,
$$
[e,V_+] \subset V_+.
$$ 
Furthermore, we say that $(V_+,\operatorname{div})$ is closed if $\pi(e) = 0$.	
\end{definition}

Given a compatible pair $(V_+,\operatorname{div})$, we have that $e = X + \varphi$ in the splitting determined by $V_+$ and the infinitesimal isometry condition reads (see \cite[Lemma 2.50]{GFStreets})
\begin{equation}\label{eq:infisometry}
L_Xg = 0, \qquad d\varphi = i_XH .
\end{equation}
Thus the condition of being closed corresponds to $X = 0$ and $d \varphi = 0$.

To introduce the Killing spinor equations on $E$, let us fix a pair $(V_+,\operatorname{div})$ as above. The fixed spin structure on $M$ combined with the isometries 
$$
\sigma_\pm \colon (T,\pm g) \to V_\pm \colon X \to X \pm g(X).
$$ 
determine complex spinor bundles $S_\pm$ for $V_\pm$. Associated to the pair $(V_+,\operatorname{div})$ there are canonical first order differential operators (see \cite{GF3})
\begin{equation}\label{eq:LCspinpuregeom}
\begin{split}
	D^{S_+}_- & \colon \Gamma(S_+) \to \Gamma(V_-^* \otimes S_+), \qquad \slashed D^+ \colon \Gamma(S_+) \to \Gamma(S_+),\\
	D^{S_-}_+ & \colon \Gamma(S_-) \to \Gamma(V_+^* \otimes S_-), \qquad \slashed D^- \colon \Gamma(S_-) \to \Gamma(S_-).
\end{split}
\end{equation}
The operators $D^{S_\pm}_\mp$ correspond to the unique lifts to $S_\pm$ of the metric-preserving operators 
$$
D_{a_-} b_ + = [a_-,b_+]_+, \qquad D_{a_+} b_ - = [a_+,b_-]_-,
$$
acting on sections $a_\pm, b_\pm \in \Gamma(V_\pm)$. The Dirac type operators $\slashed D^\pm$ are more difficult to construct, as they involve torsion-free generalized connections (much like in Section \ref{sec:KSeq}). In the situation of our interest, they can be described 
in terms of the following affine metric connections with totally skew-symmetric torsion (where $\nabla$ is the Levi--Civita connection): 
$$
\nabla^\pm = \nabla \pm \tfrac{1}{2} H, \qquad \nabla^{\pm \tfrac{1}{3}} = \nabla \pm \tfrac{1}{6} H.
$$

\begin{lemma}[\cite{GF3}]\label{lem:DpmexpE}
Let $(V_+,\operatorname{div})$ be a pair given by a generalized metric and a divergence operator. Denote $\operatorname{div}_0 - \operatorname{div} = \la e, \cdot \ra $, and set $\varphi_\pm = g(\pi e_\pm, \cdot) \in T^*$. Then, for any spinor $\eta \in \Gamma(S_\pm)$ one has
\begin{align*}
D_{\sigma_\mp X}^{S_\pm} \eta & = \nabla^\pm_X \eta,\\
\slashed D^\pm \eta & = \slashed \nabla^{\pm \tfrac{1}{3}} \eta - \tfrac{1}{2}\varphi_\pm \cdot \eta.
\end{align*}

\end{lemma}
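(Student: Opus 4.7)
The plan is to reduce both identities to computations already carried out in Section~\ref{sec:KSeq} after fixing the isotropic splitting $s\colon T\to E$ determined by the generalized metric $V_+$. In this splitting, $E\cong T\oplus T^*$, the Dorfman bracket takes the form \eqref{eq:pairingbracketexp} for a closed three-form $H\in\Omega^3(M)$, and $V_\pm = \{X\pm g(X):X\in T\}$ for the uniquely determined Riemannian metric $g$.

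For the first identity, recall that $D^{S_\pm}_\mp$ is by definition the unique lift to $S_\pm$ of the canonical metric-preserving operator $D_{a_\mp}b_\pm = [a_\mp,b_\pm]_\pm$ of Section~\ref{sec:gmetrics}. Applying \eqref{eq:pairingbracketexp} to $a_\mp = X\mp g(X)$ and $b_\pm = Y\pm g(Y)$ and projecting onto $V_\pm$, a routine Koszul-type computation gives
\begin{equation*}
D_{\sigma_\mp X}\,\sigma_\pm Y \;=\; \sigma_\pm\bigl(\nabla^\pm_X Y\bigr),
\end{equation*}
where $\nabla^\pm = \nabla\pm\tfrac12 H$ is the metric connection with skew-symmetric torsion $\pm H$. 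The induced operator on the spinor bundle $S_\pm$ is then precisely $\nabla^\pm_X$, establishing the first formula.

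For the second identity, the strategy is to exhibit a specific $D\in\cD^0(V_+,\operatorname{div})$ mirroring the formula \eqref{eq:Ddiv} in the algebraic setting: its mixed-type parts are fixed by Lemma~\ref{lem:mixedfixed} and spin-lift to $\nabla^\pm$ as above; its pure-type parts are $\tfrac13[a_\pm,b_\pm]_\pm$, whose spin lifts are the Bismut connections $\nabla^{\pm 1/3} = \nabla\pm\tfrac16 H$; and the divergence is implemented by adding $\phi^{\varepsilon_\pm}/(\dim V_\pm-1)$ with $\varepsilon_\pm\in V_\pm$ the $V_\pm$-components of the section of $E$ determined by $\operatorname{div}_0-\operatorname{div}$. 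By Lemma~\ref{lem:dDpm}, $\slashed D^\pm$ is independent of the torsion-free extension, so it may be read off this concrete $D$: the pure-type Bismut piece contributes $\slashed\nabla^{\pm 1/3}\eta$, while the Clifford contraction of $\phi^{\varepsilon_\pm}/(\dim V_\pm-1)$ collapses, by exactly the telescoping identity carried out at the end of the proof of Lemma~\ref{lem:dilatino}, to $-\tfrac12\varepsilon_\pm\cdot\eta$. Under the isometry $\sigma_\pm$, $\varepsilon_\pm$ corresponds to the one-form $\varphi_\pm = g(\pi e_\pm,\cdot)$, yielding the stated formula.

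The main obstacle is bridging the global divergence operator on $\Gamma(E)$ with the purely algebraic divergence of Section~\ref{sec:KSeq}. Concretely, one must verify that the Riemannian divergence $\operatorname{div}_0$ of \eqref{eq:div0} coincides with the divergence determined by the canonical torsion-free $V_+$-compatible connection $D^0$ constructed in the proof of Lemma~\ref{lem:Ddivergence}, i.e.\ the one with pure-type part $\tfrac13[\cdot,\cdot]_\pm$. This reduces to the standard local identity $L_X\mu_g/\mu_g = \operatorname{tr}(\nabla X)$, a direct consequence of torsion-freeness of the Levi-Civita connection; once in place, the pointwise algebraic manipulations of Section~\ref{sec:KSeq} apply verbatim to complete the proof.
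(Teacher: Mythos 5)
The paper does not actually prove this lemma: it is stated with the citation [GF3] and imported wholesale from that reference, so there is no internal proof to compare against. Your reconstruction is nonetheless sound and follows the route taken in [GF3], and it meshes correctly with the algebraic material of Section~\ref{sec:KSeq}: the first identity is the standard Koszul-type computation showing $[\sigma_\mp X,\sigma_\pm Y]_\pm=\sigma_\pm(\nabla^\pm_XY)$ in the splitting determined by $V_+$, and the second follows by evaluating $\slashed D^\pm$ on the explicit element of $\cD^0(V_+,\operatorname{div})$ modelled on \eqref{eq:Ddiv}, using Lemma~\ref{lem:dDpm} to justify that the answer is connection-independent and the telescoping Clifford identity from the end of the proof of Lemma~\ref{lem:dilatino} to extract $-\tfrac12\varphi_\pm\cdot\eta$. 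Two points deserve slightly more care than your phrase ``apply verbatim'' suggests. First, Section~\ref{sec:KSeq} works over a point, whereas here the Dorfman bracket has nonzero anchor, so the claim that the pure-type part of the torsion-free completion $\tilde D-\tfrac13 T_{\tilde D}$ Clifford-contracts to $\slashed\nabla^{\pm 1/3}$ is a genuine (if standard) computation with the three-form $H$ rather than a formal transcription; only the Weyl-term contraction $\phi^{\varepsilon_\pm}\mapsto-\tfrac12\varepsilon_\pm\cdot\eta$ is literally pointwise linear algebra. Second, you correctly isolate the real bridging step, namely that $\operatorname{div}_0$ of \eqref{eq:div0} is the divergence of the canonical torsion-free connection $D^0$; here one should also note that the pure-type term $\tfrac13[\cdot,\cdot]_\pm$ contributes nothing to the trace because its associated tensor is totally skew, so the trace reduces to $\operatorname{tr}(\nabla X)=L_X\mu_g/\mu_g$ as you say. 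With those two remarks supplied, your argument is a complete and correct substitute for the external citation.
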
 

Observe that $D_\mp^{S_\pm}$ are independent of the choice of divergence, while the dependence of $\slashed D^\pm$ on $\operatorname{div}$ is only through the \emph{partial divergence operator}
$$
\operatorname{div}_\pm:= \operatorname{div}_{|V_\pm} \colon \Gamma(V_\pm) \to C^\infty(M).
$$

We are ready to introduce the equations of our interest. 

\begin{definition}\label{def:killingE}
Let $M$ be a smooth oriented spin manifold endowed with an exact Courant algebroid $E$. A triple $(V_+,\operatorname{div}_\pm,\eta)$, given by a generalized metric $V_+$, a partial divergence operator $\operatorname{div}_\pm$, and a non-vanishing spinor $\eta \in S_\pm$, is a solution of the \emph{Killing spinor equations}, if
	\begin{equation}\label{eq:killingE}
		\begin{split}
			D^{S_\pm}_\mp \eta &= 0,\\
			\slashed D^\pm \eta &= 0.
		\end{split}
	\end{equation}
\end{definition}

As we show in our next result, there is a perfect match between the geometric Definition \ref{def:killingE} and the algebraic Definition \ref{def:killing}, provided that we consider invariant solutions of \eqref{eq:killingE} on a homogeneous manifold. 
Assume that $M$ is endowed with the action of a Lie group $K$. Then, we say that $E$ is equivariant if the $K$-action on $M$ lifts to $E$ preserving the Courant algebroid structure.

\begin{proposition}\label{prop:geomalg}
Let $M$ be a smooth oriented spin manifold endowed with a left transitive action of a Lie group $K$. Let $E$ be an exact equivariant Courant algebroid over $M$. Then, the space of invariant sections
$$
\g  = \Gamma(E)^K
$$
of $E$, endowed with the induced bracket and pairing, defines a quadratic Lie algebra. Furthermore, there is a one-to-one correspondence between invariant solutions of \eqref{eq:killingE} and solutions to the Killing spinor equations \eqref{eq:killing} on $\g$.
\end{proposition}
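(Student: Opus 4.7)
The plan is to first establish the quadratic Lie algebra structure on $\g=\Gamma(E)^K$, then match the geometric data (metric, partial divergence, spinor) with their algebraic counterparts by restriction to invariants, and finally verify that the two pairs of operators entering the Killing spinor equations reduce to the algebraic ones on $\g$.

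First I would show that $\g$ is a quadratic Lie algebra. Transitivity of the $K$-action forces every $K$-invariant function on $M$ to be constant, so for $a,b\in\g$ we have $d\qf{a}{b}=0$; combined with the standard Courant algebroid identity $[a,b]+[b,a]=\pi^*d\qf{a}{b}$, this shows that the Dorfman bracket restricts to a skew-symmetric operation on $\g$. The Jacobi identity and the ad-invariance $\qf{[a,b]}{c}+\qf{b}{[a,c]}=\pi(a)\qf{b}{c}=0$ follow directly from the Leibniz-type axioms on $\Gamma(E)$. Evaluation at a point $p\in M$ embeds $\g$ into the fibre $E_p$; in the simply transitive situation relevant for the paper's applications this is an isomorphism, so the non-degenerate pairing on $E_p$ descends to a non-degenerate pairing on $\g$.

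Next I would match the remaining ingredients one by one. An invariant generalized metric $V_+\subset E$ descends to an orthogonal positive-definite decomposition $\g=V_+^\g\oplus V_-^\g$ via $V_\pm^\g:=\Gamma(V_\pm)^K$, and conversely any such algebraic decomposition extends by $K$-translation to an invariant generalized metric on $E$. The invariant Riemannian divergence $\operatorname{div}_0$ restricts to a linear functional on $\g$, so any invariant partial divergence $\operatorname{div}_\pm$ corresponds bijectively to an element $\varepsilon_\pm\in V_\pm^\g$ via the pairing $\operatorname{div}-\operatorname{div}_0=\qf{\varepsilon}{\cdot}$. Finally, the $K$-action trivializes the spinor bundles $S_\pm$, so invariant spinors are in bijection with elements of the algebraic spinor spaces associated to $V_\pm^\g$.

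The main obstacle will be checking that the two pairs of operators agree on invariant data. For $D^{S_\pm}_\mp$ the identification is straightforward: the formula $D_{a_\mp}b_\pm=[a_\mp,b_\pm]_\pm$ from Lemma \ref{lem:mixedfixed} is shared by the algebraic and the geometric constructions, and its spin lift therefore agrees on invariant sections. For $\slashed D^\pm$ the comparison is more delicate: using Lemma \ref{lem:DpmexpE} the geometric operator takes the form $\slashed\nabla^{\pm\frac{1}{3}}\eta-\frac{1}{2}\varphi_\pm\cdot\eta$, and I would need to verify that on an invariant spinor the first term reduces to the Clifford contraction of the torsion-free algebraic connection $D^0$ from the proof of Lemma \ref{lem:Ddivergence}, while the $\varphi_\pm$-term reproduces the $\phi^{\varepsilon_\pm}/(\dim V_\pm-1)$-correction in \eqref{eq:Ddiv}. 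The bookkeeping of the normalizations $\tfrac{1}{3}$, $\tfrac{1}{2}$, and $\tfrac{1}{\dim V_\pm-1}$ is the delicate part; once it is in place, pointwise evaluation at the base point yields the claimed bijection between invariant solutions of \eqref{eq:killingE} and algebraic solutions of \eqref{eq:killing}.
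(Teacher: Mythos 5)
Your outline is correct and its first half (the quadratic Lie algebra structure, and the dictionary between invariant metrics, divergences and spinors and their algebraic counterparts) is exactly the paper's argument, just written out in more detail; the observation that transitivity kills $d\qf{a}{b}$ and hence symmetrizes away the non-skew part of the Dorfman bracket is the right one. Where you diverge is in the comparison of the Dirac operators. You propose to take the explicit geometric formula $\slashed D^\pm\eta=\slashed\nabla^{\pm\frac{1}{3}}\eta-\tfrac{1}{2}\varphi_\pm\cdot\eta$ of Lemma \ref{lem:DpmexpE} and match it, normalization by normalization, against the algebraic connection \eqref{eq:Ddiv} — and you correctly flag this bookkeeping as the delicate part, but you leave it undone. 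The paper avoids this computation entirely by exploiting the intrinsic characterization of both operators: on each side, $\slashed D^\pm$ is the Clifford trace of \emph{any} torsion-free compatible generalized connection with the prescribed divergence, and is independent of the choice (Lemma \ref{lem:dDpm} algebraically, and its analogue from \cite{GF3} geometrically). Since an invariant generalized connection on $E$ over a homogeneous $M$ is literally an element of $\g^*\otimes\Lambda^2\g$, and invariant torsion-free connections with invariant divergence exist (extend \eqref{eq:Ddiv} by translation), the two Dirac operators agree on invariant spinors for structural reasons, with no need to track the constants $\tfrac13$, $\tfrac12$, $\tfrac{1}{\dim V_\pm-1}$. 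Your route would also work if carried through, but the connection-space argument is what makes the proof genuinely short; I would recommend replacing the pending verification by that observation. Your caveat about simple transitivity is reasonable caution, though evaluation at a point identifies $\g$ with the isotropy-invariants of the fibre, on which the pairing is still non-degenerate, so the general transitive case causes no trouble.
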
 

\begin{proof}
The first part of the statement is straightforward from the axioms of a Courant algebroid, transitivity of the action, and the equivariance of $E$. As for the second part, it follows from the natural construction of the operators $D^{S_\pm}_\mp$ and $\slashed D^\pm$ using torsion-free generalized connections \cite{GF3}. Observe here that an invariant generalized connection on $E$ corresponds to an element $D \in \g^* \otimes \Lambda^2 \g$ as in Definition \ref{def:Gconnection}. Similarly, for an invariant pair $(V_+,\operatorname{div}_\pm)$ one has 
\[
\operatorname{div}_{0|V_\pm} - \operatorname{div}_\pm = \la e_\pm, \cdot \ra \in V_\pm^*.
\qedhere\]
\end{proof}

We finish this section with a more explicit description of the solutions of the Killing spinor equations on an even-dimensional manifold for closed pairs (Definition \ref{def:compatibleE}). This will be useful for understanding the salient geometric implications of (0,2) mirror symmetry in the following sections. Recall that, in even dimensions, our spinor bundle decomposes as
$$
S_+ = S_+^+ \oplus S_+^-,
$$
where the factors correspond to irreducible spin representations (cf. Section \ref{sec:KSeq}). The resulting system of equations \eqref{eq:twistedStrom} below was first considered in \cite{grst} in relation to holomorphic Courant algebroids. The next result is a particular case of \cite[Proposition 3.1]{grst2}, but we include a sketch of the proof here for the convenience of the reader.

\begin{proposition}\label{lemma:KillingevenE}
Let $M$ be a smooth oriented spin manifold of dimension $2n$ endowed with an exact Courant algebroid $E$. Then, a solution $(V_+,\operatorname{div}_+,\eta)$ of the Killing spinor equations \eqref{eq:killingE}, with $\operatorname{div}_+ = \operatorname{div}_{|V_+}$ for $(V_+,\operatorname{div})$ closed and $\eta \in \Gamma(S_+^+)$ pure, is equivalent to a $SU(n)$-structure $(\Psi,\omega)$ with integrable almost complex structure $I$ and metric $g = \omega(\cdot, I\cdot)$, such that
\begin{equation}
\label{eq:twistedStrom}
\begin{split}
d \Psi - \theta_\omega \wedge \Psi & = 0,\\
d \theta_\omega & = 0,\\
dd^c \omega  & = 0,
\end{split}
\end{equation}
where $H = - d^c \omega$ and $\tfrac{1}{2}(\operatorname{div}_0 - \operatorname{div}) = \theta_\omega := I d^*\omega$ is the Lee form of $(\Psi,\omega)$.
\end{proposition}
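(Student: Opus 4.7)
The plan is to express everything in the isotropic splitting determined by $V_+$, translate the Killing spinor equations into classical PDEs via Lemma~\ref{lem:DpmexpE}, and then invoke standard results on parallel spinors for connections with skew-symmetric torsion.

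First I would unpack the geometric data. The choice of $V_+$ induces an isotropic splitting $E\cong T\oplus T^*$ as in~\eqref{eq:pairingbracketexp}, hence a Riemannian metric $g$ on $M$ together with a closed three-form $H \in \Omega^3(M)$, with $V_\pm = \{X \pm g(X) : X \in T\}$. At each point, the pure spinor $\eta \in S_+^+$ determines an almost complex structure $J$ on $(T,g)$ compatible with the orientation, together with a line of $(n,0)$-forms; choosing $\Psi$ globally, we obtain an $SU(n)$-structure $(\Psi,\omega)$ with $\omega(X,Y) = g(JX,Y)$, and conversely every such structure arises from a pure spinor that is unique up to a $\mathbb{C}^*$-valued factor.

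Next I would translate the Killing spinor equations. By Lemma~\ref{lem:DpmexpE}, the gravitino equation $D^{S_+}_-\eta=0$ amounts to $\nabla^+\eta=0$, where $\nabla^+=\nabla+\tfrac{1}{2}H$ is the metric connection with skew torsion $H$. The closedness assumption on $(V_+,\operatorname{div})$ means $\pi(e)=0$, so $e=\varphi$ for some $\varphi\in\Omega^1(M)$, and the remaining isometry condition in~\eqref{eq:infisometry} reduces to $d\varphi=0$. A direct computation in the splitting gives $\pi(e_+) = \tfrac{1}{2}g^{-1}(\varphi)$, whence $\varphi_+=\tfrac{1}{2}\varphi$. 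The prescription $\tfrac{1}{2}(\operatorname{div}_0-\operatorname{div}) = \theta_\omega$ in the statement is therefore precisely $\varphi_+ = \theta_\omega$, and Lemma~\ref{lem:DpmexpE} then turns the dilatino equation into $\slashed\nabla^{+\frac{1}{3}}\eta = \tfrac{1}{2}\theta_\omega\cdot\eta$.

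The core of the proof is the classical identification (Strominger~\cite{Strom}, Friedrich--Ivanov) for a parallel pure spinor under a metric connection with skew torsion: the gravitino equation $\nabla^+\eta=0$ is equivalent to the conjunction of (i) $J$ integrable, (ii) $H=-d^c\omega$, and (iii) the existence of a real $1$-form $\alpha$ with $d\Psi = \alpha\wedge\Psi$ (the reduction of the Bismut holonomy from $U(n)$ to $SU(n)$). I would then substitute this information into the dilatino equation and use the standard Clifford-algebraic formula expressing $\slashed\nabla^{+\frac{1}{3}}\eta$ in terms of $H\cdot\eta$, combined with the identity $\theta_\omega = Jd^*\omega$, to conclude $\alpha = \varphi_+ = \theta_\omega$; this produces the first equation $d\Psi - \theta_\omega\wedge\Psi = 0$ of~\eqref{eq:twistedStrom}. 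The remaining two equations are then automatic: $d\varphi=0$ reads $d\theta_\omega=0$, and $dd^c\omega=0$ follows from $dH=0$ (the Jacobi identity for the exact Courant algebroid) together with $H=-d^c\omega$. The converse runs the same equivalences backwards. The main obstacle will be the Friedrich--Ivanov identification and the Clifford computation pinning down $\alpha=\varphi_+$ in the dilatino step; once those are in place, everything else is bookkeeping in the splitting.
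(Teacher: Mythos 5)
Your overall architecture — pass to the isotropic splitting determined by $V_+$, use Lemma~\ref{lem:DpmexpE} to turn the Killing spinor equations into classical spinorial PDEs for $\nabla^\pm$ and $\slashed\nabla^{+1/3}$, and then invoke the Strominger/Friedrich--Ivanov analysis — is exactly the paper's (which itself only sketches the argument, deferring to \cite[Theorem 5.1]{grt} and \cite[Prop.~3.1]{grst2}). Your bookkeeping of the divergence is also correct: with $(V_+,\operatorname{div})$ closed one has $e=\varphi\in T^*$ with $d\varphi=0$, and indeed $\varphi_+=\tfrac12\varphi$, matching the paper's $\operatorname{div}_0-\operatorname{div}=\varphi=2\varphi_+$ and the identification $\theta_\omega=\varphi_+$. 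The derivations of $d\theta_\omega=0$ and of $dd^c\omega=0$ from $dH=0$ are fine.

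The genuine gap is in what you call ``the core of the proof'': you assert that the gravitino equation $\nabla^+\eta=0$ alone is equivalent to ($J$ integrable) $+$ ($H=-d^c\omega$) $+$ ($d\Psi=\alpha\wedge\Psi$). That is not the classical statement and it is false. For a pure $\nabla^+$-parallel spinor, parallelism of the induced almost complex structure under a metric connection with skew torsion only forces the Nijenhuis tensor $N$ to be totally skew-symmetric and fixes $H=-d^c\omega+N$ (this is the Friedrich--Ivanov characteristic-torsion formula); it does not kill $N$. Nearly K\"ahler six-manifolds with their characteristic connection are the standard counterexample to ``$\nabla^+\eta=0\Rightarrow J$ integrable.'' In the paper, and in \cite[Theorem 5.1]{grt} which it cites, the integrability of $J$ and the identity $H=-d^c\omega$ (together with $\theta_\omega=\varphi_+$) are consequences of the \emph{dilatino} equation combined with the gravitino equation, via the Clifford-algebraic expansion of $\slashed\nabla^{+1/3}\eta$ in terms of $H\cdot\eta$; you instead use the dilatino equation only to pin down the $1$-form $\alpha$ in $d\Psi=\alpha\wedge\Psi$. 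As written, your forward implication derives integrability from an insufficient hypothesis, and the backwards direction inherits the same defect. The fix is to redo the dilatino step in full: decompose $H\cdot\eta$ and $\theta_\omega\cdot\eta$ by Clifford type as in Lemma~\ref{lem:dilatinoeven}, extract from $\slashed\nabla^{+1/3}\eta=\tfrac12\varphi_+\cdot\eta$ both the vanishing of the $(0,3)$-part of $H$ (equivalently $N=0$, hence integrability and $H=-d^c\omega$) and the Lee-form identity $\theta_\omega=\varphi_+$, and only then conclude $d\Psi-\theta_\omega\wedge\Psi=0$ via Gauduchon's formula for the Bismut connection on the canonical bundle, as the paper does.
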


\begin{proof}
Given a solution of \eqref{eq:killingE}, $\eta$ determines a reduction of the orthonormal frame bundle of $(M,g)$ to $SU(n)$. This reduction is equivalent to a pair $(\omega,\Psi)$, where $\omega$ is a non-degenerate real two-form and $\Psi$ is a locally decomposable complex $n$-form satisfying:
\begin{equation}\label{eq:SU(n)}
\omega \wedge \Psi = 0, \qquad i^{n(n-1)/2} (-1)^{n-1} \Psi \wedge \overline{\Psi} = \frac{\omega^n}{n!}\, .
\end{equation}
In terms of the almost complex structure $I$ determined by $\eta$, $\omega$ is of type $(1,1)$, $g = \omega(\cdot, I\cdot)$, and we have
\begin{equation*}
T^{0,1} M = \{X \in T \otimes \mathbb{C}: \iota_X \Psi = 0 \}.
\end{equation*} 
Using Lemma \ref{lem:DpmexpE}, the first equation in \eqref{eq:killingE} implies that the triple $(\omega,I,\Psi)$ is parallel with respect to $\nabla^{+}$. The second equation in \eqref{eq:killingE} implies, as it happens in \cite[Theorem 5.1]{grt}, that the almost complex structure is integrable and furthermore:
\begin{equation}\label{eq:Htheta}
H = - d^c \omega, \qquad \theta_\omega = \varphi_+.
\end{equation}
Using that $\operatorname{div}_0 - \operatorname{div} = \varphi = 2\varphi_+ \in \Omega^1(M)$ and $d\varphi = 0$, we obtain $d \theta_\omega = 0$. The rest of the proof follows as in \cite[Theorem 5.1]{grt}, using Gauduchon's formula for the Bismut connection $\nabla^+$ on the canonical bundle.
\end{proof}

\subsection{Killing spinors on homogeneous Hopf surfaces}\label{sec:KillingHopf}

We present a family of solutions of the Killing spinor equations \eqref{eq:killingE} on a four-dimensional compact Lie group, such that the triple $(V_+,\operatorname{div}_+,\eta)$ is left-invariant, and Proposition \ref{prop:geomalg} and Proposition \ref{lemma:KillingevenE} apply. 

Consider the compact four-dimensional manifold $M = S^3 \times S^1$ endowed with the canonical orientation. We use the Lie group structure given by identifying
$$
M \cong K = \SU(2) \times \U(1).
$$
Consider generators for the Lie algebra of left-invariant vector fields
$$
\mathfrak{k} = \mathfrak{su}(2) \oplus \mathbb{R} = \langle v_1, v_2, v_3, v_4 \rangle, \qquad  
$$
with relations
\begin{equation}\label{eq:brackets}
\left[v_2,v_3\right] = -v_1, \quad \left[v_3,v_1\right] = -v_2, \quad \left[v_1,v_2\right] = -v_3, \quad \left[v_4,\cdot\right] = 0.
\end{equation}
Equivalently,
$$
dv^1 = v^{23}, \quad dv^2 = v^{31}, \quad dv^3 = v^{12}, \quad dv^4 = 0,
$$
for $\{v^j\}$ the dual basis satisfying $v^j(v_k) = \delta_{jk}$. Here, we use the notation $v^{ij}=v^i\wedge v^j$. We take $\ell \in \mathbb{R}$ and define a left-invariant three-form
\begin{equation}\label{eq:Hell}
H_\ell = \ell v^{123}. 
\end{equation}
This corresponds to a constant multiple of the Cartan three-form on the $SU(2)$ factor, and hence it is bi-invariant and closed. Thus it defines an equivariant Courant algebroid $E _\ell = T \oplus T^*$, with pairing and Dorfman bracket as in \eqref{eq:pairingbracketexp} (with $H = H_\ell$).

Our next goal is to define a one-parameter family of left-invariant solutions of the Killing spinor equations on $E_\ell$. Given $x,a >0$ positive real numbers, consider the bi-invariant metric on $K$ defined by
$$
g_{x,a} = \frac{a}{x}(v^1 \otimes v^1 + v^2 \otimes v^2 + v^3 \otimes v^3 + x^2 v^4 \otimes v^4).
$$
We define a bi-invariant generalized metric on $E_\ell$
$$
V_\pm^{x,a} = \{v \pm g_{x,a}(v) \; | \; v \in \mathfrak{k}\}
$$
and a bi-invariant divergence $\operatorname{div}_{x,a} = \operatorname{div}_0 - \langle \varepsilon^x , \rangle$, for $\operatorname{div}_0$ the Riemannian divergence of $g_{x,a}$ (see \eqref{eq:div0}) and
$$
\varepsilon^x = - x v^4.
$$

\begin{lemma}\label{lem:examiso}
For any $x,a >0$ and $\ell \in \RR$, the pair $(V_\pm^{x,a},\varepsilon^x)$ is closed, that is,
$$
[\varepsilon^x,V_\pm^{x,a}] \subset V_{\pm}^{x,a}, \qquad \pi(\varepsilon^x) = 0.
$$
\end{lemma}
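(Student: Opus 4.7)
The statement has two parts, both of which follow directly from the explicit form of the equivariant Courant algebroid $E_\ell$ once we unpack the definitions.

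First, I would observe that in the splitting $E_\ell = T \oplus T^*$ determined by the three-form $H_\ell$, the element $\varepsilon^x = -x v^4$ is purely of cotangent type, i.e.\ $\varepsilon^x = \pi^*(-xv^4)$. Exactness of the sequence \eqref{eq:exactE} immediately gives $\pi(\varepsilon^x) = \pi\circ\pi^*(-xv^4) = 0$, which is the ``closed'' condition on the pair.

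Second, for the compatibility condition $[\varepsilon^x, V_\pm^{x,a}] \subset V_\pm^{x,a}$, I would use the explicit formula \eqref{eq:pairingbracketexp} for the Dorfman bracket. For any section $Y+\eta \in \Gamma(E_\ell)$, with $\varepsilon^x = 0 + (-xv^4)$, the formula gives
\begin{equation*}
[\varepsilon^x, Y+\eta] \;=\; [0,Y] + L_0 \eta - i_Y d(-xv^4) + i_Y i_0 H_\ell \;=\; - i_Y d(-xv^4).
\end{equation*}
The key observation is that $d v^4 = 0$ (as recorded right after \eqref{eq:brackets}), so $d(-xv^4) = 0$ and hence $[\varepsilon^x, Y+\eta] = 0$ for every section $Y+\eta$. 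In particular, $[\varepsilon^x, V_\pm^{x,a}] = 0 \subset V_\pm^{x,a}$, so $\varepsilon^x$ is trivially an infinitesimal isometry of $V_+^{x,a}$.

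There is essentially no obstacle here: the $\U(1)$-factor is abelian, its dual generator $v^4$ is closed, and $\varepsilon^x$ lives in $T^*$, so both conditions of Definition \ref{def:compatibleE} reduce to a one-line computation. Note that the result is independent of $\ell$ and of the metric parameters $x,a$, which is consistent with the role played by $\varepsilon^x$ as a dilaton-type term in the constructions that follow.
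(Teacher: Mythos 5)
Your proof is correct and follows essentially the same route as the paper, which simply invokes \eqref{eq:infisometry} (the statement that for $e = X+\varphi$ the isometry condition reads $L_Xg=0$, $d\varphi = i_XH$, and closedness means $X=0$, $d\varphi=0$); your direct computation of the Dorfman bracket is just the unpacked form of that same observation, using $dv^4=0$.
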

\begin{proof}
The statement follows trivially from \eqref{eq:infisometry}.
\end{proof}

It remains to specify a left-invariant spinor line $\langle \eta \rangle \subset S_+^+$ corresponding to an almost complex structure on $V_+^{x,a}$. For this, note that the anchor map $\pi \colon E_\ell \to T$ induces an isomorphism
$$
V_+^{x,a} \cong T,
$$
and we set
\begin{equation}\label{eq:Ix}
I_x v_4 = x v_1, \qquad I_x v_2 = v_3.
\end{equation}

For $\operatorname{div}^{x,a}_+:= \operatorname{div}_{x,a|V_+^{x,a}}$, we shall prove that $(V_+^{x,a},\operatorname{div}^{x,a}_+,I_x)$ defines a solution of the Killing spinor equations, provided that $\ell = a/x$. For this, we adopt an algebraic approach. By Proposition \ref{prop:geomalg}, the space of left-invariant section of $\Gamma(E_\ell)^{K}$ inherits a structure of quadratic Lie algebra
$$
\g_\ell := \Gamma(E_\ell)^{K}
$$
with underlying vector space $\g_\ell \cong \mathfrak{k} \oplus \mathfrak{k}^*$, pairing
$$
\qf{v + \alpha}{v + \alpha} = \alpha(v) 
$$
and Lie bracket
$$
[v + \alpha,w + \beta] = [v,w] - \beta([v,]) + \alpha([w,]) + \ell i_wi_v v^{123}.
$$
For fixed $x,a >0$, $(V_+^{x,a},\operatorname{div}_{x,a})$ can be regarded as the pair $(V_+^{x,a},\varepsilon^x)$ of generalized metric and divergence on the quadratic Lie algebra $\g_\ell$ (see Section \ref{sec:gmetrics}). Observe that, being $g_{x,a}$ bi-invariant, the Riemannian divergence $\operatorname{div}_0$ corresponds to the zero element in $\g_\ell^*$.

\begin{lemma}\label{lem:solutionsKsEq}
Denote by $\eta_x \in S_+^+$ a non-vanishing pure spinor in the spinor line determined by $I_x$. Then, $(V_+^{x,a},\varepsilon^x_+,\eta_x)$ is a solution of the Killing spinor equations \eqref{eq:killing} on $\g_\ell$ if and only if $\ell = a/x$. Consequently,  $(V_+^{x,a},\operatorname{div}^{x,a}_+,\eta_x)$ is a left-invariant solution of the Killing spinor equations \eqref{eq:killingE} on $E_\ell$ if and only if $\ell = a/x$.
\end{lemma}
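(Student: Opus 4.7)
The plan is to apply Proposition \ref{prop:Killingeven} to reduce the Killing spinor equations on $\g_\ell$ to the F-term and D-term conditions \eqref{eq:FtermDterm}, and then verify these by a direct calculation in the basis $\{v_j\}$ of $\mathfrak{k}$. Since $\dim V_+^{x,a} = 4$ and $\eta_x \in S_+^+$ is a pure spinor corresponding to $J_x$, the proposition applies, so we must check
\begin{equation*}
\left[V_+^{0,1}, V_+^{0,1}\right] \subset V_+^{0,1}, \qquad \tfrac{i}{2}\sum_{j=1}^{2}\bigl[\epsilon_j^+, \overline{\epsilon}_j^+\bigr] = -J_x\, \varepsilon_+^x,
\end{equation*}
where $\{\epsilon_j^+, \overline{\epsilon}_j^+\}$ is an isotropic basis of $V_+^\CC$ built from $J_x$ and an orthonormal frame for $g_{x,a}$, and $\varepsilon^x_+$ denotes the $V_+^{x,a}$-projection of the divergence vector $\varepsilon^x = -x v^4$.

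The key step is to compute the Dorfman bracket on $V_+^{x,a}$. For $\mu = v + g_{x,a}(v)$ and $\nu = w + g_{x,a}(w)$, the bi-invariance of $g_{x,a}$ and the formula \eqref{eq:pairingbracketexp} yield
\begin{equation*}
[\mu, \nu]_{\g_\ell} = [v, w]_{\mathfrak{k}} + 2\, g_{x,a}([v, w]) + \ell\, i_w i_v\, v^{123}.
\end{equation*}
Since $v_4$ is central in $\mathfrak{k}$, and since $i_{v_j} i_{v_i}\, v^{123} = -(x/a)\, g_{x,a}([v_i, v_j])$ for $i, j \in \{1, 2, 3\}$ by \eqref{eq:brackets}, decomposing this expression along $\g_\ell = V_+ \oplus V_-$ produces
\begin{equation*}
\pi\bigl([\mu, \nu]_-\bigr) = \tfrac{1 - \ell x/a}{2}\,[v, w]_{\mathfrak{k}}.
\end{equation*}
Hence $V_+^{x,a}$ is closed under the bracket if and only if $\ell = a/x$, in which case the anchor induces a Lie algebra isomorphism $V_+^{x,a} \cong \mathfrak{k}$. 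Extending the computation $\CC$-linearly, $\pi([\overline{\epsilon}_1^+, \overline{\epsilon}_2^+]_-)$ is a nonzero multiple of $1 - \ell x/a$, so the F-term gravitino equation fails whenever $\ell \neq a/x$; this settles the only-if direction.

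For the if direction, assume $\ell = a/x$. The F-term condition translates via the isomorphism $V_+^\CC \cong \mathfrak{k}^\CC$ into integrability of $J_x$ on $\mathfrak{k}$, which is immediate from $[v_4 + ix v_1, v_2 + i v_3]_{\mathfrak{k}} = -x(v_2 + i v_3)$. For the D-term, I would use the bracket relations \eqref{eq:brackets} to compute, in $\mathfrak{k}^\CC$,
\begin{equation*}
\bigl[\epsilon_1^+, \overline{\epsilon}_1^+\bigr]_{\mathfrak{k}} = 0, \qquad \bigl[\epsilon_2^+, \overline{\epsilon}_2^+\bigr]_{\mathfrak{k}} = -\tfrac{ix}{a}\,v_1,
\end{equation*}
while a linear-algebra calculation identifies $\varepsilon^x_+$ with $-\tfrac{1}{2a}\,v_4 \in \mathfrak{k}$. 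Combined with $J_x v_4 = x v_1$, both sides of the D-term equation then equal $\tfrac{x}{2a}\,v_1$, as required. The geometric statement on $E_\ell$ follows by Proposition \ref{prop:geomalg}. The only real obstacle is bookkeeping: tracking normalization factors in the isotropic basis $\{\epsilon_j^+, \overline{\epsilon}_j^+\}$ and correctly computing the $V_+$-projection of $\varepsilon^x$.
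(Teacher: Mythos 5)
Your proposal is correct and follows the same overall strategy as the paper: both reduce the Killing spinor equations to the F-term and D-term conditions \eqref{eq:FtermDterm} via Proposition \ref{prop:Killingeven} and then compute in the quadratic Lie algebra $\g_\ell$. The difference is organizational. The paper works directly in the isotropic basis \eqref{eq:epsbasisexam}, computing $[\epsilon_1^+,\epsilon_2^+]$ and $\sum_j[\epsilon_j^+,\overline{\epsilon}_j^+]$ with all their one-form components and reading off $\ell=a/x$ from the coefficients $2-\ell x/a$. You instead first establish the closed formula $[\mu,\nu]=[v,w]+(2-\ell x/a)\,g_{x,a}([v,w])$ for $\mu=v+g_{x,a}(v)$, $\nu=w+g_{x,a}(w)$, which shows at once that the $V_-$-component of any such bracket is a multiple of $(\ell x/a-1)[v,w]_{\mathfrak{k}}$ (giving the only-if direction) and that, when $\ell=a/x$, the anchor is a Lie algebra isomorphism $V_+^{x,a}\cong\mathfrak{k}$; this reduces the F-term to the integrability of $J_x$ on $\mathfrak{k}$ and the D-term to a two-line computation. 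That is a cleaner packaging of the same calculation, and everything you assert checks out against the paper's explicit formulas. Two harmless slips: the $V_-$-projection should be $\tfrac{\ell x/a-1}{2}\,[v,w]_{\mathfrak{k}}$, opposite in sign to what you wrote (irrelevant, since only its vanishing matters); and the two brackets $[\epsilon_j^+,\overline{\epsilon}_j^+]_{\mathfrak{k}}$ are swapped --- it is $[\epsilon_1^+,\overline{\epsilon}_1^+]_{\mathfrak{k}}=-\tfrac{ix}{a}v_1$, coming from $[v_2,v_3]=-v_1$, while $[\epsilon_2^+,\overline{\epsilon}_2^+]_{\mathfrak{k}}=0$ because $v_4$ is central --- but the sum, and hence the verification of the D-term against $-J_x\varepsilon^x_+=\tfrac{x}{2a}v_1$, is unchanged.
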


\begin{proof}
By Proposition \ref{prop:Killingeven}, it suffices to prove that \eqref{eq:FtermDterm} holds if and only if $\ell = a/x$. We have that
$$
I_x(v+g_{x,a}(v)) := I_xv+g_{x,a}\left(I_xv\right).
$$
Hence, an oriented orthonormal basis for $V_+^{x,a}$ is given by
\begin{equation}
\left\lbrace \sqrt{\tfrac{x}{a}} v_2 + \sqrt{\tfrac{a}{x}} v^2, \sqrt{\tfrac{x}{a}} v_3 + \sqrt{\tfrac{a}{x}} v^3, \tfrac{1}{\sqrt{xa}} v_4 + \sqrt{xa} v^4, \sqrt{\tfrac{x}{a}} v_1 + \sqrt{\tfrac{a}{x}} v^1 \right\rbrace,
\end{equation}
with associated basis of $V_+^{1,0}$ and $V_+^{0,1}$ satisfying \eqref{eq:isotropybasis} given by
\begin{equation}\label{eq:epsbasisexam}
\begin{array}{rlrl}
\epsilon_1^+ & = \frac{1}{\sqrt{2}}\left(\left(\sqrt{\tfrac{x}{a}} v_2 + \sqrt{\tfrac{a}{x}} v^2\right) -i\left(\sqrt{\tfrac{x}{a}} v_3 + \sqrt{\tfrac{a}{x}} v^3\right)\right), &\overline{\epsilon}_1^+ & = \overline{\epsilon_1^+},\\
\epsilon_2^+ & = \frac{1}{\sqrt{2}}\left(\left(\tfrac{1}{\sqrt{xa}} v_4 + \sqrt{xa} v^4\right) -i\left(\sqrt{\tfrac{x}{a}} v_1 + \sqrt{\tfrac{a}{x}} v^1\right)\right), &\overline{\epsilon}_2^+ & = \overline{\epsilon_2^+}.
\end{array}
\end{equation}
Now, a direct calculation shows that
\begin{equation*}
\left[\epsilon_1^+,\epsilon_2^+\right] = \tfrac{1}{2}\left(\tfrac{x}{a}v_2 +\left(2 - \ell \tfrac{x}{a}\right) v^2 \right) - \tfrac{i}{2}\left(\tfrac{x}{a}v_3 +\left(2  - \ell \tfrac{x}{a}\right) v^3 \right).
\end{equation*}
Hence, $\left[\epsilon_1^+,\epsilon_2^+\right] \in V_+^\CC$ if and only if $\ell = a/x$ and, assuming this condition, it follows that $[V_+^{1,0},V_+^{1,0}] \in V_+^{1,0}$, where $V_+^{1,0} \subset V_+^{x,a} \otimes \CC$ is the $i$-eigenbundle of $I_x$. Similarly,
$$
v:= \sum_{j=1}^2 \left[\epsilon_j^+,\overline{\epsilon}_j^+\right] = i\left(-\tfrac{x}{a}v_1 - \left(2 - \ell \tfrac{x}{a}\right) v^1 \right),
$$
and therefore $v \in V_+^\CC$ if and only if $\ell = a/x$. Finally, assuming this condition and using that
$$
I_x(\varepsilon^x_+) = -\tfrac{1}{2}I_x\left(\tfrac{1}{a}v_4 + x v^4 \right) = -\tfrac{1}{2}\left(\tfrac{x}{a}v_1 + v^1 \right),
$$
it follows that the second equation in \eqref{eq:FtermDterm} holds. The last part of the statement follows by direct application of Proposition \ref{prop:geomalg}.
\end{proof}

\begin{remark}\label{rem:Zquotient}
Using Lemma \ref{lem:DpmexpE} and Proposition \ref{prop:geomalg}, the solutions of the Killing spinor equations in the previous lemma can be identified, via pull-back to the universal cover 
$$
\widetilde{M} = \mathbb{R}^4 \backslash \{0\},
$$
with the \emph{heterotic string soliton} constructed by Callan, Harvey, and Strominger \cite{CHS}. As pointed out by a referee, the \emph{dilaton} of this physical solution (given by the radius in $\mathbb{R}^4$) is not invariant under the radial $\mathbb{Z}$-action on $\widetilde{M}$, and hence the physical meaning of the quotient geometry is unclear. Nonetheless, the Killing spinor equations only depend on the total derivative of the dilaton, which is $\mathbb{Z}$-invariant and coincides with the divergence $\varepsilon_+^x$ via the natural isomorphism $V_+^{x,a} \cong T^*$.
\end{remark}

Our solutions of the Killing spinor equations in the previous lemma are such that $\varepsilon^x$ is an infinitesimal isometry for $V_+^{x,a}$ in the sense of Definition \ref{def:isometry} (see Lemma \ref{lem:examiso}). In the next result we prove that the dual vector field $\pi \varepsilon_+^x$ is holomorphic with respect to $I_x$.

\begin{lemma}\label{lem:examhol}
The left-invariant vector field $\pi \varepsilon^x_+$ is $I_x$-holomorphic, that is,
$$
[\varepsilon^x_+,V_+^{1,0}] \subset V_{+}^{1,0},
$$
where $V_+^{1,0} \subset V_+^{x,a} \otimes \CC$ is the $i$-eigenbundle of $I_x$.
\end{lemma}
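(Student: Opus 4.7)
The plan is to prove the claim by an explicit computation showing that $\varepsilon^x_+$ is in fact central in the quadratic Lie algebra $\g_\ell$, from which the holomorphicity condition follows trivially.

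First I would compute $\varepsilon^x_+$ in the splitting $\g_\ell = V_+^{x,a} \oplus V_-^{x,a}$. Since $V_\pm^{x,a} = \{v \pm g_{x,a}(v): v \in \mathfrak{k}\}$, any element $w + \beta \in \mathfrak{k} \oplus \mathfrak{k}^* = \g_\ell$ satisfies
\begin{equation*}
(w+\beta)_+ = \tfrac{1}{2}\bigl(w + g_{x,a}^{-1}\beta\bigr) + \tfrac{1}{2}\bigl(g_{x,a}w + \beta\bigr).
\end{equation*}
Using $g_{x,a}(v_4) = ax\, v^4$, and hence $g_{x,a}^{-1}(v^4) = \tfrac{1}{ax}v_4$, applied to $\varepsilon^x = -x v^4 \in \mathfrak{k}^*$ one obtains
\begin{equation*}
\varepsilon^x_+ = -\tfrac{1}{2a}v_4 - \tfrac{x}{2}v^4.
\end{equation*}

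The key observation is then that $[\varepsilon^x_+,\,\cdot\,] = 0$ as a derivation of the Dorfman bracket on $\g_\ell$. To see this, I would evaluate the explicit formula
\begin{equation*}
[v+\alpha,\,w+\beta] = [v,w] - \beta([v,\cdot]) + \alpha([w,\cdot]) + \ell\, i_w i_v v^{123}
\end{equation*}
with $v+\alpha = \varepsilon^x_+$. Each of the four terms vanishes because $v_4$ is central in $\mathfrak{k}$ (so $[v_4,w] = 0$ and $\beta([v_4,\cdot]) = 0$), because $[w,u] \in \mathfrak{su}(2) = \langle v_1,v_2,v_3\rangle$ for all $w,u \in \mathfrak{k}$ (so $v^4([w,u]) = 0$), and because $i_{v_4} v^{123} = 0$. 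Thus $[\varepsilon^x_+,\,\cdot\,] \equiv 0$ on $\g_\ell$, and a fortiori $[\varepsilon^x_+, V_+^{1,0}] = 0 \subset V_+^{1,0}$, proving the lemma.

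There is really no obstacle to overcome: once $\varepsilon^x_+$ is written out, the centrality of $v_4$ and the vanishing of $i_{v_4}v^{123}$ make all three potential contributions (adjoint action, coadjoint action, and $H_\ell$-twist) disappear. Note that this simultaneously re-confirms Lemma \ref{lem:examiso} and establishes the stronger property required by Definition \ref{def:holoiso}; geometrically, $\pi\varepsilon^x_+ = -\tfrac{1}{2a}v_4$ generates the $U(1)$-factor of $K = \SU(2)\times U(1)$, which manifestly preserves $J_x$ in view of the definition \eqref{eq:Jx}.
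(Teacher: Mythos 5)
Your proof is correct and follows essentially the same route as the paper: the paper's proof consists precisely of the observation that $\varepsilon^x_+ = -\tfrac{1}{2}\left(\tfrac{1}{a}v_4 + x v^4\right)$ lies in the center of $\g_\ell$, which you compute and then verify in detail from the explicit Dorfman bracket formula (centrality of $v_4$ in $\mathfrak{k}$, $v^4$ annihilating $[\mathfrak{k},\mathfrak{k}]=\mathfrak{su}(2)$, and $i_{v_4}v^{123}=0$). The only difference is that you spell out the centrality check that the paper leaves implicit.
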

\begin{proof}
The statement follows from the fact that $\varepsilon^x_+ = -\tfrac{1}{2}\left(\tfrac{1}{a}v_4 + x v^4 \right)$ is in the center of the Lie algebra $\g_\ell$.
\end{proof}

We analyse next our family of Killing spinors in terms of complex geometry by means of Proposition \ref{lemma:KillingevenE}. Assuming that $\ell = a/x$, the family of solutions of \eqref{eq:twistedStrom} induced by 
\begin{equation}\label{eq:V+x}
(V_+^{x},\operatorname{div}^{x}_+,\eta_x) := (V_+^{x,\ell x},\operatorname{div}^{x,\ell x}_+,\eta_x)
\end{equation}
is given by
\begin{equation}\label{eq:SU2structure}
\begin{split}
\omega_{x} & =  \ell x v^{41} + \ell v^{23},\\
\Psi_{x} & = \tfrac{\ell}{2}(iv^1 + x v^4)\wedge (v^2 + i v^3),
\end{split}
\end{equation}
with Lee form $\theta_{x} = -xv^4$. A classification of solutions of \eqref{eq:twistedStrom} on compact four-manifolds was obtained in \cite{grst}: either $(M,g,I)$ is a flat torus or a K3 surface with a K\"ahler Ricci-flat metric (and hence $\theta_\omega = 0$), or $(M,g,I)$ is a quaternionic Hopf surface and $\theta_\omega \neq 0$. To see this more explicitly, we note that the complex manifold $(K,I_x)$ is biholomorphic to the diagonal Hopf surface 
$$
X_x =  (\CC^2 \backslash \{0\})/\langle \gamma_x \rangle
$$
where $\langle \gamma_x \rangle \cong \mathbb{Z}$ is generated by $\gamma_x(z_1,z_2) = (e^xz_1,e^xz_2)$. 
With this identification, there is an isomorphism (see \cite[Thm. 1.2]{AngellaBC})
$$
H_A^{1,1}(X_x) \cong \mathbb{C}\langle [\eta^1_x \wedge \overline{\eta^1_x}] \rangle \cong \mathbb{C}\langle [v^{41}] \rangle,
$$
where $H_A^{1,1}(X_x)$ stands for the Aeppli cohomology group of $X_x$ and $\eta^1_x = iv^1 + x v^4$. Therefore, we obtain an interpretation of the parameter $a = \ell x > 0$ as the Aeppli class of the pluriclosed metric $\omega_{x}$ (see Proposition \ref{lemma:KillingevenE})
\begin{equation}\label{eq:Aeppli}
[\omega_{x}] = a [v^{41}] \in H_A^{1,1}(X_x).
\end{equation}

\subsection{T-dual Killing spinors}

In this section we apply T-duality to the family of solutions of the Killing spinor equations \eqref{eq:killingE} found in Lemma \ref{lem:solutionsKsEq}. By \cite[Theorem 6.5]{GF3}, solutions of \eqref{eq:killingE} are preserved under T-duality, and we shall prove that the T-dual of $(V_+^{x},\operatorname{div}^{x}_+,\eta_x)$ (see \eqref{eq:V+x}) is a different element in the same family.

We start recalling some background on topological T-duality following \cite{BEM,CaGu}. Let $T^k$ be a $k$-dimensional torus acting freely and properly on a smooth compact manifold $M$, so that $M$ is a principal $T^k$-bundle over the smooth manifold $B := M/T^k$. We endow $M$ with a choice of $T^k$-invariant cohomology class
$$
\tau \in H^3(M,\mathbb{R})^{T^k}.
$$
Fix another pair $(\hat M,\hat \tau)$ consisting of a smooth compact manifold $\hat M$ with a proper and free $T^k$-action such that $B=\hat M/T^k$, and $\hat\tau \in H^3(\hat M,\mathbb{R})^{T^k}$. 
Consider the fibre product $\overline M = M \times_B \hat M$ and the diagram
\begin{equation*}
  \xymatrix{
 & \ar[ld]_{q} \overline M \ar[rd]^{\hat{q}} & \\
 M \ar[rd]_{p} &  & \hat{M} \ar[ld]^{\hat{p}} \\
  & B & \\
  }
\end{equation*}

\begin{definition}\label{def:toptdual}
We say that two pairs $(M,\tau)$ and $(\hat M,\hat \tau)$ as above are T-dual if there exist representatives $H \in \Omega^3(M)^{T^k}$ and $\hat H \in \Omega^3(\hat M)^{\hat T^k}$ of $\tau$ and $\hat \tau$, respectively, such that
\begin{equation}\label{eq:relationTdual}
q^*H - \hat{q}^* \hat H = d \overline{B},
\end{equation}
where $\overline{B} \in \Omega^2(\overline M)^{T^k \times \hat T^k}$ is such that
\begin{equation}\label{eq:Bpairing}
\overline B \colon \operatorname{ker} d q \otimes \operatorname{ker} d \hat{q} \to \mathbb{R}
\end{equation}
is non-degenerate. 
\end{definition}


Given $M$ as above, the natural exact sequence
\begin{equation}\label{eq:2.1.3}
    0 \rightarrow \mathfrak{t} \rightarrow \frac{TM}{T^{k}} \rightarrow TB \rightarrow 0,
\end{equation}
where $\mathfrak{t} = B \times \operatorname{Lie} \; T^k$, induces a filtration \begin{equation}\label{eq:filtration}
\Omega^*(B) \cong \mathcal{F}^{0}\subset \mathcal{F}^{1} \subset \dots \subset \mathcal{F}^{\bullet}= \Omega^*(M)^{T^{k}},
\end{equation}
where $\mathcal{F}^{i}=\mathrm{Ann}(\wedge^{i+1} \mathfrak{t})$. 
Given now T-dual pairs $(M,\tau)$ and $(\hat M,\hat \tau)$, there exist representatives $H \in \tau$ and $\hat H \in \hat \tau$ in $\mathcal{F}^{1}$ of $M$ and $\hat M$, respectively, which satisfy the conditions in Definition \ref{def:toptdual} (see \cite[Lemma 10.5]{GFStreets}). 


Let $E$ be an exact Courant algebroid over $M$ which is equivariant with respect to the $T^k$-action.  Recall that such an $E$ has an associated \v Severa class
$$
[E] \in H^3(M,\RR)^{T^k}.
$$
Consider the vector bundle 
$$
E/T^k \to B,
$$
whose sheaf of sections is given by the invariant section of $E$, that is, $\Gamma(E/T^k) = \Gamma(E)^{T^k}$. We can endow $E/T^k$ with a natural structure of Courant algebroid, with pairing and Dorfman bracket given by the restriction of the neutral pairing and the Dorfman bracket on $E$ to $\Gamma(E)^{T^k}$. We will call $E/T^k$ the \emph{simple reduction} of $E$ by $T^k$. 

\begin{theorem}[\cite{CaGu}]\label{th:Tduality}
Let $E \to M$ and $\hat E \to \hat M$ be equivariant exact Courant algebroids. Assume that $(M,[E])$ is T-dual to $(\hat M,[\hat E])$. Then there exists a canonical isomorphism of Courant algebroids between the simple reductions
\begin{equation}\label{eq:psiisomorphism}
\psi \colon E/T^k \to \hat E/\hat T^k.
\end{equation}
\end{theorem}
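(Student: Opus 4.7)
The plan is to construct $\psi$ explicitly using the correspondence space $\overline{M}$ and the 2-form $\overline{B}$, and then verify that it preserves the pairing and Dorfman bracket. First, we use the freedom in choosing isotropic splittings: by \cite[Lemma 10.5]{GFStreets} we may pick $T^k$-invariant representatives $H\in\tau$ and $\hat H\in\hat\tau$ lying in the filtration piece $\mathcal{F}^1$ of \eqref{eq:filtration}, and satisfying \eqref{eq:relationTdual} for a form $\overline B$ whose restriction \eqref{eq:Bpairing} is non-degenerate. These splittings identify $E\cong TM\oplus T^*M$ and $\hat E\cong T\hat M\oplus T^*\hat M$ with Dorfman brackets twisted by $H$ and $\hat H$ respectively (see \eqref{eq:pairingbracketexp}), and the non-degeneracy of $\overline B$ on verticals yields, at each point of $B$, a canonical isomorphism $\mathfrak{t}\cong\hat{\mathfrak{t}}^*$.

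Second, I would describe the simple reductions concretely. Invariance and the exact sequence \eqref{eq:2.1.3} show that $\Gamma(E/T^k)$ consists of pairs $(X,\xi)$ with $X\in\Gamma(TM)^{T^k}$ and $\xi\in\Gamma(T^*M)^{T^k}$, and an analogous description holds for $\hat E/\hat T^k$. On the correspondence space we have pullback bundles $q^*E$ and $\hat q^*\hat E$, each carrying the structure of a Courant algebroid over $\overline M$, and each invariant section of $E$ (resp.\ $\hat E$) has a canonical pullback to $q^*E$ (resp.\ $\hat q^*\hat E$). The definition of $\psi$ is then: pull back an invariant section of $E$ to $\overline M$, perform the $B$-transform by $\overline B$, and identify the vertical directions of $M$ with the dual of the vertical directions of $\hat M$ via $\overline B$, pushing the result down to $\hat M$. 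Explicitly, writing an invariant section of $E$ as $X+\xi$ with $X=X^h+X^v$ and $\xi=\xi^h+\xi^v$ decomposed into horizontal and vertical parts via any connection on the principal bundle $M\to B$, the image $\psi(X+\xi)=\hat X+\hat\xi$ has horizontal parts inherited from the common base $B$, while the vertical components are exchanged via $\overline B(X^v,\cdot)\leftrightarrow\hat\xi^v$ and $\xi^v\leftrightarrow\overline B(\cdot,\hat X^v)$.

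Third, preservation of the pairing is essentially tautological: the non-degeneracy of $\overline B$ on $\operatorname{ker}dq\otimes\operatorname{ker}d\hat q$ is set up precisely so that the swap $X^v\leftrightarrow\hat\xi^v$ and $\xi^v\leftrightarrow\hat X^v$ preserves the canonical pairing $\xi(X)$, while the horizontal parts are matched directly on $B$. One then checks that $\psi$ is $C^\infty(B)$-linear and invertible, so it is an isomorphism of pseudo-Euclidean vector bundles over $B$.

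The main obstacle, and the heart of the proof, is verifying that $\psi$ intertwines the Dorfman brackets. The Dorfman bracket on $E/T^k$ reads as in \eqref{eq:pairingbracketexp} restricted to invariant sections with twist $H$, and similarly on the $\hat E$ side with twist $\hat H$. A careful case analysis (pairs of horizontal-horizontal, horizontal-vertical, and vertical-vertical invariant sections) reduces the compatibility to two ingredients: first, the fact that the bracket of two $T^k$-invariant vertical vector fields vanishes (since $T^k$ is abelian) and that Lie derivatives along verticals of invariant forms vanish, which accounts for the non-standard swap between vectors and 1-forms; and second, the identity \eqref{eq:relationTdual}, which ensures that the discrepancy between $H$-twisted and $\hat H$-twisted terms is precisely absorbed by the $B$-transform coming from $\overline B$. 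Concretely, performing a $B$-field transformation by $\overline B$ on $q^*E$ produces a new twist $q^*H - d\overline B=\hat q^*\hat H$, so that after the vertical/cotangent swap the resulting bracket on $\hat q^*\hat E$ is the one twisted by $\hat H$, and the transformation descends along $\hat q$ to give the Dorfman bracket on $\hat E/\hat T^k$. Since the construction depended only on $H$, $\hat H$ and $\overline B$ up to the standard $B$-field gauge, the resulting isomorphism $\psi$ is canonical, completing the proof.
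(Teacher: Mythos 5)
Your construction is correct and is essentially the one the paper uses: the paper states this result as a citation to Cavalcanti--Gualtieri and sketches exactly your map, $\psi(X+\xi)=\hat q_*\bigl(\overline X + q^*\xi - \overline B(\overline X,\cdot)\bigr)$ with $\overline X$ the unique invariant lift making $q^*\xi-\overline B(\overline X,\cdot)$ basic for $\hat q$, built from the same ingredients (representatives $H,\hat H\in\mathcal F^1$, the relation $q^*H-\hat q^*\hat H=d\overline B$, and non-degeneracy of $\overline B$ on the verticals). The only cosmetic slip is calling $q^*E$ a Courant algebroid over $\overline M$ --- pullbacks of exact Courant algebroids are not canonically Courant algebroids, and one should instead work with $T\overline M\oplus T^*\overline M$ twisted by $q^*H$ --- but this does not affect the argument.
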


We briefly describe the construction of the isomorphism $\psi$, which we will need. We choose equivariant isotropic splittings of $E$ and $\hat E$ such that the corresponding three-forms $H$ and $\hat H$ are in $\mathcal{F}^1$ of their respective fibrations. Given $X + \xi \in \Gamma(TM \oplus T^*M)^{T^k}$, choose the unique lift $\overline{X}$ of $X$ to $\Gamma(T\bar{M})^{T^k \times \hat T^k}$ such that
\begin{equation}\label{eq:liftvector}
q^* \xi(Y) - \overline{B}(\overline{X}, Y) = 0, \quad \mbox{ for all } Y \in \mathfrak t.
\end{equation}
Due to this condition the form $q^* \xi - \overline{B}(\overline{X}, \cdot)$ is basic for the
bundle determined by $\hat{q}$, and can therefore be pushed forward to
$\hat{M}$. Then, $\psi$ is defined by the explicit formula
\begin{align*}
\psi(X + \xi) = \hat{q}_*(\overline{X} + q^* \xi - \overline{B}(\overline{X}, \cdot)).
\end{align*}


To apply T-duality to the situation of our interest, we regard $K$ as in Section \ref{sec:KillingHopf} as a $T^1$-principal bundle over $S^3 \cong \SU(2)$, via the natural right action of the central subgroup
$$
T^1 = \U(1) \subset \SU(2) \times \U(1)
$$
given by the second factor. Given $\ell \in \mathbb{R}$ we consider the closed three-form $H_\ell$ in \eqref{eq:Hell}.

\begin{lemma}\label{lem:selfTdual}
For any $\ell \in \mathbb{R}$, the pair $(K,[H_\ell])$ is self-T-dual.
\end{lemma}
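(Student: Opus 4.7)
The plan is to take $\hat{M}=K$ with the $\hat{T}^1$-action given by the same $\U(1)$-factor as for $M$, set $\hat{H}_\ell = H_\ell$, and exhibit an explicit $T^1\times \hat{T}^1$-invariant 2-form $\overline{B}\in\Omega^2(\overline{M})^{T^1\times\hat{T}^1}$ on $\overline{M}=K\times_{S^3}K$ satisfying the two conditions of Definition \ref{def:toptdual}. Throughout, let $\hat v^4$ denote the pullback of $v^4$ via the projection onto the second factor of $\overline{M}$, and likewise keep writing $v^4$ for its pullback via the first factor.

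First I would observe that $H_\ell=\ell\,v^{123}$ has no leg along $v^4$, i.e.\ $H_\ell\in \mathcal{F}^0$ for the filtration \eqref{eq:filtration}. Since $v^1,v^2,v^3$ are bi-invariant and the $\U(1)$-action is by the central factor, $H_\ell$ is the pullback via $p\colon K\to S^3$ of a bi-invariant $3$-form on $SU(2)$. Consequently $p\circ q = \hat p\circ \hat q$ implies
\[
q^*H_\ell-\hat q^*\hat H_\ell=0,
\]
so the required identity \eqref{eq:relationTdual} reduces to $d\overline{B}=0$.

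Second, I would propose
\[
\overline{B} = v^4\wedge \hat v^4 \in \Omega^2(\overline{M})^{T^1\times\hat{T}^1}.
\]
Since $v^4$ (and hence $\hat v^4$) is closed and the $\U(1)$-factor is central, $\overline{B}$ is closed and invariant under $T^1\times\hat{T}^1$, yielding $d\overline{B}=0$ as needed.

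Finally, I would verify the non-degeneracy condition \eqref{eq:Bpairing}. The fibres of $q\colon \overline{M}\to M$ are identified with those of $\hat p\colon \hat M\to S^3$, so $\operatorname{ker}dq$ is spanned by the generator $\partial_{\hat\theta}$ of the $\hat{T}^1$-action; symmetrically $\operatorname{ker}d\hat q$ is spanned by $\partial_\theta$. Then
\[
\overline{B}(\partial_\theta,\partial_{\hat\theta}) = v^4(\partial_\theta)\,\hat v^4(\partial_{\hat\theta}) \neq 0,
\]
which establishes the non-degeneracy. There is essentially no obstacle here: the whole statement rests on the fact that $H_\ell$ is transverse to the $T^1$-fibre, so the T-dual bundle has the same (vanishing) Chern class and the same $3$-form representative works for both sides.
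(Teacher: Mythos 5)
Your proof is correct and follows essentially the same route as the paper: both take $\hat H_\ell = H_\ell$, observe that $H_\ell$ is pulled back from $SU(2)$ so that $q^*H_\ell - \hat q^*\hat H_\ell = 0$, and use the closed, bi-invariant two-form $\overline{B} = \pm\, v^4\wedge\hat v^4$ on the correspondence space (the sign is immaterial for this lemma). Your explicit check of non-degeneracy via $\overline{B}(\partial_\theta,\partial_{\hat\theta}) = v^4(\partial_\theta)\,\hat v^4(\partial_{\hat\theta})\neq 0$ just spells out what the paper summarizes by saying $\overline{B}$ is bi-invariant.
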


\begin{proof}
We regard the correspondence space $\overline{K}$ in Definition \ref{def:toptdual} inside the Lie group
$$
\iota \colon \overline{K} \hookrightarrow K \times \hat K,
$$
where $\hat K$ denotes another copy of $K$. Then, define $\overline{B}$ as the pull-back to $\overline{K}$ of the bi-invariant two-form 
\begin{equation}\label{eq:B}
\overline{B}  = - \iota^*(v^4 \wedge \hat v^4).
\end{equation}
Then we have
$$
d \overline{B} = 0 = p^{*}H_{\ell} - \hat{p}^{*}\hat{H}_{\ell},
$$
where we used that $H_{\ell}$ and $\hat{H}_{\ell}$ are both pull-back of the same three-form on $B =  \SU(2)$. The non-degeneracy condition on $\overline{B}$ follows from the fact that this two-form is bi-invariant. 
\end{proof}
 
We are ready to construct our pairs of T-dual solutions of the Killing spinor equations on the compact Lie group $K$. 

\begin{proposition}\label{prop:Tdual}
Given $0 < \ell \in \RR$, consider the equivariant exact Courant algebroid $E_\ell$ over $K$. Then two solutions $(V^{x}_+,\operatorname{div}^{x}_+,\eta_x)$ and $(V^{\hat x}_+,\operatorname{div}^{\hat x}_+,\eta_{\hat x})$ of the Killing spinor equations \eqref{eq:killingE} on $E_\ell$ in Lemma \ref{lem:solutionsKsEq} are exchanged under T-duality, provided that
\begin{equation}\label{eq:Tdualparameters}
\hat x = \frac{1}{\ell x}.   
\end{equation}

\end{proposition}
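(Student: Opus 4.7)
The strategy is to compute the T-duality isomorphism $\psi\colon E_\ell/T^1\to\hat E_\ell/\hat T^1$ of Theorem~\ref{th:Tduality} explicitly on bi-invariant sections and verify that it carries the first triple onto the second. By Lemma~\ref{lem:selfTdual}, the relevant data on the correspondence space $\overline K\subset K\times\hat K$ is $\overline B=-q^*v^4\wedge\hat q^*\hat v^4$. Since every object in sight is bi-invariant, it suffices to evaluate $\psi$ on the basis $\{v_i,v^i\}_{i=1}^4$ of $\g_\ell=\Gamma(E_\ell)^K$.

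A direct unpacking of the Cavalcanti--Gualtieri formula should yield $\psi(v_i)=\hat v_i$ and $\psi(v^i)=\hat v^i$ for $i=1,2,3$ (these are already basic for the projection $p\colon K\to\SU(2)$), together with the standard T-duality flip $\psi(v_4)=\hat v^4$ and $\psi(v^4)=\hat v_4$ along the circle direction. For instance, taking $X+\xi=v_4$ one can choose the lift $\overline X=v_4$, which solves \eqref{eq:liftvector}; then $q^*\xi-\overline B(\overline X,\cdot)=\hat v^4$ while $\hat q_*(v_4)=0$, so $\psi(v_4)=\hat v^4$, and the case $X+\xi=v^4$ is entirely analogous.

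With $\psi$ in hand, the remaining verification is linear-algebraic. The space $V_+^x$ is spanned by $v_i+\ell v^i$ for $i=1,2,3$ and $v_4+\ell x^2 v^4$. The first three images are $\hat v_i+\ell\hat v^i$, which lie in $V_+^{\hat x}$ for any $\hat x>0$. The fourth is $\hat v^4+\ell x^2\hat v_4=\ell x^2\bigl(\hat v_4+(\ell x^2)^{-1}\hat v^4\bigr)$, which matches $\hat v_4+\ell\hat x^2\hat v^4$ up to scale precisely when $\ell\hat x^2=(\ell x^2)^{-1}$, forcing $\hat x=1/(\ell x)$. Applied to the formula $\varepsilon^x_+=-\tfrac12\bigl((\ell x)^{-1}v_4+xv^4\bigr)$ derived in the proof of Lemma~\ref{lem:solutionsKsEq}, the same substitution gives $\psi(\varepsilon^x_+)=-\tfrac12\bigl(x\hat v_4+(\ell x)^{-1}\hat v^4\bigr)=\varepsilon^{\hat x}_+$. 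Finally, $\psi$ intertwines the almost complex structures $J_x$ and $J_{\hat x}$: the $\SU(2)$ block is tautological, and for the circle block the rescaling factor $\ell x^2$ introduced by $\psi$ combines with $J_{\hat x}(\hat v_4+\ell\hat x^2\hat v^4)=\hat x(\hat v_1+\ell\hat v^1)$ to give
\[
J_{\hat x}\,\psi(v_4+\ell x^2 v^4)=\ell x^2\cdot\hat x\,(\hat v_1+\ell\hat v^1)=x(\hat v_1+\ell\hat v^1)=\psi(J_x(v_4+\ell x^2 v^4)),
\]
using $\ell x^2\cdot\hat x=x$. Since the spinor line in $S_+^+$ is determined by the almost complex structure on $V_+$, this matches $\langle\eta_x\rangle$ with $\langle\eta_{\hat x}\rangle$.

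The only non-routine step is the explicit computation of $\psi$, which requires careful bookkeeping with the Cavalcanti--Gualtieri formula on $\overline K$ for the specific $\overline B$ of Lemma~\ref{lem:selfTdual}. Once this is done, all three pieces of data (generalized metric, divergence, spinor line) match under the single parameter relation $\hat x=1/(\ell x)$, which is exactly the geometric prediction that T-duality inverts the radius of the circle factor of $K$.
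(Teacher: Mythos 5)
Your proposal is correct and follows essentially the same route as the paper: compute the Cavalcanti--Gualtieri isomorphism $\psi$ explicitly on the bi-invariant basis (obtaining $\psi(v_j)=\hat v_j$, $\psi(v^j)=\hat v^j$ for $j=1,2,3$ and $\psi(v_4)=\hat v^4$, $\psi(v^4)=\hat v_4$), then check by linear algebra that $\psi$ carries $V_+^{x}$, $\varepsilon^x_+$, and $J_x$ to $V_+^{\hat x}$, $\varepsilon^{\hat x}_+$, and $J_{\hat x}$ exactly when $\hat x=1/(\ell x)$. The only cosmetic difference is that you verify the intertwining relation $J_{\hat x}\circ\psi=\psi\circ J_x$ on generators, whereas the paper computes $\hat J=\psi J_x\psi^{-1}$ and reads off that it equals $J_{\hat x}$; these are equivalent.
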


\begin{proof}
We calculate first the isomorphism \eqref{eq:psiisomorphism} in Theorem \ref{th:Tduality} corresponding to  the two-form \eqref{eq:B}. Firstly, notice that, since $T^1 \subset K$ is central, our global left-invariant frame $v_j,v^j \in \Gamma(E_\ell)^K$ is also invariant by the right $T^1$-action. Then, by \eqref{eq:liftvector} we have
$$
\psi(v_j) = \hat v_j, \qquad \psi(v^j) = \hat v^j, \textrm{ for } j= 1,2,3.
$$
A direct calculation also shows that
$$
\psi(v_4) = \hat v^4, \quad \psi(v^4) = \hat v_4.
$$
By definition of $V_+^{x}$ (see \eqref{eq:V+x}) we have
$$
V_+^{x} = \langle v_2 + \ell v^2, v_3 + \ell v^3, v_1 + \ell v^1, v_4 + \ell x^2 v^4  \rangle \subset T \oplus T^*,
$$
and therefore
$$
\hat V_+^x := \psi(V_+^{x}) = \langle \hat v_2 + \ell \hat v^2, \hat v_3 + \ell \hat v^3, \hat v_1 + \ell \hat v^1, \hat v^4 + \ell x^2 \hat v_4\rangle \subset T \oplus T^*.
$$
From this, the T-dual metric is
$$
\hat g_x = \ell(\hat v^1 \otimes \hat v^1 + \hat v^2 \otimes \hat v^2 + \hat v^3 \otimes \hat v^3 + (\ell x)^{-2} \hat v^4 \otimes \hat v^4) = g_{\hat x},
$$
where $\hat x$ is defined as in \eqref{eq:Tdualparameters}. Similarly, we have that
$$
\psi(\varepsilon^x_+) = -\tfrac{1}{2}\psi\left(\tfrac{1}{\ell x}v_4 + x v^4 \right) = -\tfrac{1}{2}\left(\tfrac{1}{\ell x}\hat v^4 + x \hat v_4 \right) = \varepsilon^{\hat x}_+,
$$
where $\varepsilon^{\hat x}_+$ denotes the orthogonal projection of $\varepsilon^{\hat x}$ onto $\hat V_+^x = V_+^{\hat x}$. Finally, the T-dual complex structure $\hat I_x := \psi I_x \psi^{-1}_{|V_+^{x}}$ is given by
\[
\hat I_x \hat v_2 = \hat v_3, \qquad \hat I_x \hat v_4 = \tfrac{1}{\ell x}\hat v_1.
\qedhere\]
\end{proof}

\begin{remark}
Observe that the volume element along the fibres of $K \to \SU(2)$ is constant on the base, and hence there is no \emph{dilaton shift} (see \cite[Proposition 6.8]{GF3}). This implies
$$
\psi_* \operatorname{div}_{x} = \operatorname{div}_{\hat x} + \langle x \hat v_4 - (\ell x)^{-1}\hat v_4, \cdot \rangle \neq \operatorname{div}_{\hat x},
$$
unless $\ell x^2  = 1$. Nonetheless, the Killing spinor equations \eqref{eq:killingE} only depend on $\operatorname{div}_+^{x}$, and this is precisely the quantity exchanged under T-duality in the previous result.
\end{remark}

Geometrically, Proposition \ref{prop:Tdual} implies that T-duality exchanges the complex structure parameter $x$ with the Aeppli class $a = \ell x$ of the T-dual solution (see \eqref{eq:Aeppli}), and vice versa: 
\begin{equation}\label{eq:Tdualparameters.bis}
\hat x = \frac{1}{a}, \qquad \hat a = \frac{1}{x}.
\end{equation}
This phenomenon is strongly reminiscent of the rotation of the Hodge diamond on mirror symmetry for algebraic Calabi--Yau manifolds. The goal of the next section is to turn this observation into a precise statement on (0,2) mirror symmetry, using vertex algebras.

\subsection{An example of $(0,2)$ mirror symmetry}\label{02example}

We start recalling the coordinate independent description of the chiral de
Rham complex using Courant algebroids \cite{Heluani09,GCYHeluani}. Let $E$ be 
a Courant algebroid over a smooth manifold $M$. 
Let $\Pi E$ be the corresponding purely odd super vector bundle. We will abuse
notation and denote by $\langle, \rangle$ the corresponding
super-skew-symmetric bilinear form, and by $[,]$ the Dorfman bracket on $\Pi E$. Similarly, we obtain an odd
differential operator 
$$
\mathcal{D}: C^\infty(M) \rightarrow \Gamma(\Pi E)
$$
defined by the composition of the exterior differential $d$, the map $\pi^* \colon T^* \to E^*$, and the isomorphism $E^* \cong E$ provided by $\langle, \rangle$, and the parity change operator $\Pi$. 

As in Section \ref{ssec:background}, we denote by $\cH$ the translation algebra. We denote by $\underline{\CC}$ the sheaf of locally constant functions on $M$.
The next result provides a coordinate-free description of the chiral de Rham complex on the smooth manifold $M$.

\begin{proposition}[\cite{Heluani09,GCYHeluani}]\label{prop:CDRE}
Let $E$ be a Courant algebroid over $M$. Then, there exists a unique sheaf of SUSY vertex algebras $\Omega^\mathrm{ch}_M(E)$ over $M$ endowed with embeddings of sheaves of $\underline{\CC}$-modules
$$
\iota \colon C^\infty(M) \hookrightarrow \Omega^\mathrm{ch}_M(E), \qquad j \colon \Pi E \hookrightarrow \Omega^\mathrm{ch}_M(E),
$$
satisfying the following properties:

\begin{enumerate}

\item $\iota$ is an isomorphism of unital commutative algebras onto its image
$$
\iota(fg) = :\iota(f)\iota(g):,
$$

\item $\iota$ and $j$ are compatible with the $C^\infty(M)$-module structure of $\Pi E$ and the $\cH$-module structure of $\Omega^\mathrm{ch}_M(E)$ 
$$
j(fA) = :\iota(f)j(A):, \qquad 2 S \iota(f) = j(\mathcal{D}f),
$$

\item $\iota$ and $j$ are compatible with the Dorfman bracket and pairing
$$
{[j(A)}_\Lambda j(B) ] = j([A,B]) + 2 \chi \iota (\langle A,B\rangle),
$$

\item $\iota$ and $j$ are compatible with the action of $\Gamma(\Pi E)$ on $C^\infty(M)$
$$
{[j(A)}_\Lambda \iota(f)] = \iota(\pi(A)(f))
$$

\item  $\Omega^\mathrm{ch}_M(E)$ is universal with these properties,

\end{enumerate}
	
for all $f,g \in C^\infty(M)$, $A,B \in \Gamma(\Pi E)$. Furthermore, when $E$ is the standard Courant algebroid $TM \oplus T^*M$, $\Omega^\mathrm{ch}_M(E)$ is the chiral de Rham complex of $M$.
	\label{prop:universal}
\end{proposition}

To relate this with our results from the previous section, assume now that our manifold $M$ is a compact Lie group $K$, and let $E$ be a left-equivariant Courant algebroid over $K$. By Proposition \ref{prop:geomalg}, we can associate to $E$ a quadratic Lie algebra given by the invariant sections of $E$
$$
\g = \Gamma(E)^K.
$$
Applying the universal construction in Proposition \ref{prop:universal}, we obtain an embedding of the universal superaffine vertex algebra $V^2(\g_{\text{super}})$ of level $k =2$.

\begin{proposition}\label{prop:superaffineembed}
Let $K$ be a compact Lie group and $E$ a left-equivariant Courant algebroid over $K$. Then there is an embedding
$$
V^2(\g_{\text{super}}) \hookrightarrow H^0(K,\Omega^\mathrm{ch}_K(E))
$$
of the universal superaffine vertex algebra $V^2(\g_{super})$ of level $k =2$ on the space of global sections $H^0(K,\Omega^\mathrm{ch}_K(E))$ of $\Omega^\mathrm{ch}_K(E)$.
\end{proposition}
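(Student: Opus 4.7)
The plan is to build the embedding directly from the universal property of $V^k(\g_{\text{super}})$, after exhibiting $\g$ inside the SUSY Lie conformal algebra $\cR$ of global sections associated to $E$ as in Proposition~\ref{prop:susycommutators}.

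First, consider the inclusion of invariant sections $\g = \Gamma(E)^{K} \hookrightarrow \Gamma(\Pi E) \subset \cR$, together with the map $\CC \to \cR$ sending the level generator $k$ to the constant $2 \in C^{\infty}(K)$. The key observation is that for $A,B \in \g$ the pairing $\qf{A}{B} \in C^{\infty}(K)$ is left-invariant, hence constant, so $\mathcal{D}\qf{A}{B} = 0$. Therefore the bracket formula \eqref{eq:lambdadef} restricted to $\g$ simplifies to
\[
	[A_\Lambda B] \;=\; [A,B] + 2\chi\qf{A}{B},
\]
which is precisely the superaffine $\Lambda$-bracket \eqref{eq:Lambdaaffine} with level $k=2$. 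Hence the above data defines a morphism of $\cH$-modules $\mathfrak{SCur}\,\g \to \cR$ preserving $\Lambda$-brackets, i.e.\ a morphism of SUSY Lie conformal algebras. Composing with the quotient $\cU \to \Omega^{\mathrm{ch}}_{K}(E)$ from Proposition~\ref{prop:universal}, the universal property of the enveloping SUSY vertex algebra yields a morphism of SUSY vertex algebras
\[
	\phi \colon V^{2}(\g_{\text{super}}) \lto \Gamma(K,\Omega^{\mathrm{ch}}_K(E)).
\]

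Next, one must verify that the image lands in $H^{0}(K,\Omega^{\mathrm{ch}}_K(E))$ with each element $K$-invariant in the natural sense; this is automatic because the building blocks $\g$ and $|0\rangle = 1_K$ are both defined globally and $K$-invariantly, and all vertex algebra operations in $\Omega^{\mathrm{ch}}_K(E)$ (namely $T,S$, $\Lambda$-brackets, and normally ordered products) are defined locally from the Courant algebroid structure, so they preserve invariance. Thus $\phi$ lands in $H^{0}(K,\Omega^{\mathrm{ch}}_K(E))$.

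The last and main step is to prove injectivity of $\phi$. The approach is to use a left-invariant trivialisation $E \cong K \times (\mathfrak{k}\oplus\mathfrak{k}^{*})$, which makes $\Pi E$ a globally trivial super vector bundle with frame given by a basis of $\g$. In a local chart, $\Omega^{\mathrm{ch}}_K(E)$ admits the standard $\beta\gamma$--$bc$ presentation, and the elements of $\g$ correspond (up to lower-order terms in the natural filtration by polynomial degree in the $\beta,\gamma,b,c$ generators) to the linear generators of the superaffine current algebra. The PBW-type basis of $V^{2}(\g_{\text{super}})$ consisting of normally ordered monomials in $\{S^{r}a : a \in \g,\ r \geq 0\}$ then maps to the analogous monomials in the $\beta\gamma$--$bc$ system, which are manifestly linearly independent sections over the chart. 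Hence $\phi$ is injective. The main obstacle here is keeping careful control of the filtration and matching the superaffine PBW basis with linearly independent chiral sections; once the filtration is set up, the associated graded argument is routine.
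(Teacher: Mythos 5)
Your construction of the morphism is the same as the paper's: both identify $\mathfrak{SCur}\,\g$ at level $k=2$ inside the SUSY Lie conformal algebra $\cR$ of Proposition~\ref{prop:susycommutators} (via the observation that $\qf{A}{B}$ is constant for invariant sections, so the $\mathcal{D}\qf{A}{B}$ term in \eqref{eq:lambdadef} drops and the coefficient of $\chi$ is $2$), and then invoke the universal property of the enveloping SUSY vertex algebra together with the quotient $\cU\to\Omega^{\mathrm{ch}}_K(E)$ of Proposition~\ref{prop:universal}. Where you diverge is the injectivity step: the paper disposes of it in one line by citing the general principle that a monomorphism from a SUSY Lie conformal algebra into a SUSY vertex algebra extends to a monomorphism from its universal enveloping SUSY vertex algebra, whereas you propose to verify injectivity concretely via a left-invariant trivialisation $E\cong K\times(\mathfrak{k}\oplus\mathfrak{k}^*)$, a local $\beta\gamma$--$bc$ presentation of $\Omega^{\mathrm{ch}}_K(E)$, and a comparison of PBW monomials through the associated graded of the polynomial-degree filtration. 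Your route is more self-contained (the paper's cited ``universal property'' is really a freeness/PBW statement beyond the bare universal property, so you are filling in exactly the point the paper leaves implicit), at the cost of having to set up the filtration and check that images of PBW monomials stay independent modulo lower order --- which you flag but do not carry out. One minor remark: you do not need the image to be $K$-invariant, only globally defined; the invariant sections of $E$ are in particular global sections and the sheaf operations preserve globality, so that paragraph can be shortened.
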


\begin{proof}
Observe that $H^0(K,\Omega^\mathrm{ch}_K(E))$ inherits a natural structure of SUSY vertex algebra. We denote by $\mathcal{R}$ the underlying SUSY Lie conformal algebra.
By Example \ref{exam:superafin} and Proposition \ref{prop:CDRE}, we have an embedding of the Lie conformal algebra $\mathfrak{SCur}\mathfrak{g}$ localized at $k=2$ into $\mathcal{R}$. Now, being $V^2(\g_{\text{super}})$ the universal enveloping SUSY vertex algebra of $\mathfrak{SCur}\mathfrak{g}$ of level $k = 2$, this induces an embedding as in the statement. Indeed, this follows because any morphism from a SUSY Lie conformal algebra $\tilde{\mathcal{R}}$ into a SUSY vertex algebra can be extended to a unique SUSY vertex algebra morphism from the universal enveloping SUSY vertex algebra of $\tilde{\mathcal{R}}$.
\end{proof}

We are ready to prove our main result, which gives our examples of $(0,2)$ mirror pairs on compact non-K\"ahler manifolds. Our result is based on two basic observations combined with a theorem by Linshaw and Mathai which provides a stronger version of Theorem \ref{th:Tduality} in terms of the chiral de Rham complex \cite{LinshawMathai}.

We fix $0 < \ell \in \RR$ and identify a left-invariant solution $(V_+,\operatorname{div}_+,\eta)$ of the Killing spinor equations \eqref{eq:killingE} on the equivariant Courant algebroid $E_\ell$ over $K = \SU(2)\times \U(1)$, as in Lemma \ref{lem:solutionsKsEq}, with a solution $(V_+,\varepsilon_+,I)$ of the F-term and D-term equations \eqref{eq:FtermDterm} on $\g_\ell$ (see Proposition \ref{prop:Killingeven} and Proposition \ref{prop:geomalg}). Firstly, if $(V_+,\varepsilon_+,I)$ is a solution of \eqref{eq:FtermDterm} with $\varepsilon_+$ holomorphic, then so is $(V_+,\varepsilon_+,-I)$ (see Remark \ref{rem:flipI}). Secondly, if we denote
\begin{align*}
J := J(V_+,\varepsilon_+,I),\qquad  H :=  H(V_+,\varepsilon_+,I) \in V^2(\g_{\text{super}})
\end{align*}
the generators of the $N=2$ superconformal vertex algebra 
constructed in Theorem \ref{th:N=2dil}, then (see Remark  \ref{rem:Jflip})
\begin{equation}\label{eq:JSUSYflip}
J(V_+,\varepsilon_+,-I)=-J, \qquad H(V_+,\varepsilon_+,-I)=H.
\end{equation}
Finally, by a theorem of Linshaw and Mathai \cite{LinshawMathai}, the T-duality isomorphism $\psi$ in Theorem \ref{th:Tduality} induces an isomorphism
\begin{equation}\label{eq:LinMat}
\psi^{ch} \colon p_* \Omega^\mathrm{ch}_K(E_\ell)^{T^1} \to \hat p_* \Omega^\mathrm{ch}_K(E_\ell)^{T^1}.
\end{equation}
where $\Omega^\mathrm{ch}_K(E_\ell)^{T^1}$ is the sheaf of SUSY vertex algebras on $K$ generated by $T^1$-invariant sections of $E_\ell$ and functions on $K/T^1 \cong \SU(2)$ (see \cite[Section 5.2]{LinshawMathai} for a precise definition). Observe also that we have an embedding 
$$
V^2(\g_{\text{super}}) \hookrightarrow H^0(\SU(2),p_* \Omega^\mathrm{ch}_K(E_\ell)^{T^1}),
$$
by Proposition \ref{prop:superaffineembed}. The existence of the induced isomorphism $\psi^{ch}$ relies on the fact that $(M,[H_\ell])$ is self-T-dual (see Lemma \ref{lem:selfTdual} and \cite[Remark 6.3]{LinshawMathai}).

\begin{theorem}\label{thm:02}
Given $0 < \ell \in \RR$, consider the one-parameter family of solutions of the Killing spinor equations  $(V_+^{x},\operatorname{div}^{x}_+,I_x)$ on $E_\ell$ in Lemma \ref{lem:solutionsKsEq} parametrized by $x >0$. Then $(V_+^{x},\operatorname{div}^{x}_+,I_x)$ and $(V^{\hat x}_+,\operatorname{div}^{\hat x}_+,-I_{\hat x})$ are related by $(0,2)$ mirror symmetry provided that $\hat x = 1 /\ell x$. More precisely, if we denote
\begin{equation*}
\begin{split}
J & = J(V_+^{x},\operatorname{div}^{x}_+,I_x), \qquad \; \; \; H =  H(V_+^{x},\operatorname{div}^{x}_+,I_x)\\
\hat J & = J(V^{\hat x}_+,\operatorname{div}^{\hat x}_+,-I_{\hat x}), \qquad \hat H =  H(V^{\hat x}_+,\operatorname{div}^{\hat x}_+,-I_{\hat x})
\end{split}
\end{equation*}
the generators of the $N=2$ superconformal vertex algebras with central charge $c = 6 + 6/\ell$ constructed in Theorem \ref{th:N=2dil}, then the Linshaw--Mathai isomorphism \eqref{eq:LinMat} realises the mirror involution
\begin{equation}\label{eq:mirrorinv}
\psi^{ch}(J) = - \hat J, \qquad \psi^{ch}(H) = \hat H.
\end{equation}
\end{theorem}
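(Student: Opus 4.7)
The plan is to reduce the claim to Theorem \ref{th:N=2dil} (which produces $J$, $H$ from data $(V_+,\varepsilon,J)$), to T-duality on the geometric side (Proposition \ref{prop:Tdual}), to the Linshaw--Mathai chiral T-duality isomorphism \eqref{eq:LinMat}, and finally to the sign-flip identities \eqref{eq:JSUSYflip}. First, I would verify that Theorem \ref{th:N=2dil} applies on both the $x$-side and the $\hat x$-side. On the $x$-side, this combines Lemma \ref{lem:solutionsKsEq} with Proposition \ref{prop:FDreal} (yielding the $F$-term and $D$-term equations) and Lemma \ref{lem:examhol} (holomorphicity of $\varepsilon_+^x$). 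The $\hat x$-side with $-J_{\hat x}$ is handled analogously: the decomposition $V_+^{\CC}=V_+^{1,0}\oplus V_+^{0,1}$ underlying Definition \ref{def:FtermDterm} is independent of whether we work with $J_{\hat x}$ or $-J_{\hat x}$ (cf.\ Remark \ref{rem:flipJ}), so the $F$-term, $D$-term and holomorphicity conditions transfer immediately. A direct substitution of $\varepsilon_+^x=-\tfrac12\!\left(\tfrac{1}{\ell x}v_4+xv^4\right)$ into the central charge formula of Theorem \ref{th:N=2dil} produces $c=6+6/\ell$, the same on both sides.

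Second, I would use Proposition \ref{prop:superaffineembed} to locate all four vectors $J$, $H$, $\hat J$, $\hat H$ inside the image of the superaffine vertex algebra $V^2(\g_{\ell,\mathrm{super}})$ in $H^0(\SU(2),p_*\Omega^{ch}_K(E_\ell)^{T^1})$. The Linshaw--Mathai isomorphism $\psi^{ch}$ is a morphism of sheaves of SUSY vertex algebras whose restriction to the generating data (sections of $E_\ell$ and functions on $\SU(2)$) is the Courant algebroid isomorphism $\psi$ of Theorem \ref{th:Tduality}. Since the formulas \eqref{eq:Jmas}, \eqref{eq:Jmasdilaton} for $J$ and the formula for $H$ in Theorem \ref{th:N=2dil} only involve normally ordered products and applications of the odd derivation $S$ to an isotropic basis $\{e_j,e^j\}$ of $V_+^{x,\CC}$ and to the divergence $\varepsilon_+^x$, the image $\psi^{ch}(J)$ is given by the same formula with all data replaced by its $\psi$-image. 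By Proposition \ref{prop:Tdual}, $\psi$ sends $(V_+^x,\varepsilon_+^x,J_x)$ to $(V_+^{\hat x},\varepsilon_+^{\hat x},J_{\hat x})$, where the T-dual complex structure is $J_{\hat x}$ and not $-J_{\hat x}$. Hence $\psi^{ch}(J)=J(V_+^{\hat x},\varepsilon_+^{\hat x},J_{\hat x})$ and $\psi^{ch}(H)=H(V_+^{\hat x},\varepsilon_+^{\hat x},J_{\hat x})$.

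Third, to obtain the mirror involution I would apply \eqref{eq:JSUSYflip}: reversing the sign of the complex structure in the construction of the $N=2$ generators sends $J$ to $-J$ and leaves $H$ invariant, so $J(V_+^{\hat x},\varepsilon_+^{\hat x},-J_{\hat x})=-J(V_+^{\hat x},\varepsilon_+^{\hat x},J_{\hat x})$ and analogously for $H$. Combining with the second step gives $\psi^{ch}(J)=-\hat J$ and $\psi^{ch}(H)=\hat H$, as required. The main obstacle I foresee lies in the second step: one must carefully check that $\psi^{ch}$ intertwines the algebraic construction of $(J,H)$ from $(V_+,\varepsilon_+,J)$ in a manner compatible with vertex algebra operations. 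This reduces to verifying that $\psi$ carries any isotropic basis of $V_+^{x,\CC}$ adapted to $J_x$ to an isotropic basis of $V_+^{\hat x,\CC}$ adapted to $J_{\hat x}$, a fact implicit in the explicit computation $\hat J=J_{\hat x}$ carried out at the end of the proof of Proposition \ref{prop:Tdual}.
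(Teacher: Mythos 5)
Your proposal is correct and follows essentially the same route as the paper: apply Theorem \ref{th:N=2dil} via Lemmas \ref{lem:solutionsKsEq} and \ref{lem:examhol}, compute $c=6+6/\ell$, reduce via \eqref{eq:JSUSYflip} to showing $\psi^{ch}(J)=J(V_+^{\hat x},\operatorname{div}^{\hat x}_+,J_{\hat x})$, and conclude from Proposition \ref{prop:Tdual} together with the fact that $\psi^{ch}$ is a SUSY vertex algebra morphism extending $\psi$. The only (harmless) difference is that the paper verifies the key identity $\psi^{ch}(J)=\tilde J$ by writing both sides explicitly in the basis $w_j=\Pi v_j$, $w^j=\Pi v^j$ and comparing term by term, whereas you invoke basis-independence of the formulas \eqref{eq:Jmas}--\eqref{eq:Jmasdilaton} plus the fact that $\psi$ carries the triple $(V_+^{x},\varepsilon_+^{x},J_x)$ to $(V_+^{\hat x},\varepsilon_+^{\hat x},J_{\hat x})$; both verifications rest on the same computation at the end of the proof of Proposition \ref{prop:Tdual}.
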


\begin{proof}
The solutions of the Killing spinor equations $(V_+^{x},\operatorname{div}^{x}_+,I_x)$ constructed in Lemma \ref{lem:solutionsKsEq} (see \eqref{eq:V+x}) are such that the corresponding divergence $\varepsilon_+ \in V_+^{x}$ is holomorphic in the sense of Definition \ref{def:holoiso} (see Lemma \ref{lem:examhol}), and hence Theorem \ref{th:N=2dil} applies. The formula for the central charge follows from \eqref{eq:centralchargedilaton}
$$
c = 3\left(2+2\qf{\varepsilon_+^x}{\varepsilon_+^x}\right) = 6 + 6 x^2 \frac{1}{x^2 \ell} = 6 + 6/\ell.
$$
Observe that the expansion in Fourier modes of \eqref{eq:mirrorinv} recovers \eqref{eq:mirrorinvintro} (see Example \ref{exam:N2}). By \eqref{eq:JSUSYflip} it suffices to prove the identity 
$$
\psi^{ch}(J) = \tilde J := J(V^{\hat x}_+,\operatorname{div}^{\hat x}_+,I_{\hat x}).
$$
To see this, we write $w_j = \Pi v_j$ and $w^j = \Pi v^j$ for all $j \in \left\lbrace1,2,3,4\right\rbrace$, where the $v_j,v^j$ are as in Section \ref{ssec:KsCourant}.  Applying \eqref{eq:cuasicon1} and setting $a = \ell x$, a simple calculation shows that
\begin{equation*}
\begin{split}
J 
& = \frac{1}{2}\left(\frac{1}{\ell}:w_2w_3:+:w_2w^3:+:w^2w_3:+\frac{a}{x}:w^2w^3: \right.\\
& +\left.\frac{1}{a}:w_4w_1:+\frac{1}{x}:w_4w^1:+x:w^4w_1:+a:w^4w^1:\right) - \frac{1}{2}S\left(\frac{1}{\ell}w_1+w^1\right), \\
\tilde J 
& = \frac{1}{2}\left(\frac{1}{\ell}:\widehat{w}_2\widehat{w}_3:+:\widehat{w}_2\widehat{w}^3:+:\widehat{w}^2\widehat{w}_3:+\frac{a}{x}:\widehat{w}^2\widehat{w}^3: \right.\\
& +\left.x:\widehat{w}_4\widehat{w}_1:+a:\widehat{w}_4\widehat{w}^1:+\frac{1}{a}:\widehat{w}^4\widehat{w}_1:+\frac{1}{x}:\widehat{w}^4\widehat{w}^1:\right) -\frac{1}{2}S\left(\frac{1}{\ell}\hat w_1 + \hat w^1\right).
\end{split}
\end{equation*}
Now, using that $\psi^{ch}$ is an isomorphism of SUSY vertex algebras (that is, in particular, is an homomorphism for the normally ordered product and $S \psi^{ch} = \psi^{ch} S$), we obtain that
\begin{equation*}
\begin{split}
\psi^{ch}\left(J\right)
& = \frac{1}{2}\left(\frac{1}{\ell}:\widehat{w}_2\widehat{w}_3:+:\widehat{w}_2\widehat{w}^3:+:\widehat{w}^2\widehat{w}_3:+\frac{a}{x}:\widehat{w}^2\widehat{w}^3:\right.\\
& +\left.\frac{1}{a}: \widehat{w}^4 \widehat{w}_1: + \frac{1}{x}:\widehat{w}^4\widehat{w}^1: + x :\widehat{w}_4\widehat{w}_1: + a:\widehat{w}_4 \widehat{w}^1:\right) - \frac{1}{2}S\left(\frac{1}{\ell}\hat w_1 + \hat w^1\right) = \tilde J.
\qedhere\end{split}
\end{equation*}
\end{proof}

To finish this section, we show that the $(0,2)$ mirrors constructed in Theorem \ref{thm:02} have isomorphic $T^1$-\emph{equivariant half-twisted models}. For the rigorous definition of the half-twisted model as a holomorphic sheaf of vertex algebras, we follow \cite[Section 8]{StructuresHeluani}. Consider the operators
\begin{equation}
Q_0 = \tfrac{1}{2}(H_{\qf{0}{1}} + i J_{\qf{0}{0}}), \qquad G_0 = \tfrac{1}{2}(H_{\qf{0}{1}} - i J_{\qf{0}{0}}),
\end{equation}
on $\Omega_K^{ch}(E_\ell)$  defined by the Fourier modes of $J$ and $H$ in Theorem \ref{thm:02} (see Example \ref{exam:N2}).

\begin{definition}\label{def:halftwisted}
Given $0 < \ell,x \in \RR$, consider the solution of the Killing spinor equations $(V_+^{x},\operatorname{div}^{x}_+,I_x)$ on $E_\ell$ in Lemma \ref{lem:solutionsKsEq}.
\begin{enumerate}
\item The \emph{equivariant $A$-half twisted model} of $(V_+^{x},\operatorname{div}^{x}_+,I_x)$ is the sheaf of SUSY vertex algebras given by the cohomology
$$
\Omega_{x,A}^{T^1} := H^*(\Omega_K^{ch}(E_\ell)^{T^1},Q_0).
$$
\item The \emph{equivariant $B$-half twisted model} of $(V_+^{x},\operatorname{div}^{x}_+,I_x)$ is the sheaf of SUSY vertex algebras given by the cohomology
$$
\Omega_{x,B}^{T^1} := H^*(\Omega_K^{ch}(E_\ell)^{T^1},G_0).
$$
\end{enumerate}
\end{definition}

The fact that the equivariant $A$-half and $B$-half twisted models are well-defined
follows as in \cite{StructuresHeluani} once we notice that $(g_x,I_x)$ is part of a generalized K\"ahler structure, where the second complex structure is given by the right invariant extension of $I_x$ to $K$ (see \cite[Example 4.8]{CaGu}). The conditions $Q_0^2 = G_0^2 = 0$ follow from the generalized K\"ahler identities. To give a more explicit description of the twisted models, recall that our solution of the Killing spinor equations $(V_+^{x},\operatorname{div}^{x}_+,I_x)$ has associated holomorphic Courant algebroids
$$
\mathcal{Q}_{x,\ell} \to X_x := (K,I_x), \qquad \overline{\mathcal{Q}}_{x,\ell} \to \overline{X}_x := (K,-I_x)
$$
defined by reduction (see \cite[Theorem 1.20]{G2})
$$
\mathcal{Q}_{x,\ell} := ((V_+^x)^{0,1})^\perp/(V_+^x)^{0,1}, \qquad \overline{\mathcal{Q}}_{x,\ell} := ((V_+^x)^{1,0})^\perp/(V_+^x)^{1,0}.
$$
Here, $((V_+^x)^{0,1})^\perp$ is the orthogonal complement of the $-i$-eigenbundle $(V_+^x)^{0,1}$ of $I_x$ on 
$$
V_+^x \otimes \CC \subset E_\ell \otimes \mathbb{C}.
$$
As in Proposition \ref{prop:universal}, $\mathcal{Q}_{x,\ell}$ (resp. $\overline{\mathcal{Q}}_{x,\ell}$) has an associated holomorphic sheaf of SUSY vertex algebras $\Omega^{ch}_{X_x}(\mathcal{Q}_{x,\ell})$ over $X_x$ (resp. $\Omega^{ch}_{\overline{X}_x}(\overline{\mathcal{Q}}_{x,\ell})$ over $\overline{X}_x$). As in \cite[Section 4.2]{LinshawMathai}, we can define subsheaves
$$
\Omega^{ch}_{X_x}(\mathcal{Q}_{x,\ell})^{T^1} \subset \Omega^{ch}_{X_x}(\mathcal{Q}_{x,\ell}), \qquad \Omega^{ch}_{\overline{X}_x}(\overline{\mathcal{Q}}_{x,\ell})^{T^1} \subset \Omega^{ch}_{\overline{X}_x}(\overline{\mathcal{Q}}_{x,\ell})
$$
as the commutants with the zero mode of $\Pi v_4$, where $v_4$ is the holomorphic vector field generating the $T^1$-action. The next result is a direct consequence of \cite[Proposition 13]{StructuresHeluani}.

\begin{proposition}[\cite{StructuresHeluani}]\label{prop:halftwist}
There are canonical isomorphisms of sheaves of SUSY vertex algebras 
$$
\Omega_{x,A}^{T^1} \cong \Omega^{ch}_{X_x}(\mathcal{Q}_{x,\ell})^{T^1}, \qquad \Omega_{x,B}^{T^1} \cong \Omega^{ch}_{\overline{X}_x}(\overline{\mathcal{Q}}_{x,\ell})^{T^1}.
$$
\end{proposition}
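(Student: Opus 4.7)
The plan is to deduce Proposition \ref{prop:halftwist} from \cite[Proposition 13]{StructuresHeluani}, first in the absence of the $T^1$-invariance, and then by descending to the commutant sub-sheaves.

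To apply \cite[Proposition 13]{StructuresHeluani} I would first verify that $(V_+^x,\operatorname{div}_+^x,J_x)$ equips $K$ with the generalized K\"ahler structure that Heluani's statement requires. The generalized metric $V_+^x$ together with the almost complex structure $J_x$ determines the first generalized complex structure $\mathbb{J}_+$ through the decomposition $V_+^x \otimes \mathbb{C} = (V_+^x)^{1,0}\oplus (V_+^x)^{0,1}$, while the second $\mathbb{J}_-$ is produced from the right-invariant extension $J_x^R$ of $J_x$, which is well defined because $g_{x,\ell x}$ is bi-invariant. The pluriclosed (and in fact Bismut-flat) condition $dd^c\omega_x=0$ needed to produce the required $N=2$ superconformal structure is supplied by Proposition \ref{lemma:KillingevenE}, while the cohomology classes in $\mathcal{Q}_{x,\ell},\overline{\mathcal{Q}}_{x,\ell}$ are exactly the reductions of $E_\ell$ by $(V_+^x)^{0,1}$ and $(V_+^x)^{1,0}$ that appear in Heluani's framework.

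Next, I would identify our $N=2$ generators $(J,H)$ of Theorem \ref{th:N=2dil} (for the A-model) and $(-J,H)$ (for the B-model) with the ones that \cite{StructuresHeluani} attaches to this generalized K\"ahler structure. Writing global sections of $E_\ell$ in the left-invariant isotropic basis $\epsilon_j^+,\overline{\epsilon}_j^+$ of \eqref{eq:epsbasisexam} with $l=(V_+^x)^{1,0}$ and dilaton $\varepsilon=\varepsilon_+^x$, one checks that the formulas \eqref{eq:Jmasdilaton} and \eqref{eq:NSmasdilatoncasi} reproduce Heluani's generators from \cite[Theorem 2]{StructuresHeluani}; the correction $-\tfrac{2}{k}Siu$ in $J$ and the term $-\tfrac{2}{k}Te$ hidden in $H$ match the Lee-form correction implicit in his construction, since by Proposition \ref{lemma:KillingevenE} the dilaton $\varepsilon_+^x$ is proportional to the Lee form $\theta_{\omega_x}$. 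With this matching, \cite[Proposition 13]{StructuresHeluani} gives sheaf isomorphisms
$$
H^*(\Omega_K^{ch}(E_\ell),Q_0)\cong \Omega_{X_x}^{ch}(\mathcal{Q}_{x,\ell}),\qquad H^*(\Omega_K^{ch}(E_\ell),G_0)\cong \Omega_{\overline{X}_x}^{ch}(\overline{\mathcal{Q}}_{x,\ell}).
$$

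Finally, I would descend to the $T^1$-commutants. By the definitions adopted in \cite[Section 4.2]{LinshawMathai}, both $\Omega_K^{ch}(E_\ell)^{T^1}$ and $\Omega_{X_x}^{ch}(\mathcal{Q}_{x,\ell})^{T^1}$ are the commutants of the zero mode $\Pi(v_4)_{(0|0)}$. Since $v_4$ is central in $\g_\ell$ and $\varepsilon_+^x$ is a linear combination of $v_4$ and $v^4$, this zero mode supercommutes with both $J$ and $H$, hence with $Q_0$ and $G_0$. Because $\Pi(v_4)_{(0|0)}$ acts semisimply with integer-valued eigenvalues, taking cohomology with respect to $Q_0$ or $G_0$ commutes with passing to this commutant, yielding the claimed equivariant isomorphisms. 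The main obstacle is the precise identification of generators in the previous step: Heluani's construction is phrased in the language of generalized complex geometry and differential forms on $K$, while ours lives in an explicit basis of the superaffine vertex algebra, so one has to translate between the two conventions and verify carefully that the holomorphicity hypothesis of Definition \ref{def:holoiso} corresponds to the input that makes \cite[Proposition 13]{StructuresHeluani} applicable.
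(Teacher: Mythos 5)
Your proposal follows essentially the same route as the paper, which records this statement as a direct consequence of \cite[Proposition 13]{StructuresHeluani} after observing that $(g_x,J_x)$ is part of a generalized K\"ahler structure (the second complex structure being the right-invariant extension of $J_x$, so that $Q_0^2=G_0^2=0$); your additional steps --- verifying the hypotheses, matching generators, and descending to the $T^1$-commutant --- are reasonable elaborations of that citation. One caution: the paper's closing remark stresses that \cite[Theorem 2]{StructuresHeluani} does \emph{not} apply in this situation (the generalized K\"ahler structure is not left-invariant and is not a generalized Calabi--Yau metric structure), so your identification of generators should be carried out at the level of the differentials $Q_0$ and $G_0$ entering Proposition 13 rather than by invoking Theorem 2.
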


Our next result provides the desired identification between the equivariant half-twisted models for the pairs of $(0,2)$ mirrors in Theorem \ref{thm:02}.

\begin{theorem}\label{thm:halftwist}
Given $0 < \ell,x \in \RR$, set $\hat x = 1/\ell x$ and consider the pair of $(0,2)$ mirrors $(V_+^{x},\operatorname{div}^{x}_+,I_x)$ and $(V^{\hat x}_+,\operatorname{div}^{\hat x}_+,-I_{\hat x})$ in Theorem \ref{thm:02}. Then, the Linshaw--Mathai isomorphism \eqref{eq:LinMat} induces isomorphisms of sheaves of SUSY vertex algebras
$$
\Omega_{x,A}^{T^1} \cong \Omega_{\hat x,B}^{T^1}, \qquad \Omega_{x,B}^{T^1} \cong \Omega_{\hat x,A}^{T^1}.
$$
Consequently, there is an isomorphism of sheaves of SUSY vertex algebras
\begin{equation}\label{eq:halftwistmirror}
\Omega^{ch}_{X_x}(\mathcal{Q}_{x,\ell})^{T^1} \cong \Omega^{ch}_{X_{\hat x}}(\mathcal{Q}_{\hat x,\ell})^{T^1}.
\end{equation}
\end{theorem}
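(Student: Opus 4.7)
The plan is to deduce the theorem as a formal consequence of Theorem~\ref{thm:02}, whose mirror involution~\eqref{eq:mirrorinv} is the main analytical ingredient. Since the identification of the two $N=2$ generators under the Linshaw--Mathai isomorphism $\psi^{ch}$ is already in hand, what remains is essentially bookkeeping: I will first show that $\psi^{ch}$ intertwines the two differentials that define the half-twisted models, then take cohomology to obtain the first two displayed isomorphisms, and finally apply Proposition~\ref{prop:halftwist} on both sides (carefully tracking the sign flip $J_{\hat x}\mapsto -J_{\hat x}$ performed on the mirror) to reach \eqref{eq:halftwistmirror}.

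The first step will use that $Q_0$ and $G_0$ are built from the Fourier modes $H_{\qf{0}{1}}$ and $J_{\qf{0}{0}}$ of the two generators, while the analogous twisting operators on the mirror side are $\hat Q_0=\tfrac{1}{2}(\hat H_{\qf{0}{1}}+i\hat J_{\qf{0}{0}})$ and $\hat G_0=\tfrac{1}{2}(\hat H_{\qf{0}{1}}-i\hat J_{\qf{0}{0}})$, built from $(\hat J,\hat H)$. Since $\psi^{ch}$ is an isomorphism of sheaves of SUSY vertex algebras, it commutes with the extraction of Fourier modes, so substituting $\psi^{ch}(J)=-\hat J$ and $\psi^{ch}(H)=\hat H$ from~\eqref{eq:mirrorinv} into the definitions of $Q_0,G_0$ yields at once
\begin{equation*}
\psi^{ch}(Q_0)=\hat G_0,\qquad \psi^{ch}(G_0)=\hat Q_0.
\end{equation*}
Passing to cohomology then gives the isomorphisms
\begin{equation*}
\Omega_{x,A}^{T^1}=H^*(\Omega_K^{ch}(E_\ell)^{T^1},Q_0)\xrightarrow{\;\sim\;}H^*(\Omega_K^{ch}(E_\ell)^{T^1},\hat G_0)=\Omega_{\hat x,B}^{T^1},
\end{equation*}
and symmetrically $\Omega_{x,B}^{T^1}\cong\Omega_{\hat x,A}^{T^1}$, proving the first part of the theorem.

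For the geometric identification~\eqref{eq:halftwistmirror}, I would apply Proposition~\ref{prop:halftwist} on each side. Applied to $(V_+^x,\operatorname{div}_+^x,J_x)$ it gives $\Omega_{x,A}^{T^1}\cong\Omega^{ch}_{X_x}(\mathcal Q_{x,\ell})^{T^1}$. Applied to the mirror $(V_+^{\hat x},\operatorname{div}_+^{\hat x},-J_{\hat x})$, the role of the complex structure is played by $-J_{\hat x}$: its $(0,1)$-eigenbundle coincides with the $(1,0)$-eigenbundle of $J_{\hat x}$, so the complex manifold that appears is $\overline X_{\hat x}$ and the reduced holomorphic Courant algebroid becomes $\overline{\mathcal Q}_{\hat x,\ell}$ (the one associated with the \emph{original} $J_{\hat x}$); hence the B-model of the mirror, built using $\hat G_0$, is identified with $\Omega^{ch}_{X_{\hat x}}(\mathcal Q_{\hat x,\ell})^{T^1}$. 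Chaining the three isomorphisms yields~\eqref{eq:halftwistmirror}. The only delicate point, and the main potential source of confusion rather than a genuine obstacle, will be the accurate tracking of this complex-structure flip on both the manifold and the Courant algebroid, but no further analysis beyond a careful reading of the conventions in Section~\ref{ssec:KsCourant} and Definition~\ref{def:halftwisted} is required.
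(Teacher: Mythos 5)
Your argument is correct and is essentially the paper's own proof: the paper likewise deduces $\psi^{ch}(Q_0)=\hat G_0$ and $\psi^{ch}(G_0)=\hat Q_0$ directly from the mirror involution \eqref{eq:mirrorinv}, passes to cohomology for the first two isomorphisms, and invokes Proposition \ref{prop:halftwist} for \eqref{eq:halftwistmirror}. Your extra care in tracking the $J_{\hat x}\mapsto -J_{\hat x}$ flip through Proposition \ref{prop:halftwist} only makes explicit what the paper leaves implicit.
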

\begin{proof}
The first part of the statement follows from Theorem \ref{thm:02}. In particular, \eqref{eq:mirrorinv} implies
$$
\psi^{ch}(Q_0) = \hat G_0, \qquad \psi^{ch}(G_0) = \hat Q_0.
$$
The second part of the statement follows from Proposition \ref{prop:halftwist}.
\end{proof}

The identity \eqref{eq:halftwistmirror} is remarkable, as it gives an isomorphism of holomorphic sheaves of SUSY vertex algebras over different complex manifolds. It shall be compared with a fundamental result of Borisov and Libgober \cite{BorLib}, which matches the elliptic genera of a Calabi--Yau hypersurface on a Fano toric manifold with the one of its mirror. Here, the term holomorphic stands for the fact that $\Omega^{ch}_{X_x}(\mathcal{Q}_{x,\ell})^{T^1}$ is locally generated by the $T^1$-invariant holomorphic sections of $\mathcal{Q}_{x,\ell}$ (after parity change) and $T^1$-invariant holomorphic functions on $X_x$. Observe that, in the present setup, there is a natural elliptic fibration structure 
$$
X_x \to \mathbb{P}^1
$$
and any $T^1$-invariant holomorphic function on $X_x$ is necessarily pull-back from $\mathbb{P}^1$. Inspired by \cite{KontsevichHMS}, one can speculate that there should be a homological version of $(0,2)$ mirror symmetry for pairs $(X,\mathcal{Q})$, given by a compact complex manifold $X$ and a \emph{holomorphic string algebroid} $\mathcal{Q}$ \cite{grt2}, which replaces \eqref{eq:halftwistmirror} by an equivalence of suitable categories of vertex algebra modules. To start, it would be interesting to obtain an explicit calculation of $\Omega^{ch}_{X_x}(\mathcal{Q}_{x,\ell})^{T^1}$ in the present example.

\begin{remark}
The use of generalized K\"ahler geometry in Definition \ref{def:halftwisted} suggests that Theorem \ref{thm:02} can possibly be regarded as an example of $(2,2)$-mirror symmetry. However, we would like to stress that neither our methods nor the ones from \cite{StructuresHeluani} apply directly in the present situation to achieve such result. On the one hand, the generalized K\"ahler structure underlying $(g_x,I_x)$ is not left-invariant, and hence it cannot be regarded as a pair Killing spinors on the quadratic Lie algebra $\g_\ell$. On the other hand, \cite[Theorem 2]{StructuresHeluani} only applies to \emph{generalized Calabi--Yau metric structures}, which are always K\"ahler on a compact manifold.
\end{remark}

\section{Applications of Theorem \ref{th:N=2}}
\label{sec:furtherex}

\subsection{A family of $N=4$ algebras}\label{sec:N=4}

In this section, we discuss two applications of Theorem \ref{th:N=2}. In our first application we will show that each element of the family of $N=2$ superconformal vertex-algebra embeddings induced by Lemma \ref{lem:solutionsKsEq} and Theorem \ref{th:N=2} embeds in a $N=4$ superconformal vertex algebra with central charge $c = 6$. Our construction seems to be related to a $N=4$ superconformal algebra originally discovered by Sevrin, Troost and Van Proeyen \cite{STVan} (see also \cite{CHS}), but we have not been able to find a precise match.

We fix $\ell, x > 0$ and consider the solution of the Killing spinor equations $(V_+^{x},\operatorname{div}^{x}_+,I_x)$ (see \eqref{eq:V+x}) on the equivariant Courant algebroid $E_\ell$ over $K = \SU(2)\times \U(1)$ obtained in Lemma \ref{lem:solutionsKsEq}. We start with the observation that $g_{x}$ is compatible with a left-invariant hyperholomorphic structure $(I_x,J_x,K_x)$ on $K$, wiht $I_x$ defined by~\eqref{eq:Ix}, and $J_x$ and $K_x$ defined using~\eqref{eq:SU2structure} by
$$
2 \Psi_{x} = \omega_{J_x} + i \omega_{K_x},
$$
where $g_x=\omega_{J_x}(,J_x) = \omega_{K_x}(,K_x)$. More explicitly, we have
$$
J_x v_4 = x v_2, \qquad J_x v_3 = v_1, \qquad  K_x v_1 = v_2, \qquad K_x v_4 = x v_3,
$$
and it is straightforward to check that the standard Hamilton relations hold
$$
I_x^2 = J_x^2 = K_x^2 = I_xJ_xK_x = - \Id.
$$
In the next result, we show that  $(V_+^{x},\operatorname{div}^{x}_+)$ combined with either $I_x$, $J_x$ or $K_x$, gives a solution of the Killing spinor equations \eqref{eq:killingE}.

\begin{lemma}\label{lem:solutionsKsEqhyper}
For any $\ell, x > 0$, $(V_+^{x},\operatorname{div}^{x}_+,I_x)$, $(V_+^{x},\operatorname{div}^{x}_+,J_x)$ and $(V_+^{x},\operatorname{div}^{x}_+,K_x)$ are left-invariant solutions of the Killing spinor equations \eqref{eq:killingE} on $E_\ell$. Consequently, $(I_x,J_x,K_x)$ is a hyperholomorphic structure compatible with $g_{x}$ and fixed Lee form 
	$$
	\theta_x= - x v^4.
	$$
\end{lemma}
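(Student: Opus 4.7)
The plan is to exploit the cyclic symmetry of $\mathfrak{su}(2)$ to reduce both new cases to the one already established in Lemma \ref{lem:solutionsKsEq}. Define the linear map $\phi\colon \mathfrak{k}\to\mathfrak{k}$ by
\[
\phi(v_1)=v_2,\quad \phi(v_2)=v_3,\quad \phi(v_3)=v_1,\quad \phi(v_4)=v_4.
\]
A direct inspection of \eqref{eq:brackets} shows that $\phi$ is a Lie algebra automorphism, and it is clearly an isometry of the bi-invariant metric $g_{x,\ell x}$ (it permutes an orthonormal triple in $\mathfrak{su}(2)$ and fixes $v_4$). Moreover $\phi^{*}H_{\ell}=\ell\,\phi^{*}v^{123}=\ell v^{123}=H_{\ell}$, so $\phi$ lifts to an automorphism of the Courant algebroid $E_{\ell}$ via the identification $E_{\ell}\cong T\oplus T^{*}$.

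Next I would observe that this automorphism preserves the pair $(V_+^x,\operatorname{div}_+^x)$: indeed $V_+^x=\{v+g_x(v):v\in\mathfrak{k}\}$ is preserved because $\phi$ is an isometry, and $\operatorname{div}_+^x$ is determined by $\varepsilon^x=-xv^4$, which is fixed by $\phi$. A short computation with the definitions \eqref{eq:Jx} and the definitions of $J$ and $K$ given just before the lemma yields
\[
\phi\circ J_x\circ\phi^{-1}=J,\qquad \phi^{2}\circ J_x\circ\phi^{-2}=K.
\]
Since Lemma \ref{lem:solutionsKsEq} gives that $(V_+^x,\operatorname{div}_+^x,I)$ with $I=J_x$ solves \eqref{eq:killingE} (via the equivalence in Proposition \ref{prop:geomalg} with the algebraic equations \eqref{eq:killing}), transporting the solution by $\phi$ and $\phi^{2}$ — which act as symmetries of the full data $(E_{\ell},V_+^x,\operatorname{div}_+^x)$ — produces solutions with $J$ and $K$ in place of $I$.

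For the final claim, the Hamilton relations $I^{2}=J^{2}=K^{2}=IJK=-\operatorname{Id}$ are a direct check from the explicit formulas for $I,J,K$, and each of the three complex structures is integrable and compatible with $g_x$ by Proposition \ref{lemma:KillingevenE}, so $(I,J,K)$ is hyperholomorphic and compatible with $g_x$. The Lee form $\theta_{\omega}=\tfrac{1}{2}(\operatorname{div}_0-\operatorname{div})=-xv^4$ depends only on the pair $(V_+^x,\operatorname{div}_+^x)$ and not on the choice of pure spinor, hence it is the same for the three complex structures. The only mildly delicate step is verifying that the conjugations by $\phi,\phi^{2}$ send $J_x$ precisely to $J$ and $K$ — but this is a direct matrix check on the generators $\{v_1,v_2,v_3,v_4\}$.
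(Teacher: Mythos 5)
Your proof is correct, but it takes a genuinely different route from the paper. The paper argues via Proposition \ref{lemma:KillingevenE}: it writes down the forms $\omega_K = \ell v^{12} + \ell x v^{43}$ and $\Psi_K = (v^1+iv^2)\wedge(iv^3+xv^4)$ explicitly, verifies the system \eqref{eq:twistedStrom} by computing $d\Psi_K = -xv^4\wedge\Psi_K$, $d\omega_K = -xv^4\wedge\omega_K$ and $d^c_K\omega_K = -H_\ell$, reads off the Lee form from the surface identity $d\omega_K = \theta_{\omega_K}\wedge\omega_K$, and leaves the case of $J$ to the reader. You instead transport the known solution for $I=J_x$ by the order-three automorphism $\phi$ cyclically permuting $v_1,v_2,v_3$; I checked that $\phi$ is indeed a Lie algebra automorphism for \eqref{eq:brackets}, an orientation-preserving isometry of $g_{x,\ell x}$ fixing $\varepsilon^x=-xv^4$ and $H_\ell$ (so it does induce an automorphism of the quadratic Lie algebra $\g_\ell$, and hence acts on solutions of \eqref{eq:killing}), and that $\phi J_x\phi^{-1}=J$ and $\phi^2 J_x\phi^{-2}=K$ on the generators. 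Your approach buys uniformity (both new cases follow at once from one symmetry, with no form computations) and a cleaner justification of the common Lee form, since by Proposition \ref{lemma:KillingevenE} the quantity $\theta_\omega=\tfrac12(\operatorname{div}_0-\operatorname{div})$ depends only on $(V_+^x,\operatorname{div}^x)$; the paper's computation buys the explicit expressions for $\omega_K$ and $\Psi_K$, which make the hyperholomorphic structure and the relation $2\Psi_x=\omega_J+i\omega_K$ concrete. The only point you should make explicit is that $\phi$ preserves the orientation of $V_+^x$ (it is an even permutation of $v_1,v_2,v_3$), so the transported pure spinor stays in $S_+^+$ and the chirality hypothesis of the Killing spinor equations is respected.
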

\begin{proof}
The claim about the complex structure $I_x$ has been checked in Lemma \ref{lem:solutionsKsEq}. We check that $(V_+^{x},\operatorname{div}^{x}_+,K_x)$ is a solution of $\eqref{eq:killingE}$ on $E_\ell$, and leave the other case for the reader. By Proposition \ref{lemma:KillingevenE}, it suffices to prove that
	$$
	\omega_{K_x} = \ell v^{12} + \ell x v^{43}, \qquad \Psi_{K_x} = (v^1 + i v^2) \wedge (iv^3 + x v^4)
	$$
	satisfies \eqref{eq:twistedStrom} with $d^c_{K_x}\omega_{K_x} = - H_\ell$ and $\theta_{\omega_{K_x}} = - x v^4$. We calculate
	\begin{align*}
	d \Psi_{K_x} & = i x v^{43} \wedge (v^1 + i v^2) = - x v^4 \wedge \Psi_{K_x},\\
	d \omega_{K_x} & = - \ell x v^{412} = - x v^4 \wedge \omega_{K_x},\\
	d^c_{K_x} \omega_{K_x} & = - d\omega_{K_x}(K_x,K_x,K_x) = \ell x v^{412}(K_x,K_x,K_x) = - \ell v^{123} = - H_\ell.
	\end{align*}
	The statement follows from the structure equation for the Lee form on a complex surface, given by $d \omega_{K_x} = \theta_{\omega_{K_x}} \wedge \omega_{K_x}$.
\end{proof}

In the next result, we construct the desired family of $N=4$ superconformal vertex algebras by application of Theorem \ref{th:N=2}. By Lemma \ref{lem:solutionsKsEqhyper} and Proposition \ref{prop:Killingeven}, it follows that
$$
w := w^{J_x} = w^{I_x} = w^{K_x} = \varepsilon_+^x,
$$
and hence condition \eqref{eq:orthogonalderived} holds by Lemma \ref{lem:examhol}.

\begin{proposition}\label{prop:hyperrotation}
The solutions of the Killing spinor equations in Lemma \ref{lem:solutionsKsEqhyper} induce an embedding of the $N=4$ superconformal vertex algebra of central charge $c = 6$ into the universal superaffine vertex algebra associated to $\mathfrak{g}_\ell$ with level $k=2$. More precisely, if we denote by 
\begin{alignat*}{3}
J_0^I & = J_0(V_+^{x},\operatorname{div}^{x}_+,I_x),
&\qquad&
H'_I &= H'(V_+^{x},\operatorname{div}^{x}_+,I_x),
\\
J_0^J & = J_0(V_+^{x},\operatorname{div}^{x}_+,J_x),
&\qquad&
H'_J &=  H'(V_+^{x},\operatorname{div}^{x}_+,J_x),
\\
J_0^{K} & = J_0(V^{x}_+,\operatorname{div}^{x}_+,K_x),
&\qquad&
H'_{K} &=  H'(V^{x}_+,\operatorname{div}^{x}_+,K_x),
\end{alignat*}
the generators of each $N=2$ superconformal vertex algebra constructed in Theorem \ref{th:N=2}, then $H'_I=H'_J=H'_K$ and furthermore
	$$
	\left[{J_0^I}_\Lambda J_0^J\right] = -\left(2\chi+S\right)J_0^K, \quad \left[{J_0^J}_\Lambda J_0^K\right] = -\left(2\chi+S\right)J_0^I, \quad \left[{J_0^K}_\Lambda J_0^I\right] = -\left(2\chi+S\right)J_0^J.
	$$
\end{proposition}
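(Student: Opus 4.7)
The plan is to apply Theorem \ref{th:N=2} to each of the three Killing spinor solutions of Lemma \ref{lem:solutionsKsEqhyper}, thereby producing three $N=2$ superconformal structures inside $V^2(\g_\ell)$, and then to verify, via the characterization in Example \ref{exam:N4.1}, that together with a common Virasoro vector $H'$ they form an embedding of the small $N=4$ vertex algebra of central charge $6$.

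To apply Theorem \ref{th:N=2} to each triple $(V_+^x,\operatorname{div}_+^x,\star)$ with $\star\in\{I,J,K\}$, I would first check that the $F$-term equation \eqref{eq:Fterm} and the $D$-term gravitino equation \eqref{eq:Dtermgrav} hold, both of which are a direct consequence of Lemma \ref{lem:solutionsKsEqhyper} via Proposition \ref{prop:Killingeven}. For the orthogonality condition $w\in[l,l]^\perp\cap[\overline{l},\overline{l}]^\perp$, I would argue that the $D$-term dilatino equation \eqref{eq:Dtermnewnot} (also part of the Killing spinor equations) forces $w$ to be a scalar multiple of $\varepsilon_+^x$, which lies in the centre of $\g_\ell$ (as noted in the proof of Lemma \ref{lem:examhol}). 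Consequently $\varepsilon_+^x$ is trivially holomorphic with respect to each of $I,J,K$, so Lemma \ref{lem:orthogonalderived} yields the required orthogonality. Theorem \ref{th:N=2} then produces the three $N=2$ superconformal pairs $(J_0^\star,H'_\star)$, each of central charge $3\dim l=6$.

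The proof then reduces to the two identities $H'_I=H'_J=H'_K$ and the $\mathfrak{su}(2)$-type cross brackets
\[
[J_0^I{}_\Lambda J_0^J]=-(S+2\chi)J_0^K,\quad [J_0^J{}_\Lambda J_0^K]=-(S+2\chi)J_0^I,\quad [J_0^K{}_\Lambda J_0^I]=-(S+2\chi)J_0^J.
\]
I would establish both by direct computation. For each choice of $\star$ I would write $J_0^\star$ and the Virasoro vector $H'_\star$ explicitly in the single left-invariant basis $\{\Pi v_j,\Pi v^j\}_{j=1}^{4}$ of $\Pi\g_\ell$, using the adapted isotropic bases obtained from the pairs $(\tilde e_2,\tilde e_3),(\tilde e_4,\tilde e_1)$ for $I$ (already carried out in the proof of Theorem \ref{thm:02}), and analogously $(\tilde e_3,\tilde e_1),(\tilde e_4,\tilde e_2)$ for $J$ and $(\tilde e_1,\tilde e_2),(\tilde e_4,\tilde e_3)$ for $K$. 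The $\Lambda$-brackets are then evaluated term by term via the Wick non-commutative formula \eqref{eq:Wick} together with the $\Lambda$-brackets of $V^2(\g_\ell)$ dictated by Example \ref{exam:superafin}, where the underlying brackets on $\g_\ell=T\oplus T^*$ come from \eqref{eq:brackets} twisted by $H_\ell$. Once both identities are in place, Example \ref{exam:N4.1} yields the embedding of the small $N=4$ vertex algebra of central charge $6$.

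The main obstacle is the length of the Wick calculus in the last step. It is kept tractable by three simplifications: the very low dimension of $\Pi\g_\ell$ (only eight odd generators), the numerous vanishing terms stemming from the isotropy relations $\qf{v_j}{v_j}=\qf{v^j}{v^j}=0$ (which in particular imply $:(\Pi v_j)(\Pi v_j):\ =\ :(\Pi v^j)(\Pi v^j):\ =0$), and the manifest cyclic symmetry of the construction under the permutation $I\to J\to K\to I$, which reduces the three commutator checks to essentially one computation.
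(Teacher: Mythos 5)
Your proposal is correct and follows essentially the same route as the paper: the hypotheses of Theorem \ref{th:N=2} are verified for each of $I,J,K$ by observing that $w$ is a multiple of the central (hence holomorphic) element $\varepsilon_+^x$ so that Lemma \ref{lem:orthogonalderived} applies, and the quaternionic cross-brackets and the identity $H'_I=H'_J=H'_K$ are then checked by direct Wick calculus in the left-invariant basis $\{\Pi v_j,\Pi v^j\}$, with the cyclic symmetry in $I\to J\to K$ reducing the bracket verification to a single computation. This is exactly the structure of the paper's argument.
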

\begin{proof}
As in the proof of Theorem \ref{thm:02}, we obtain the explicit formulae
	\begin{equation*}
	\begin{split}
	J_0^J & = \frac{1}{2\ell}\left(:\left(w_3+\ell w^3\right)\left(w_1+\ell w^1\right):+ :\left(\frac{1}{x}\left(w_4+\ell x^2 w^4\right)\right)\left(w_2+\ell w^2\right):\right),\\
	J_0^K & = \frac{1}{2\ell }\left(:\left(w_1+\ell w^1\right)\left(w_2+\ell w^2\right):+ :\left(\frac{1}{x}\left(w_4+ \ell x^2 w^4\right)\right)\left(w_3+\ell w^3\right):\right).
	\end{split}
	\end{equation*}
We compute $\left[{J_0^I}_\Lambda J_0^J\right]$ and leave the rest of $\Lambda$-brackets as an exercise for the reader. By the non-commutative Wick formula, we have
	\begin{equation*}
	\begin{split}
	\left[{J_0^I}_\Lambda J_0^J\right] & = \frac12\left(\frac{1}{\ell}:\left[{J_0^I}_\Lambda\left(w_3+\ell w^3\right)\right]\left(w_1+\ell w^1\right): \right.\\
	&-\frac{1}{\ell}:\left(w_3+\ell w^3\right)\left[{J_0^I}_\Lambda\left(w_1+\ell w^1\right)\right]:\\
	&+\frac{1}{\ell}\int_0^\Lambda d\Gamma\left[\left[{J_0^I}_\Lambda\left(w_3+\ell w^3\right)\right]_\Gamma\left(w_1+\ell w^1\right)\right]\\
	&+\frac{1}{\ell x}:\left[{J_0^I}_\Lambda\left(w_4+\ell x^2 w^4\right)\right]\left(w_2+\ell w^2\right):\\
	&-\frac{1}{\ell x}:\left(w_4+\ell x^2w^4\right)\left[{J_0^I}_\Lambda\left(w_2+\ell w^2\right)\right]:\\
	&+\left.\frac{1}{\ell x}\int_0^\Lambda d\Gamma\left[\left[{J_0^I}_\Lambda\left(w_4+\ell x^2w^4\right)\right]_\Gamma\left(w_2+\ell w^2\right)\right]\right).
	\end{split}
	\end{equation*}
We compute the $\Lambda$-brackets:
	\begin{equation*}
	\begin{split}
	\left[\left(w_3+\ell w^3\right)_\Lambda{J_0^I}\right] & = -\chi\left(w_2+\ell w^2\right)+\frac{1}{2\ell}\left(:\left(w_1+\ell w^1\right)\left(w_3+\ell w^3\right):\right.\\
	&+\left.:\left(\frac{1}{x}\left(w_4+\ell x^2w^4\right)\right)\left(w_2+\ell w^2\right):\right),\\
	\left[{\left(w_1+\ell w^1\right)}_\Lambda{J_0^I}\right] & = -\lambda-\chi\frac{1}{x}\left(w_4+\ell x^2 w^4\right),\\
	\left[\left(w_2+\ell w^2\right)_\Lambda{J_0^I}\right] & = \chi\left(w_3+\ell w^3\right)-\frac{1}{2\ell}\left(:\left(w_2+\ell w^2\right)\left(w_1+\ell w^1\right):\right.\\
	&-\left.:\left(\frac{1}{x}\left(w_4+\ell x^2w^4\right)\right)\left(w_3+\ell w^3\right):\right),\\
	\left[\left(w_4+\ell x^2w^4\right)_\Lambda{J^I_0}\right] & = x\chi\left(w_1+\ell w^1\right).
	\end{split}
	\end{equation*}
Combining the non-commutative Wick formula, the antisymmetry of $\Lambda$-bracket, \eqref{eq:cuasicon1}, and \eqref{eq:cuasicon2}, we conclude
	\begin{equation*}
	\begin{split}
	\left[{J^I_0}_\Lambda J^J_0\right] & = \frac{1}{2\ell}\left(\chi:\left(w_2+\ell w^2\right)\left(w_1+\ell w^1\right):+:\left(S\left(w_2+\ell w^2\right)\right)\left(w_1+\ell w^1\right):\right.\\
	& +\frac{1}{x}\left(\chi:\left(w_3+\ell w^3\right)\left(w_4+\ell x^2w^4\right):-:\left(w_3+\ell w^3\right)\left(S\left(w_4+\ell x^2w^4\right)\right):\right)\\
	& -\left(\lambda+T\right)\left(w_3+\ell w^3\right)+\lambda\left(w_3+\ell w^3\right)-\\
	& - \chi:\left(w_1+\ell w^1\right)\left(w_2+\ell w^2\right):-:\left(S\left(w_1+\ell w^1\right)\right)\left(w_2+\ell w^2\right):\\
	& -\frac{1}{x}\left(\chi:\left(w_4+\ell x^2w^4\right)\left(w_3+\ell w^3\right):-:\left(w_4+\ell x^2w^4\right)\left(S\left(w_3+\ell w^3\right)\right):\right) \\
	& = -\left(2\chi+S\right)J^K_0,
	\end{split} 
	\end{equation*}
	because $S$ is an antiderivation for the normally ordered product. Finally, the identity $H'_I =H'_J = H'_K$ follows calculating a basis as in \eqref{eq:isotropybasis} in each case and substituting in the formula \eqref{eq:NSmas0KSE}.
\end{proof}

\begin{remark}
It is interesting to observe that the method of Proposition \ref{prop:hyperrotation} does not apply to the family of $N=2$ superconformal structures with central charge $c = 6 + 6/\ell$ in Theorem \ref{thm:02}. This seems to agree with the analysis of the Sevrin--Troost--Van Proeyen algebra made in \cite[Section 8]{CHS}. Note also that the value of the central charge $c = 6$ of our family of $N=4$ algebras coincides with the expected value from superstring theory. 

\end{remark}

Proposition \ref{prop:hyperrotation} suggests a generalization of Theorem \ref{th:N=2} for $N=4$ algebras, when $\dim V_+ = 4 k$ and $G_\eta = \operatorname{Sp}(k)$ in Lemma \ref{lem:gravitino} (cf. Remark \ref{rem:G2}). Here $\operatorname{Sp}(k)$ denotes the compact symplectic group. We hope to go back to this question in future work.

\subsection{$N=2$ algebras from the Hull-Strominger system}\label{sec:HS}

In this last section we discuss a different application of Theorem \ref{th:N=2}, related to invariant solutions of the Hull-Strominger system  \cite{HullTurin,Strom} on a Lie group. More precisely, we will show that, starting with such a solution, one can construct a quadratic Lie algebra endowed with a solution of the Killing spinor equations \eqref{eq:killing}, such that Theorem \ref{th:N=2} applies. Furthermore, we will provide an infinite family of examples for which $l \oplus \overline{l}$ is not a Manin triple (see Remark \ref{rem:Getzler}). 

For simplicity, we restrict to a family of solutions of the Hull-Strominger system recently studied in \cite{GFGM}. We will follow the abstract definition of the equations in \cite{grst}. The same arguments can be applied to other invariant solutions in the literature (see \cite{FeiYau,OUVi} and references therein). Consider the complex Heisenberg Lie group
$$
H_\CC =\left\{ \left(\begin{array}{c c c}
1 & z_2 & z_3\\
0 & 1 & z_1\\
0 & 0 & 1\\
\end{array}\right) \hspace{1mm} |\hspace{1mm} z_i \in \mathbb{C}\right\}.
$$
The following $1$-forms define a global left-invariant holomorphic frame of $T^*_{1,0}$ 
$$
\omega_1=dz_1 \hspace{2mm} , \hspace{2mm} \omega_2=dz_2 \hspace{2mm} , \hspace{2mm} \omega_3=dz_3-z_2 dz_1
$$
and satisfy the structure equations
$$
d\omega_1=d\omega_2=0 \hspace{3mm}, \hspace{3mm} d\omega_3=\omega_{12},
$$
from which all the exterior algebra relations can be derived. For any choice of
$$
(m,n,p)\in \mathbb{R}^{3}\backslash \{0\}
$$
we consider the following purely imaginary $(1,1)$-form
\begin{equation}\label{lbtorus}
F=\pi (m(\omega_{1\overline{1}}-\omega_{2\overline{2}})+n(\omega_{1\overline{2}}+\omega_{2\overline{1}})+ip(\omega_{1\overline{2}}-\omega_{2\overline{1}})).
\end{equation}
Consider the left-invariant $\SU(3)$ structure on $H_\CC$ defined by 
\begin{align}\label{Iwasu3}
    \Omega=\omega_{123} \hspace{3mm}, \hspace{3mm} \omega=\tfrac{i}{2}(\omega_{1\overline{1}}+\omega_{2\overline{2}}+\omega_{3\overline{3}}). 
\end{align}

\begin{proposition}[\cite{GFGM}]\label{prop:HSsol}
Let $\alpha = (2\pi^2(m^2+n^2+p^2))^{-1}  \in \RR$. Then, with the notation above, the pair $(\omega,F)$ is a solution of the Hull-Strominger system, that is,
\begin{equation}\label{eq:HSabstract}
\begin{split}
F^{0,2} = 0, \qquad F \wedge \omega^2 & =0,\\
d(\|\Omega\|_\omega \omega^2) & = 0,\\
dd^c \omega - \alpha F \wedge F & = 0.
\end{split}
\end{equation}
\end{proposition}

Starting from a solution of \eqref{eq:HSabstract}, it was proved in \cite{grt} that one can construct a solution of the Killing spinor equations on a transitive Courant algebroid over $H_\CC$ (cf. Proposition \ref{lemma:KillingevenE}). Taking left-invariant sections, one obtains a quadratic Lie algebra endowed with a solution of the Killing spinor equations \eqref{eq:killing}, similarly as in Proposition \ref{prop:geomalg}. Using that the solution in Proposition \ref{prop:HSsol} is left-invariant, it follows that $\|\Omega\|_\omega$ is constant and hence $\omega$ is balanced, that is, $\theta_\omega = 0$. From this, the induced solution of $\eqref{eq:killing}$ has zero divergence and Theorem \ref{th:N=2} applies. Rather than given the details of this general argument, we shall provide here an explicit direct proof.

Denote by $\mathfrak{h}_\CC$ the Lie algebra of $H_\CC$. For any choice of $(m,n,p)\in \mathbb{R}^{3}\backslash \{0\}$ one can define a real quadratic Lie algebra with underlying vector space 
$$
\g_{m,n,p} = \mathfrak{h}_\CC \oplus i \RR \oplus \mathfrak{h}_\CC^*,
$$ 
pairing
$$
\qf{v + r + \beta}{v + r + \beta} = \beta(v) - \alpha r^2
$$
and Lie bracket
\begin{align*}
[v + r + \beta,w + t + \eta] & = [v,w] - \eta([v,]) + \beta([w,]) + i_wi_v (d^c\omega)\\
& - F(v,w) + 2\alpha (r i_w F - t i_v F),
\end{align*}
where $F$ and $\alpha$ are defined as in Proposition \ref{prop:HSsol}. One can readily check that the definition of $\alpha$ is necessary for the Lie bracket to satisfy the Jacobi identity.  More explicitly, taking a real basis of $\mathfrak{h}_\CC^*$ defined by
$$
\omega_1 = v^1 + i v^2, \qquad \omega_2 = v^3 + i v^4, \qquad \omega_3 = v^4 + i v^6,
$$
one has relations
$$
dv^j = 0, \quad j = 1,2,3,4, \qquad dv^5 = v^{13} - v^{24}, \quad dv^6 = v^{14} + v^{23}
$$
and the complex structure on $\mathfrak{h}_\CC$ reads (for $v_j$ the dual basis)
$$
I v_1 = v_2, \quad I v_3 = v_4, \quad I v_5 = v_6.
$$
From this, it follows that
$$
d^c\omega = v^{135} + v^{236} + v^{146} - v^{245}   
$$
and also that
$$
F = 2 \pi i (m(v^{34} - v^{12})+n(v^{23} - v^{14})+p(v^{13} + v^{24})).
$$
We prove next that the previous data determines a solution of the Killing spinor equations on $\g_{m,n,p}$ with zero divergence. Consider the generalized metric on $\g_{m,n,p}$ defined by 
$$
V_+ = \{v + g(v) \; | \; v \in \mathfrak{h}_\CC\}, \qquad V_- = \{v + r - g(v) \; | \; v + r \in \mathfrak{h}_\CC \oplus i \RR \},
$$
where 
$$
g := \omega(,I) = v^1 \otimes v^1 + v^2 \otimes v^2 + v^3 \otimes v^3 + v^4 \otimes v^4 + v^5 \otimes v^5 + v^6 \otimes v^6.
$$
Consider the spinor line $\langle \eta \rangle \subset S_+^+$ corresponding to the complex structure on $V_+ \cong \mathfrak{h}_\CC$. 

\begin{lemma}\label{lem:solutionsKsEqHS}
The triple $(V_+,0,\eta)$ is a solution of the Killing spinor equations \eqref{eq:killing} on $\g_{m,n,p}$. Furthermore, $V_+^\CC \subset \g_{m,n,p}^\CC := \g_{m,n,p} \otimes_\RR \CC$ is a Lie subalgebra if and only if $m = 0$.
\end{lemma}

\begin{proof}
By Proposition \ref{prop:Killingeven}, it suffices to prove that \eqref{eq:FtermDterm} holds. A basis of $V_+^{1,0}$ and $V_+^{0,1}$  satisfying \eqref{eq:isotropybasis} is given by
\begin{equation}\label{eq:epsbasisexam}
\begin{array}{rlrl}
\epsilon_1^+ & = \frac{1}{\sqrt{2}}\left(\left(v_1 + v^1\right) -i\left(v_2 + v^2\right)\right), &\overline{\epsilon}_1^+ & = \overline{\epsilon_1^+},\\
\epsilon_2^+ & = \frac{1}{\sqrt{2}}\left(\left(v_3 + v^3\right) -i\left(v_3 + v^3\right)\right), &\overline{\epsilon}_2^+ & = \overline{\epsilon_2^+},\\
\epsilon_3^+ & = \frac{1}{\sqrt{2}}\left(\left(v_5 + v^5\right) -i\left(v_6 + v^6\right)\right), &\overline{\epsilon}_1^+ & = \overline{\epsilon_1^+}.
\end{array}
\end{equation}
Now, a direct calculation shows that
\begin{equation*}
\left[\epsilon_1^+,\epsilon_2^+\right] = - \sqrt{2}\epsilon_3^+, \qquad \left[\epsilon_1^+,\epsilon_3^+\right] = 0,\qquad \left[\epsilon_2^+,\epsilon_3^+\right] = 0,
\end{equation*}
and therefore $[V_+^{1,0},V_+^{1,0}] \in V_+^{1,0}$. Similarly,
\begin{equation*}
\left[\epsilon_1^+,\overline{\epsilon}_1^+\right] = - 2\pi m, \qquad \left[\epsilon_2^+,\overline{\epsilon}_2^+\right] = 2\pi m,\qquad \left[\epsilon_3^+,\overline{\epsilon}_3+\right] = 0,
\end{equation*}
and therefore the first part of the statement follows. From the previous formula we also conclude that $m \neq 0$ implies that $V_+^\CC \subset \g_{m,n,p}^\CC$ is not a Lie subalgebra. The other implication is left to the reader.
\end{proof}

By direct application of Theorem \ref{th:N=2}, we obtain an embedding of the $N=2$ superconformal vertex algebra of central charge $c = 9$ into the universal superaffine vertex algebra associated to $\g_{m,n,p}^\CC$, for any level $0 \neq k \in \CC$. When $m \neq 0$, these embeddings are not associated to Manin triples, showing that Theorem \ref{th:N=2} provides a strict generalization of Getzler's result in \cite{Getzler} (see Remark \ref{rem:Getzler}).

\begin{proposition}\label{prop:N=2HS}
For any choice $(m,n,p)\in \mathbb{R}^{3}\backslash \{0\}$, the solution of the Killing spinor equations in Lemma \ref{lem:solutionsKsEqHS} induces an embedding of the $N=2$ superconformal vertex algebra of central charge $c = 9$ into the universal superaffine vertex algebra $V^k((\g_{m,n,p}^\CC)_{super})$ with level $0 \neq k \in \CC$. The generators of each $N=2$ superconformal vertex algebra are as in Theorem \ref{th:N=2}. 
\end{proposition}

\begin{remark}\label{rem:CDRHS}
As proved in \cite{grt}, associated to a solution of the Hull-Strominger system there exists a transitive Courant algebroid $E$. Denote by $E_{m,n,p}$ the transitive Courant on $H_\CC$ determined by the solution in Proposition \ref{prop:HSsol}. Then, similarly as in Proposition \ref{prop:superaffineembed}, when $k =2$ there is an embedding
$$
V^2((\g_{m,n,p}^\CC)_{super}) \hookrightarrow H^0(H_\CC,\Omega^\mathrm{ch}_{H_\CC}(E_{m,n,p}))
$$
on the space of global sections of the sheaf of vertex algebras associated to $E_{m,n,p}$ (see Proposition \ref{prop:CDRE}). Applying now Proposition \ref{prop:N=2HS}, one obtains an embedding of the $N=2$ superconformal vertex algebra of central charge $c = 9$ into $H^0(H_\CC,\Omega^\mathrm{ch}_{H_\CC}(E_{m,n,p}))$. We expect that a similar result holds for general solutions of the Hull-Strominger system. We hope to go back to this question in future work.
\end{remark}

\appendix
\section{Rules and identities on supersymmetric vertex algebras}
\label{app:1}

In this appendix, we collect relevant remarks and identities about SUSY vertex algebras, especially in the case of the universal superaffine vertex algebras.


\subsection{SUSY Lie conformal algebras}

Given a SUSY Lie conformal algebra $\cR$, the terms in~\eqref{eq:sesquiLambda},~\eqref{eq:comLambda} and~\eqref{eq:JacobiLambda} are calculated as follows (see~\cite[Def. 4.10]{SUSYVA} for details):

\begin{itemize}
	\item \textit{Sesquilinearity}. To obtain $(S+\chi)[a_{\Lambda}b]$ as an element of $\mathcal{L} \otimes\cR$ in \eqref{eq:sesquiLambda}, we first calculate the $\Lambda$-bracket and then commute $S$ with $\chi$ and $\lambda$ to the right using the relations $\left[S,\lambda\right]=0$ and $\left[S,\chi\right]=2\lambda$.
	\item \textit{Skew-symmetry}. To obtain $\left[b_{-\Lambda-\nabla}a\right]$ as an element of $\mathcal{L} \otimes\cR$ in \eqref{eq:comLambda}, we first expand $\left[b_\Gamma a\right]=\sum_{n,J}\Gamma^{n|J}c_{n|J}$ in $\cL'\otimes\cR$ using the relations $\left[\gamma,\eta\right]=0$, $\left[\eta,\eta\right]=-2\gamma$, for a copy $\cL'$ of $\cL$ with the formal variables $\Lambda=(\lambda,\chi)$ replaced by $\Gamma=(\gamma,\eta)$, and then replace $\Gamma$ by $-\Lambda-\nabla=(-\lambda-T,-\chi-S)$ and apply $T$ and $S$ to the coefficients $c_{n|J}\in\cR$.
	\item \textit{Jacobi identity}. To obtain $\left[\left[a_\Lambda b\right]_{\Lambda+\Gamma}c\right]\in\mathcal{L} \otimes \mathcal{L}'\otimes\cR$ in \eqref{eq:JacobiLambda}, we first calculate 
$\left[\left[a_\Lambda b\right]_{\Psi}c\right]\in\mathcal{L} \otimes \mathcal{L}''\otimes V$, where $\cL''$ is another copy of $\cL$ with the formal variables $\Lambda$ replaced by $\Psi$, and then replace $\Psi$ by $\Lambda+\Gamma=(\lambda+\gamma,\chi+\eta)$ and use the relations $[\lambda,\gamma]=[\lambda,\eta]=[\chi,\gamma]=[\chi,\eta]=0$. 
  To calculate the other two terms in \eqref{eq:JacobiLambda} as elements of $\mathcal{L} \otimes \mathcal{L}'\otimes\cR$, we use the relations $[\gamma,\lambda]=[\gamma,\chi]=[\eta,\lambda]=[\eta,\chi]=0$ and 
\begin{equation}\label{eq:MUYIMPORTANTE}
[a_\Lambda q(\Gamma)b]=(-1)^{\left|q\right|(\left|a\right|+1)}q(\Gamma)[a_\Lambda b]
\end{equation}
for any homogeneous polynomial $q(\Gamma)$ in $\Gamma$ (this formula follows applying the Koszul rule to the \emph{odd} $\Lambda$-bracket~\cite[Remark 4.9]{SUSYVA}).
\end{itemize}

\subsection{SUSY vertex algebras}

Given a SUSY vertex algebra $V$, the terms in~\eqref{eq:cuasicon},~\eqref{eq:cuasiaso} and~\eqref{eq:Wick} are calculated as follows (see~\cite[(3.3.3.2); Theorem 3.3.14; (3.2.6.12)]{SUSYVA} for details):

\begin{itemize}
\item \textit{Quasicommutativity}. To compute the integral in \eqref{eq:cuasicon}, we use the expansion $\left[b_\Lambda a\right]=\sum_{n,J}\Lambda^{n|J}c_{n|J}$ in $\cL\otimes V$, and on each term $p(\Lambda)=\Lambda^{n|J}c_{n|J}\in\cL\otimes V$, apply the formula 
\begin{equation}\label{eq:def-definite-integral.1}
\int_{-\nabla}^0 d\Lambda\, p=\int_{-T}^0 d\lambda\left(\partial_{\chi}p\right),
\end{equation}
taking first the partial derivative with respect to the odd variable $\chi$, performing the indefinite integral in the even variable $\lambda$, and taking the difference of the values at the limits.
Here, the (left) partial derivative $\partial_{\chi}p$ of $p=\Lambda^{n|J}c_{n|J}\in\cL\otimes V$ is zero if $J=0$, and $\lambda^nc_{n|J}$ if $J=1$.

\item \textit{Quasiassociativity}.
The fist integral in \eqref{eq:cuasiaso} is computed by expanding the $\Lambda$-bracket $[b_\Lambda c]$ as in~\eqref{eq:cuasicon}, putting the powers of $\Lambda$ on the left under the integral sign and performing the definite integral 
\begin{equation}\label{eq:def-definite-integral.2}
\int^{\nabla}_0 d^r\!\Lambda\, q=\int^{T}_0d\lambda\left(\partial^r_{\chi}q\right)
\end{equation}
to each term $q(\Lambda)=a\Lambda^{n|J}$ inside the parenthesis,
where the \emph{right} partial derivative $\partial^r_{\chi}q$ of $q(\Lambda)=a\Lambda^{n|J}$ with respect to the odd variable $\chi$ is zero if $J=0$ and $\lambda^na$ if $J=1$. 
The second integral in \eqref{eq:cuasiaso} is calculated applying the same rules.

\item \textit{The non-commutative Wick formula}. To compute the integral in~\eqref{eq:Wick}, 
we expand
\[
[[a_\Lambda b]_\Gamma c]=\sum_{j,k\in\Z\atop J,K=0,1}\Lambda^{j|J}\Gamma^{k|K}c_{j|J,k|K}
\]
as an element of $\cL\otimes\cL'\otimes V$, using the relations
$[\lambda,\gamma]=[\lambda,\eta]=[\chi,\gamma]=[\chi,\eta]=0$ and~\eqref{eq:MUYIMPORTANTE}, 
and take the definite integral of each term as in~\eqref{eq:def-definite-integral.1}.
\end{itemize}

In the notation of Heluani--Kac~\cite[(3.3.3.2); Theorem 3.3.14; (3.2.6.12)]{SUSYVA}, the integrals~\eqref{eq:def-definite-integral.1} and~\eqref{eq:def-definite-integral.2} would be represented as $\int_{-\nabla}^0p\,d\Lambda $ and $\int^{\nabla}_0 d\Lambda\,q$, respectively. 
Our small notational changes will be used in our calculations of Sections~\ref{sec:SCVA} and~\ref{sec:Tdual}.
\begin{remark}
For simplicity, in the quasiassociativity identity \eqref{eq:cuasiaso} we have omited the parenthesis that determine the order of computing the normally ordered products, which is clear from the notation we are using. 
\end{remark}
\begin{remark}
	In the identities \eqref{eq:cuasiaso} and \eqref{eq:Wick}, we must use that
	\begin{equation}\label{eq:nopLambda}
	:\left(p(\Lambda)a\right)b: = p(\Lambda):ab: \text{ and } :a\left(p(\Lambda)b\right): = (-1)^{|a||p|}p(\Lambda):ab:, \quad \text{for } a,b \in \mathcal{R},
	\end{equation}
	for any homomegeneous polynomial $p(\Lambda)$ in $\Lambda$ (this follows after applying the Koszul rule to the \textit{even} normally ordered product).
\end{remark}
\subsection{Superaffine vertex algebras}

Let $V^k(\g_{\text{super}})$ be the universal superaffine vertex algebra with level $k$ associated to a quadratic Lie algebra $\left(\mathfrak{g},\qf{\cdot}{\cdot}\right)$, for a scalar $k\in\C$ (see Example \ref{exam:superafin}). 

\begin{lemma}\label{lem:tech1}
For all $a,b,c \in \Pi\mathfrak{g}$, the following identities hold:
\begin{align}
		:ab: & = -:ba:\label{eq:cuasicon1}\\
		:a\left(Tb\right): & = -:\left(Tb\right)a:\label{eq:cuasicon11}\\
		:a\left(Sb\right):&=:\left(Sb\right)a:+T\left[a,b\right]\label{eq:cuasicon2}\\
		:a:bc::&=::bc:a:+kT\left(\qf{a}{b}c-\qf{a}{c}b\right)\label{eq:cuasicon3}\\
		::a\left(Sb\right):c:&=:a:\left(Sb\right)c::+:\left(Ta\right)\left[b,c\right]:+kT\qf{a}{c}Sb\label{eq:cuasiaso1}\\
		::ab:c:&=:a:bc:: + kT\left(\qf{c}{b}a-\qf{c}{a}b\right)\label{eq:cuasiaso2}\\
         \begin{split}
		:a:bc::&=:b:ca:: \\
		& = -:b:ac::\label{eq:cuasiconaso1}
              \end{split}
  \\
         \begin{split}
          :a:bc::&=:c:ab::\\
		&=-:c:ba::\label{eq:cuasiconaso2}
        \end{split}
\end{align}
\end{lemma}
\begin{proof}
The identities \eqref{eq:cuasicon1}, \eqref{eq:cuasicon11}, \eqref{eq:cuasicon2} and \eqref{eq:cuasicon3} are immediate consequences of quasicommutativity, while \eqref{eq:cuasiaso1} and \eqref{eq:cuasiaso2} follow from quasiassociativity. The identities \eqref{eq:cuasiconaso1} and \eqref{eq:cuasiconaso2} are obtained applying the previous ones. 
\end{proof}

\begin{lemma}\label{lem:tech4}
For all $a,b,c,d \in \Pi\mathfrak{g}$, the following identities hold:
\begin{gather}
		:a::bc:d::  = - :::bc:d:a: + k\left(\qf{a}{b}T\left(:cd:\right)-\qf{a}{c}T\left(:bd:\right)+\qf{a}{d}T\left(:bc:\right)\right).\label{eq:cuasicon4}\\ 
          ::a:bc::d:  = :a::bc:d:: + k\left(:\left(Ta\right)\left(\qf{d}{c}b-\qf{d}{b}c\right):+\qf{a}{d}T\left(:bc:\right)\right).\label{eq:cuasiaso3}
\end{gather}
\end{lemma}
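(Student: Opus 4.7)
The plan is to deduce each identity directly from one of the SUSY vertex algebra axioms, combined with the non-commutative Wick formula \eqref{eq:Wick} and the base bracket $[x_\Lambda y]=[x,y]+\chi k\qf{x}{y}$ on $\Pi\g$. Both identities are four-element generalizations of items in Lemma~\ref{lem:tech1} (\eqref{eq:cuasicon4} extends \eqref{eq:cuasicon3}; \eqref{eq:cuasiaso3} extends \eqref{eq:cuasiaso2}), so the strategy mirrors the proofs of those three-element analogues, with an additional layer of Wick expansion.

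For \eqref{eq:cuasicon4}, apply quasicommutativity \eqref{eq:cuasicon} to the odd pair $\bigl(a,\,:(:bc:)d:\bigr)$. Since $(-1)^{|a|\cdot|:(:bc:)d:|}=-1$, one obtains
\begin{equation*}
:a(:(:bc:)d:): + :(:(:bc:)d:)a: \;=\; \int_{-\nabla}^0 d\Lambda\,[a_\Lambda\,:(:bc:)d:].
\end{equation*}
Expand the right-hand $\Lambda$-bracket by applying the Wick formula twice, first for the outer pair $(:bc:,d)$ and then for the inner pair $(b,c)$. The Berezin integration $\int_{-\nabla}^0 d\Lambda$ annihilates every $\chi$-independent term and sends $\chi\cdot(\text{scalar})$ to $T\cdot(\text{scalar})$, so only the $\chi$-linear contributions $\chi\,k\qf{a}{b}(\cdots)$, $\chi\,k\qf{a}{c}(\cdots)$ and $\chi\,k\qf{a}{d}(\cdots)$ survive, yielding the three pairing contractions on the right of \eqref{eq:cuasicon4}. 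The Lie-bracket residues (involving $[a,b]$, $[a,c]$, $[a,d]$) and the nested $\int_0^\Lambda d\Gamma$ contributions drop out under the outer integration because they carry no $\chi$.

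For \eqref{eq:cuasiaso3}, apply quasiassociativity \eqref{eq:cuasiaso} to the triple $(a,\,:bc:,\,d)$; since $(-1)^{|a||:bc:|}=1$, this gives
\begin{equation*}
:(:a(:bc:):)d: \;-\; :a(:(:bc:)d:): \;=\; :\!\Bigl(\!\int_0^\nabla d\Lambda\,a\Bigr)[:bc:_\Lambda d]\!: \;+\; :\!\Bigl(\!\int_0^\nabla d\Lambda\,:bc:\Bigr)[a_\Lambda d]\!:.
\end{equation*}
In the second term, $[a_\Lambda d]=[a,d]+\chi k\qf{a}{d}$, and the $\int_0^\nabla$ extracts only the $\chi$-linear piece to produce $k\qf{a}{d}\,T(:bc:)$. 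In the first term, compute $[:bc:_\Lambda d]$ via skew-symmetry \eqref{eq:comLambda} followed by a Wick expansion of $[d_\Lambda :bc:]$; its $\chi$-linear coefficients are controlled by $\qf{d}{b}$ and $\qf{d}{c}$, and, after using the derivation property $T(:xy:)=:(Tx)y:+:x(Ty):$ and reordering via \eqref{eq:cuasicon1}, these combine into $k\,:(Ta)(\qf{d}{c}b-\qf{d}{b}c):$.

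The main obstacle is the sign and integration bookkeeping in the super setting. One must track the parities $(-1)^{(|a|+1)|b|}$ appearing in nested Wick expansions, manage the two layers of Berezin integration (the inner $\int_0^\Lambda d\Gamma$ generated by Wick and the outer $\int_{-\nabla}^0 d\Lambda$ or $\int_0^\nabla d\Lambda$ from the axiom), and handle the $\nabla=(T,S)$ substitution coming from skew-symmetry, which produces $S$-type terms that must be absorbed using $T$-derivations and \eqref{eq:cuasicon1}. The delicate step is verifying that the Lie-algebraic residues, such as iterated brackets $[[x,y],z]$ and pairings $\qf{[x,y]}{z}$, collapse exactly by invariance of $\qf{\cdot}{\cdot}$ and the Jacobi identity on $\g$, leaving only the claimed pairing-contracted $T$-terms.
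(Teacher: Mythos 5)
Your proposal is correct and follows the paper's own (one\nobreakdash-line) proof exactly: \eqref{eq:cuasicon4} is quasicommutativity \eqref{eq:cuasicon} applied to the odd pair $\bigl(a,\,:(:bc:)d:\bigr)$ and \eqref{eq:cuasiaso3} is quasiassociativity \eqref{eq:cuasiaso} applied to the triple $(a,:bc:,d)$, with the right-hand sides read off from the $\chi$-linear coefficients of the expanded $\Lambda$-brackets. One small imprecision in your bookkeeping: the nested $\int_0^\Lambda d\Gamma$ term in the Wick expansion for \eqref{eq:cuasicon4} \emph{does} produce a $\lambda\chi$-contribution (coming from the central piece $\chi\eta\,k^2\qf{a}{b}\qf{c}{d}$ and its companions), which survives $\partial_\chi$ and integrates to a multiple of $T^2\left|0\right\rangle$; it vanishes because $T\left|0\right\rangle=0$, not because it ``carries no $\chi$'' --- and for the same reason no Jacobi identity or invariance of $\qf{\cdot}{\cdot}$ is actually needed here, since all remaining Lie-algebraic residues are killed simply by $\partial_\chi$.
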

\begin{proof}
The first identity follows from quasicommutativity, and the second one follows from quasiassociativity. 
\end{proof}
\begin{remark}
Observe that the formula for the Weyl endomorphism \eqref{eq:chipm} appears naturally in several of the previous identities.
\end{remark}


\begin{thebibliography}{12}
\frenchspacing\smallbreak

\bibitem{ABS}
A. Adams, A. Basu \and S. Sethi, 
\emph{(0,2) Duality},  
Adv. Theor. Math. Phys. {\bf 7} (2004) 865--950.

\bibitem{AldiHel} 
M. Aldi \and R. Heluani,
\emph{On a Complex-Symplectic Mirror Pair}, 
Int. Math. Res. Not. {\bf 2018} (22) (2018) 6934--6960

\bibitem{AXu} 
A. Alekseev \and P. Xu,
\emph{Derived brackets and courant algebroids}.
Unpublished, available at \url{http://www.math.psu.edu/ping/anton-final.pdf} (2001).

\bibitem{AngellaBC} 
D. Angella, G. Dloussky \and A. Tomassini, 
\emph{On Bott-Chern cohomology of compact complex surfaces}, 
Ann. Mat. Pura Appl. \textbf{195} (2016) 199--217.



\bibitem{Barron00}
K. Barron,
\emph{$N=1$ Neveu--Schwarz vertex operator superalgebras over Grassmann algebras and with odd formal variables}.
In: \emph{Representations and Quantizations (Shanghai, 1998)}.
China High. Educ. Press, Beijing, 2000, pp. 9--35.

\bibitem{BZHS}
D. Ben-Zvi, R. Heluani \and M. Szczesny,
\emph{Supersymmetry of the chiral de Rham complex},
Compositio Math. \textbf{144} (2008) 503--521.

\bibitem{Borisov}
L. Borisov, 
\emph{Vertex algebras and mirror symmetry}, 
Comm. Math. Phys. {\bf 215} (2001) 517--557.

\bibitem{Borisov02}
L. Borisov \and R. Kaufmann, 
\emph{On CY-LG correspondence for (0,2) toric mirror models}, 
Adv. Math. {\bf 230} (2012) 531--551.

\bibitem{BorLib}
L. Borisov \and A. Libgober,
\emph{Elliptic genera of toric varieties and applications to mirror
symmetry}, 
Invent. Math. (2) {\bf 140} (2000) 453--485.

\bibitem{BEM}
P. Bouwknegt, J. Evslin \and V. Mathai,
\emph{T-duality: topology change from $H$-flux},
Comm. Math. Phys. {\bf 249} (2004) 383--415.
		
		
\bibitem{CHS} 
C. Callan, J. Harvey \and A. Strominger, 
\emph{Supersymmetric string solitons}, 
Trieste 1991 Proceedings, String theory and quantum gravity, 1991, pp. 208--244.
		
\bibitem{COGP}
P. Candelas, X. De la Ossa, P. Green \and L. Parkes, 
\emph{A pair of Calabi--Yau manifolds as an exactly soluble superconformal field theory}, 
Nucl. Phys. B {\bf 359} (1991) 21--74.		
		
\bibitem{CaGu}
G. Cavalcanti \and M. Gualtieri,
\emph{Generalized complex geometry and T-duality}. 
In: \emph{A Celebration of the Mathematical Legacy of Raoul Bott}.
CRM Proceedings \& Lecture Notes. Amer. Math. Soc. 2010, pp. 341--366.
        
\bibitem{CSW} A. Coimbra, C. Strickland-Constable \and D. Waldram, 
\emph{Supergravity as Generalised Geometry I: Type II Theories}, 
JHEP {\bf 11} (2011) 91.

  
\bibitem{Dine} 
M. Dine, I. Ichinose \and N. Seiberg, 
\emph{F Terms and D Terms in String Theory}, 
Nucl. Phys. B {\bf 293} (1987) 253--265.
  
\bibitem{Donagi}
R. Donagi, J. Guffin, S. Katz \and E. Sharpe, 
\emph{A mathematical theory of quantum sheaf cohomology}, 
Asian J. Math. {\bf 18} (2014) 387--418.

\bibitem{FeiYau} 
T. Fei \and S.-T.~Yau, 
\emph{Invariant Solutions to the Strominger System on Complex Lie Groups and Their Quotients}, 
Comm. Math. Phys. {\bf 338} (2015) 1183--1195.

\bibitem{GGG} 
A. Gadde, S. Gukov \and P. Putrov, 
{\em (0,2)-trialities}, 
JHEP {\bf 1403} (2014) 076.


\bibitem{GF3} 
M. Garcia-Fernandez, 
\emph{Ricci flow, Killing spinors, and T-duality in generalized geometry}, 
Adv. Math. {\bf 350} (2019) 1059--1108.
 
\bibitem{GF4}
\bysame, \emph{T-dual solutions of the Hull--Strominger system on non-K\"ahler threefolds}, J.
Reine Angew. Math. {\bf 776} (2020) 137--150.

\bibitem{GFGM} 
M. Garcia-Fernandez \and R. Gonz\'alez Molina, 
\emph{Harmonic metrics for the Hull-Strominger system and stability}, to appear (2022).

\bibitem{grt} 
M. Garcia-Fernandez, R. Rubio \and C. Tipler, 
\emph{Infinitesimal moduli for the Strominger system and Killing spinors in generalized geometry}, 
Math. Ann. {\bf 369} (2017) 539--595.

\bibitem{grt2} 
\bysame, 
\emph{Holomorphic string algebroids}, 
Trans. Amer. Math. Soc. (10) {\bf 373} (2020) 7347--7382.

\bibitem{grst} 
M. Garcia-Fernandez, R. Rubio, C. Shahbazi \and C. Tipler, 
\emph{Canonical metrics on holomorphic Courant algebroids}, Proc. London Math. Soc. {\bf 125} (3) (2022) 700--758.

\bibitem{grst2} 
\bysame, 
\emph{Heterotic supergravity and moduli stabilization}, to appear.

\bibitem{GFStreets} 
M. Garcia-Fernandez \and J. Streets, 
\emph{Generalized Ricci flow}, University Lecture Series {\bf 76}, 2021, 248 pp, American Mathematical Society.

\bibitem{Getzler}
E. Getzler,
\emph{Manin Pairs and Topological Conformal Field Theory}, 
Annals Phys. {\bf 237} (1995) 161--201.
		
\bibitem{GSV} 
V. Gorbounov, F. Malikov \and V. Schechtman, 
\emph{Gerbes of chiral differential operators. II. Vertex algebroids}, 
Invent. Math. {\bf 155} (2001) 605--680.

\bibitem{GGSharpe} 
W. Gu, J. Guo \and E. Sharpe,
\emph{A proposal for nonabelian $(0,2)$-mirrors}, 
\href{https://arxiv.org/abs/1908.06036}{\tt arXiv:1908.06036 [hep-th]}.

\bibitem{G3} 
M. Gualtieri, 
\emph{Branes on Poisson varieties}. 
In: \emph{The many facets of geometry}.
Oxford Univ. Press, Oxford, 2010, pp. 368--394.

    
\bibitem{G2}
\bysame, 
\emph{Generalized K\"ahler Geometry}, 
Comm. Math. Phys. (1) {\bf 331} (2014) 297--331.

    


\bibitem{Heluani09}
R. Heluani,
\emph{Supersymmetry of the chiral de Rham complex II: Commuting sectors}, 
Int. Math. Res. Not.  {\bf 2009} (6) (2009) 953--987.

\bibitem{HeluaniRev} 
\bysame,
 \emph{Recent advances \and open questions on the susy structure of the chiral de Rham Complex}, 
J. Phys. A: Math. Theor. {\bf 50} (2017) 423002.

\bibitem{SUSYVA}
R.~ Heluani \and V.~ G.~ Kac,
\emph{Supersymmetric vertex algebras},  
Comm. Math. Phys. {\bf 271} (2007) 103--178.

\bibitem{GCYHeluani}
R.~ Heluani \and M.~ Zabzine, 
\textit{Generalized Calabi--Yau manifolds and the chiral De Rham complex}. Adv. Math. 
{\bf 223} (2010) 1815--1844.

\bibitem{StructuresHeluani}
R.~ Heluani \and M.~ Zabzine,
\textit{Superconformal Structures on Generalized Calabi--Yau Metric Manifolds}, 
Comm. Math. Phys. {\bf 306} (2011) 333--364.

\bibitem{Hit1}
N. Hitchin,
\emph{Generalized Calabi--Yau manifolds}, 
Q. J. Math {\bf 54} (2003) 281--308.
		

\bibitem{HullTurin}
C. Hull, 
\emph{Superstring compactifications with torsion and space-time supersymmetry}, 
In: Turin 1985 Proceedings ``Superunification and Extra Dimensions'', 1986, pp. 347--375.

\bibitem{Kac}
V.~ G.~ Kac.
\emph{Vertex algebras for beginners}.
University Lecture series, Vol. {\bf 10}. Providence,
RI: Amer. Math. Soc. Second Edition, 1998.


\bibitem{Kapustin}
A. Kapustin, 
\emph{Chiral de Rham complex and the half-twisted sigma-model}, 
CALT-68-2547, \href{https://arxiv.org/abs/hep-th/0504074}{\tt arXiv:0504074} (2005).



\bibitem{KontsevichHMS}
M. Kontsevich, 
\emph{Homological Algebra of Mirror Symmetry}. 
In: S. D. Chatterji (ed.), \emph{Proceedings of the International Congress of Mathematicians, Z\"urich, 1994}. Birkh\"auser, 1995, pp. 120--139.

\bibitem{LTY}
S.-C. Lau, L.-S. Tseng \and S.-T. Yau, 
\emph{Non-K\"ahler SYZ mirror symmetry}, 
Comm. Math. Phys. \textbf{340} (2015) 145--170.

\bibitem{MicLaw}
H. Lawson \and M. Michelsohn,
\emph{Spin geometry}. 
Princeton Mathematical Series \textbf{38}. Princeton University Press, Princeton, N. J., 1989.

\bibitem{LinshawMathai}
A. Linshaw \and V. Mathai,
\emph{Twisted Chiral de Rham Complex, Generalized Geometry, and T-duality},  
Comm. Math. Phys. {\bf 339} (2015) 663--697.
		
\bibitem{MSV} 
F. Malikov, V. Schechtman \and A. Vaintrob, 
\emph{Chiral de Rham complex}, 
Comm. Math. Phys. {\bf 204} (1999) 439--473.
		
\bibitem{McOrist}
J. McOrist, 
\emph{The revival of (0,2) linear $\sigma$-models}, 
Int. J. Mod. Phys. A {\bf 26} (2011) 1--41.

\bibitem{MelBook}
I. V. Melnikov, 
\emph{An introduction to two-dimensional quantum field theory with (0,2) supersymmetry}. 
Lecture Notes in Physics {\bf 951}, Springer, Cham, 2019.

\bibitem{MelPle}
I. V. Melnikov \and M. R. Plesser, 
\emph{A $(0,2)$ mirror map}, JHEP {\bf 1102} (2011) 001.


\bibitem{MelSeShe}
I. Melnikov, S. Sethi \and E. Sharpe, 
\emph{Recent developments in (0,2) mirror symmetry}, 
SIGMA {\bf 8} (2012) 068.

\bibitem{OUVi} 
A. Otal, L. Ugarte \and R. Villacampa, 
\emph{Invariant solutions to the Strominger system and the heterotic equations of motion}, 
Nucl. Phys. B {\bf 920} (2017) 442--474.

\bibitem{PPZ}
D.-H. Phong, S. Picard \and X. Zhang,
\emph{Geometric flows and Strominger systems}, 
Math. Z. {\bf 288} (2018) 101--113.
        
\bibitem{Popovici} 
D. Popovici,
\emph{Non-K\"ahler Mirror Symmetry of the Iwasawa Manifold}, 
Int. Math. Res. Notices \textbf{2020}, no. 23, 9471--9538.

		
\bibitem{SV2}
P. \v Severa \and F. Valach,
\emph{Courant algebroids, Poisson-Lie T-duality, and type II supergravities}, 
Comm. Math. Phys. {\bf 375} (2020) 307--344.
		
\bibitem{STVan}
A. Sevrin, W. Troost \and A. van Proeyen,
\emph{Superconformal Algebras in Two Dimensions with $N=4$}, 
Phys. Lett. {\bf B208} (1988) 447--450.
		
\bibitem{Strom} A.~Strominger, \emph{Superstrings with torsion}, Nucl. Phys. B {\bf 274} (1986) 253--284.
	
\bibitem{SYZ} A.~Strominger, S.-T. Yau \and E. Zaslow, \emph{Mirror symmetry is T-Duality}, Nucl. Phys. B {\bf 479} (1996) 243--259.

\bibitem{Tan} M.-C. Tan, \emph{Two-dimensional twisted sigma models, the mirror chiral de Rham complex, and twisted generalized mirror symmetry}, JHEP {\bf 07} (2007) 013.

\bibitem{Ward} A. Ward, \emph{Homological mirror symmetry for elliptic Hopf surfaces}, \href{https://arxiv.org/abs/2101.11546}{\tt arXiv:2101.11546 [math.SG]}.

\bibitem{WittenMS} E. Witten, \emph{Mirror manifolds and topological field theory}.
In: S. T. Yau (ed.), \emph{Essays on Mirror Manifolds}, International Press, Hong Kong, 1992, 120--158.

\bibitem{WittenN2} \bysame, \emph{Phases of $N=2$ theories in two dimensions}, Nuclear Phys. {\bf B 403} (1993) 159--222.

\bibitem{Witten02} \bysame, \emph{Two dimensional models with (0,2) supersymmetry: perturbative aspects}, Adv. Theor. Math. Phys. \textbf{11} (2007) 1--63.
				
\end{thebibliography}
\end{document}